\numberwithin{equation}{section}
\numberwithin{figure}{section}
\theoremstyle{plain}
\newtheorem{thm}{\protect\theoremname}[section]
  \theoremstyle{plain}
  \newtheorem{prop}[thm]{\protect\propositionname}
  \theoremstyle{remark}
  \newtheorem{rem}[thm]{\protect\remarkname}
  \newtheorem*{remn}{\protect\remarkname}
  \theoremstyle{plain}
  \newtheorem{cor}[thm]{\protect\corollaryname}
  \theoremstyle{plain}
  \theoremstyle{plain}
  \newtheorem{lem}[thm]{\protect\lemmaname}
  \theoremstyle{definition}
  \newtheorem{example}[thm]{\protect\examplename}
  \theoremstyle{definition}
    \newtheorem{defn}[thm]{\protect\definitionname}
  \providecommand{\propositionname}{Proposition}
\providecommand{\theoremname}{Theorem}
\newcommand{\irr}{\mathrm{Irr}(\mathbb{G})}
\newcommand{\f}{\varphi}
\newcommand{\pol}{\mathrm{Pol}(\mathbb{G})}
\DeclareMathOperator{\tr}{Tr}
  \providecommand{\corollaryname}{Corollary}
  \providecommand{\examplename}{Example}
  \providecommand{\factname}{Fact}
  \providecommand{\lemmaname}{Lemma}
  \providecommand{\propositionname}{Proposition}
  \providecommand{\remarkname}{Remark}
\providecommand{\theoremname}{Theorem}
\providecommand{\definitionname}{Definition}
\begin{document}
\title{Lacunary Fourier series for compact quantum groups}

\author{Simeng Wang}

\address{Laboratoire de Mathématiques, Université de Franche-Comté, 25030
Besançon Cedex, France and Institute of Mathematics, Polish Academy
of Sciences, ul. \'{S}niadeckich 8, 00-956 Warszawa, Poland}

\email{simeng.wang@univ-fcomte.fr}

\subjclass[2010]{Primary: 20G42, 46L89. Secondary: 43A46, 46L52.}

\keywords{Compact quantum group, Fourier series, Fourier multipliers, Sidon sets, $\Lambda(p)$-sets.}

\begin{abstract}
This paper is devoted to the study of Sidon sets, $\Lambda(p)$-sets and some related notions for compact quantum groups. We establish several different characterizations of Sidon sets, and in particular prove that any Sidon set in a discrete group is a strong Sidon set in the sense of Picardello. We give several relations between Sidon sets, $\Lambda(p)$-sets and lacunarities for $L^p$-Fourier multipliers, generalizing a previous work by Blendek and Michali\u{c}ek. We also prove the existence of $\Lambda(p)$-sets for orthogonal systems in noncommutative $L^p$-spaces, and deduce the corresponding properties for compact quantum groups. Central Sidon sets are also discussed, and it turns out that the compact quantum groups with the same fusion rules and the same dimension functions have identical central Sidon sets. Several examples are also included.
\end{abstract}

\maketitle

\section*{Introduction}
The study of lacunarity and particularly of Sidon sets is a major and fascinating subject of harmonic analysis. Historically, the notion of Sidon sets originated from discussions of special lacunary series on the circle  $\mathbb T$. This has been generalized later by replacing $\mathbb T$ with an arbitrary compact abelian group $G$. 
Recall that for a compact abelian group $G$, a Sidon set $\mathbf E$ for $G$ is a subset of the dual discrete group $\Gamma=\hat G$ such that any continuous function on $G$ with Fourier transform supported on $\mathbf E$ has absolutely convergent Fourier series. The theory of Sidon sets and lacunarity for compact abelian groups has been remarkably developed in the past several decades, and in a series of works of Rudin, Drury, Rider, Marcus, Pisier and others (see  \cite{drury70sidon,rider1975randomsidon,rudin1960sidon,marcuspisier1981randomfourier,pisier1978sidon,pisier78aleatoireseminaire,pisier83entropysidon,pisier83entropy} and references therein); it is shown to be deeply related to the theory of random Fourier series, metric entropy condition, multiplier spaces as well as some other topics in harmonic analysis and Banach space theory.

The development of similar subjects in more general setting goes naturally into two lines. The first one is the theory of Sidon sets in the setting of a  non-abelian compact group $G$; these sets are special subsets of irreducible representations of $G$. This generalized notion of Sidon sets was firstly introduced in \cite{figrider66lacunarycpt,hewittross1970abstract} and then has seen many links to the harmonic analysis and probability theory as in the abelian case  (see \cite{bozejko74drurycptgroup,marcuspisier1981randomfourier}). The other line of development concerns subsets of an arbitrary non-abelian discrete group $\Gamma$ with related \textquotedblleft functions" in its group von Neumann algebra $VN(\Gamma)$, such as in \cite{picardello73lacunary,fig77lacunary,bozejko81newlacunary}. The latter is more complicated and the behavior of lacunarity is closely related to the amenability of the discrete group, which also involves some tools from the operator space theory and leads to interesting topics in the study of completely bounded $L^p$-Fourier multipliers in  abstract harmonic analysis (\cite{harcharras99nclambdap,pisier1995multiplier}).

In the both directions mentioned above, the Sidon sets admit many seemingly non-related characterizations, and enjoy some special relations with $\Lambda(p)$-sets, multipliers, etc.. Moreover, many basic properties of Sidon sets in the two different settings have quite similar formulations. This gives rise to a natural motivation to find a more general framework, say Woronowicz's compact quantum groups, to unify these developments from different viewpoints, and to seek new aspects of the theory for these objects. In fact, based on the Woronowicz-Peter-Weyl theory for compact quantum groups, we may discuss similar questions for Sidon sets in the quantum group setting. Note that a priori in this general view, a discrete group $\Gamma$ is regarded as the set of irreducible representations of the dual compact quantum group $\mathbb G=\hat \Gamma$, and the amenability of $\Gamma$ is often interpreted as the coamenability of $\mathbb G$. Recently, Blendek and Michali\u{c}ek \cite{blendekmichalicek2013sidonl1} have obtained some partial results towards this direction, which we will significantly complete and improve in Section 4.

In this paper we introduce and develop some important notions and properties related to lacunarities in the framework of compact quantum groups. The subjects that we address are as follows:

(1) \emph{Generalizations and characterizations of Sidon sets.} With some preliminary work on Fourier series, we introduce in Section 3 the notion of Sidon sets for a compact quantum group and as an analogue of the classical case, we give some first characterizations of Sidon sets via interpolation of Fourier series of bounded functionals or $L^1$-functions (Theorem \ref{thm:sidon_l1_interpolation state}). Theorem \ref{thm:sidon_l1_interpolation state} answers in particular a basic question in the study on lacunarity of discrete groups, raised in \cite{fig77lacunary,picardello73lacunary}, that is, the equivalence between strong Sidon sets and Sidon sets in discrete groups in the sense of \cite{picardello73lacunary}. In fact, in the latter article Picardello defined two different kinds of lacunary sets that he  called strong Sidon sets and Sidon sets in discrete groups. He proved the equivalence between these two notions for amenable discrete groups, but the non-amenable case has been left open. Here our approach is different from the classical ones in \cite{hewittross1970abstract,picardello73lacunary}. Our  argument is simpler and avoids the use of coamenability as in \cite{picardello73lacunary}. It as well applies to the general quantum setting. We show as well that the Sidon property is stable under Cartesian/free products of compact quantum groups. Apart from the above approach, there are indeed various viewpoints on generalizations of Sidon sets for non-coamenable compact quantum groups, which lead to different types of lacunarities such as weak Sidon sets and unconditional Sidon sets. In Theorem \ref{sidon and u sidon} we will discuss the relations between these various notions and prove the equivalence among them for coamenable compact quantum groups. 

(2) \emph{$\Lambda(p)$-sets and $L^p$-Fourier multipliers.} We discuss the equivalence between $\Lambda(p)$-sets and interpolation sets of bounded $L^p$-Fourier multipliers for a compact quantum group (Theorem \ref{thm:lambda inter}), generalizing the previous work \cite{harcharras99nclambdap}. Our investigation also leads to some facts which are hidden in the classical cases: we show that the restriction of the modular element $Q$ of the dual quantum group on a $\Lambda(p)$-set must be bounded (Proposition \ref{prop: unif bdd of mod gp for sidon}), and that the class of $\Lambda(p)$-sets on a compact quantum group is independent of the interpolation parameter in the construction of noncommutative $L^p$-spaces (Proposition \ref{prop:indep lambda parameter}).

(3) \emph{Relations between Sidon sets and $\Lambda(p)$-sets.} For a compact group or (the dual of) a discrete group, a simple (but non-trivial) argument shows that any Sidon set $\mathbf E$ is of type $\Lambda(p)$ for $1< p<\infty$, which indeed means that any two norms $\|\,\|_p,\|\,\|_{p'}$ for $1< p,p'<\infty$ are equivalent on the subspace of polynomials with Fourier series supported in $\mathbf E$. The case for a general compact quantum group turns out to be more difficult. A first attempt was made by Blendek and Michali\u{c}ek \cite{blendekmichalicek2013sidonl1}, who showed in 2013 that if $\mathbb G$ is a compact quantum group \emph{of Kac type} and if the Sidon set $\mathbf E$ satisfies some other special lacunarity conditions, then the norms $\|\,\|_1$ and $\|\,\|_2$ are equivalent on the subspace of \emph{central} polynomials related to $\mathbf E$. In this paper we give a shorter argument which completely solves the problem by showing that any Sidon set for an arbitrary compact quantum group is a $\Lambda(p)$-set for $1< p <\infty$ (see Theorem \ref{sidon lambda} and Corollary \ref{cor:cor sidon lambda}). 

(4) \emph{Existence of $\Lambda(p)$-sets.}
We prove in Theorem \ref{thm: exist lambda p} that any infinite subset of irreducible representations with uniformly bounded dimensions admits an infinite $\Lambda(p)$-set. The argument is based on the general fact that any infinite uniformly bounded orthogonal system with respect to a normal faithful state in a von Neumann algebra (including type III) admits an infinite $\Lambda(p)$-set. Since this result may be of independent interest, we include a proof in the appendix. 
On the other hand, contrary to the classical case, we may find some nice central $\Lambda(4)$-sets for $\mathrm{SU}_q(2)$ with $0<q<1$. This will be given in Proposition \ref{central lambda four su}.

(5) \emph{Central Sidon sets.} We also investigate in Section 5 some basic notions and facts concerning central Sidon sets for compact quantum groups. Our argument includes some new characterizations of compact quantum groups of Kac type via bounded central functionals or conditional expectations onto central functions. We show that any two compact quantum groups with the same fusion rules and the same dimension functions have identical central Sidon sets. This gives many examples of central Sidon sets for compact quantum groups via monoidal equivalence, Drinfeld-Jimbo deformations, etc.. As a corollary, we see that the Drinfeld-Jimbo $q$-deformation of any compact simply connected semi-simple Lie group (e.g. $\mathrm{SU}_q(2)$) does not admit any infinite Sidon set.

We would also like to remark that many results mentioned above rely on some very elementary properties of bounded $L^p$-Fourier multipliers on compact quantum groups, which have not been discussed so far in literature. The first property concerns the question how to construct a left bounded $L^p$-multiplier from a right bounded $L^p$-multiplier, and the second one concerns the inequality comparing the norms $\|a\|_{\ell^\infty (\hat{\mathbb G})}$ and $\|a\|_{\mathrm M (L^p(\mathbb G))}$. These questions are easy to deal with for completely bounded $L^\infty$-multipliers, but it seems that they are not obvious for other cases. Although the completely bounded $L^\infty$-multipliers, instead of bounded $L^\infty$-multipliers, are more natural objects for studying harmonic analysis on quantum groups, the  questions above are still natural and elementary for studying $L^p$-Fourier analysis on quantum groups, especially for $p<\infty$. As a result we include a detailed argument on these basic facts (Lemma \ref{lem: left to right multipliers}, Proposition \ref{prop:multiplier bdd quantum}) in Section 2, which are also frequently used in other proofs of this paper.

It would also be interesting to study the completely bounded version of Sidon sets or $\Lambda(p)$-sets. We refer to Pisier's work \cite{pisier1995multiplier} for completely bounded lacunarity in discrete groups. We have not studied here this topic; but we will pursue it elsewhere. In fact, our argument on $L^p$-Fourier multipliers in Section 2, and the estimation of modular elements in Section 4, will be helpful for the study towards this direction. 
\section{Preliminaries}
\subsection{Compact quantum groups} 
\subsubsection{Basic notions} 
Let us first recall some well-known definitions and properties concerning compact quantum groups. We refer to  \cite{woronowicz1998note} and \cite{maesvandaele1998note} for more details.

\begin{defn}
Let  $A$ be a unital C{*}-algebra. If there exists a unital $*$-homomorphism
$\Delta:A\to A\otimes A$ such that
$(\Delta\otimes\iota)\Delta=(\iota\otimes\Delta)\Delta$ and 
\[
\{\Delta(a)(1\otimes b):a,b\in A\}\quad\text{and}\quad\{\Delta(a)(b\otimes1):a,b\in A\}
\]
are linearly dense in $A\otimes A$, then $(A,\Delta)$ is called
a \emph{compact quantum group} and $\Delta$ is called the \emph{comultiplication} on $A$. We denote $\mathbb{G}=(A,\Delta)$
and $A=C(\mathbb{G})$.
\end{defn}

Any compact quantum group $\mathbb G$ admits a unique \emph{Haar
state} $h$ on $C(\mathbb{G})$ such that for all $x\in C(\mathbb{G})$,
\[
(h\otimes\mathrm{\iota})\circ\Delta(x)=h(x)1=(\iota\otimes h)\circ\Delta(x).
\]

Consider an element $u\in C(\mathbb{G})\otimes B(H)$, where $H$ is a Hilbert space with $\dim H=n$. We identify
$C(\mathbb{G})\otimes B(H)=\mathbb{M}_{n}(C(\mathbb{G}))$ and write $u=[u_{ij}]_{i,j=1}^{n}$.
The matrix $u$ is called an \emph{$n$-dimensional representation }of $\mathbb{G}$
if for all $j,k=1,...,n$ we have 
\begin{equation*}
\Delta(u_{jk})=\sum_{p=1}^{n}u_{jp}\otimes u_{pk}.\label{eq:comultiplication}
\end{equation*}
Denote by $\mathrm{Irr}(\mathbb{G})$ 
the set of unitary equivalence classes of irreducible finite-dimensional unitary representations
of $\mathbb{G}$. Also denote by $\mathrm{Rep}(\mathbb G)$ the set of unitary equivalence classes of (not necessarily irreducible) finite-dimensional unitary representations
of $\mathbb{G}$. For each $\pi\in\mathrm{Irr}(\mathbb{G})$, we fix a representative
$u^{(\pi)}\in C(\mathbb{G})\otimes B(H_{\pi})$ 
of the class $\pi$ where $H_{\pi}$ is the finite dimensional
Hilbert space on which $u^{(\pi)}$ acts. In the sequel we write $n_\pi=\dim \pi =\dim H_\pi$ for $\pi\in\irr$.

For $\pi,\pi'\in \mathrm{Irr}(\mathbb{G})$, define the tensor product representation $\pi\otimes\pi'$ on $H_{\pi}\otimes H_{\pi'}$ by 
$$u^{(\pi\otimes \pi')}=\sum_{i,j,k,l}u_{ij}^{(\pi)}u_{kl}^{(\pi')}\otimes e_{ij}^{(\pi)}\otimes e_{kl}^{(\pi')},$$
where $e_{ij}^{(\pi)}, e_{kl}^{(\pi')}$ denotes the matrix units of $B(H_{\pi})$ and $B( H_{\pi'})$ respectively.
For each $\pi\in \mathrm{Irr}(\mathbb{G})$, there exists a unique $\bar{\pi }\in \mathrm{Irr}(\mathbb{G})$ such that the trivial representation $1\in \irr$ is a subrepresentation of $\pi\otimes\bar \pi$. We call $\bar \pi$ the adjoint of $\pi$. The notions of tensor product and adjoint then can be extended to all elements in $\mathrm{Rep}(\mathbb G)$ by decomposing representations into irreducible ones.

For $\pi\in\mathrm{Rep}(\mathbb G)$, the character of $\pi$ is the element $\chi_\pi=\sum_{i=1}^{n_\pi}u_{ii}^{(\pi)}\in C(\mathbb G)$. One can show that the definition does not depend on the representative matrix $u^{(\pi)}$. For $\pi,\pi'\in\mathrm{Rep}(\mathbb G)$, we have 
\begin{equation}\label{character of rep}
\chi_{\pi\oplus\pi'}=\chi_{\pi}+\chi_{\pi'},\quad \chi_{\pi\otimes\pi'}=\chi_{\pi}\chi_{\pi'},\quad \chi_{\bar{\pi}}=\chi_\pi^*.
\end{equation}

Denote  $\mathrm{Pol}(\mathbb{G})=\mathrm{span}\{u_{ij}^{(\pi)}:u^{(\pi)}=[u_{ij}^{(\pi)}]_{i,j=1}^{n_{\pi}},\pi\in\mathrm{Irr}(\mathbb{G})\}$. This is a dense subalgebra of $C(\mathbb{G})$. Consider the GNS representation $(\pi_{h},H_{h})$ of the Haar state $h$,
then $\mathrm{Pol}(\mathbb{G})$ can be viewed as a subalgebra of
$B(H_{h})$. Define $C_{r}(\mathbb{G})$ (resp., $L^{\infty}(\mathbb{G})$)
to be the C{*}-algebra (resp., the von Neumann algebra) generated
by $\mathrm{Pol}(\mathbb{G})$ in $B(H_{h})$. Then $h$ extends to
a normal faithful state on $L^{\infty}(\mathbb{G})$. On the other hand, we may equip the following C*-norm on $\mathrm{Pol}(\mathbb{G})$,
$$\|x\|_u=\sup\{p(x):p\text{ is a C*-seminorm on }\mathrm{Pol}(\mathbb{G})\},\quad x\in \mathrm{Pol}(\mathbb{G}).$$
Then the corresponding completion of $\mathrm{Pol}(\mathbb{G})$ is a unital C*-algebra, denoted by $C_u(\mathbb G)$.

It is known that there exists a linear antihomomorphism $S$ on $\mathrm{Pol}(\mathbb{G})$, called the \emph{antipode} of $\mathbb{G}$, 
determined by 
$$S(u_{ij}^{(\pi)})=(u_{ji}^{(\pi)})^*,\quad u^{(\pi)}=[u_{ij}^{(\pi)}]_{i,j=1}^{n_{\pi}},\ \pi\in\mathrm{Irr}(\mathbb{G}).$$ 
Also, let $\epsilon$ be the \emph{counit}  of $\mathrm{Pol}(\mathbb{G})$, i.e., the linear functional defined by 
$$\epsilon(u_{ij}^{(\pi)})=\delta_{ij},\quad u^{(\pi)}=[u_{ij}^{(\pi)}]_{i,j=1}^{n_{\pi}},\ \pi\in\mathrm{Irr}(\mathbb{G}).$$

Let $(\mathbb{G}_{i}:i\in I)$ be a family of compact quantum groups. It is shown in \cite{wang95tensor} that there exists a compact quantum group, denoted by $\prod_{i\in I}\mathbb{G}_{i}$,
such that the algebra $C_r(\prod_{i\in I}\mathbb{G}_{i})$ is the minimal
tensor product of the C{*}-algebras $C_r(\mathbb{G}_{i})$, and the comultiplication on $\prod_{i\in I}\mathbb{G}_{i}$ is induced by those on $\mathbb{G}_{i}$'s. We call $\prod_{i\in I}\mathbb{G}_{i}$ the Cartesian product of $(\mathbb{G}_{i})$. Each irreducible representation $\pi\in\mathrm{Irr}(\mathbb{G}_{i})$
can be naturally viewed as an irreducible representation of $\prod_{i\in I}\mathbb{G}_{i}$,
still denoted by $\pi$. In this way we may write $\mathrm{Irr}(\prod_{i\in I}\mathbb{G}_{i})=\{\otimes_{i\in I}\pi_i:\pi_i\in \mathrm{Irr}(\mathbb G _{i}),i\in I\}$. On the other hand, according to \cite{wang1995freeprod}, we may construct a compact quantum group denoted by $\hat{*}_{i\in I}\mathbb{G}_{i}$ and called the dual free product of $(\mathbb{G}_{i})$, 
such that the algebra $C_r(\hat{*}_{i\in I}\mathbb{G}_{i})$ is the reduced
free product of the C{*}-algebras $C_r(\mathbb{G}_{i})$ associated to the Haar states of $\mathbb{G}_{i}$. Again, each irreducible representation $\pi\in\mathrm{Irr}(\mathbb{G}_{i})$
can be naturally viewed as an irreducible representation of $\hat{*}_{i\in I}\mathbb{G}_{i}$,
still denoted by $\pi$.

\subsubsection{Modular properties of the Haar state}
It is well-known that for each $\pi\in\irr$ there
exists a unique positive invertible operator $Q_{\pi}\in B(H_{\pi})$
with $\mathrm{Tr}(Q_{\pi})=\mathrm{Tr}(Q_{\pi}^{-1})\coloneqq d_{\pi}$ that intertwines $u^{(\pi)} $ and $(S^2\otimes\iota) (u^{(\pi)})$. Then the Haar state can be calculated as follows, 
\begin{equation}
h(u_{ij}^{(\pi)}(u_{lm}^{(\pi')})^{*})=\delta_{\pi\pi'}\delta_{il}\dfrac{(Q_{\pi})_{mj}}{d_{\pi}},\quad h((u_{ij}^{(\pi)})^{*}u_{lm}^{(\pi')})=\delta_{\pi\pi'}\delta_{jm}\dfrac{(Q_{\pi}^{-1})_{li}}{d_{\pi}},\label{eq:haar state def}
\end{equation}
where $\pi'\in\irr$, $1\leq i,j\leq n_{\pi}$, $1\leq l,m\leq n_{\pi'}$. The number $d_\pi$ is called the \emph{quantum dimension} of $\pi$. The quantum group 
$\mathbb G$ is said to be \emph{of Kac type} if $Q_\pi=\mathrm{Id}_\pi$ for all $\pi\in \irr$. The Woronowicz characters on $\mathrm{Pol}(\mathbb G)$ are defined as
$$f_z(u_{ij}^{(\pi)})=(Q_\pi^z)_{ij},\quad z\in \mathbb C, \pi\in\irr, 1\leq i,j\leq n_{\pi}.$$ 
We denote
$$\Delta^{(2)}\coloneqq (\Delta\otimes\iota)\Delta=(\iota\otimes\Delta)\Delta.$$
The modular automorphism group of the Haar state $h$ on $L^\infty (\mathbb{G})$ is determined by the following formula:
$$\sigma_z(x)=(f_{\mathrm i z}\otimes\iota \otimes f_{\mathrm i z})\Delta^{(2)}(x),\quad  x\in \mathrm{Pol}(\mathbb G),z\in\mathbb C,$$
in other words,
\begin{equation}\label{modular group on cqg}
(\sigma_{z}\otimes\iota) (u^{(\pi)})= (1\otimes Q_\pi^{\mathrm i z})u^{(\pi)}(1\otimes Q_\pi^{\mathrm i z}),\quad \pi\in\irr.
\end{equation}
The antipode $S$ has the following polar decomposition 
\begin{equation}\label{eq: polar decomp of s}
S=R\circ \tau_{-\frac{\mathrm i}{2}}=  \tau_{-\frac{\mathrm i}{2}} \circ R,
\end{equation}
where $R$ is a $*$-antiautomorphism of $C_r(\mathbb G)$ and $(\tau_z)_{z\in \mathbb C}$ is the analytic extension of the one-parameter group $(\tau_t)_{t\in \mathbb R}$ of $*$-automorphisms defined as 
$$\tau_z(x)= (f_{\mathrm i z}\otimes\iota \otimes f_{-\mathrm i z})\Delta^{(2)}(x)$$
for $x\in \mathrm{Pol}(\mathbb G)$, or in other words,
\begin{equation}\label{eq: scaling group on cqg}
(\tau_{z}\otimes\iota) (u^{(\pi)})= (1\otimes Q_\pi^{\mathrm i z})u^{(\pi)}(1\otimes Q_\pi^{-\mathrm i z}),\quad \pi\in\irr,
\end{equation}
and moreover
\begin{equation}
\label{antipode2}
 S^2= \tau_{-\mathrm i},\quad \Delta \circ R=\Sigma \circ  (R\otimes R)\circ \Delta.
\end{equation} 
The dual quantum group $\hat{\mathbb G}$ of $\mathbb G$ is defined via its ``algebras
of functions'',
\[c_{0}(\hat{\mathbb{G}})=\oplus_{\pi\in\irr}^{c_{0}}B(H_{\pi}),\quad 
\ell^{\infty}(\hat{\mathbb{G}})=\oplus_{\pi\in\irr}B(H_{\pi}),
\]
where $\oplus_{\pi}B(H_{\pi})$ refers to the direct sum of
$B(H_{\pi})$, i.e. the bounded families $(x_{\pi})_{\pi}$
with each $x_{\pi}$ in $B(H_{\pi})$, and $ \oplus_{\pi\in\irr}^{c_{0}}B(H_{\pi}) $
  corresponds to the subalgebra of bounded families converging to 0
  at infinity. Also set $ c_{c}(\hat{\mathbb{G}}) $
  to be the corresponding algebraic direct sum and denote by $ \prod_{\pi}B(H_{\pi}) $
  the usual Cartesian product. We will not recall the full quantum group structure on $\hat{\mathbb{G}}$ as we do
not need it in the following. We only remark that the (left) Haar
weight $\hat{h}$ on $\hat{\mathbb G}$ can be explicitly given by (see
e.g. \cite[Section 5]{vandaele1996discrete}) 
\[
\hat{h}:\ell^{\infty}(\hat{\mathbb G})\ni x\mapsto\sum_{\pi\in\irr}d_{\pi}\tr(Q_{\pi}p_{\pi}x),
\]
where $p_{\pi}$ is the projection onto $H_{\pi}$ and $\mathrm{Tr}$
denotes the usual trace on $B(H_{\pi})$ for each $\pi$.

\subsubsection{Coamenability} Consider a compact quantum group $\mathbb G$. We say that $\mathbb G$ is \emph{coamenable} if the counit $\epsilon:\mathrm{Pol}(\mathbb G)\to \mathbb C $ extends to a state on $C_r(\mathbb G)$. On the other hand, for linear functionals $\f,\f'$ on $\mathrm{Pol}(\mathbb{G})$, we define the convolution product 
\begin{equation}\label{conv def}
\f\star\f'=(\f\otimes\f')\circ\Delta.
\end{equation}
Then it is easy to see that
\[
\|\varphi_{1}\star\varphi_{2}\|\leq\|\varphi_{1}\|\|\varphi_{2}\|,
\] 
where the norm is induced from $C_r(\mathbb G)^*$ or $L^\infty (\mathbb G)^*$, when the respective functionals admit  bounded extensions.
Equipped with this convolution product, the predual space $L^\infty(\mathbb G)_*$  forms a Banach algebra. The following characterization of coamenability is given in \cite[Theorem 3.1]{bedostuset03amenable}.

\begin{prop}
\label{amenability}
A compact quantum group $\mathbb G$ is coamenable if and only if the Banach algebra $L^\infty(\mathbb G)_*$ has a bounded right approximate unit with norm not more than $1$, and if and only if the identity map on $\mathrm{Pol}(\mathbb G)$ extends to a $*$-isomorphism from $C_u(\mathbb G)$ to $C_r(\mathbb G)$.
\end{prop}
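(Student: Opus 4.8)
The plan is to prove the two asserted equivalences separately: ``$\mathbb{G}$ coamenable'' $\Leftrightarrow$ ``$L^\infty(\mathbb{G})_*$ has a right approximate unit of norm $\le 1$'', and ``$\mathbb{G}$ coamenable'' $\Leftrightarrow$ ``the canonical surjection $C_u(\mathbb{G})\to C_r(\mathbb{G})$ is a $*$-isomorphism''. One of the four implications is essentially free: the counit is a one--dimensional $*$-representation of $\pol$, so $|\epsilon(x)|\le\|x\|_u$, whence $\epsilon$ always extends to a state on $C_u(\mathbb{G})$; if $C_u(\mathbb{G})=C_r(\mathbb{G})$ we are done. I expect the implication ``coamenable $\Rightarrow$ approximate unit'' to be the crux.

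\emph{Coamenability $\Rightarrow$ approximate unit.} Assume $\epsilon$ extends to a state $\bar{\epsilon}$ on $C_r(\mathbb{G})$. I would first record two reductions: by Kaplansky density the restriction map embeds $L^\infty(\mathbb{G})_*$ \emph{isometrically} into $C_r(\mathbb{G})^*$, so the required norm convergence $\|\varphi\star e-\varphi\|\to 0$ need only be checked against $C_r(\mathbb{G})$; and since $(\iota\otimes\epsilon)\Delta=\iota$ on $\pol$, a density argument gives $(\iota\otimes\bar{\epsilon})\Delta=\iota$ on $C_r(\mathbb{G})$ (where $\Delta$ restricts to $C_r(\mathbb{G})\to C_r(\mathbb{G})\otimes C_r(\mathbb{G})$). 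Hence for $\varphi\in L^\infty(\mathbb{G})_*$, $y\in C_r(\mathbb{G})$ and a normal state $e$ on $L^\infty(\mathbb{G})$ one gets $\langle\varphi\star e-\varphi,\,y\rangle=(e-\bar{\epsilon})\bigl((\varphi\otimes\iota)\Delta(y)\bigr)$. The key observation is that for $\varphi=h(\,\cdot\,a)$ with $a\in\pol$, the orthogonality relations force $h(u_{ik}^{(\pi)}a)=0$ for all but finitely many $\pi\in\irr$, so $(\varphi\otimes\iota)\Delta\colon C_r(\mathbb{G})\to C_r(\mathbb{G})$ is of finite rank and its image of the unit ball lies in a fixed finite--dimensional subspace $V\subseteq\pol$. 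As normal states are weak$^*$-dense in the state space of $C_r(\mathbb{G})$ and the weak$^*$ and norm topologies agree on $V^*$, one can pick a normal state $e$ with $\|(e-\bar{\epsilon})|_{V}\|$ as small as desired, making $\|\varphi\star e-\varphi\|$ as small as desired simultaneously for finitely many prescribed $\varphi$ of this form. Indexing over finite subsets of the norm--dense subspace $\{h(\,\cdot\,a):a\in\pol\}$ of $L^\infty(\mathbb{G})_*$ and over $\varepsilon>0$ then produces a net of normal states -- hence of norm $1$ -- which is the desired right approximate unit. The obstacle here is precisely that the counit is not normal, so one cannot directly approximate it by normal states and push it through the convolution; the device above (isometric embedding into $C_r(\mathbb{G})^*$ together with the finite--rank behaviour of convolution by polynomial densities) is what circumvents this while keeping the bound equal to $1$ on the nose.

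\emph{Approximate unit $\Rightarrow$ coamenability.} Let $(e_i)$ be such a net, $\|e_i\|\le 1$. By Banach--Alaoglu it has a weak$^*$-cluster point $e\in L^\infty(\mathbb{G})^*$ with $\|e\|\le 1$. Since $(\varphi\otimes\iota)\Delta$ maps $L^\infty(\mathbb{G})$ into itself, the map $\psi\mapsto\varphi\star\psi=\psi\circ\bigl((\varphi\otimes\iota)\Delta\bigr)$ is weak$^*$-to-weak$^*$ continuous, so $\varphi\star e_i\to\varphi$ in norm yields $\varphi\star e=\varphi$ for all $\varphi\in L^\infty(\mathbb{G})_*$. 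Evaluating at $u_{ij}^{(\pi)}$ gives $\sum_k\varphi(u_{ik}^{(\pi)})\,e(u_{kj}^{(\pi)})=\varphi(u_{ij}^{(\pi)})$; linear independence of the matrix coefficients of a fixed $\pi$ lets one choose $\varphi$ with $\varphi(u_{il}^{(\pi)})=\delta_{il}$, forcing $e(u_{kj}^{(\pi)})=\delta_{kj}=\epsilon(u_{kj}^{(\pi)})$. Thus $e|_{\pol}=\epsilon$; since $e(1)=1=\|e\|$, $e$ is a state, and its restriction to $C_r(\mathbb{G})$ extends $\epsilon$, so $\mathbb{G}$ is coamenable.

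\emph{Coamenability $\Rightarrow$ $C_u(\mathbb{G})\cong C_r(\mathbb{G})$.} For this I would invoke Fell's absorption principle for compact quantum groups: for any $*$-representation $\rho$ of $C_u(\mathbb{G})$ on $H_\rho$, the tensor product representation $(\pi_h\otimes\rho)\Delta$ -- with $\pi_h$ the GNS representation of the Haar state -- is unitarily equivalent to $\pi_h\otimes 1_{H_\rho}$, the intertwiner being $(\iota\otimes\rho)$ applied to the regular corepresentation; in particular $\|(\pi_h\otimes\rho)\Delta(x)\|=\|x\|_r$ for $x\in\pol$. Extending the bounded counit $\bar{\epsilon}$ to a state $\hat{\epsilon}$ on $B(H_h)$ and using $(\epsilon\otimes\iota)\Delta=\iota$ on $\pol$, one checks $\rho(x)=(\hat{\epsilon}\otimes\iota)\bigl[(\pi_h\otimes\rho)\Delta(x)\bigr]$; since $\hat{\epsilon}\otimes\iota$ is unital completely positive, $\|\rho(x)\|\le\|(\pi_h\otimes\rho)\Delta(x)\|=\|x\|_r$, and taking the supremum over $\rho$ gives $\|x\|_u=\|x\|_r$, so the canonical map is a $*$-isomorphism. (If one does not take Fell absorption for compact quantum groups as granted, establishing it from the multiplicative unitary is the other step requiring care.)
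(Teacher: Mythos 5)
This proposition is not proved in the paper at all: it is quoted verbatim from \cite[Theorem 3.1]{bedostuset03amenable}, so there is no internal proof to compare your argument against. What you have written is a correct, essentially self-contained reconstruction of the standard argument from that source, and the four implications you prove ((a)$\Rightarrow$(b), (b)$\Rightarrow$(a), (a)$\Rightarrow$(c), (c)$\Rightarrow$(a), with (a) denoting coamenability) do close the full cycle of equivalences. The individual ingredients all check out: the isometric embedding $L^\infty(\mathbb{G})_*\hookrightarrow C_r(\mathbb{G})^*$ via Kaplansky density; the fact that $(\varphi\otimes\iota)\Delta$ has finite-dimensional range inside $\mathrm{Pol}(\mathbb{G})$ when $\varphi=h(\cdot\,a)$ with $a\in\mathrm{Pol}(\mathbb{G})$ (by the orthogonality relations); the weak$^*$-density of normal states in the state space of a concretely and nondegenerately represented C$^*$-algebra; and the weak$^*$-cluster-point argument recovering $\epsilon$ from the approximate unit, where positivity of the limit follows from $e(1)=1=\|e\|$. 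The only step you do not carry out in full is the Fell absorption principle in the implication (a)$\Rightarrow$(c), which you correctly flag as the one remaining nontrivial input; it is indeed standard for compact quantum groups and is exactly how the cited reference proceeds.
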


We remark that any compact group $G$ or compact quantum group $\mathbb G$ with an amenable discrete group $\Gamma$ as the dual quantum group, is coamenable.

\subsubsection{Drinfeld-Jimbo deformation, quantum $\mathrm{SU}(N)$ groups}
Let $ G $ be a simply connected semi-simple compact Lie group. It follows from the work of Levendorskii and Soibelman \cite{levsoi91qdeformation,soibelman90qdeformation} that given any $ q > 0 $ one can define
a compact quantum group $G_q$, called the \emph{Drinfeld-Jimbo $ q $-deformation} of  $G$ such that the fusion rules, the classical dimension function and the coamenablity do not depend on $q$. More precisely, we may state the following property (see for example \cite[Theorem 2.4.7]{neshveyevtuset13qgbook}, \cite{banica99fusion} and references therein). 

\begin{prop}
\label{deformation fusion rule}
Let $0<q<1$. 

\emph{(1)} There exists a bijection $\Phi :\mathrm{Rep}(G)\to \mathrm{Rep}(G_q)$ such that 
$$\Phi(\pi\otimes\pi')=\Phi(\pi)\otimes \Phi(\pi'),\quad \Phi(\oplus_{i\in I}\pi_i)=\oplus_{i\in I}\Phi(\pi_i), \quad \pi,\pi',\pi_i\in \mathrm{Rep}(G)$$
and $\dim \Phi(\pi)=\dim \pi$ for all $\pi\in \mathrm{Irr}(G)$.

\emph{(2)} $G_q$ is coamenable.
\end{prop}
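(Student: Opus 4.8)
We only indicate the main ideas, since the statement is classical and complete proofs are in \cite[Theorem 2.4.7]{neshveyevtuset13qgbook} and \cite{banica99fusion}. By construction (\cite{levsoi91qdeformation,soibelman90qdeformation}), $G_q$ is the compact quantum group attached to the quantized universal enveloping algebra $U_q(\mathfrak g)$, where $\mathfrak g$ is the complexified Lie algebra of $G$, and $\mathrm{Rep}(G_q)$ identifies, as a semisimple tensor category, with the category of finite dimensional type-$1$ representations of $U_q(\mathfrak g)$.

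For part (1): since $0<q<1$, the number $q$ is positive, real and $\neq 1$, hence not a root of unity, so $U_q(\mathfrak g)$ lies in the generic regime. There the finite dimensional type-$1$ representations are completely reducible and the simple ones $V^q_\lambda$ are indexed by the dominant integral weights $\lambda\in P_+$, exactly as the simple $G$-modules $V_\lambda$. The essential point is that the weight multiplicities are unchanged, $\dim (V^q_\lambda)_\mu=\dim (V_\lambda)_\mu$ for all weights $\mu$; this follows either from Lusztig's and Kashiwara's theory of canonical and crystal bases, which furnishes a $q$-independent combinatorial basis of $V^q_\lambda$ adapted to the weight decomposition, or from a flatness/specialization argument through an integral form of $U_q(\mathfrak g)$. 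Hence the formal character of $V^q_\lambda$ is the Weyl character $\chi_\lambda$. Defining $\Phi$ on simple objects by $\Phi(V_\lambda)=V^q_\lambda$ and extending by direct sums yields a bijection $\mathrm{Rep}(G)\to\mathrm{Rep}(G_q)$ which is additive by construction; it is multiplicative because the fusion multiplicities $(V_\lambda\otimes V_\mu:V_\nu)$ are the structure constants of the representation ring in the basis of characters and hence agree on the two sides once the characters do; and $\dim\Phi(\pi)=\dim\pi$ follows by summing the weight multiplicities of the equal characters. (The compatibilities $\Phi(1)=1$ and $\Phi(\bar\pi)=\overline{\Phi(\pi)}$ are then immediate.)

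For part (2): I would use the characterization of coamenability through characters due to Banica. If $v\in\mathrm{Rep}(\mathbb G)$ satisfies $v\cong\bar v$ and generates $\mathbb G$ (every irreducible occurs in some tensor power of $v$), then $\mathbb G$ is coamenable if and only if $\|\chi_v\|_{C_r(\mathbb G)}=\dim v$, the bound $\leq$ being automatic since each diagonal entry of a unitary representation matrix has norm $\leq 1$. Now $L^2(\mathbb G,h)$ contains the family $(\chi_\pi)_{\pi\in\irr}$, which is orthonormal: by \eqref{eq:haar state def} and \eqref{character of rep} one has $h(\chi_\pi\chi_{\pi'}^*)=h(\chi_{\pi\otimes\bar{\pi'}})=\dim\mathrm{Mor}(\pi',\pi)=\delta_{\pi\pi'}$; and left multiplication by $\chi_v$ sends $\chi_\pi$ to $\chi_{v\otimes\pi}=\sum_\sigma(v\otimes\pi:\sigma)\chi_\sigma$, so on the closed span of the characters it acts as the adjacency operator $A$ of the fusion graph, giving $\|\chi_v\|_{C_r(\mathbb G)}\geq\|A\|_{B(\ell^2(\irr))}$. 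For $\mathbb G=G_q$ the fusion graph coincides, by part (1), with that of the compact Lie group $G$; for the latter the vector $\pi\mapsto\dim\pi$ is a strictly positive formal eigenvector of $A$ with eigenvalue $\dim v$, and since the rank of $G$ is fixed the number of irreducibles appearing in $v^{\otimes n}$ grows only polynomially in $n$, which produces a F\o lner sequence and forces $\|A\|=\dim v$. Combined with the automatic inequality this yields $\|\chi_v\|_{C_r(G_q)}=\dim v$, so $G_q$ is coamenable, equivalently (Proposition~\ref{amenability}) the canonical map $C_u(G_q)\to C_r(G_q)$ is an isomorphism. For $G_q=\mathrm{SU}_q(N)$ one takes $v$ to be the direct sum of the fundamental $N$-dimensional representation and its conjugate.

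The genuinely nontrivial inputs, both supplied by the literature, are the $q$-independence of weight multiplicities for $U_q(\mathfrak g)$ at generic $q$, and Banica's character characterization of coamenability together with the amenability of a fusion algebra with polynomially growing dimension function; everything else is bookkeeping with the Woronowicz--Peter--Weyl orthogonality relations \eqref{eq:haar state def}.
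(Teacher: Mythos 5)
The paper does not actually prove Proposition \ref{deformation fusion rule}: it is quoted as a known consequence of the work of Levendorskii--Soibelman and Banica, with \cite{neshveyevtuset13qgbook} and \cite{banica99fusion} cited in lieu of an argument. Your sketch is a correct reconstruction of exactly those standard proofs, so there is nothing to fault on substance. For (1), the $q$-independence of weight multiplicities at generic $q$ (canonical bases, or specialization through an integral form) is indeed the one nontrivial input, and the passage from equality of formal characters to equality of fusion multiplicities and of dimensions is correct bookkeeping. For (2), your route --- orthonormality of the characters in $L^2(\mathbb{G},h)$, identification of left multiplication by $\chi_v$ on the closed span of characters with the adjacency operator of the fusion graph, amenability of the fusion algebra of a compact Lie group via polynomial growth, and Banica's character criterion --- is precisely the argument of \cite[Section 2.7]{neshveyevtuset13qgbook}. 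Two small remarks. First, Banica's criterion is most safely stated as ``$\dim v\in\sigma(\chi_v)$ in $C_r(\mathbb{G})$'' rather than ``$\|\chi_v\|_{C_r(\mathbb{G})}=\dim v$'': since $\chi_v$ is self-adjoint, the norm could a priori be attained at $-\dim v$. Your argument does in fact yield the correct conclusion, because the F\o lner vectors are positive and furnish an approximate eigenvector of multiplication by $\chi_v$ at the eigenvalue $+\dim v$, so $\dim v$ lands in the spectrum; just phrase the criterion accordingly. Second, an alternative (and historically earlier) proof of (2) proceeds via Soibelman's classification of the irreducible $*$-representations of $C(G_q)$, which shows directly that the Haar state is faithful on the universal completion; your fusion-theoretic route has the advantage of relying only on part (1) together with general quantum-group machinery.
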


Take $G$ to be the special unitary group $\mathrm{SU}(N)$ of degree $N$. We denote by $\mathrm{SU}_q(N)$ the compact quantum group $G_q$ for $0<q<1$. Let us recall some facts of the representation theory of $\mathrm{SU}_q(2)$. The elements of $\mathrm{Irr}(\mathrm{SU}_{q}(2))$
can be indexed by $n\in\mathbb{N}\cup \{0\}$ and each representation
$u^{(n)}$ is of dimension $n+1$. The associated matrix $Q_{n}$ in \eqref{eq:haar state def} can
be represented as a diagonal under some appropriate basis (see for example Theorem 17 in \cite[Sect.4.3.2]{klimyksch97qgrepbook}):
\begin{equation}\label{qn for sutwo}
Q_{n}=\begin{bmatrix}q^{-n}\\
 & q^{-n+2}\\
 &  & \ddots\\
 &  &  & q^{n-2}\\
 &  &  &  & q^{n}
\end{bmatrix}.
\end{equation}
%
Write $\chi_{n}=\sum_{i}u_{ii}^{(n)}$ to be the character of 
$u^{(n)}$. We recall the property below. See for example Proposition 6.2.10 in \cite{timmermann08qgbook} for the proof.
\begin{prop}
\emph{(1)} For $m,m'\in\mathbb{N}\cup \{0\}$,  $\chi_{m}\chi_{m'}=\chi_{|m-m'|}+\chi_{|m-m'|+1}+\cdots+\chi_{m+m'}$;

\emph{(2)} For $n\in\mathbb{N}\cup \{0\}$, $\chi_{n}=\chi_{n}^{*}$.
\end{prop}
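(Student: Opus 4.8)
To prove this I would deduce both parts from the classical representation theory of $\mathrm{SU}(2)$, using the deformation result of Proposition~\ref{deformation fusion rule} applied with $G=\mathrm{SU}(2)$ together with the character identities~\eqref{character of rep}.

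For part (1), let $V_n$ be the unique irreducible representation of $\mathrm{SU}(2)$ of dimension $n+1$, so that the classical Clebsch--Gordan rule writes $V_m\otimes V_{m'}$ as a direct sum of the $V_k$ over the appropriate set of indices between $|m-m'|$ and $m+m'$. First I would invoke Proposition~\ref{deformation fusion rule}(1) to obtain a bijection $\Phi\colon\mathrm{Rep}(\mathrm{SU}(2))\to\mathrm{Rep}(\mathrm{SU}_q(2))$ compatible with direct sums and tensor products and preserving dimensions. Since $\Phi$ carries direct-sum decompositions to direct-sum decompositions, it restricts to a bijection between irreducible classes; as each of $\mathrm{SU}(2)$ and $\mathrm{SU}_q(2)$ has exactly one irreducible representation of each dimension $n+1$ (for $\mathrm{SU}_q(2)$ this is $u^{(n)}$, as recalled above), preservation of dimension forces $\Phi(V_n)\cong u^{(n)}$. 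Applying $\Phi$ to the classical decomposition of $V_m\otimes V_{m'}$ then gives the corresponding decomposition of $u^{(m)}\otimes u^{(m')}$, and passing to characters by means of $\chi_{\pi\otimes\pi'}=\chi_\pi\chi_{\pi'}$ and $\chi_{\pi\oplus\pi'}=\chi_\pi+\chi_{\pi'}$ from~\eqref{character of rep} yields the asserted product formula for $\chi_m\chi_{m'}$.

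For part (2), I would take $m=m'=n$ in part (1): the summand $\chi_0$, i.e. the trivial representation, then occurs in the decomposition of $u^{(n)}\otimes u^{(n)}$, so by the uniqueness of the adjoint recalled in Section~1 one gets $\overline{u^{(n)}}\cong u^{(n)}$; the identity $\chi_{\bar\pi}=\chi_\pi^*$ from~\eqref{character of rep} then gives $\chi_n=\chi_n^*$.

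With Proposition~\ref{deformation fusion rule} available the rest is routine bookkeeping, the only point needing a word of comment being the identification $\Phi(V_n)\cong u^{(n)}$, which is immediate once one notes that $\Phi$ preserves irreducibility and that both dual objects carry exactly one irreducible of each dimension. If instead one wanted to bypass Proposition~\ref{deformation fusion rule} and argue directly from the known corepresentation theory of $\mathrm{SU}_q(2)$, the one genuinely nontrivial step would be the fundamental fusion $u^{(1)}\otimes u^{(n)}\cong u^{(n-1)}\oplus u^{(n+1)}$ for $n\geq1$ --- equivalently, the explicit construction of the $q$-deformed Clebsch--Gordan intertwiners --- after which the $\chi_n$ satisfy the Chebyshev-type recursion $\chi_{n+1}=\chi_1\chi_n-\chi_{n-1}$, so that (1) follows from the product formula for Chebyshev polynomials of the second kind, and (2) again follows because $\chi_0$ appears in $\chi_n\chi_n$.
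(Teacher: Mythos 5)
The paper does not actually prove this proposition: it is recalled as a known fact with a pointer to Proposition 6.2.10 of Timmermann's book, where it is obtained directly from the corepresentation theory of $\mathrm{SU}_q(2)$ --- essentially your second, ``bypass'' route: one establishes the fundamental fusion $u^{(1)}\otimes u^{(n)}\cong u^{(n-1)}\oplus u^{(n+1)}$ by exhibiting the $q$-Clebsch--Gordan intertwiners and then inducts via the Chebyshev-type recursion. Your primary argument, transporting the classical Clebsch--Gordan rule for $\mathrm{SU}(2)$ through the bijection $\Phi$ of Proposition \ref{deformation fusion rule}, is sound: $\Phi$ does preserve irreducibility (if $\Phi(\pi)=\rho\oplus\rho'$, surjectivity and additivity of $\Phi$ force $\pi$ itself to split), both dual objects carry exactly one irreducible class in each dimension, so $\Phi(V_n)\cong u^{(n)}$, and part (2) then follows from the uniqueness of the adjoint together with $\chi_{\bar\pi}=\chi_\pi^*$ exactly as you say. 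The trade-off is that you invoke the full Drinfeld--Jimbo deformation theorem, whose proof for $G=\mathrm{SU}(2)$ consists essentially of establishing the very fusion rule you are after; within the logical architecture of this paper (where Proposition \ref{deformation fusion rule} is itself a cited black box) this is not circular, but the direct corepresentation-theoretic proof is the more economical and self-contained one.

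One point you should be careful about: with the normalization fixed in this paper ($u^{(n)}$ indexed by $n\in\mathbb{N}\cup\{0\}$ and of dimension $n+1$), the classical rule --- and hence anything your transport argument can produce --- reads $\chi_m\chi_{m'}=\chi_{|m-m'|}+\chi_{|m-m'|+2}+\cdots+\chi_{m+m'}$, with the index increasing in steps of $2$; a dimension count at $m=m'=1$ (namely $2\cdot 2=4$ versus $1+2+3=6$) shows the displayed step-one formula cannot hold verbatim under this convention. The step-one form belongs to the half-integer spin labelling (dimension $2l+1$) used in the cited source. Your hedge ``over the appropriate set of indices'' quietly acknowledges this, but you then assert that the computation ``yields the asserted product formula,'' which your argument does not (and should not) deliver as literally stated; the same remark applies to your Chebyshev route, since the product formula for Chebyshev polynomials of the second kind also skips by $2$. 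Part (2) is unaffected, since $\chi_0$ does occur in $\chi_n\chi_n$ in either version.
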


\subsection{Noncommutative $L^p$-spaces}\label{subs:nc Lp}
In this subsection we recall some basic definitions and facts on noncommutative $L^p$-spaces. We refer to \cite{takesaki2002opeI,takesaki2003oa2}
for the theory of von Neumann algebras and to \cite{pisierxu2003nclp}
for more details on noncommutative $L_{p}$-spaces. In this paper we will mainly use the construction via interpolation \cite{kosaki84interpolation} and we refer to \cite{bergh76interpolation} for all notions and notation from interpolation theory used below. 

Let $\mathcal{M}$ be a von Neumann algebra equipped with a distinguished normal faithful state $\varphi$. Denote by $\mathcal{M}_*$ the predual space of $\mathcal{M}$. Define $L^1(\mathcal{M},\varphi)=\mathcal{M}_*$ and $L^\infty (\mathcal{M},\varphi)=\mathcal{M}$. We identify $\mathcal{M}$ as a subspace
of $\mathcal{M}_{*}$ by the following injection 
\[
j:\mathcal{M}\to \mathcal{M}_{*},\quad j(x)=x\varphi\coloneqq \varphi(\cdot\, x),\quad x\in \mathcal{M}.
\]
It is known that $j$ is a contractive%
{} injection with dense image%
. In this way we may view $(\mathcal{M},\mathcal{M}_*)$ as a compatible pair of Banach spaces and for $1<p<\infty $, we introduce the corresponding noncommutative $L^p$-space as 
$$L^p(\mathcal{M},\varphi)=(\mathcal{M},\mathcal{M}_*)_{1/p},$$
where $(\cdot,\cdot)_{1/p}$ denotes the complex interpolation space. Denote by $\|\cdot\|_p$ the norm on $L^p(\mathcal{M},\varphi)$. Let $H_\varphi$ be the Hilbert space in the GNS construction induced by $\varphi$. Then $$L^2(\mathcal{M},\varphi)=H_\varphi$$ with equal norms.

Another useful and equivalent construction of noncommutative $L^p$-spaces is given by Haagerup. We refer to \cite{terp1981lp,pisierxu2003nclp,haagerupjx10reduction} for details. For the convenience of the reader, we will not formulate the precise definition but rather cite here several basic properties which will be sufficient for later use. 
 
Assume that the von Neumann algebra $\mathcal{M}$ acts on a Hilbert space ${H}$ and denote by $\sigma=\sigma^\varphi$ the modular automorphism group of $\varphi$. 
For $1\leq p\leq \infty $, denote by $L^{p,\mathsf{H}}(\mathcal{M},\varphi)$ the corresponding Haagerup $L^p$-space with norm $\|\cdot\|_{p,\mathsf{H}}$. Recall that each element in $L^{p,\mathsf{H}}(\mathcal{M},\varphi)$ is realized as a densely defined operator on $L^2(\mathbb{R},{H})$ and the usual H\"older inequality also holds for these noncommutative $L^p$-spaces in this sense. Also note that $\mathcal{M}$ can be identified with $L^{\infty,\mathsf H}(\mathcal{M},\varphi)$.

Let $D$ be the density operator associated to $\varphi$ and $\mathrm{tr}$ be the trace on $L^{1,\mathsf{H}}(\mathcal{M},\varphi)$. We recall that $D$ is a distinguished invertible positive selfadjoint operator on $L^2(\mathbb{R},{H})$  and $\mathrm{tr}$ is a distinguished positive functional on $L^{1,\mathsf{H}}(\mathcal{M},\varphi)$, which enjoy the following properties.

\begin{prop}
\label{density op haagerup Lp}
\emph{(1)} For all $x\in \mathcal{M},t\in \mathbb{R}$, $\sigma_t(x)=D^{\mathrm i t}xD^{-\mathrm i t}$;

\emph{(2)} Let $1\leq p,q\leq \infty $ be such that $1/p+1/q=1$. Then for $x\in L^{p,\mathsf{H}}(\mathcal{M},\varphi), y\in L^{q,\mathsf{H}}(\mathcal{M},\varphi)$, we have $xy,yx\in L^{1,\mathsf{H}}(\mathcal{M},\varphi)$ and $\mathrm{tr}(xy)=\mathrm{tr}(yx)$;

\emph{(3)} $D\in L^{1,\mathsf{H}}(\mathcal{M},\varphi)$ and $\varphi(x)=\mathrm{tr}(xD)$ for $x\in \mathcal{M}$;

\emph{(4)} For $x\in L^{1,\mathsf{H}}(\mathcal{M},\varphi)$, $\mathrm{tr}(|x|)=\|x\|_{1,\mathsf{H}}$. For $x\in L^{p,\mathsf{H}}(\mathcal{M},\varphi)$ with $1\leq p<\infty$, we have $$|x|^p\in L^{1,\mathsf{H}}(\mathcal{M},\varphi),\quad \|x\|_{p,\mathsf{H}}=\|x^*\|_{p,\mathsf{H}}=\||x|\|_{p,\mathsf{H}}=\||x|^p\|_{1,\mathsf{H}}^{1/p};$$
and for $x\in \mathcal M$, $1\leq p\leq p'\leq \infty$, we have $$xD^{1/p}\in L^{p,\mathsf{H}}(\mathcal{M},\varphi),\quad
xD^{1/p'}\in L^{p',\mathsf{H}}(\mathcal{M},\varphi),\quad
\|xD^{1/p}\|_{p,\mathsf{H}}\leq \|xD^{1/p'}\|_{p',\mathsf{H}};$$

\emph{(5) (\cite[(1.3)]{junge02doob})}  For $1\leq p<\infty$ and for $0\leq x\leq y\in L^{p,\mathsf H}(\mathcal{M})$, we have $\|x\|_{p,\mathsf H}\leq \|y\|_{p,\mathsf H}$.
\end{prop}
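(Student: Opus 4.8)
The statement bundles together standard structural facts about Haagerup $L^p$-spaces, with the single monotonicity assertion (5) already attributed to \cite{junge02doob}; accordingly my plan is to recall only as much of the crossed-product construction as is needed to fix notation, and then to read off each item, pausing on the one or two points that genuinely require an argument.

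\emph{Setup.} Let $\mathcal N=\mathcal M\rtimes_{\sigma^\varphi}\mathbb R$ be the crossed product by the modular automorphism group, acting on $L^2(\mathbb R,H)$; let $\theta$ denote the dual action of $\mathbb R$ on $\mathcal N$, and let $\tau$ be the canonical normal semifinite faithful trace on $\mathcal N$, normalised so that $\tau\circ\theta_s=e^{-s}\tau$. For $1\le p\le\infty$, $L^{p,\mathsf H}(\mathcal M,\varphi)$ is the space of $\tau$-measurable operators $x$ affiliated with $\mathcal N$ satisfying $\theta_s(x)=e^{-s/p}x$, and $D$ is the positive selfadjoint operator affiliated with $\mathcal N$ defined so that $\varphi=\tau(D\,\cdot\,)$ on $\mathcal M\subseteq\mathcal N$ (invertible as an affiliated operator since $\varphi$ is faithful); equivalently $D$ is the element of $L^{1,\mathsf H}(\mathcal M,\varphi)$ corresponding to $\varphi$ under the canonical isometry $L^{1,\mathsf H}(\mathcal M,\varphi)\cong\mathcal M_*$, and $\mathrm{tr}$ is the positive functional induced on $L^{1,\mathsf H}(\mathcal M,\varphi)$ by this isometry. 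All of this is in \cite{terp1981lp,haagerupjx10reduction}. Granting the setup, (1) is precisely Haagerup's defining property of $D$, namely $\sigma^\varphi_t(x)=D^{\mathrm i t}xD^{-\mathrm i t}$ for $x\in\mathcal M$; and (3) is immediate, since $D\ge 0$ and $\mathrm{tr}(D)=\varphi(1)=1<\infty$ force $D\in L^{1,\mathsf H}(\mathcal M,\varphi)$, while $\varphi(x)=\mathrm{tr}(xD)$ holds by the very definition of $D$ and of $\mathrm{tr}$.

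\emph{Hölder and the $L^p$-norm.} For (2): if $x$ and $y$ are homogeneous of degrees $1/p$ and $1/q$ for the dual action $\theta$, then $xy$ and $yx$ are homogeneous of degree $1/p+1/q=1$ and (being $\tau$-measurable by the usual product estimates) belong to $L^{1,\mathsf H}(\mathcal M,\varphi)$; traciality of $\mathrm{tr}$ then gives $\mathrm{tr}(xy)=\mathrm{tr}(yx)$. I will also want the accompanying norm estimate $\|xy\|_{1,\mathsf H}\le\|x\|_{p,\mathsf H}\|y\|_{q,\mathsf H}$, which is classical and reduces to the case $x,y\ge 0$. For (4): the identities $\|x\|_{p,\mathsf H}=\||x|\|_{p,\mathsf H}=\||x|^p\|_{1,\mathsf H}^{1/p}$ are built into the definition of the Haagerup norm via $\mathrm{tr}$, and $\|x^*\|_{p,\mathsf H}=\|x\|_{p,\mathsf H}$ because $|x|$ and $|x^*|$ have the same generalized singular numbers. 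The one computation worth doing is the final inequality: put $1/r:=1/p-1/p'\ge 0$, so that $xD^{1/p}=(xD^{1/p'})\,D^{1/r}$ with $1/p'+1/r=1/p$; by (4) and (3) one has $\|D^{1/r}\|_{r,\mathsf H}=\||D^{1/r}|^r\|_{1,\mathsf H}^{1/r}=\mathrm{tr}(D)^{1/r}=1$, so the Hölder estimate from (2) yields $\|xD^{1/p}\|_{p,\mathsf H}\le\|xD^{1/p'}\|_{p',\mathsf H}$. That $xD^{1/p}\in L^{p,\mathsf H}(\mathcal M,\varphi)$ for $x\in\mathcal M=L^{\infty,\mathsf H}(\mathcal M,\varphi)$ is again an instance of Hölder.

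\emph{Monotonicity (5) and the main difficulty.} I would simply quote (5) from \cite[(1.3)]{junge02doob}; if one wanted it from scratch, it amounts to $\mathrm{tr}(x^p)\le\mathrm{tr}(y^p)$ whenever $0\le x\le y$ in $L^{p,\mathsf H}(\mathcal M,\varphi)$, which follows from the standard monotonicity property of generalized $s$-numbers. I do not anticipate a serious obstacle anywhere: every item is classical once the crossed-product picture is in place. The only things requiring care are the bookkeeping of homogeneity degrees in (2) and (4), and — more relevant for the rest of the paper — the fact that this Haagerup picture is isometrically compatible with the interpolation construction $L^p(\mathcal M,\varphi)=(\mathcal M,\mathcal M_*)_{1/p}$ recalled just above, which is exactly the content of \cite{terp1981lp} together with \cite{kosaki84interpolation}.
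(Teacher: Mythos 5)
The paper offers no proof of this proposition: it is stated as background, with the items quoted from the standard references (Terp, Haagerup--Junge--Xu) and item (5) cited directly to \cite[(1.3)]{junge02doob}. Your sketch is a correct account of the standard arguments from the crossed-product picture — in particular your derivation of $\|xD^{1/p}\|_{p,\mathsf H}\leq\|xD^{1/p'}\|_{p',\mathsf H}$ via the factorization $xD^{1/p}=(xD^{1/p'})D^{1/r}$ with $1/r=1/p-1/p'$ and $\|D^{1/r}\|_{r,\mathsf H}=\mathrm{tr}(D)^{1/r}=1$ is exactly the usual one; note only that this step uses the generalized H\"older inequality $\|ab\|_{p,\mathsf H}\leq\|a\|_{p',\mathsf H}\|b\|_{r,\mathsf H}$ for $1/p=1/p'+1/r$, slightly more than the $L^1$-valued case you record in (2), but this is equally standard.
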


In this paper we will frequently identify the two $L^p$-spaces via the following isomorphism.

\begin{prop}
[{\cite[Sect.9]{kosaki84interpolation}}]
 For all $1\leq p\leq \infty $, the map 
$$j^p: x\mapsto xD^{1/p}, x\in \mathcal{M}$$ 
extends to an isometry from $L^{p}(\mathcal{M},\varphi)$ onto $L^{p,\mathsf{H}}(\mathcal{M},\varphi)$.
 \end{prop}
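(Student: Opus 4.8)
The plan is to reduce the statement to the classical interpolation theorem for Haagerup $L^p$-spaces (Kosaki, Terp), treating first the two endpoints $p\in\{1,\infty\}$, which serve as the glue between the two constructions, and then obtaining the interior range $1<p<\infty$ by functoriality of complex interpolation.

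\emph{The endpoints.} For $p=\infty$ the map $j^\infty$ is just the identity of $\mathcal M$, and $L^\infty(\mathcal M,\varphi)=\mathcal M=L^{\infty,\mathsf H}(\mathcal M,\varphi)$ as C$^*$-algebras, so there is nothing to prove. For $p=1$ I would start from the standard fact that $L^{1,\mathsf H}(\mathcal M,\varphi)$ is isometrically isomorphic to the predual $\mathcal M_*$ via the pairing $\xi\mapsto\mathrm{tr}(\xi\,\cdot)$. Combining parts~(2) and~(3) of Proposition~\ref{density op haagerup Lp}, for $x,y\in\mathcal M$ one has $\mathrm{tr}\big((xD)y\big)=\mathrm{tr}\big((yx)D\big)=\varphi(yx)=j(x)(y)$, so that under this identification $j^1(x)=xD$ corresponds exactly to $j(x)=x\varphi\in\mathcal M_*=L^1(\mathcal M,\varphi)$. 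Since $j$ has dense image and $j^1$ is isometric for the $L^1$-norm, it extends to a surjective isometry $L^1(\mathcal M,\varphi)\to L^{1,\mathsf H}(\mathcal M,\varphi)$.

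\emph{The interior range.} Fix $1<p<\infty$ and let $q$ be its conjugate exponent. The computation above shows that $\mathrm{id}_{\mathcal M}$, together with the isometry $\mathcal M_*\cong L^{1,\mathsf H}(\mathcal M,\varphi)$, is an isometric isomorphism of compatible couples from $(\mathcal M,\mathcal M_*)$, with $\mathcal M$ embedded via $j$, onto $\big(L^{\infty,\mathsf H}(\mathcal M,\varphi),L^{1,\mathsf H}(\mathcal M,\varphi)\big)$, with $\mathcal M$ embedded via $x\mapsto xD$. By functoriality of the complex interpolation functor $(\,\cdot\,)_{1/p}$, this induces a surjective isometry $L^p(\mathcal M,\varphi)=(\mathcal M,\mathcal M_*)_{1/p}\to\big(L^{\infty,\mathsf H}(\mathcal M,\varphi),L^{1,\mathsf H}(\mathcal M,\varphi)\big)_{1/p}$ that agrees with $x\mapsto xD$ on the dense subspace $\mathcal M$. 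I would then invoke the interpolation theorem of Kosaki and Terp: with the right-hand couple realized inside $L^{1,\mathsf H}(\mathcal M,\varphi)$ as above, the map $\xi\mapsto\xi D^{1/q}$ is an isometric isomorphism from $L^{p,\mathsf H}(\mathcal M,\varphi)$ onto $\big(L^{\infty,\mathsf H}(\mathcal M,\varphi),L^{1,\mathsf H}(\mathcal M,\varphi)\big)_{1/p}$. Since $xD=(xD^{1/p})D^{1/q}$, the element $xD$ of the couple corresponds to $xD^{1/p}\in L^{p,\mathsf H}(\mathcal M,\varphi)$, so the composite surjective isometry $L^p(\mathcal M,\varphi)\to L^{p,\mathsf H}(\mathcal M,\varphi)$ sends $x\varphi$ to $xD^{1/p}$; that is, it is the extension of $j^p$, which would complete the proof.

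\emph{Main obstacle.} All of the above is essentially bookkeeping with embeddings; the genuine content sits entirely in the isometric interpolation identity for the Haagerup couple, and that is the step I expect to be delicate if one wants it self-contained rather than quoted from \cite{kosaki84interpolation,terp1981lp}. The inclusion with norm at most $1$ is proved by a Stein/three-lines argument on a dense subspace — say on elements $\xi=xD^{1/p}$ with $x=v|x|\in\mathcal M$, dense by Proposition~\ref{density op haagerup Lp}(4) — testing against an analytic family on the strip built from the polar decomposition of $x$ and from complex powers of the density operator $D$, with exponents arranged so that the value at $1/p$ is the image of $\xi$ in the couple and the boundary values sit in $L^{\infty,\mathsf H}$ and $L^{1,\mathsf H}$ with the correct norms; the converse inequality then follows by duality together with the same statement for the conjugate exponent. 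The subtle point is that the operators entering such a family are only affiliated to $\mathcal M$, respectively to the modular crossed product $\mathcal M\rtimes_\sigma\mathbb R$, so checking that the boundary values genuinely lie in the endpoint spaces and estimating their norms must be done carefully; working with the dense subspace $\{xD^{1/p}:x\in\mathcal M\}$ is precisely what keeps the powers $|x|^{pz}$ under control.
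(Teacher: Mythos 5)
The paper offers no proof of this proposition at all — it is quoted directly from \cite[Sect.~9]{kosaki84interpolation} — and your argument, after correct but routine identifications of the two compatible couples at the endpoints $p=1$ and $p=\infty$ (the computation $\mathrm{tr}\bigl((xD)y\bigr)=\varphi(yx)$ and the standard isometry $L^{1,\mathsf H}(\mathcal M,\varphi)\cong\mathcal M_*$), ultimately rests on exactly that same theorem: the ``interpolation theorem of Kosaki and Terp'' you invoke for the couple $\bigl(L^{\infty,\mathsf H},L^{1,\mathsf H}\bigr)$ is, under your own identification of couples, precisely the statement being proved, so the reduction is a reformulation rather than independent progress. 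Your proposal is therefore essentially the paper's approach — cite \cite{kosaki84interpolation,terp1981lp} — correctly set up, with the genuine analytic content (the three-lines/duality argument you sketch in your final paragraph) rightly, and necessarily, deferred to those references.
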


We will need the noncommutative Khintchine inequality for Rademacher sequences. Let $2\leq p<\infty$. For a finitely supported sequence  $(x_n)_{1\leq n\leq N}\subset  L^{p,\mathsf{H}}(\mathcal{M},\varphi)$, we introduce the notation 
$$\|(x_n)\|_{L^p(\mathcal{M};\ell^2_c)}=\Big\|(\sum_n |x_n|^2)^{1/2}\Big\|_{p,\mathsf{H}},\quad
\|(x_n)\|_{L^p(\mathcal{M};\ell^2_r)}=\Big\|(\sum_n |x_n^*|^2)^{1/2}\Big\|_{p,\mathsf{H}},$$
and write
$$\|(x_n)\|_{CR_p[L^p( \mathcal{M})]}=\max \{\|(x_n)\|_{L^p(\mathcal{M};\ell^2_c)}, \|(x_n)\|_{L^p(\mathcal{M};\ell^2_r)}\}.$$
By Proposition \ref{density op haagerup Lp} and the triangle inequality, we see easily that
$$\|(x_n)\|_{CR_p[L^p( \mathcal{M})]}\leq \Big(\sum_n \|x_n\|_{L^{p,\mathsf H}(\mathcal{M},\varphi)}^2\Big)^{1/2}.$$
Denote by $(\varepsilon_{n})_{n\geq 1}$ a
Rademacher sequence on a probability space $(\Omega,P)$, i.e., an independent sequence of random variables with $ P (\varepsilon_n = 1) = P (\varepsilon_n =  -1) = 1/2 $ for all $ n $.
The following noncommutative Khintchine inequality for Haagerup's $L^p$-spaces is given in \cite[Theorem 3.4]{jungexu03burkholder}. 
\begin{thm}
\label{khintchine}
There exists an absolute constant $C>0$ such that for all $2\leq p<\infty$ and all finitely supported sequences $(x_{n})$  in $L^{p,\mathsf H}(\mathcal{M},\varphi)$, we have
\begin{equation*}
  \|(x_n)\|_{CR_p[L^p( \mathcal{M})]}\leq\Big(\int_{\Omega}\Big\|\sum_{n}\varepsilon_{n}(\omega)x_{n}\Big\|_{L^{p,\mathsf H}(\mathcal{M},\varphi)}^p dP(\omega)\Big)^{1/p}\leq C\sqrt{p}\|(x_n)\|_{CR_p[L^p( \mathcal{M})]}.
\end{equation*}
Consequently $L^{p,\mathsf H}(\mathcal{M},\varphi)$ is of type $2$, i.e., for the above sequences we have 
$$\Big(\int_{\Omega}\Big\|\sum_{n}\varepsilon_{n}(\omega)x_{n}\Big\|_{L^{p,\mathsf H}(\mathcal{M},\varphi)}^p dP(\omega)\Big)^{1/p}\leq
C\sqrt p \Big(\sum_n \|x_n\|_{L^{p,\mathsf H}(\mathcal{M},\varphi)}^2\Big)^{1/2}.$$
\end{thm}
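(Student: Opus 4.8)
The plan is to prove the two inequalities separately and then read off the type $2$ statement. The first (lower) inequality is elementary and, unlike the second, needs no reduction to the tracial case. Fix a finitely supported sequence $(x_{n})$ in $L^{p,\mathsf{H}}(\mathcal{M},\varphi)$ and write $y_{\omega}=\sum_{n}\varepsilon_{n}(\omega)x_{n}$. Orthonormality of the $\varepsilon_{n}$ in $L^{2}(\Omega,P)$ yields the identity $\sum_{n}|x_{n}|^{2}=\int_{\Omega}|y_{\omega}|^{2}\,dP(\omega)$ in the positive cone of $L^{p/2,\mathsf{H}}(\mathcal{M},\varphi)$ (note $p/2\geq1$). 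Combining the monotonicity of $\|\cdot\|_{p/2,\mathsf{H}}$ on positive elements (Proposition \ref{density op haagerup Lp}(5)), the integral triangle inequality in $L^{p/2,\mathsf{H}}(\mathcal{M},\varphi)$, the identity $\||z|^{2}\|_{p/2,\mathsf{H}}=\|z\|_{p,\mathsf{H}}^{2}$ (Proposition \ref{density op haagerup Lp}(4)) and H\"older's inequality in the exponent (as $p\geq2$) gives
\[
\Big\|\big(\textstyle\sum_{n}|x_{n}|^{2}\big)^{1/2}\Big\|_{p,\mathsf{H}}^{2}=\Big\|\textstyle\sum_{n}|x_{n}|^{2}\Big\|_{p/2,\mathsf{H}}\leq\int_{\Omega}\big\||y_{\omega}|^{2}\big\|_{p/2,\mathsf{H}}\,dP(\omega)=\int_{\Omega}\|y_{\omega}\|_{p,\mathsf{H}}^{2}\,dP(\omega)\leq\Big(\int_{\Omega}\|y_{\omega}\|_{p,\mathsf{H}}^{p}\,dP(\omega)\Big)^{2/p}.
\]
The same applied to $(x_{n}^{*})$, together with $\|\sum_{n}\varepsilon_{n}x_{n}^{*}\|_{p,\mathsf{H}}=\|\sum_{n}\varepsilon_{n}x_{n}\|_{p,\mathsf{H}}$, controls the row term; taking the maximum yields the first inequality with constant $1$.

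For the upper bound I would first reduce to the tracial case: by Haagerup's construction (see \cite{haagerupjx10reduction}), $L^{p,\mathsf{H}}(\mathcal{M},\varphi)$ embeds isometrically into $L^{p}(\mathcal{N},\tau)$ for some semifinite von Neumann algebra $\mathcal{N}\supseteq\mathcal{M}$ with normal semifinite faithful trace $\tau$, compatibly for all $p$ and so as to preserve the column and row norms and the construction of $L^{p}(L^{\infty}(\Omega){\,\bar\otimes\,}\mathcal{M})$; hence it suffices to treat the case where $\varphi$ is a trace. There, for an \emph{even} integer $p=2k$ one expands, using $\tau(|y|^{2k})=\tau((y^{*}y)^{k})$,
\[
\int_{\Omega}\Big\|\textstyle\sum_{n}\varepsilon_{n}(\omega)x_{n}\Big\|_{2k}^{2k}\,dP(\omega)=\sum_{n_{1},\dots,n_{2k}}\Big(\int_{\Omega}\varepsilon_{n_{1}}\cdots\varepsilon_{n_{2k}}\,dP\Big)\,\tau\big(x_{n_{1}}^{*}x_{n_{2}}x_{n_{3}}^{*}x_{n_{4}}\cdots x_{n_{2k-1}}^{*}x_{n_{2k}}\big),
\]
in which only index patterns where every value occurs an even number of times survive. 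A combinatorial bookkeeping over such patterns — whose number is controlled by the count $(2k-1)!!$ of pair partitions of $\{1,\dots,2k\}$ — together with H\"older's inequality in $L^{k}(\mathcal{N},\tau)$ to split each surviving product into factors of $\sum_{n}|x_{n}|^{2}$ and $\sum_{n}|x_{n}^{*}|^{2}$, bounds the right-hand side by $C^{2k}k^{k}\|(x_{n})\|_{CR_{2k}[L^{2k}(\mathcal{M})]}^{2k}$ with $C$ absolute; taking $2k$-th roots and using $((2k-1)!!)^{1/(2k)}\asymp\sqrt{k}$ gives the upper bound with constant $\leq C\sqrt{2k}$ when $p=2k$. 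This is essentially the noncommutative Khintchine inequality of Lust-Piquard and Lust-Piquard--Pisier, here with explicit control of the constant.

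For general $2\leq p<\infty$, pick the least even integer $2k\geq p$, so $2k\leq 2p$ and $1/p=(1-\theta)/2+\theta/(2k)$ for some $\theta\in[0,1]$. The map $T:(a_{n})\mapsto\sum_{n}\varepsilon_{n}\otimes a_{n}$ is bounded from $CR_{q}[L^{q}(\mathcal{M})]$ into $L^{q}(L^{\infty}(\Omega){\,\bar\otimes\,}\mathcal{M})$ with norm $1$ for $q=2$ (by orthogonality, this is an isometry) and with norm $\leq C\sqrt{2k}$ for $q=2k$ by the previous step, and the column-plus-row spaces interpolate as $(CR_{2}[L^{2}(\mathcal{M})],CR_{2k}[L^{2k}(\mathcal{M})])_{\theta}=CR_{p}[L^{p}(\mathcal{M})]$ with equal norms; complex interpolation then gives $\|T\|\leq(C\sqrt{2k})^{\theta}\leq C\sqrt{2k}\leq\sqrt{2}\,C\sqrt{p}$, which is the second inequality after renaming the constant, and transferring back through the reduction gives it for $L^{p,\mathsf{H}}(\mathcal{M},\varphi)$. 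The type $2$ statement is then immediate from this upper bound and the elementary estimate $\|(x_{n})\|_{CR_{p}[L^{p}(\mathcal{M})]}\leq(\sum_{n}\|x_{n}\|_{L^{p,\mathsf{H}}(\mathcal{M},\varphi)}^{2})^{1/2}$ noted just before the statement. The main obstacle is the $\sqrt{p}$ rate: since the inequality fails at $p=\infty$ one cannot interpolate against that endpoint, so a direct estimate at a finite exponent is unavoidable, and it is precisely the pair-partition count in the even-integer moment expansion that produces the growth $\sqrt{p}$; the reduction to a trace, and checking that the $\ell^{2}_{c}/\ell^{2}_{r}$ structures are preserved by it, is the other (by now routine) technical ingredient.
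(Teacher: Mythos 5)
First, a point of comparison: the paper does not prove this statement at all. Theorem \ref{khintchine} is quoted from \cite[Theorem 3.4]{jungexu03burkholder}, and the only thing the paper supplies is the elementary observation, recorded just above the statement, that $\|(x_n)\|_{CR_p[L^p(\mathcal M)]}\le(\sum_n\|x_n\|_{p,\mathsf H}^2)^{1/2}$, from which the ``consequently'' clause follows. So you are reproving a cited result, and your outline (Haagerup reduction to the tracial case, moment expansion at even integers, interpolation) is the standard route in the literature rather than anything in this paper. Your lower estimate is complete and correct: the identity $\sum_n|x_n|^2=\int_\Omega|y_\omega|^2\,dP$, the triangle inequality in $L^{p/2,\mathsf H}$, Proposition \ref{density op haagerup Lp}(4) and H\"older in $\omega$ give constant $1$, and the row term follows by applying the same to $(x_n^*)$.

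The upper estimate, however, delegates its two load-bearing steps to assertions. (i) The claim that the surviving terms of the even-moment expansion are bounded by $C^{2k}k^k\|(x_n)\|_{CR_{2k}[L^{2k}(\mathcal M)]}^{2k}$ \emph{is} the tracial Khintchine inequality with constant $O(\sqrt p)$: for each fixed pair partition one must show that the sum of $\tau(x_{n_1}^*x_{n_2}\cdots x_{n_{2k-1}}^*x_{n_{2k}})$ over all compatible multi-indices is dominated by the $2k$-th power of the column/row norm, and this iterated Cauchy--Schwarz/H\"older argument (Haagerup--Pisier, Buchholz) is where all the work lies; ``combinatorial bookkeeping'' does not discharge it. (ii) The passage to non-even $p$ via $(CR_2,CR_{2k})_\theta=CR_p$ ``with equal norms'' is not a formal consequence of the fact that the column scale and the row scale each interpolate: $CR_q$ is an intersection, and complex interpolation of intersections only yields the contractive inclusion $(A_0\cap B_0,A_1\cap B_1)_\theta\subseteq A_\theta\cap B_\theta$, whereas you need the reverse inclusion $CR_p\subseteq(CR_2,CR_{2k})_\theta$; that this holds for $2\le p\le\infty$ (with a universal constant, not equal norms) is a theorem in its own right. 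Finally, \cite{haagerupjx10reduction} does not embed $L^{p,\mathsf H}(\mathcal M,\varphi)$ isometrically into a single semifinite $L^p(\mathcal N,\tau)$: it produces a crossed product $\mathcal R\supseteq\mathcal M$, in general still of type III, with an increasing family of finite subalgebras $\mathcal R_n$ such that $\bigcup_nL^p(\mathcal R_n)$ is dense in $L^p(\mathcal R)$ and the conditional expectations are compatible; so the reduction needs an approximation step and a check that these expectations contract the $CR_p$-norms. None of these gaps is fatal---each is a known result---but as written the proposal is a roadmap through the literature rather than a proof, and the honest course here is what the paper does: cite \cite{jungexu03burkholder}.
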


In this paper, we will mainly be interested in the case of $\mathcal{M}=L^\infty (\mathbb G)$ for a compact quantum group $\mathbb G$. Let $h$ be the Haar state on $\mathbb G$. Throughout the paper, for any $1\leq p\leq \infty$ we will use the notation $L^p(\mathbb G)\coloneqq L^p(L^\infty (\mathbb G),h)$ for Kosaki's noncommutative $L^p$-spaces and $L^{p,\mathsf H}(\mathbb G)\coloneqq L^{p,\mathsf H}(L^\infty (\mathbb G),h)$ for Haagerup's noncommutative $L^p$-spaces introduced above. As is seen in the previous construction, we will in the sequel identify $L^\infty(\mathbb G)$ with a subspace of $L^1(\mathbb G)$ via the embedding 
$$j:x\mapsto xh\coloneqq h(\cdot x).$$ In particular we define the convolution for $x,x'\in L^\infty(\mathbb{G})$,
$$x\star x'\coloneqq (xh)\star (x'h) (\in L^1(\mathbb G)).$$ It is easy to see that the algebra of polynomials $\mathrm{Pol}(\mathbb{G})$ is a common dense subspace of all $L^p$-spaces associated to $\mathbb G$ with $1\leq p<\infty $. In fact we have the following property (see \cite[Lemma 2.2]{junge02doob}).

\begin{lem}
\label{prop: poly dense in L_1}Let $A$ be an ultraweakly dense $*$-subalgebra of $\mathcal{M}$. Then for $1\leq p<\infty $,  $A$ is dense
in $L^{p}(\mathcal{M},\varphi)$ with respect to $\|\ \|_p$.\end{lem}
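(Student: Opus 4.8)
The plan is to reduce the statement to the density of $\mathcal{M}$ itself inside $L^p(\mathcal{M},\varphi)$, and then to approximate each element of $\mathcal{M}$ by elements of $A$ using the Kaplansky density theorem, converting strong operator convergence into $\|\cdot\|_p$-convergence by an elementary interpolation inequality.

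First I would check that $\mathcal{M}$ is $\|\cdot\|_p$-dense in $L^p(\mathcal{M},\varphi)$ for every $1\le p<\infty$. For $p=1$ this is precisely the density of the range of $j:\mathcal{M}\to\mathcal{M}_*$ recorded above. For $1<p<\infty$ it is the standard fact that in a compatible couple $(X_0,X_1)$ the intersection $X_0\cap X_1$ is dense in the complex interpolation space $(X_0,X_1)_\theta$ for $0<\theta<1$ (see \cite{bergh76interpolation}), applied to $(\mathcal{M},\mathcal{M}_*)$, whose intersection is $j(\mathcal{M})$. Thus it suffices to show that an arbitrary $x\in\mathcal{M}$ lies in the $\|\cdot\|_p$-closure of $A$.

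Next I would fix $x\in\mathcal{M}$ and realize $\mathcal{M}$ on the GNS Hilbert space $H_\varphi$ with cyclic vector $\xi_\varphi$. Since $A$ is an ultraweakly dense $*$-subalgebra of $\mathcal{M}$, the Kaplansky density theorem provides a norm-bounded net $(a_i)$ in $A$ with $a_i\to x$ in the strong operator topology; in particular $\|(a_i-x)\xi_\varphi\|_{H_\varphi}\to0$, and under the isometric identification $L^2(\mathcal{M},\varphi)=H_\varphi$ this says $\|a_i-x\|_2\to0$. To pass from $L^2$ to $L^p$ I would invoke, for $y\in\mathcal{M}$, the basic interpolation inequality $\|y\|_p\le\|y\|_\infty^{1-1/p}\|y\|_1^{1/p}$ attached to $L^p=(\mathcal{M},\mathcal{M}_*)_{1/p}$, together with $\|y\|_1\le\|y\|_2$ (which holds because $\varphi$ is a state --- e.g.\ by Cauchy--Schwarz in $H_\varphi$, or by H\"older in the Haagerup $L^p$-spaces using Proposition \ref{density op haagerup Lp}). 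These give
\[
\|a_i-x\|_p\ \le\ \|a_i-x\|_\infty^{\,1-1/p}\,\|a_i-x\|_2^{\,1/p}\ \longrightarrow\ 0
\]
(for $p=1$ one simply uses $\|a_i-x\|_1\le\|a_i-x\|_2$), since $\sup_i\|a_i-x\|_\infty<\infty$. Hence $x\in\overline{A}^{\|\cdot\|_p}$, and together with the first step this proves the lemma.

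The only step requiring real care is the insistence that the approximating net be \emph{norm-bounded}: strong (or ultraweak) density of $A$ alone would not force $L^p$-density for $p>2$, and this is exactly why one appeals to Kaplansky's theorem rather than merely to the bicommutant theorem. The remaining ingredients are routine manipulations with the interpolation description of $L^p(\mathcal{M},\varphi)$ and the comparison $\|\cdot\|_1\le\|\cdot\|_2\le\|\cdot\|_\infty$ of norms on $\mathcal{M}$.
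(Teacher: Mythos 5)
Your proof is correct; note that the paper does not prove this lemma itself but only cites \cite[Lemma 2.2]{junge02doob}, and your argument (density of $X_0\cap X_1=j(\mathcal M)$ in the interpolation space, Kaplansky density to produce a \emph{norm-bounded} net $a_i\to x$ in $L^2$, then $\|y\|_p\le\|y\|_\infty^{1-1/p}\|y\|_1^{1/p}$ together with $\|y\|_1\le\|y\|_2$) is essentially the standard proof given there. You are also right to flag the norm-boundedness from Kaplansky as the one non-routine point, since ultraweak density alone would not suffice for $p>2$.
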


We will not consider the noncommutative $L^p$-spaces associated to discrete quantum groups for a general $p$ in this text, but let us add several words on the special cases $p=1$ and $2$. We consider the dual discrete quantum group $\hat{\mathbb G}$ of a compact quantum group $\mathbb G$. Define the $L^{1}$-space $\ell^{1}(\hat{\mathbb{G}})$ on $\hat{\mathbb{G}}$
associated to $\hat{h}$ as 
\[
\ell^{1}(\hat{\mathbb{G}})=\{x\in c_{0}(\hat{\mathbb{G}}):\|x\|_{1}=\sum_{\pi\in\irr}d_{\pi}\mathrm{Tr}(|p_{\pi}xQ_{\pi}|)<\infty\}.
\]
By the property of $\mathrm{Tr}$ on $B(\oplus_\pi H_\pi)$, it is easy to see that $\ell^{1}(\hat{\mathbb{G}})$ is a Banach space
and the injection 
\[
j':\ell^{1}(\hat{\mathbb{G}})\to\ell^{\infty}(\hat{\mathbb{G}})_{*},\quad j'(x)=x\hat{h}\coloneqq\hat{h}(\cdot\, x),\quad x\in \ell^{1}(\hat{\mathbb{G}})
\]
is an isometric isomorphism. 
One can easily see
that $c_{c}(\hat{\mathbb{G}})$ is a dense subset in $\ell^{1}(\hat{\mathbb{G}})$. Also, we define the space $\ell^2(\hat{\mathbb{G}})$ to be the Hilbert space in the GNS construction induced by the Haar weight $\hat h$ on $\ell^\infty(\hat{\mathbb G})$.

\section{Fourier series and multipliers}


The Fourier transform for locally compact quantum groups has been discussed in \cite{cooney2010hy},  \cite{caspers2013fourier} and \cite{kahng2010fourier}.
In the setting of compact quantum groups, we may give a more explicit
description. Let a compact quantum group $\mathbb{G}$ be fixed. For a linear functional $\f$ on $\mathrm{Pol}(\mathbb{G})$, we define
the \emph{Fourier transform} $\hat{\varphi}=(\hat{\varphi}(\pi))_{\pi\in\irr}\in\oplus_{\pi}B(H_{\pi})$
by 
\[
\hat{\varphi}(\pi)=(\varphi\otimes\iota)((u^{(\pi)})^{*})\in B(H_{\pi}),\quad\pi\in\irr.
\]
In particular, any $x\in L^{\infty}(\mathbb G)$ (or $L^{2}(\mathbb{G})$)
induces a functional $xh\coloneqq h(\cdot\,x)$ on $\mathrm{Pol}(\mathbb{G})$ defined by $y\mapsto h(yx)$,
and the Fourier transform
$\hat{x}=(\hat{x}(\pi))_{\pi\in\irr}$ of $x$ is given by 
\[
\hat{x}(\pi)=(h(\cdot x)\otimes\iota)((u^{(\pi)})^{*})\in B(H_{\pi}),\quad\pi\in\irr.
\]
The above definition is slightly different from that of \cite{caspers2013fourier}
or \cite{kahng2010fourier}. Indeed, we replace the unitary $u^{(\pi)}$
by $(u^{(\pi)})^{*}$ in the above formulas. This is just to be compatible
with standard definitions in classical analysis on compact groups
such as in \cite[Section 5.3]{folland1995harmonic}, and this will not
cause any essential difference. We refer to Section 2.2.2 of \cite{wang14lpimproving} for some explanation on classical examples in this setting. On the other hand,
the notation $\hat{\varphi}$ has a slight conflict with the dual
Haar weight $\hat{h}$ on $\hat{\mathbb{G}}$. One can however
distinguish them by the elements on which it acts, so we hope that
this will not cause any ambiguity for the reader. 

Let $\varphi_{1},\varphi_{2}$ be linear functionals on $\mathrm{Pol}(\mathbb{G})$. Consider their convolution product $ 
\varphi_{1}\star\varphi_{2}=(\varphi_{1}\otimes\varphi_{2})\circ\Delta$. 
We note that for a linear functional $\varphi$ on $\mathrm{Pol}(\mathbb{G})$ and $\pi\in\irr$, 
\begin{align}
(\varphi\circ S^{-1})\,\hat{}\,(\pi)&=((\varphi\circ S^{-1})\otimes\iota)((u^{(\pi)})^{*})=\left[\varphi(u_{ij}^{(\pi)})\right]_{i,j}.\label{eq:fourier series with S}
\end{align}
In particular,
\begin{equation}\label{eq:fourier series for phi}
(\varphi^{*}\circ S^{-1})\,\hat{}\,(\pi)=\hat{\varphi}(\pi)^{*},
\end{equation}
where $\varphi^{*}$ denotes
the usual adjoint of $\varphi$, i.e., $\varphi^*(x)=\overline{\varphi(x^*)}$.
On the other hand, a straightforward calculation shows that 
\begin{equation}\label{conv fourier series}
(\varphi_{1}\star\varphi_{2})\,\hat{}\,(\pi)=\hat{\varphi}_{2}(\pi)\hat{\varphi}_{1}(\pi).
\end{equation}
Using the property of the antipode $S$ we may also show (cf. \cite[Proposition 2.2]{vandaele2007fourier}, \cite[(2.14)]{wang14lpimproving}) that for a linear functional $\varphi$ on $L^\infty(\mathbb{G})$ and $x\in L^\infty (\mathbb G)$, $\pi\in\irr$,
\begin{equation}\label{conv another}
((\iota\otimes\varphi)\Delta( x))\,\hat{}\,(\pi)=(\varphi\circ S^{-1})\,\hat{}\,(\pi)\hat{x}(\pi),\quad
((\varphi\otimes\iota)\Delta( x))\,\hat{}\,(\pi)=\hat{x}(\pi)(\varphi\circ S)\,\hat{}\,(\pi).
\end{equation}
Using \eqref{eq: scaling group on cqg} and \eqref{antipode2} we may rewrite the  second  equality above as 
\begin{equation}\label{conv another with q}
((\varphi\otimes\iota)\Delta( x))\,\hat{}\,(\pi)Q_\pi=\hat{x}(\pi)Q_\pi(\varphi\circ S^{-1})\,\hat{}\,(\pi).
\end{equation}

In the sequel let $\mathcal{F}:f\mapsto\hat{f}$ denote the Fourier transform.
\begin{prop}\label{prop: l1 into c0}
$\mathcal{F}$ is a contraction from $L^{\infty}(\mathbb{G})^{*}$
to $\ell^{\infty}(\hat{\mathbb{G}})$, and moreover $\mathcal{F}$ sends $L^{1}(\mathbb{G})$ injectively into $ c_{0}(\hat{\mathbb{G}})$.\end{prop}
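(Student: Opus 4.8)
The plan is to establish the contractivity statement first, since it immediately supplies the continuity needed for the $c_0$-assertion; the injectivity will then follow from the $\sigma$-weak density of $\mathrm{Pol}(\mathbb{G})$ in $L^\infty(\mathbb{G})$.

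For the contractivity, I would fix $\varphi\in L^\infty(\mathbb{G})^*$ and $\pi\in\irr$ and use that $(u^{(\pi)})^*$ is a unitary in $\mathbb{M}_{n_\pi}(L^\infty(\mathbb{G}))=L^\infty(\mathbb{G})\otimes B(H_\pi)$, hence of norm $1$. Since a bounded linear functional on a C*-algebra is automatically completely bounded with $\|\varphi\|_{\mathrm{cb}}=\|\varphi\|$ (alternatively, one invokes that the slice map $\iota\otimes\omega_{\xi,\eta}$ has norm $\|\xi\|\,\|\eta\|$), this gives $\|\hat{\varphi}(\pi)\|=\|(\varphi\otimes\iota)((u^{(\pi)})^*)\|\le\|\varphi\|$; taking the supremum over $\pi\in\irr$ yields $\|\hat{\varphi}\|_{\ell^\infty(\hat{\mathbb{G}})}\le\|\varphi\|$. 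In particular, since $L^1(\mathbb{G})=L^\infty(\mathbb{G})_*\subset L^\infty(\mathbb{G})^*$, the restriction of $\mathcal{F}$ to $L^1(\mathbb{G})$ is a contraction into $\ell^\infty(\hat{\mathbb{G}})$.

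Next I would check that $\mathcal{F}$ takes values in $c_0(\hat{\mathbb{G}})$ on the dense subspace $\mathrm{Pol}(\mathbb{G})$: for $x\in\mathrm{Pol}(\mathbb{G})$ the orthogonality relations \eqref{eq:haar state def} show that $\hat{x}(\pi)=0$ for all but finitely many $\pi$, so $\hat{x}\in c_c(\hat{\mathbb{G}})\subset c_0(\hat{\mathbb{G}})$. Combining this with the contractivity of $\mathcal{F}$ on $L^1(\mathbb{G})$, the density of $\mathrm{Pol}(\mathbb{G})$ in $L^1(\mathbb{G})$ (Lemma \ref{prop: poly dense in L_1}), and the fact that $c_0(\hat{\mathbb{G}})$ is norm-closed in $\ell^\infty(\hat{\mathbb{G}})$, one concludes $\mathcal{F}(L^1(\mathbb{G}))\subset c_0(\hat{\mathbb{G}})$. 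For injectivity, given $x\in L^1(\mathbb{G})=L^\infty(\mathbb{G})_*$ with $\hat{x}=0$, the normal functional $x$ annihilates every matrix coefficient $(u^{(\pi)}_{ij})^*$, hence all of $\mathrm{Pol}(\mathbb{G})$ (the linear span of these coefficients, since $\mathrm{Pol}(\mathbb{G})$ is a $*$-algebra and $\overline{\pi}$ exhausts $\irr$ as $\pi$ does); as $\mathrm{Pol}(\mathbb{G})$ is $\sigma$-weakly dense in $L^\infty(\mathbb{G})$ and $x$ is normal, $x=0$.

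I do not expect a serious obstacle: the argument is entirely elementary. The only point requiring a little attention is the norm estimate in the first step, because for a functional $\varphi$ that is not a state one cannot simply use $\|\varphi\|=\sup$ over states, and must instead combine the unitarity of $u^{(\pi)}$ with complete boundedness of functionals (or with the norm of slice maps). One should also be careful to invoke \emph{norm} density of $\mathrm{Pol}(\mathbb{G})$ in $L^1(\mathbb{G})$ for the $c_0$-part, but $\sigma$-\emph{weak} density of $\mathrm{Pol}(\mathbb{G})$ in $L^\infty(\mathbb{G})$ for the injectivity part.
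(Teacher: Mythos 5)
Your proposal is correct and follows essentially the same route as the paper: the norm bound via $\|\varphi\otimes\iota\|=\|\varphi\|$ together with unitarity of $u^{(\pi)}$, the $c_0$-statement via $\mathcal{F}(\mathrm{Pol}(\mathbb{G}))\subset c_c(\hat{\mathbb{G}})$ plus norm density of $\mathrm{Pol}(\mathbb{G})$ in $L^1(\mathbb{G})$, and injectivity from ultraweak density of $\mathrm{Pol}(\mathbb{G})$ in $L^\infty(\mathbb{G})$. You merely spell out a few details (complete boundedness of functionals, the passage from vanishing on matrix coefficients to vanishing on all of $\mathrm{Pol}(\mathbb{G})$ via $\pi\mapsto\bar{\pi}$) that the paper leaves implicit.
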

\begin{proof}
For $\varphi\in L^{\infty}(\mathbb{G})^{*}$, recall that $\|\varphi\otimes\iota\|=\|\varphi\|$%
, so we have 
\[
\|\hat{\varphi}\|=\sup_{\pi\in\irr}\|\hat{\varphi}(\pi)\|=\sup_{\pi\in\irr}\|(\varphi\otimes\iota)((u^{(\pi)})^{*})\|\leq\sup_{\pi\in\irr}\|\varphi\otimes\iota\|\|u^{(\pi)}\|=\|\varphi\|.
\]
So $\mathcal{F}$ is a contraction. Recall that $\mathrm{Pol}(\mathbb{G})$
is dense in $L^{1}(\mathbb{G})=L^\infty(\mathbb G)_{*}$ and note that $\mathcal{F}(\mathrm{Pol}(\mathbb{G}))\subset c_{c}(\hat{\mathbb{G}})$,
so $\mathcal{F}(L^{1}(\mathbb{G}))\subset\overline{\mathcal{F}(\mathrm{Pol}(\mathbb{G}))}\subset\overline{c_{c}(\hat{\mathbb{G}})}=c_{0}(\hat{\mathbb{G}})$. The injectivity of $\mathcal{F}$ follows from the ultraweak density of $\mathrm{Pol}(\mathbb G)$ in $L^\infty(\mathbb G)$.\end{proof}

It is easy to establish the Fourier inversion formula and the Plancherel theorem
for $L^2(\mathbb{G})$. The following result  can be found in Proposition 2.6 in \cite{wang14lpimproving}.
\begin{prop}
\emph{\label{prop:Plancherel}(a) }For all $x\in L^{2}(\mathbb{G})$,
we have 
\begin{equation}
x=\sum_{\pi\in\irr}d_{\pi}(\iota\otimes\mathrm{Tr})[(1\otimes\hat{x}(\pi)Q_{\pi})u^{(\pi)}],\label{fourier series}
\end{equation}
where the convergence of the series is in the $L^{2}$-sense. For any $\pi\in\irr$, if we denote by $\mathcal E _\pi $ the orthogonal projection of $L^2(\mathbb G )$ onto the subspace spanned by the matrix coefficients $(u_{ij}^{(\pi)})_{i,j=1}^{n_{\pi}}$, and write $\mathcal{E}_{\pi}x=\sum_{i,j}x_{ij}^{(\pi)}u_{ij}^{(\pi)}$ with $x_{ij}^{(\pi)}\in\mathbb C$, $X_{\pi}=[x_{ji}^{(\pi)}]_{i,j}$, then
$$\hat x (\pi)=d_\pi^{-1}X_\pi Q_\pi^{-1}.$$

\emph{(b) }$\mathcal{F}$ is a unitary operator from $L^{2}(\mathbb{G})$
onto $\ell^{2}(\hat{\mathbb{G}})$.
\end{prop}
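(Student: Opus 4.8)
The plan is to verify the two assertions of part (a) directly from the Peter--Weyl orthogonality relations encoded in \eqref{eq:haar state def}, and then deduce part (b). First I would recall that, by the Woronowicz--Peter--Weyl theory, the matrix coefficients $(u_{ij}^{(\pi)})_{\pi\in\irr,\,1\le i,j\le n_\pi}$ span a dense subspace of $L^2(\mathbb G)$, and that by \eqref{eq:haar state def} the coefficients belonging to distinct $\pi$ are mutually orthogonal; hence the projections $\mathcal E_\pi$ are well defined and $x=\sum_\pi \mathcal E_\pi x$ in the $L^2$-sense for every $x\in L^2(\mathbb G)$. So it suffices to treat a single fixed $\pi$, i.e. to identify $\mathcal E_\pi x$ and its Fourier transform when $\mathcal E_\pi x=\sum_{i,j}x_{ij}^{(\pi)}u_{ij}^{(\pi)}$.

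For that fixed $\pi$, I would compute $\hat x(\pi)=((h(\cdot\,x))\otimes\iota)((u^{(\pi)})^*)$, whose $(m,j)$-entry is $h(\,(u_{jm}^{(\pi)})^{*}x\,)$. Expanding $x$ through its $L^2$-expansion and using orthogonality across representations, only the $\pi$-part contributes, so this entry equals $\sum_{l}x_{lm}^{(\pi)}\,h(\,(u_{jm}^{(\pi)})^{*}u_{lm}^{(\pi)}\,)$; wait---more carefully, $h((u_{jm}^{(\pi)})^{*}u_{ab}^{(\pi)})=\delta_{mb}(Q_\pi^{-1})_{aj}/d_\pi$ by the second formula in \eqref{eq:haar state def}, so the $(m,j)$-entry of $\hat x(\pi)$ is $d_\pi^{-1}\sum_{a}(Q_\pi^{-1})_{aj}\,x_{am}^{(\pi)}$. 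Recognizing $[x_{ji}^{(\pi)}]_{i,j}=X_\pi$ (note the transpose convention $X_\pi=[x_{ji}^{(\pi)}]_{i,j}$), this is exactly the $(m,j)$-entry of $d_\pi^{-1}X_\pi Q_\pi^{-1}$, which gives the stated formula $\hat x(\pi)=d_\pi^{-1}X_\pi Q_\pi^{-1}$. Substituting $X_\pi=d_\pi\,\hat x(\pi)Q_\pi$ back and writing out $(\iota\otimes\mathrm{Tr})[(1\otimes \hat x(\pi)Q_\pi)u^{(\pi)}]=\sum_{i,j}(\hat x(\pi)Q_\pi)_{ji}u_{ij}^{(\pi)}=\sum_{i,j}d_\pi^{-1}(X_\pi)_{ji}u_{ij}^{(\pi)}=\sum_{i,j}d_\pi^{-1}x_{ij}^{(\pi)}u_{ij}^{(\pi)}$, so $d_\pi$ times this is $\mathcal E_\pi x$; summing over $\pi$ yields \eqref{fourier series}.

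For part (b), unitarity of $\mathcal F\colon L^2(\mathbb G)\to\ell^2(\hat{\mathbb G})$ follows by computing $\|\mathcal E_\pi x\|_2^2$ in two ways. On one hand, $\|\mathcal E_\pi x\|_2^2=\sum_{i,j,k,l}\overline{x_{kl}^{(\pi)}}x_{ij}^{(\pi)}h((u_{kl}^{(\pi)})^{*}u_{ij}^{(\pi)})=d_\pi^{-1}\sum_{i,j,k}\overline{x_{kj}^{(\pi)}}x_{ij}^{(\pi)}(Q_\pi^{-1})_{ik}=d_\pi^{-1}\mathrm{Tr}(X_\pi^{*}X_\pi Q_\pi^{-1})$ after rewriting in terms of $X_\pi$. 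On the other hand, the norm on $\ell^2(\hat{\mathbb G})$ induced by $\hat h$ is $\|y\|_2^2=\sum_\pi d_\pi\,\mathrm{Tr}(Q_\pi\, y(\pi)^{*}y(\pi))$; plugging in $\hat x(\pi)=d_\pi^{-1}X_\pi Q_\pi^{-1}$ gives $d_\pi\,\mathrm{Tr}(Q_\pi\cdot d_\pi^{-1}Q_\pi^{-1}X_\pi^{*}\cdot d_\pi^{-1}X_\pi Q_\pi^{-1})=d_\pi^{-1}\mathrm{Tr}(X_\pi^{*}X_\pi Q_\pi^{-1})$, using the trace property and $[Q_\pi,Q_\pi^{-1}]=0$. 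These agree term by term, so $\mathcal F$ is isometric on $\mathrm{Pol}(\mathbb G)$, hence extends to an isometry; surjectivity follows since $\mathcal F(\mathrm{Pol}(\mathbb G))=c_c(\hat{\mathbb G})$ is dense in $\ell^2(\hat{\mathbb G})$ and the range of an isometry with dense range is everything.

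The only genuinely delicate point is bookkeeping: keeping the transpose convention $X_\pi=[x_{ji}^{(\pi)}]_{i,j}$ consistent with the two orthogonality relations in \eqref{eq:haar state def} and with the placement of $Q_\pi$ versus $Q_\pi^{-1}$, and making sure the $*$ in $(u^{(\pi)})^{*}$ inside the definition of $\hat x$ lands the conjugation on the correct index. I expect the main obstacle to be purely this index-chasing; there is no analytic subtlety, since everything reduces to the finite-dimensional block $\pi$ and the Plancherel identity for a single irreducible, with the infinite sum handled by the already-cited density of $\mathrm{Pol}(\mathbb G)$ and Lemma \ref{prop: poly dense in L_1}.
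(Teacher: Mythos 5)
Your proof is correct: the index-chasing through the two orthogonality relations in \eqref{eq:haar state def} checks out (the $(m,j)$-entry computation, the transpose convention for $X_\pi$, and the two evaluations of $\|\mathcal E_\pi x\|_2^2$ all agree), and the density arguments closing parts (a) and (b) are the right ones. The paper itself gives no proof here — it only cites Proposition 2.6 of \cite{wang14lpimproving} — and your direct Peter--Weyl verification is exactly the standard argument that reference supplies.
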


For $a=(a_{\pi})_{\pi}\in\prod_{\pi}B(H_{\pi})$, we
define the left and right \emph{multipliers} $m_{a}^L:\mathrm{Pol}(\mathbb{G})\to\mathrm{Pol}(\mathbb{G}), m_{a}^R:\mathrm{Pol}(\mathbb{G})\to\mathrm{Pol}(\mathbb{G})$
associated to $a$ (cf. \cite{jungeruan2009repquantumgroup,daws2012cpmultiplier}) by 
\begin{equation}
(m_{a}^L\otimes \iota)u^{(\pi)}=(1\otimes a_\pi)u^{(\pi)},\quad
(m_{a}^R\otimes \iota)u^{(\pi)}=u^{(\pi)}(1\otimes a_\pi),\label{left multiplier formula}
\end{equation}
which, according to the above proposition, yields that 
\begin{equation}
(m_{a}^Lx)\,\hat{}\,(\pi)Q_\pi=\hat{x}(\pi)Q_\pi a_{\pi},\quad (m_{a}^Rx)\,\hat{}\,(\pi)Q_\pi=a_{\pi}\hat{x}(\pi)Q_\pi  \quad\pi\in\irr.\label{eq:multiplier, Fourier coefficient}
\end{equation}

Denote $Q=(Q_\pi)_{\pi} \in \prod_{\pi}B(H_{\pi})$ and let $1\leq p\leq \infty$. We say that $a$ is a \emph{bounded left (resp., right) multiplier} on $L^p(\mathbb G)$ if $m_{a}^L$ (resp., $m_a^R$) extends
to a  bounded map on $L^p(\mathbb{G})$, and denote the set of all such multipliers by $\mathrm{M}_L(L^p(\mathbb G))$ (resp., $\mathrm{M}_R(L^p(\mathbb G))$). We define  
$$\mathrm{M}(L^p(\mathbb G))=\Big\{a\in \prod_{\pi}B(H_{\pi}):Q^{-1/p}aQ^{1/p}\in \mathrm{M}_L(L^p(\mathbb G)),\   a\in \mathrm{M}_R(L^p(\mathbb G)) \Big\}$$
equipped with the norm
$$\|a\|_{\mathrm{M}(L^p(\mathbb G))} 
=\max\{\|m_{Q^{-1/p}aQ^{1/p}}^L\|_{B(L^p(\mathbb G))},\,\|m_{a}^R\|_{B(L^p(\mathbb G))} \} .$$
 
\begin{lem}
	\label{lem:multiplier on ltwo}
	For any $a\in \prod_{\pi}B(H_{\pi})$, we have
	$$\|a\|_\infty = \|m_{Q^{-1/2}aQ^{1/2}}^L\|_{B(L^2(\mathbb G))} 
	= \|m_{a}^R\|_{B(L^2(\mathbb G))}.$$
	In particular, we have the following isometric isomorphism
	$$\mathrm{M}(L^2(\mathbb G))=\ell^\infty (\hat{\mathbb G}).$$
\end{lem}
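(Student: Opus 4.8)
The plan is to prove both equalities by direct computation on $L^2(\mathbb G)=\ell^2(\hat{\mathbb G})$, using the Plancherel identification from Proposition~\ref{prop:Plancherel} to transport the multiplier action to a concrete operator on $\ell^2(\hat{\mathbb G})$, and then to read off its norm. First I would recall that by Proposition~\ref{prop:Plancherel}(b), $\mathcal F$ is a unitary from $L^2(\mathbb G)$ onto $\ell^2(\hat{\mathbb G})=\bigoplus_\pi^{\,2} (B(H_\pi),\,d_\pi\tr(Q_\pi\,\cdot\,))$ (the Hilbert–Schmidt-type norm weighted by $d_\pi$ and $Q_\pi$, via the GNS construction for $\hat h$). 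So it suffices to compute, for fixed $\pi$, the operator norm on that weighted matrix space of the maps determined on Fourier coefficients by \eqref{eq:multiplier, Fourier coefficient}.

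For the right multiplier: by \eqref{eq:multiplier, Fourier coefficient}, $(m_a^R x)\,\hat{}\,(\pi)Q_\pi = a_\pi \hat x(\pi)Q_\pi$, i.e. on the variable $\xi_\pi \coloneqq \hat x(\pi)Q_\pi$ the map is simply left multiplication $\xi_\pi \mapsto a_\pi\xi_\pi$. I would check that the $\ell^2(\hat{\mathbb G})$-norm of $x$, expressed through $\xi_\pi$, is a sum over $\pi$ of terms of the form $d_\pi^{-1}\tr(Q_\pi^{-1}\xi_\pi^* \cdots)$ — concretely, from the formula $\hat x(\pi)=d_\pi^{-1}X_\pi Q_\pi^{-1}$ in Proposition~\ref{prop:Plancherel}(a) one gets that each homogeneous block contributes a Hilbert–Schmidt norm of $\xi_\pi$ against a fixed positive weight built from $Q_\pi$. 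Since $\xi\mapsto a_\pi\xi$ on any such weighted Hilbert–Schmidt space has operator norm exactly $\|a_\pi\|$ (the weight acts on the right, left multiplication by $a_\pi$ acts on the left, so they decouple and the norm is the ordinary operator norm of $a_\pi$), taking the supremum over $\pi$ gives $\|m_a^R\|_{B(L^2(\mathbb G))}=\sup_\pi\|a_\pi\|=\|a\|_\infty$.

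For the left multiplier with symbol $Q^{-1/2}aQ^{1/2}$: from \eqref{eq:multiplier, Fourier coefficient}, $(m_{Q^{-1/2}aQ^{1/2}}^L x)\,\hat{}\,(\pi)Q_\pi = \hat x(\pi)Q_\pi Q_\pi^{-1/2}a_\pi Q_\pi^{1/2} = \xi_\pi Q_\pi^{-1/2}a_\pi Q_\pi^{1/2}$; so in the variable $\eta_\pi\coloneqq \xi_\pi Q_\pi^{-1/2}=\hat x(\pi)Q_\pi^{1/2}$ the map is right multiplication $\eta_\pi\mapsto \eta_\pi Q_\pi^{-1/2}a_\pi Q_\pi^{1/2}$, while one checks the $\ell^2$-norm becomes an \emph{unweighted} Hilbert–Schmidt norm of $\eta_\pi$ (the substitution $\xi_\pi=\eta_\pi Q_\pi^{1/2}$ absorbs the weight symmetrically, since $\tr(Q_\pi^{-1}\xi_\pi^*\xi_\pi)=\tr(\eta_\pi^*\eta_\pi)$ up to the constants). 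On the unweighted Hilbert–Schmidt space, right multiplication by $b$ has operator norm $\|b\|$, and $\|Q_\pi^{-1/2}a_\pi Q_\pi^{1/2}\|$ — hmm, this need not equal $\|a_\pi\|$ in general. Here I would instead be slightly more careful: right multiplication by $Q_\pi^{-1/2}a_\pi Q_\pi^{1/2}$ on HS-space is unitarily equivalent (conjugating by the similarity implementing $Q_\pi^{1/2}$ on the right) to the map $\eta\mapsto \eta a_\pi$ on the HS-space weighted by $Q_\pi$ on the right, whose norm is again just $\|a_\pi\|$; alternatively, transport directly back to the variable $\xi_\pi=\hat x(\pi)Q_\pi$, where left multiplier action is right multiplication by $Q_\pi^{-1/2}a_\pi Q_\pi^{1/2}$ on the $Q_\pi$-right-weighted HS-space, and compute that norm to be $\|a_\pi\|$. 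The cleanest route is to absorb $Q_\pi^{\pm 1/2}$ into the inner product on both sides so that in the resulting unweighted picture the left-multiplier map becomes honest multiplication by $a_\pi$. Taking $\sup_\pi$ then yields $\|m_{Q^{-1/2}aQ^{1/2}}^L\|_{B(L^2(\mathbb G))}=\|a\|_\infty$.

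Combining the two computations with the definition of $\|\cdot\|_{\mathrm M(L^2(\mathbb G))}$ gives $\|a\|_{\mathrm M(L^2(\mathbb G))}=\max\{\|a\|_\infty,\|a\|_\infty\}=\|a\|_\infty$, and since $\mathrm M(L^2(\mathbb G))$ is by definition all of $\prod_\pi B(H_\pi)$ with finite such norm, i.e. exactly $\ell^\infty(\hat{\mathbb G})=\bigoplus_\pi B(H_\pi)$, we conclude $\mathrm M(L^2(\mathbb G))=\ell^\infty(\hat{\mathbb G})$ isometrically. The main obstacle I anticipate is purely bookkeeping: correctly tracking how the $Q_\pi$- and $d_\pi$-weights in the Plancherel norm interact with left versus right multiplication, and in particular verifying that the asymmetric-looking symbol $Q^{-1/2}aQ^{1/2}$ in the definition of $\mathrm M(L^p(\mathbb G))$ is precisely what is needed to make the left-multiplier norm collapse to $\|a\|_\infty$ rather than to $\|Q_\pi^{-1/2}a_\pi Q_\pi^{1/2}\|$ — this is the one place where a naive computation gives the wrong answer and the weight must be handled by a genuine unitary rearrangement of the Hilbert space rather than a plain similarity.
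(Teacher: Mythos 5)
Your proposal is correct and follows essentially the same route as the paper: the paper likewise works through Plancherel, computing $\|m^L_{Q^{-1/2}aQ^{1/2}}x\|_2^2=\sum_\pi d_\pi\mathrm{Tr}\left(a_\pi^*Q_\pi^{1/2}\hat x(\pi)^*\hat x(\pi)Q_\pi^{1/2}a_\pi\right)$ for the upper bound and getting the lower bound by testing on $\hat x(\pi)=EQ_\pi^{-1/2}$ with $E$ a spectral projection of $a_\pi a_\pi^*$ --- which is exactly your rank-restricted test element written in the variable $\eta_\pi=\hat x(\pi)Q_\pi^{1/2}$. One simplification worth noting: in that variable the left multiplier is literally $\eta_\pi\mapsto\eta_\pi a_\pi$ on the \emph{unweighted} Hilbert--Schmidt blocks (the two factors $Q_\pi^{\pm1/2}$ coming from the symbol and from the change of variable cancel), so the similarity-versus-unitary worry you raise at the end never actually arises.
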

\begin{proof}
Let $a\in \ell^\infty (\hat{\mathbb G})$. By   equality \eqref{eq:multiplier, Fourier coefficient} and Proposition \ref{prop:Plancherel}, we have for all $x\in \mathrm{Pol}(\mathbb G)$,
\begin{align*}
\|m_{Q^{-1/2}aQ^{1/2}}^L x\|_2^2 
& = \sum_{\pi \in \mathrm{Irr} (\mathbb G)} d_\pi \mathrm{Tr} \left(|\hat x (\pi) Q_\pi^{1/2}a_\pi Q_\pi ^{-1/2} |^2 Q_\pi \right) \\
& = \sum_{\pi \in \mathrm{Irr} (\mathbb G)} d_\pi \mathrm{Tr} \left(\hat x (\pi) Q_\pi^{1/2} a_\pi a_\pi^* Q_\pi^{1/2} \hat x (\pi)^*  \right)\\
& \leq \|a\|_\infty ^2 \sum_{\pi \in \mathrm{Irr} (\mathbb G)} d_\pi \mathrm{Tr} \left(\hat x (\pi) Q_\pi \hat x (\pi)^*  \right)
= \|a\|_\infty ^2 \|x\|_2^2.
\end{align*}
Therefore we get
\begin{equation}\label{eq: multiplier two 1}
Q^{-1/2}aQ^{1/2} \in \mathrm{M}_L(L^2(\mathbb G)),\quad  \|m_{Q^{-1/2}aQ^{1/2}}^L\|_{B(L^2(\mathbb G))} \leq \|a\|_\infty.
\end{equation}

Conversely, take $a\in \prod_{\pi}B(H_{\pi})$ with $Q^{-1/2}aQ^{1/2} \in \mathrm{M}_L(L^2(\mathbb G))$. For $\pi\in \irr$ and for each eigenvalue $\lambda$ of the operator $a_\pi a_\pi^*$, we choose an orthogonal projection $E\in B(H_\pi)$ such that $a_\pi a_\pi^*E=\lambda E$. According to Proposition \ref{prop:Plancherel}, we choose an $x\in \pol$ with 
$$\hat x (\pi) = E Q_\pi^{-1/2},\quad \hat x (\pi')=0, \quad \pi' \neq \pi.$$
Then we have
\begin{align*}
\|m_{Q^{-1/2}aQ^{1/2}}^L x\|_2^2
& = d_\pi \mathrm{Tr} \left(|\hat x (\pi) Q_\pi^{1/2}a_\pi Q_\pi ^{-1/2} |^2 Q_\pi \right) \\
& =d_\pi \mathrm{Tr} \left(a_\pi^* Q_\pi^{1/2} \hat x (\pi)^*\hat x (\pi) Q_\pi^{1/2} a_\pi   \right)
= d_\pi \mathrm{Tr} \left(a_\pi^* E a_\pi   \right)= \lambda d_\pi \mathrm{Tr} \left(E   \right)\\
&
 = \lambda d_\pi \mathrm{Tr} \left(E^*E   \right) = \lambda d_\pi \mathrm{Tr} \left( Q_\pi^{1/2} \hat x (\pi)^*\hat x (\pi) Q_\pi^{1/2}   \right)
 = \lambda \|x\|_2^2.
\end{align*}
So we see that $\lambda\leq \|m_{Q^{-1/2}aQ^{1/2}}^L\|_{B(L^2(\mathbb G))}^2$. Taking the supremum over all such eigenvalues $\lambda$, we get
\begin{equation}\label{eq: multiplier two 2}
a\in \ell^\infty(\hat{\mathbb G}),\quad \|a\|_\infty \leq \|m_{Q^{-1/2}aQ^{1/2}}^L\|_{B(L^2(\mathbb G))}.
\end{equation}
Combining the two inequalities \eqref{eq: multiplier two 1} and \eqref{eq: multiplier two 2} we prove that for all $a\in \prod_{\pi}B(H_{\pi})$ we have 
 $$\|a\|_\infty = \|m_{Q^{-1/2}aQ^{1/2}}^L\|_{B(L^2(\mathbb G))}.$$
 A similar argument gives 
 $$\|a\|_\infty = \|m_{a}^R\|_{B(L^2(\mathbb G))}.$$
So we establish the lemma. 
\end{proof}

It is well known that if $\mathbb G$ is a compact group $G$ or the dual compact quantum group $\hat \Gamma$ of a discrete group $\Gamma$, then for all $1\leq p\leq \infty$ we have
\begin{equation}\label{eq:multiplier bdd classical}
\|a\|_\infty \leq \|a\|_{\mathrm M (L^p(\mathbb G))},\quad a\in \mathrm M (L^p(\mathbb G)).
\end{equation}
We refer to \cite{harcharras99nclambdap,hewittross1970abstract} for related discussions. This is however not clear for an arbitrary compact quantum group.
Note that if the operators $m_a^L$ and $m_a^R$ are completely bounded on $L^\infty(\mathbb G)$, then from \eqref{left multiplier formula} we see that 
\begin{equation*}
\|a\|_\infty =\| ((1\otimes a_\pi)u^{(\pi)})\|_{\oplus (C(\mathbb G)\otimes B(H_\pi))}\leq \|m_a^L\|_{CB(L^\infty(\mathbb G))},\quad 
\|a\|_\infty \leq \|m_a^R\|_{CB(L^\infty(\mathbb G))},
\end{equation*}
where $\|\|_{CB(L^\infty(\mathbb G))}$ denote the completely bounded norm in the sense of \cite{effrosruan00op}. If the multiplier $a\in \mathrm{M}(L^\infty(\mathbb G))$ is not necessarily completely bounded, it is even not clear to see whether the sequence $a\in \prod_{\pi }B(H_\pi) $ is bounded or not.
We refer to \cite[Section 8.2]{daws10multiplier} for some related discussions. In the following Proposition \ref{prop:multiplier bdd quantum} we will give an affirmative answer in the case that $\mathbb G$ is of Kac type, and indeed give a similar estimate for the general case, which also improves an early result in \cite[Proposition 8.8]{daws10multiplier} for compact quantum groups. Let us first establish some easy but useful lemmas.
\begin{lem}
	\label{lem: R isometry Lp}
Let $1\leq p\leq \infty$. We have the following assertions: 

\emph{(a)} For all $t\in \mathbb R$, the operator $\sigma_t$ extends to an isometry on $L^p(\mathbb G)$.

\emph{(b)} We have $\sigma_{-\mathrm i /p}\circ R= R \circ \sigma_{\mathrm i/p}$, and they extend to an isometry on $L^p(\mathbb G)$.

\emph{(c)} The map $x\mapsto \sigma_{-\mathrm i /p}(x^*)$ extends to an isometry on $L^p(\mathbb G)$.
\end{lem}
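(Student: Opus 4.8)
The plan is to use Kosaki's interpolation definition $L^p(\mathbb G)=(L^\infty(\mathbb G),L^1(\mathbb G))_{1/p}$ together with the fact that an operator which is an isometry (or, more generally, a complete contraction) on both $L^\infty(\mathbb G)$ and $L^1(\mathbb G)$ compatibly extends to a contraction on $L^p(\mathbb G)$ for all $1\le p\le\infty$; applying this to the map and its inverse then gives an isometry. So for each of (a), (b), (c) it suffices to (i) identify the relevant map on $L^\infty(\mathbb G)=L^\infty(\mathbb G)$ and check it is an isometry there, and (ii) compute the map induced on the predual $L^1(\mathbb G)$ under the embedding $j\colon x\mapsto xh=h(\cdot\,x)$ and check that too is an isometry. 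For (a): $\sigma_t$ is a $*$-automorphism of $L^\infty(\mathbb G)$, hence an $L^\infty$-isometry, and since $h$ is $\sigma$-invariant ($h\circ\sigma_t=h$) one checks $j(\sigma_t(x))=h(\cdot\,\sigma_t(x))=(\sigma_{-t})^*(xh)$ as functionals, i.e. the predual map is the adjoint of the automorphism $\sigma_{-t}$, again an isometry; interpolation finishes it.

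For (b), the first step is the algebraic identity $\sigma_{-\mathrm i/p}\circ R=R\circ\sigma_{\mathrm i/p}$ on $\mathrm{Pol}(\mathbb G)$: this follows by evaluating both sides on $u^{(\pi)}$ using \eqref{modular group on cqg} and \eqref{antipode2} — $R$ is a $*$-antiautomorphism intertwining the $Q_\pi$'s appropriately, so the conjugations by $Q_\pi^{\pm 1/p}$ match up. The subtle point is that $\sigma_{-\mathrm i/p}$ is unbounded on $L^\infty(\mathbb G)$, so ``$R\circ\sigma_{\mathrm i/p}$'' is to be understood as a densely-defined map on $\mathrm{Pol}(\mathbb G)$ whose $L^p$-closure we want. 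Here I would instead argue at the level of Haagerup $L^p$-spaces: via the isometry $j^p\colon x\mapsto xD^{1/p}$ of $L^p(\mathbb G)$ onto $L^{p,\mathsf H}(\mathbb G)$, and using $\sigma_t(x)=D^{\mathrm i t}xD^{-\mathrm i t}$ (Proposition \ref{density op haagerup Lp}(1)) so that formally $\sigma_{-\mathrm i/p}(x)=D^{1/p}xD^{-1/p}$, one sees $j^p(\sigma_{-\mathrm i/p}(x))=D^{1/p}x$ — i.e. the map $x\mapsto\sigma_{-\mathrm i/p}(x)$ corresponds under $j^p$ to multiplying $D$ on the \emph{left} by $x$ rather than the right. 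Then $R$ extends $R$ to an isometric $*$-antiautomorphism of $L^{p,\mathsf H}(\mathbb G)$ (it preserves the trace $\mathrm{tr}$ and hence all the $L^p$-norms by Proposition \ref{density op haagerup Lp}(4)), and composing one gets an isometry of $L^{p,\mathsf H}(\mathbb G)$; transporting back through $j^p$ gives the claim for $L^p(\mathbb G)$.

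For (c), the map $x\mapsto\sigma_{-\mathrm i/p}(x^*)$ corresponds, under $j^p$ and the formal identification above, to $xD^{1/p}\mapsto D^{1/p}x^*=(xD^{1/p})^*$ — i.e. it is just the adjoint operation $y\mapsto y^*$ on $L^{p,\mathsf H}(\mathbb G)$, which is an isometry by $\|y\|_{p,\mathsf H}=\|y^*\|_{p,\mathsf H}$ (Proposition \ref{density op haagerup Lp}(4)); pulling back via the isometry $j^p$ finishes the proof.

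The main obstacle is making the passage to Haagerup $L^p$-spaces rigorous, since $\sigma_{-\mathrm i/p}$ and the ``left multiplication by $D^{1/p}$'' are only densely defined: one must check that $\{xD^{1/p}:x\in\mathrm{Pol}(\mathbb G)\}$ is dense in $L^{p,\mathsf H}(\mathbb G)$ (which follows from Lemma \ref{prop: poly dense in L_1} and the isometry $j^p$), that the map $xD^{1/p}\mapsto D^{1/p}x$ is well-defined and isometric on this dense subspace (the key computation: $\|D^{1/p}x\|_{p,\mathsf H}=\||D^{1/p}x|\|_{p,\mathsf H}$ and $|D^{1/p}x|^p$ is unitarily related to $|xD^{1/p}|^p$ via the polar-type manipulations and the trace property $\mathrm{tr}(yz)=\mathrm{tr}(zy)$), and hence extends to the asserted isometry. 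Once this bookkeeping is done, (a), (b), (c) all drop out; alternatively one can package everything through interpolation by treating $\sigma_{-\mathrm i/p}$ as the ``regular'' part of the one-parameter group on the interpolation scale, but the Haagerup-space route is the most transparent.
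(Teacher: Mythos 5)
Your parts (a) and (c) coincide with the paper's proofs: (a) is the interpolation argument from $h\circ\sigma_t=h$, and (c) is exactly the observation that under $j^p$ the map $x\mapsto\sigma_{-\mathrm i/p}(x^*)$ becomes the adjoint $xD^{1/p}\mapsto(xD^{1/p})^*$ on $L^{p,\mathsf H}(\mathbb G)$, which the paper likewise reduces to Proposition \ref{density op haagerup Lp} (1)(4). For (b) you take a genuinely different route: you identify $\sigma_{-\mathrm i/p}\circ R$ under $j^p$ with $xD^{1/p}\mapsto D^{1/p}R(x)$, i.e.\ with the canonical extension of the antiautomorphism $R$ to the Haagerup scale, and appeal to the isometry of that extension. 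The paper instead proves directly that $\sigma_{-\mathrm i}\circ R$ is an isometry of $L^1(\mathbb G)$ by a duality computation using $h\circ R=h$ together with the KMS identity $h(xy)=h(y\sigma_{-\mathrm i}(x))$, and then applies the Stein interpolation theorem to the analytic family $\sigma_{-\mathrm iz-t\mathrm i}\circ R$. The paper's route is more self-contained: the fact that a state-preserving $*$-antiautomorphism extends isometrically to every $L^{p,\mathsf H}$ is true but is not among the cited preliminaries, and your one-line justification (``it preserves $\mathrm{tr}$ and hence all the $L^p$-norms by Proposition \ref{density op haagerup Lp} (4)'') is not a proof of it.

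There is also a concrete false step in your final paragraph: the map $xD^{1/p}\mapsto D^{1/p}x$ (i.e.\ $\sigma_{-\mathrm i/p}$ alone) is \emph{not} isometric on $L^{p,\mathsf H}(\mathbb G)$, and $|D^{1/p}x|^p$ is not unitarily related to $|xD^{1/p}|^p$. Indeed $\|D^{1/p}x\|_{p,\mathsf H}=\|x^*D^{1/p}\|_{p,\mathsf H}=\|x^*\|_p$, which differs from $\|x\|_p$ whenever $h$ is not a trace (already for $p=2$ one has $\|x\|_2^2=h(x^*x)$ versus $\|x^*\|_2^2=h(xx^*)$). If that map were isometric, the adjoint alone would be an $L^p$-isometry and the twist by $\sigma_{-\mathrm i/p}$ in parts (b) and (c) would be superfluous. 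The isometry only holds for the \emph{composites} $xD^{1/p}\mapsto(xD^{1/p})^*$ and $xD^{1/p}\mapsto D^{1/p}R(x)$, as in the body of your argument; so the bookkeeping as you describe it would not close the argument, and you should either prove the isometric extension of $R$ to the Haagerup spaces properly or switch to the paper's $L^1$-duality-plus-Stein-interpolation argument.
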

\begin{proof}
	(a) Since $h\circ \sigma =h$, it is easy to see that the operators $\sigma_t$ and $\sigma_{-t}$ extend to isometries on $L^1(\mathbb G)$ and $L^\infty(\mathbb G)$ for all $t\in \mathbb R$. So by interpolation $\sigma_t$ also extends to an isometry on $L^p(\mathbb G)$ for all $t\in\mathbb R$.
	
(b) It is well known and easy to see from \eqref{modular group on cqg}, \eqref{eq: polar decomp of s} and \eqref{eq: scaling group on cqg} that
$$S\circ \sigma_z= \sigma_{-z}\circ S,\quad R\circ \sigma_z= \sigma_{-z}\circ R, \quad z\in \mathbb C.$$
Also recall that $R$ is a $*$-antiautomorphism on $L^\infty(\mathbb G)$ with $h\circ R=h$, and $h(xy)=h(y\sigma_{-\mathrm i}(x))$ for $x,y\in \mathrm{Pol}(\mathbb G)$. So for $x\in \mathrm{Pol}(\mathbb G)$, we have
\begin{align*}
	\|x\|_1 & =\sup_{y\in \mathrm{Pol}(\mathbb G),\|y\|_\infty=1}|h(yx)|
	= \sup_{y\in \mathrm{Pol}(\mathbb G),\|y\|_\infty=1}|h(R(x)R(y))|\\
	&=	\sup_{y\in \mathrm{Pol}(\mathbb G),\|y\|_\infty=1}|h(R(y)\sigma_{-\mathrm i}(R(x)))| =\|\sigma_{-\mathrm i}(R(x))\|_1.
\end{align*}
Thus we see that $\sigma_{-\mathrm i}\circ R$ extends to an isometry on $L^1(\mathbb G)$. Therefore by (a), $\sigma_{-\mathrm i-t}\circ R$ and $R \circ \sigma_{\mathrm i+t}$ also extend to isometries on $L^1(\mathbb G)$ for $t\in\mathbb R$. Then the assertion follows directly from the Stein interpolation theorem (see e.g. \cite[Theorem 2.7]{lunardi09interpolationbook}).

(c) This follows directly from Proposition \ref{density op haagerup Lp} (1)(4).
\end{proof}

\begin{lem}
	\label{lem: left to right multipliers}
Let $1\leq p\leq \infty$. Consider $\mathbf{E}\subset\irr$ and $$X=\mathrm{span}\left\lbrace u_{ij}^{(\pi)},(u_{ij}^{(\pi)})^*: \pi\in \mathbf{E},1\leq i,j\leq n_\pi \right\rbrace.
$$
We equip the space $X$ with the norm $\|\|_p$. Then we have for all  $a\in \prod_{\pi}B(H_{\pi})$,
	$$\|m_a^R|_X\|_{B(X)}=\|m_{Q^{-1/2}a^*Q^{1/2}}^L|_X\|_{B(X)}.$$
\end{lem}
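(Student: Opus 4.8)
The plan is to realize $m_a^R|_X$ and $m_{Q^{-1/2}a^*Q^{1/2}}^L|_X$ as adjoints of one another (or near-adjoints) with respect to a suitable duality, or else to conjugate one into the other by an isometry of $X$; the second route looks cleaner given the isometries already assembled in Lemma \ref{lem: R isometry Lp}. First I would record how the relevant maps act on the building blocks of $X$. From \eqref{left multiplier formula} we have $(m_a^R\otimes\iota)u^{(\pi)}=u^{(\pi)}(1\otimes a_\pi)$, and applying the $*$-antiautomorphism $R$ (which sends $u_{ij}^{(\pi)}$ to $(u_{ij}^{(\pi)})^*$ up to the $Q$-twist coming from \eqref{eq: polar decomp of s}–\eqref{eq: scaling group on cqg}) one sees that conjugating $m_a^R$ by the isometry $J_p:x\mapsto\sigma_{-\mathrm i/p}(x^*)$ of Lemma \ref{lem: R isometry Lp}(c), or by $\sigma_{-\mathrm i/p}\circ R$ of part (b), turns the right multiplier by $a$ into a left multiplier whose symbol is $a^*$ conjugated by the appropriate power of $Q$. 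The key computational step is thus to check the identity
\[
J_p\circ m_a^R = m_{Q^{-1/2}a^*Q^{1/2}}^L\circ J_p
\]
on the spanning elements $u_{ij}^{(\pi)}$ and $(u_{ij}^{(\pi)})^*$ for $\pi\in\mathbf E$; this is a bookkeeping exercise using \eqref{modular group on cqg}, \eqref{eq: scaling group on cqg} and the intertwining property of $Q_\pi$, together with the fact that $X$ is invariant under all the maps involved (it is spanned by coefficients and their adjoints over the \emph{symmetric} set, so $R$, $\sigma_t$ and $*$ preserve $X$).

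Granting that identity, since $J_p$ is a surjective isometry of $(X,\|\cdot\|_p)$ we immediately get
\[
\|m_a^R|_X\|_{B(X)} = \|J_p\, m_a^R|_X\, J_p^{-1}\|_{B(X)} = \|m_{Q^{-1/2}a^*Q^{1/2}}^L|_X\|_{B(X)},
\]
which is exactly the claim. One subtlety is the correct placement of the $Q^{\pm 1/p}$ and $Q^{\pm 1/2}$ factors: the isometry $J_p$ carries a $\sigma_{-\mathrm i/p}$, i.e. conjugation by $Q^{1/p}$ in the Haagerup picture (Proposition \ref{density op haagerup Lp}(1)), whereas the symbol in the statement carries $Q^{\pm 1/2}$; these must be reconciled using \eqref{conv another with q} and the precise relation between the Fourier coefficients of $x$ and of $x^*$. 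I expect this reconciliation — making sure that the $p$-dependent twist from the isometry and the $p$-independent $Q^{1/2}$-twist built into the definition of $m^L$ versus $m^R$ combine to give precisely $Q^{-1/2}a^*Q^{1/2}$ with no leftover power of $Q$ — to be the main obstacle, and the one place where one genuinely has to be careful rather than formal.

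An alternative, should the conjugation bookkeeping prove awkward, is a duality argument: equip $X$ with $\|\cdot\|_p$ and $\|\cdot\|_{p'}$, use the $L^p$–$L^{p'}$ pairing $\langle x,y\rangle = h(xy^*)$ (or the Haagerup trace pairing), and verify via \eqref{eq:multiplier, Fourier coefficient} that the Banach-space adjoint of $m_a^R$ acting on $L^{p'}$ is $m_{b}^L$ for the appropriate $b$ involving $a^*$ and $Q$; then $\|m_a^R|_X\|_{B(X,\|\cdot\|_p)} = \|(m_a^R)^*|_X\|_{B(X,\|\cdot\|_{p'})}$, and a separate use of the isometries of Lemma \ref{lem: R isometry Lp} identifies the $\|\cdot\|_{p'}$-norm of the adjoint with the $\|\cdot\|_p$-norm of $m_{Q^{-1/2}a^*Q^{1/2}}^L$. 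Either way the essential content is the same pair of ingredients: the symmetry of $X$ under $R$, $*$ and the modular group, and the explicit formulas \eqref{left multiplier formula}–\eqref{eq:multiplier, Fourier coefficient} tracking how the $Q$-twists propagate.
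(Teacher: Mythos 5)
Your proposal follows essentially the same route as the paper's proof: the paper realizes $m_a^R$ composed with the isometry $x\mapsto\sigma_{-\mathrm i/p}(x^*)$ as $m^L_{Q^{-1/2}a^*Q^{1/2}}$ composed with the isometry $x\mapsto R\circ\sigma_{\mathrm i/p}(x)$, using exactly the ingredients you name (the polar decomposition $S=\tau_{-\mathrm i/2}\circ R$, the relation $\Sigma\circ\Delta\circ R=(R\otimes R)\circ\Delta$, the formula \eqref{conv another with q}, and the invariance of $X$), and then invokes the isometries of Lemma \ref{lem: R isometry Lp} to conclude. The only cosmetic difference is that the two intertwining isometries are not literally the same map $J_p$ as in your displayed identity, but since both are surjective isometries of $(X,\|\cdot\|_p)$ the equality of operator norms follows exactly as you argue.
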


\begin{proof}
Note that the subspace $X$ is invariant under $R$, $\sigma$ and $\tau$, and that the maps $m_a^R$ and $m_{Q^{-1/2}a^*Q^{1/2}}^L$ send $X$ into $X$ itself. We define a linear functional $\varphi$ on $\pol$ by $\varphi(u_{ij}^{(\pi)})=\overline{(a_\pi)_{ji}}$ for $\pi\in\irr$ and $1\leq i,j\leq n_\pi$. Then we see that $\widehat{\varphi^*}=a$ where $\varphi^*=\overline{\varphi(\cdot ^*)}$. Take $x\in X$. By \eqref{conv another}, \eqref{conv another with q}, we have
\begin{align*}
m_a^R(\sigma_{-\mathrm i/p}(x^*))= (\iota\otimes (\varphi^*\circ S))\Delta(\sigma_{-\mathrm i /p}(x^*)).
\end{align*}
Recall from \eqref{eq: polar decomp of s} and \eqref{antipode2} that
$$S=\tau_{-\mathrm i/2}\circ R,\quad R^2=\mathrm{id},\quad \Sigma \circ \Delta \circ R= (R\otimes R)\circ \Delta.$$
So we get 
\begin{align*}
(\iota\otimes (\varphi^*\circ S))\Delta(\sigma_{-\mathrm i /p}(x^*))
&=(R\otimes (\varphi^*\circ \tau_{-\mathrm i/2}))(R\otimes R)\Delta(\sigma_{-\mathrm i /p}(x^*))\\
&= ( (\varphi^*\circ \tau_{-\mathrm i/2})\otimes R)\Delta (R\circ \sigma_{-\mathrm i /p}(x^*)) .
\end{align*}
Writing $R=(R\circ \sigma_{\mathrm i /p} ) \circ \sigma_{-\mathrm i /p}$ and using the previous lemma, we then have
\begin{align*}
\|m_a^R(\sigma_{-\mathrm i/p}(x^*))\|_p
&=\big\|\sigma_{-\mathrm i /p}( (\varphi^*\circ \tau_{-\mathrm i/2})\otimes \iota)\Delta (R\circ \sigma_{-\mathrm i /p}(x^*))\big\|_p\\
&=\left\|\sigma_{-\mathrm i /p}\left(\left[( (\varphi\circ \tau_{\mathrm i/2})\otimes \iota)\Delta (R\circ \sigma_{\mathrm i /p}(x))\right]^* \right) \right\|_p\\
&=\big\|( (\varphi\circ \tau_{\mathrm i/2})\otimes \iota)\Delta (R\circ \sigma_{\mathrm i /p}(x))\big\|_p.
\end{align*}
Note that by \eqref{eq:fourier series with S} and \eqref{eq: scaling group on cqg}, we have
$$(\varphi\circ \tau_{\mathrm i/2}\circ S^{-1})\,\hat{}\,=Q^{-1/2}a^*Q^{1/2}.$$
So together with \eqref{conv another with q} and \eqref{eq:multiplier, Fourier coefficient}, we have 
$$( (\varphi\circ \tau_{\mathrm i/2})\otimes \iota)\Delta (R\circ \sigma_{\mathrm i /p}(x))=m_{Q^{-1/2}a^*Q^{1/2}}^L(R\circ \sigma_{-\mathrm i /p}(x))$$
and hence the above computations yield
$$\|m_a^R(\sigma_{-\mathrm i/p}(x^*))\|_p=
\|m_{Q^{-1/2}a^*Q^{1/2}}^L(R\circ \sigma_{\mathrm i /p}(x))\|_p.$$
Since the maps $x\mapsto \sigma_{-\mathrm i/p}(x^*)$ and $x\mapsto R\circ \sigma_{\mathrm i /p}(x)$ are isometries on $X$ according to the previous lemma, we obtain the desired conclusion.
\end{proof}

\begin{lem}
	\label{lem:fourier series with sigma}
	For $z\in \mathbb C$ and $x\in \pol$, we have
	$$\mathcal F (\sigma_z(x)) = Q^{\mathrm i  z} \hat x Q^{\mathrm i z}.$$
	In particular for $a\in\prod_{\pi\in\irr} B(H_\pi)$ and $t\in \mathbb R$,
	$$\sigma_t \circ m^L_a \circ \sigma_{-t} = m^L_{Q^{-\mathrm i t}a Q^{\mathrm i t}},\quad \sigma_t \circ m^R_a \circ \sigma_{-t} = m^R_{Q^{\mathrm i t}a Q^{-\mathrm i t}}.$$
\end{lem}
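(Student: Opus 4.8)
The plan is to reduce the statement to the dense $*$-subalgebra $\pol$, on which $\sigma_z$ acts by the explicit rule \eqref{modular group on cqg}, the Fourier transform is given by finite sums and is injective, and no convergence issues arise. I would first record two continuation facts on $\pol$: for fixed $x\in\pol$ the map $z\mapsto\sigma_z(x)$ is a $\pol$-valued entire function (its matrix coefficients being polynomials in the entries of the entire functions $z\mapsto Q_\pi^{\mathrm i z}$, by \eqref{modular group on cqg}), so from $h\circ\sigma_t=h$ and from $\sigma_t$ being $*$-preserving for $t\in\mathbb R$ one gets by analytic continuation $h\circ\sigma_z=h$ and $\sigma_z(a^*)=\sigma_{\bar z}(a)^*$ on $\pol$ for all $z\in\mathbb C$; also $\sigma_z$ is an algebra automorphism of $\pol$.

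For the first identity, fix $x\in\pol$, $\pi\in\irr$ and recall $\hat x(\pi)_{ij}=h\big((u_{ji}^{(\pi)})^*x\big)$. Using multiplicativity of $\sigma_z$ and $h\circ\sigma_z=h$,
\[
\mathcal F(\sigma_z(x))(\pi)_{ij}=h\big((u_{ji}^{(\pi)})^*\sigma_z(x)\big)=h\big(\sigma_{-z}\big((u_{ji}^{(\pi)})^*\big)\,x\big)=h\big(\sigma_{-\bar z}(u_{ji}^{(\pi)})^*\,x\big).
\]
Now I would expand $\sigma_{-\bar z}(u_{ji}^{(\pi)})$ via \eqref{modular group on cqg}, take the adjoint, and use $(Q_\pi^{s})^*=Q_\pi^{\bar s}$ (valid since $Q_\pi>0$) to rewrite each conjugated scalar $\overline{(Q_\pi^{-\mathrm i\bar z})_{mn}}$ as $(Q_\pi^{\mathrm i z})_{nm}$; after reindexing the resulting double sum and using $h\big((u_{j'i'}^{(\pi)})^*x\big)=\hat x(\pi)_{i'j'}$ one reads off exactly $(Q_\pi^{\mathrm i z}\hat x(\pi)Q_\pi^{\mathrm i z})_{ij}$, which is the claim. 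One may alternatively avoid the $*$-bookkeeping by writing $\sigma_z=(f_{\mathrm i z}\otimes\iota)\Delta\circ(\iota\otimes f_{\mathrm i z})\Delta$ (which follows from $\sigma_z=(f_{\mathrm i z}\otimes\iota\otimes f_{\mathrm i z})\Delta^{(2)}$ and coassociativity) and applying \eqref{conv another}, \eqref{conv another with q} with $\varphi=f_{\mathrm i z}$, since $(f_{\mathrm i z}\circ S^{-1})\,\hat{}\,(\pi)=[f_{\mathrm i z}(u_{ij}^{(\pi)})]_{i,j}=Q_\pi^{\mathrm i z}$ by \eqref{eq:fourier series with S} and the two copies of $Q_\pi$ produced by \eqref{conv another with q} cancel because powers of $Q_\pi$ commute (these identities hold on $\pol$ with the same algebraic proof).

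For the ``in particular'' part, fix $t\in\mathbb R$, $a\in\prod_\pi B(H_\pi)$, $x\in\pol$. By the first identity $(\sigma_{-t}(x))\,\hat{}\,(\pi)=Q_\pi^{-\mathrm i t}\hat x(\pi)Q_\pi^{-\mathrm i t}$; applying $m_a^L$ and then $\sigma_t$, and using \eqref{eq:multiplier, Fourier coefficient} together with the first identity once more, a short computation in which all factors $Q_\pi^{\pm\mathrm i t}$ commute past powers of $Q_\pi$ yields
\[
\big(\sigma_t\circ m_a^L\circ\sigma_{-t}(x)\big)\,\hat{}\,(\pi)\,Q_\pi=\hat x(\pi)\,Q_\pi\,(Q_\pi^{-\mathrm i t}a_\pi Q_\pi^{\mathrm i t}),
\]
which is precisely the defining relation \eqref{eq:multiplier, Fourier coefficient} of $m^L_{Q^{-\mathrm i t}aQ^{\mathrm i t}}$; injectivity of $\mathcal F$ on $\pol$ then gives the left-multiplier identity. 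The right-multiplier case is identical, starting from $(m_a^R z)\,\hat{}\,(\pi)=a_\pi\hat z(\pi)$ (read off from \eqref{eq:multiplier, Fourier coefficient}).

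I do not expect a genuine obstacle here: the content is a bookkeeping verification. The only delicate points are justifying $h\circ\sigma_z=h$ and $\sigma_z(a^*)=\sigma_{\bar z}(a)^*$ for complex $z$ by continuation from $\mathbb R$, and correctly tracking the complex conjugates in the non-Kac setting — it is precisely the appearance of $Q_\pi^{-\mathrm i\bar z}$ (rather than $Q_\pi^{-\mathrm i z}$) in $\sigma_{-z}((u_{ji}^{(\pi)})^*)$ that, upon taking adjoints, restores the symmetric factor $Q_\pi^{\mathrm i z}$ on both sides; the $\Delta^{(2)}$-route sidesteps this subtlety altogether.
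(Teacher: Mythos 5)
Your proposal is correct, and your ``alternative'' route is in fact precisely the paper's proof: the paper writes $\sigma_z=(f_{\mathrm i z}\otimes\iota\otimes f_{\mathrm i z})\Delta^{(2)}$, observes $(f_{\mathrm i z}\circ S^{-1})\,\hat{}\,=Q^{\mathrm i z}$ via \eqref{eq:fourier series with S}, and reads off the identity from \eqref{conv another} and \eqref{conv another with q}; the ``in particular'' part is then the same Fourier-coefficient computation you describe, concluded by injectivity of $\mathcal F$ on $\pol$. Your primary route --- moving $\sigma_z$ onto the matrix coefficient via $h\circ\sigma_z=h$ and tracking the adjoints through $\sigma_{-z}\big((u_{ji}^{(\pi)})^*\big)=\sigma_{-\bar z}(u_{ji}^{(\pi)})^*$ and $(Q_\pi^{s})^*=Q_\pi^{\bar s}$ --- is also valid, but it costs you the preliminary continuation facts ($h\circ\sigma_z=h$, $*$-compatibility and multiplicativity of $\sigma_z$ for complex $z$ on $\pol$), which the convolution-formula route avoids entirely since \eqref{conv another} and \eqref{conv another with q} are purely algebraic identities on $\pol$. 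One small bonus of your careful bookkeeping: the final displayed expression in the paper's own computation reads $\hat x\,Q\,Q^{\mathrm i t}aQ^{-\mathrm i t}$ where it should be $\hat x\,Q\,Q^{-\mathrm i t}aQ^{\mathrm i t}$ (a sign typo; the stated conclusion $m^L_{Q^{-\mathrm i t}aQ^{\mathrm i t}}$ is the correct one, and your version of the computation gets it right).
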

\begin{proof}
	Recall that
	$$\sigma_z(x)=(f_{\mathrm i z}\otimes\iota \otimes f_{\mathrm i z})\Delta^{(2)}(x),\quad  x\in \mathrm{Pol}(\mathbb G),z\in\mathbb C.$$
	And by the construction of $(f_z)$ and \eqref{eq:fourier series with S} we have $(f_{\mathrm i z} \circ S^{-1})\,\hat{}\,=Q^{\mathrm i z}$. So together with \eqref{conv another} and \eqref{conv another with q} we get 
	$$\mathcal F (\sigma_z(x)) = Q^{\mathrm i  z} \hat x Q^{\mathrm i z}.$$
	Consequently, for $x\in\pol$,
	$$\mathcal{F}(\sigma_t \circ m^L_a \circ \sigma_{-t}(x))Q= Q^{\mathrm i t} \mathcal{F}( m^L_a \circ \sigma_{-t}(x)) Q^{1+\mathrm i t}
	=Q^{\mathrm i t}  \mathcal{F}(    \sigma_{-t}(x)) Qa Q^{\mathrm i t} =\hat x Q Q^{\mathrm i t} a Q^{-\mathrm i t}  .$$
	That is, 
		$$\sigma_t \circ m^L_a \circ \sigma_{-t} = m^L_{Q^{-\mathrm i t}a Q^{\mathrm i t}},$$
	as desired. The equality for $m^R_a$ follows similarly.
\end{proof}
Now we are able to give an analogue of inequality \eqref{eq:multiplier bdd classical} for general compact quantum groups. The proof below is based on a personal communication by Marius Junge.
\begin{prop}
	\label{prop:multiplier bdd quantum}
Let $1\leq p\leq \infty$. For all $a\in\prod_{\pi}B(H_{\pi})$ and all $0\leq \theta\leq 1$, we have
$$\|Q^{\frac{1}{4}-\frac{\theta}{2}}aQ^{ -\frac{1}{4}+\frac{\theta}{2}}\|_\infty \leq \|m_a^R\|^{1/2}_{B(L^p(\mathbb G))} \|m_{Q^{-\theta}aQ^{\theta}}^L\|^{1/2}_{B(L^p(\mathbb G))} .$$
\end{prop}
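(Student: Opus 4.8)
The plan is to interpolate between the case $p = \infty$ (or more precisely a "trace-type" endpoint) and the case $p = 1$, exploiting the self-duality of the $L^p$-scale together with Lemma~\ref{lem: left to right multipliers} to convert the right multiplier into a left multiplier. First I would reduce everything to estimating the $\ell^\infty$-norm of $Q^{1/4-\theta/2} a Q^{-1/4+\theta/2}$ in terms of $L^p$-multiplier norms. By Lemma~\ref{lem:multiplier on ltwo}, $\|b\|_\infty = \|m^L_{Q^{-1/2}bQ^{1/2}}\|_{B(L^2(\mathbb G))} = \|m^R_b\|_{B(L^2(\mathbb G))}$, so the left-hand side equals $\|m^R_{Q^{1/4-\theta/2} a Q^{-1/4+\theta/2}}\|_{B(L^2(\mathbb G))}$, or equally $\|m^L_{Q^{-1/4-\theta/2} a Q^{1/4+\theta/2}}\|_{B(L^2(\mathbb G))}$. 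So the task becomes: bound an $L^2$-multiplier norm by the geometric mean of an $L^p$-right-multiplier norm and an $L^p$-left-multiplier norm with conjugated symbols.

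The key step is a duality/interpolation argument on $L^p(\mathbb G)$. Fix $p$ and let $p'$ be its conjugate exponent. Using the pairing $L^p(\mathbb G) \times L^{p'}(\mathbb G) \to \mathbb C$ and the formulas \eqref{eq:multiplier, Fourier coefficient}, the adjoint of a left multiplier on $L^p$ is (up to conjugation by $Q$) a right multiplier on $L^{p'}$; thus $\|m^L_{Q^{-\theta}aQ^{\theta}}\|_{B(L^p(\mathbb G))}$ controls a corresponding right-multiplier norm on $L^{p'}$, and Lemma~\ref{lem: left to right multipliers} converts that right-multiplier norm on $L^{p'}$ back into a left-multiplier norm with symbol $Q^{-1/2}(\cdots)^* Q^{1/2}$. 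Combining this with $\|m^R_a\|_{B(L^p(\mathbb G))}$ and interpolating between $L^p$ and $L^{p'}$ at the midpoint $L^2$ (Stein interpolation, exactly as in the proof of Lemma~\ref{lem: R isometry Lp}(b)), one lands on $L^2$, where Lemma~\ref{lem:multiplier on ltwo} identifies the multiplier norm with $\|\cdot\|_\infty$. The exponents $1/4 - \theta/2$ and $-1/4+\theta/2$ in the statement are precisely what emerge from averaging the $Q$-conjugations coming from the two endpoint symbols $a$ (for $m^R$) and $Q^{-\theta}aQ^{\theta}$ (for $m^L$), together with the $Q^{\pm 1/2}$ coming from Lemma~\ref{lem: left to right multipliers} and the $Q^{\pm 1/p}$ built into the definition of $\mathrm M(L^p(\mathbb G))$; one should track these through the Stein interpolation to see that the midpoint symbol is exactly $Q^{1/4-\theta/2}aQ^{-1/4+\theta/2}$.

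Concretely, the steps I would carry out are: (i) set up an analytic family $z \mapsto m^L_{Q^{-z(\cdots)}a\,Q^{z(\cdots)}}$ (with an affine choice of exponent in $z$ matching the two endpoints) acting on the scale of $L^p$-spaces, using the isometries of Lemma~\ref{lem: R isometry Lp} to move between the spaces $L^p$, $L^2$, $L^{p'}$; (ii) at one endpoint, recognise the operator as $m^R_a$ on $L^p$ after applying Lemma~\ref{lem: left to right multipliers} and a duality identification, with norm $\le \|m^R_a\|_{B(L^p(\mathbb G))}$; (iii) at the other endpoint recognise it as $m^L_{Q^{-\theta}aQ^{\theta}}$ on $L^p$ (or its dual on $L^{p'}$), with norm $\le \|m^L_{Q^{-\theta}aQ^{\theta}}\|_{B(L^p(\mathbb G))}$; (iv) apply the Stein interpolation theorem to get the bound at the midpoint $L^2$; (v) invoke Lemma~\ref{lem:multiplier on ltwo} to convert the $L^2$-multiplier norm into $\|Q^{1/4-\theta/2}aQ^{-1/4+\theta/2}\|_\infty$. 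The main obstacle I anticipate is the bookkeeping in step (i)–(iv): making sure the analytic family is of admissible growth, that the $Q$-conjugations composed along the way (from the multiplier definitions, from Lemma~\ref{lem: left to right multipliers}, and from Lemma~\ref{lem:fourier series with sigma}) really do combine to give the clean exponents $\pm(1/4-\theta/2)$ at the midpoint, and that the adjoint of a left multiplier on $L^p$ is genuinely a right multiplier on $L^{p'}$ with the symbol one expects (this is where the subtle $Q$-twist in the very definition of $\mathrm M(L^p(\mathbb G))$ plays its role). Once the exponents are verified at the two endpoints, the interpolation itself is routine.
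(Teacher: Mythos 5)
Your plan follows essentially the same route as the paper's proof: dualize one of the two multipliers to move it to the conjugate exponent $p'$, use Lemma~\ref{lem: left to right multipliers} to make both endpoints multipliers of the same type, run Stein interpolation (with the $\sigma_t$-invariance from Lemmas~\ref{lem: R isometry Lp} and~\ref{lem:fourier series with sigma} giving admissibility) to land at the midpoint $L^2$, and invoke Lemma~\ref{lem:multiplier on ltwo} to convert the $L^2$-multiplier norm into $\|Q^{\frac14-\frac\theta2}aQ^{-\frac14+\frac\theta2}\|_\infty$. One small correction to the bookkeeping you flag: the adjoint of a left multiplier with respect to the bracket $\mathrm{tr}(xD^{1/p'}yD^{1/p})$ is again a \emph{left} multiplier (with a $Q$-conjugated adjoint symbol), not a right one; the paper sidesteps this by dualizing the right multiplier instead, via the twisted map $y\mapsto\sigma_{-\mathrm i/p}\bigl([m_a^R(\sigma_{-\mathrm i/p}(y^*))]^*\bigr)$, whose adjoint comes out cleanly as $m_{a^*}^R$ on $L^{p'}$.
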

\begin{proof}
Assume that $m_a^R$ is bounded on $L^p(\mathbb G)$. Consider the map
$$T:\pol\to\pol,\quad y\mapsto \sigma_{-\mathrm i /p}\big([m_a^R(\sigma_{-\mathrm i /p}(y^*))]^*\big).$$
By Lemma \ref{lem: R isometry Lp}, $T $ extends to a bounded operator on $L^p(\mathbb G)$ with $\|T\|_{B(L^p(\mathbb G))}=\|m_a^R\|_{B(L^p(\mathbb G))}$. We let $1\leq p'\leq \infty$ with $1/p+1/p'=1$ and denote $\langle \cdot,\cdot \rangle_{p',p}$ the duality bracket between $L^{p'}(\mathbb G)$ and $L^p(\mathbb G)$ defined by 
$$\langle x,y \rangle_{p',p} = \mathrm{tr}(xD^{1/p'}yD^{1/p}),\quad x,y\in \mathrm{Pol}(\mathbb G).$$
First let $1\leq p<\infty$. Then consider the adjoint map $T^*:L^p(\mathbb G)^*\to L^p(\mathbb G)^*$ of $T$ and recall the formula $\sigma_t(x)=D^{\mathrm i t}xD^{-\mathrm i t}$ in Proposition \ref{density op haagerup Lp}. We have for $x,y\in \pol$,
\begin{align*}
 \langle T^*x,y \rangle_{p',p}
 &= \langle x,Ty \rangle_{p',p} = \mathrm{tr}(xD^{1/p'}TyD^{1/p})
 =\mathrm{tr}\big(xD[m_a^R(\sigma_{-\mathrm i /p}(y^*))]^*\big)\\
 &= h\big([m_a^R(\sigma_{-\mathrm i /p}(y^*))]^*x\big).
\end{align*}
According to Proposition \ref{prop:Plancherel} and   \eqref{eq:multiplier, Fourier coefficient}, we note that
\begin{align*}
h\big([m_a^R(\sigma_{-\mathrm i /p}(y^*))]^*x\big) 
&= \hat h \Big( \big([m_a^R(\sigma_{-\mathrm i /p}(y^*))]\,\hat{ } \,\big) ^*\hat x\Big)
=\hat h \Big( \big([a(\sigma_{-\mathrm i /p}(y^*))\,\hat{ } \,]\big) ^*\hat x\Big)\\
&=\hat h \Big( [(\sigma_{-\mathrm i /p}(y^*))\,\hat{ } \,]^*a^*\hat x\Big) 
= h\big( (\sigma_{-\mathrm i /p}(y^*))^* m_{a^*}^Rx \big).
\end{align*}
Observe that $(\sigma_{-\mathrm i /p}(y^*))^*=D^{-1/p}yD^{1/p}$. Therefore the above equalities give
$$\langle T^*x,y \rangle_{p',p}=h\big( (\sigma_{-\mathrm i /p}(y^*))^* m_{a^*}^Rx \big)=\mathrm{tr}(m_{a^*}^Rx D^{1/p'}yD^{1/p})=\langle m_{a^*}^Rx,y \rangle_{p',p}.$$
So $m_{a^*}^R$ extends to an isometry on $L^{p'}(\mathbb G)$ with 
\begin{equation}\label{eq:a bdd pf bis}
\|m_{a^*}^R\|_{B(L^{p'}(\mathbb G))}=\|T\|_{B(L^p(\mathbb G))}=\|m_a^R\|_{B(L^p(\mathbb G))}.
\end{equation}
If $p=\infty$, we consider the restriction of operator $T^*|_{L^1(\mathbb G)}:L^1(\mathbb G)\to L^1(\mathbb G)$ instead of $T^*:L^\infty(\mathbb G)^*\to L^\infty(\mathbb G)^*$, and repeat the above argument. Then \eqref{eq:a bdd pf bis} also holds for $p=\infty$.

On the other hand, we assume that $m_{Q^{-\theta}aQ^{\theta}}^L$ is bounded on $L^p(\mathbb G)$. By Lemma \ref{lem: left to right multipliers}, we have 
\begin{equation}\label{eq:a bdd pf}
\|m_{Q^{-\frac{1}{2}+\theta}a^*Q^{ \frac{1}{2}-\theta}}^R\|_{B(L^p(\mathbb G))}=\|m_{Q^{-\theta}aQ^{\theta}}^L\|_{B(L^p(\mathbb G))}.
\end{equation}
Note also that by Lemma \ref{lem: R isometry Lp} and Lemma \ref{lem:fourier series with sigma}, for all $t\in\mathbb R$,
\begin{align*}
\|m_{Q^{(-\frac{1}{2}+\theta)(1+\mathrm i t)}a^*Q^{ (\frac{1}{2}-\theta)(1+\mathrm i t)}}^R\|_{B(L^p(\mathbb G))}
	&=\|\sigma_{(-\frac{1}{2}+\theta)t}\circ m_{Q^{-\frac{1}{2}+\theta}a^*Q^{ \frac{1}{2}-\theta}}^R \circ \sigma_{(\frac{1}{2}-\theta)t}\|_{B(L^p(\mathbb G))}\\
&	=\|m_{Q^{-\frac{1}{2}+\theta}a^*Q^{ \frac{1}{2}-\theta}}^R\|_{B(L^p(\mathbb G))}.
\end{align*}
Hence, applying the Stein interpolation theorem,   \eqref{eq:a bdd pf bis} and \eqref{eq:a bdd pf}  yields
\begin{align*}
\|m_{Q^{-\frac{1}{4}+\frac{\theta}{2}}a^*Q^{ \frac{1}{4}-\frac{\theta}{2 }}}^R\|_{B(L^2(\mathbb G))}
& \leq \|m_{a^*}^R\|_{B(L^{p'}(\mathbb G))}^{\frac{1}{2}}
\|m_{Q^{-\frac{1}{2}+\theta}a^*Q^{ \frac{1}{2}-\theta}}^R\|_{B(L^p(\mathbb G))}^{\frac{1}{2}} \\
&=\|m_a^R\|_{B(L^p(\mathbb G))}^{\frac{1}{2}}
\|m_{Q^{-\theta}aQ^{\theta}}^L\|_{B(L^p(\mathbb G))}^{\frac{1}{2}}.
\end{align*}
Now applying Lemma \ref{lem:multiplier on ltwo}, we get
$$\|Q^{\frac{1}{4}-\frac{\theta}{2}}aQ^{ -\frac{1}{4}+\frac{\theta}{2 }}\|_\infty \leq 
\|m_a^R\|_{B(L^p(\mathbb G))}^{\frac{1}{2}}
\|m_{Q^{-\theta}aQ^{ \theta}}^L\|_{B(L^p(\mathbb G))}^{\frac{1}{2}},$$
as desired.
\end{proof}

In particular, we may state the following corollary. 
\begin{cor}
	Assume that $\mathbb G$ is of Kac type. Let $1\leq p\leq \infty$. For all $a\in\prod_{\pi}B(H_{\pi})$, we have
	$$\|a\|_\infty \leq \|a\|_{\mathrm{M}(L^p(\mathbb G))}.$$
\end{cor}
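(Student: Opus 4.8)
The plan is to deduce the corollary directly from Proposition \ref{prop:multiplier bdd quantum} by exploiting the fact that for a Kac type quantum group the modular element is trivial. Since $\mathbb{G}$ is of Kac type we have $Q_\pi = \mathrm{Id}_{H_\pi}$ for every $\pi \in \irr$, hence $Q = (Q_\pi)_\pi = 1$ in $\prod_\pi B(H_\pi)$, so that all the twisted conjugations $Q^s (\cdot) Q^{-s}$ occurring in Proposition \ref{prop:multiplier bdd quantum} and in the definition of $\|\cdot\|_{\mathrm{M}(L^p(\mathbb G))}$ collapse to the identity.

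First I would fix any $\theta \in [0,1]$ (for definiteness $\theta = 1/p$) and apply Proposition \ref{prop:multiplier bdd quantum}. Because $Q = 1$ we have $Q^{\frac14 - \frac\theta2} a Q^{-\frac14 + \frac\theta2} = a$ and $Q^{-\theta} a Q^{\theta} = a$, so the inequality of the proposition becomes
$$\|a\|_\infty \leq \|m_a^R\|_{B(L^p(\mathbb G))}^{1/2}\,\|m_a^L\|_{B(L^p(\mathbb G))}^{1/2}.$$
Next I would note that, again because $Q = 1$, the defining formula for the multiplier norm reduces to $\|a\|_{\mathrm{M}(L^p(\mathbb G))} = \max\{\|m_a^L\|_{B(L^p(\mathbb G))},\,\|m_a^R\|_{B(L^p(\mathbb G))}\}$. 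Combining this with the displayed estimate and the elementary bound $\sqrt{st} \leq \max\{s,t\}$ for $s,t \geq 0$ gives $\|a\|_\infty \leq \|a\|_{\mathrm{M}(L^p(\mathbb G))}$, which is the claim; the case $a \notin \mathrm{M}(L^p(\mathbb G))$ is vacuous since then the right-hand side is by convention infinite.

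There is essentially no obstacle here: the entire content of the corollary is already carried by Proposition \ref{prop:multiplier bdd quantum}, and the only thing to verify is the bookkeeping of the powers of $Q$, which is immediate once $Q = 1$. (If one wished to avoid invoking the full strength of Proposition \ref{prop:multiplier bdd quantum}, one could alternatively run its proof directly in the Kac case, where Lemma \ref{lem: left to right multipliers} simplifies to $\|m_a^R|_X\| = \|m_{a^*}^L|_X\|$ and the Stein interpolation step interpolates between $L^{p'}$ and $L^p$ down to $L^2$, but this is just a reorganization of the same argument.)
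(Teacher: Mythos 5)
Your proposal is correct and is exactly the argument the paper intends: the corollary is stated as an immediate consequence of Proposition \ref{prop:multiplier bdd quantum}, and in the Kac case ($Q=1$) that proposition reads $\|a\|_\infty\leq\|m_a^R\|^{1/2}\|m_a^L\|^{1/2}\leq\max\{\|m_a^L\|,\|m_a^R\|\}=\|a\|_{\mathrm M(L^p(\mathbb G))}$, precisely as you compute. The bookkeeping with $\theta$ and with the $Q$-twists in the definition of $\|\cdot\|_{\mathrm M(L^p(\mathbb G))}$ is handled correctly.
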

In the following paragraph we present a special subspace of $C_r(\mathbb{G})$,
which, in the classical case, is known as the \emph{Fourier algebra
}introduced by Eymard \cite{eymard1964fourieralg}, as explained in
\cite[p.367]{hewittross1970abstract}. Recall that we always identify $C_r(\mathbb{G})\subset L^\infty (\mathbb G)$ as a subspace of $L^1(\mathbb G)$.
\begin{prop}\label{prop: fourier alg}
Let $A(\mathbb{G})=\{x\in L^{1}(\mathbb{G}):\hat{x}\in\ell^{1}(\hat{\mathbb{G}})\}$.
Then $A(\mathbb{G})\subset C_r(\mathbb{G})$ and 
\begin{equation}
\|x\|_{\infty}\leq\|\hat{x}\|_{1},\quad x\in A(\mathbb{G}).\label{eq:L_infty and l_1 absolute cv series}
\end{equation}
Moreover, if we let $\|x\|_{A}=\|\hat{x}\|_{1}$ for $x\in A(\mathbb{G})$,
then $(A(\mathbb{G}),\|\cdot\|_{A})$ is a Banach space isometrically
isomorphic to $\ell^{1}(\hat{\mathbb{G}})$.\end{prop}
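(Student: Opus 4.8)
The plan is to work entirely through the Fourier transform $\mathcal{F}$ and the identification $\ell^1(\hat{\mathbb{G}})\cong \ell^\infty(\hat{\mathbb{G}})_*$ from the preliminaries. First I would record what $A(\mathbb{G})$ is as a normed space: by definition $x\in A(\mathbb{G})$ iff $x\in L^1(\mathbb{G})$ and $\hat x\in\ell^1(\hat{\mathbb{G}})$. Since $\mathcal{F}$ is injective on $L^1(\mathbb{G})$ (Proposition \ref{prop: l1 into c0}), the map $x\mapsto\hat x$ is a linear bijection from $A(\mathbb{G})$ onto its image in $\ell^1(\hat{\mathbb{G}})$, and I want to show this image is all of $\ell^1(\hat{\mathbb{G}})$ and that $\|x\|_A:=\|\hat x\|_1$ makes it isometric. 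Surjectivity: given $a=(a_\pi)_\pi\in\ell^1(\hat{\mathbb{G}})$, define the functional $\varphi_a$ on $\mathrm{Pol}(\mathbb{G})$ by prescribing its Fourier coefficients to be $a$ — concretely, using the inversion formula \eqref{fourier series} one sets $x_a=\sum_{\pi}d_\pi(\iota\otimes\mathrm{Tr})[(1\otimes a_\pi Q_\pi)u^{(\pi)}]$, and I must check this series converges in $C_r(\mathbb{G})$ (indeed absolutely in the C*-norm), which is exactly inequality \eqref{eq:L_infty and l_1 absolute cv series} applied termwise; this simultaneously yields $x_a\in A(\mathbb{G})$ with $\hat{x_a}=a$ and the inclusion $A(\mathbb{G})\subset C_r(\mathbb{G})$.

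So the real content is the norm estimate \eqref{eq:L_infty and l_1 absolute cv series}: $\|x\|_\infty\le\|\hat x\|_1$ for $x\in A(\mathbb{G})$. Here is where I would do the actual work. Fix $x$ with $\hat x=(\hat x(\pi))_\pi\in\ell^1(\hat{\mathbb{G}})$. Write out the partial sums $x_F=\sum_{\pi\in F}d_\pi(\iota\otimes\mathrm{Tr})[(1\otimes\hat x(\pi)Q_\pi)u^{(\pi)}]$ over finite $F\subset\irr$; these lie in $\mathrm{Pol}(\mathbb{G})$. For a single $\pi$ I would estimate $\|d_\pi(\iota\otimes\mathrm{Tr})[(1\otimes\hat x(\pi)Q_\pi)u^{(\pi)}]\|_{C_r(\mathbb{G})}$ by $d_\pi\mathrm{Tr}|\,\hat x(\pi)Q_\pi|$, using that $u^{(\pi)}$ is a unitary in $C_r(\mathbb{G})\otimes B(H_\pi)$ so that the "matrix-coefficient against a trace-class operator" operation is bounded by the trace norm of that operator: for any $b\in B(H_\pi)$, $\|(\iota\otimes\mathrm{Tr})[(1\otimes b)u^{(\pi)}]\|\le\|b\|_1$ (bound $b$ by a linear combination of rank-one operators via its polar/singular decomposition and use $\|u_{ij}^{(\pi)}\|\le\|u^{(\pi)}\|=1$; more cleanly, $(\iota\otimes\mathrm{Tr})[(1\otimes b)u]=(\iota\otimes\omega_b)(u)$ for the functional $\omega_b=\mathrm{Tr}(b\,\cdot)$ of norm $\|b\|_1$, and slicing a unitary by a functional of norm $\le 1$ gives an element of norm $\le 1$). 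Summing over $\pi\in F$ gives $\|x_F\|_\infty\le\sum_{\pi\in F}d_\pi\mathrm{Tr}|\hat x(\pi)Q_\pi|$, hence the net $(x_F)$ is Cauchy in $C_r(\mathbb{G})$ and converges to some $y\in C_r(\mathbb{G})$ with $\|y\|_\infty\le\|\hat x\|_1$. Finally I must identify $y$ with $x$: both have the same Fourier transform (the partial sums converge to $x$ in $L^2$ by Proposition \ref{prop:Plancherel}(a), and to $y$ in $C_r(\mathbb{G})\subset L^1(\mathbb{G})$, and $\mathcal{F}$ is injective on $L^1$, so by passing through $L^2$-convergence of a subnet one gets $\hat y=\hat x$, whence $y=x$). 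This gives \eqref{eq:L_infty and l_1 absolute cv series}.

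With \eqref{eq:L_infty and l_1 absolute cv series} in hand the rest is bookkeeping. The map $\Phi:\ell^1(\hat{\mathbb{G}})\to A(\mathbb{G})$, $a\mapsto x_a$, is a linear bijection (injectivity from $\mathcal{F}$ injective, surjectivity from the construction above), it satisfies $\|x_a\|_A=\|\hat{x_a}\|_1=\|a\|_1$ by definition, so $(A(\mathbb{G}),\|\cdot\|_A)$ is isometrically isomorphic to $\ell^1(\hat{\mathbb{G}})$ and in particular is a Banach space (completeness is inherited). The inclusion $A(\mathbb{G})\subset C_r(\mathbb{G})$ and inequality \eqref{eq:L_infty and l_1 absolute cv series} were established along the way.

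I expect the main obstacle to be the convergence/identification step in proving \eqref{eq:L_infty and l_1 absolute cv series}: one must carefully juggle three topologies — the C*-norm on $C_r(\mathbb{G})$ in which the partial sums are shown Cauchy, the $L^2$-norm in which Proposition \ref{prop:Plancherel} guarantees they converge to $x$, and the $L^1$-embedding through which these are compared — to conclude that the $C_r(\mathbb{G})$-limit really is $x$ and not merely some element with the same Fourier data modulo the injectivity of $\mathcal{F}$ on $L^1$. The per-representation trace-norm bound $\|(\iota\otimes\mathrm{Tr})[(1\otimes b)u^{(\pi)}]\|\le\|b\|_1$ is the one genuinely new computational input, and it is short once one views it as slicing the unitary $u^{(\pi)}$ by the functional $\mathrm{Tr}(b\,\cdot)$ on $B(H_\pi)$.
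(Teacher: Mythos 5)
Your proposal follows essentially the same route as the paper: the per-representation estimate $\|(\iota\otimes\mathrm{Tr})[(1\otimes b)u^{(\pi)}]\|_{\infty}\leq\mathrm{Tr}(|b|)$ obtained by slicing the unitary $u^{(\pi)}$ against a functional of norm at most one (the paper slices with $\omega\in L^{\infty}(\mathbb{G})_{*}$ and applies the H\"older inequality for $\mathrm{Tr}$; you slice with $\mathrm{Tr}(b\,\cdot)$ on $B(H_{\pi})$ --- the same computation read from the other side), followed by a Cauchy-net argument in $C_r(\mathbb{G})$ and identification of the limit via the injectivity of $\mathcal{F}$ on $L^{1}(\mathbb{G})$. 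The paper approximates $\hat{y}$ by arbitrary elements of $c_{c}(\hat{\mathbb{G}})$ in the $\ell^{1}$-norm rather than by the partial sums of $y$ itself, but this is a cosmetic difference, as is your direct construction of $x_a$ for the surjectivity statement versus the paper's reuse of the same approximation argument.

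The one step you should repair is the identification of the $C_r(\mathbb{G})$-limit $y$ of the partial sums $x_F$ with the original $x$. You invoke the $L^{2}$-convergence of Proposition \ref{prop:Plancherel}(a), but an element of $A(\mathbb{G})$ is by definition only in $L^{1}(\mathbb{G})$, and there is no a priori reason it should lie in $L^{2}(\mathbb{G})$, so that proposition does not apply to $x$ directly. The fix costs nothing and uses only tools you already cite: by Proposition \ref{prop:Plancherel} applied to the polynomial $x_F$ one has $\hat{x}_F(\pi)=\hat{x}(\pi)$ for $\pi\in F$ and $\hat{x}_F(\pi)=0$ otherwise; since $x_F\to y$ in $C_r(\mathbb{G})$, hence in $L^{1}(\mathbb{G})$, and $\mathcal{F}$ is contractive into $\ell^{\infty}(\hat{\mathbb{G}})$ by Proposition \ref{prop: l1 into c0}, one gets $\hat{y}(\pi)=\lim_{F}\hat{x}_F(\pi)=\hat{x}(\pi)$ for every $\pi$, and then the injectivity of $\mathcal{F}$ on $L^{1}(\mathbb{G})$ gives $y=x$. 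This is precisely how the paper closes the loop. With that change your argument is complete.
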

\begin{proof}
Firstly we show that 
\begin{equation}
\|x\|_{\infty}\leq\|\hat{x}\|_{1},\quad x\in\mathrm{Pol}(\mathbb{G}).\label{eq:L_infty l_1 finite absolute cv series}
\end{equation}
Choose an $x\in\mathrm{Pol}(\mathbb{G})$. For each $\pi\in\irr$,
by the identification $L^{\infty}(\mathbb{G})=(L^{\infty}(\mathbb{G})_{*})^{*}$
and the Hölder inequality on $\mathrm{Tr}$ we have 
\begin{align*}
\|(\iota\otimes\mathrm{Tr})\left[(1\otimes\hat{x}(\pi)Q_{\pi})u^{(\pi)}\right]\|_{\infty} & =\sup_{\omega\in L^{\infty}(\mathbb{G})_{*},\|\omega\|=1}\left|\omega\left((\iota\otimes\mathrm{Tr})\left[(1\otimes\hat{x}(\pi)Q_{\pi})u^{(\pi)}\right]\right)\right|\\
 & =\sup_{\omega\in L^{\infty}(\mathbb{G})_{*},\|\omega\|=1}\left|\mathrm{Tr}\left(\hat{x}(\pi)Q_{\pi}\left[(\omega\otimes \iota)u^{(\pi)}\right]\right)\right|\\
 & \leq\sup_{\omega\in L^{\infty}(\mathbb{G})_{*},\|\omega\|=1}\mathrm{Tr}(|\hat{x}(\pi)Q_{\pi}|)\|(\omega\otimes \iota)u^{(\pi)}\|\\
 & \leq\mathrm{Tr}(|\hat{x}(\pi)Q_{\pi}|).
\end{align*}
Therefore by (\ref{fourier series}), 
\[
\|x\|_{\infty}\leq\sum_{\pi\in\irr}d_{\pi}\|(\iota\otimes\mathrm{Tr})\left[(1\otimes\hat{x}(\pi)Q_{\pi})u^{(\pi)}\right]\|_{\infty}\leq\sum_{\pi\in\irr}d_{\pi}\mathrm{Tr}(|\hat{x}(\pi)Q_{\pi}|) = \|\hat{x}\|_{1}.
\]
Now given $y\in A(\mathbb{G})$, we note that $c_{c}(\hat{\mathbb{G}})$
is dense in $\ell^{1}(\hat{\mathbb{G}})$ and $\mathcal{F}(\mathrm{Pol}(\mathbb{G}))=c_{c}(\hat{\mathbb{G}})$,
so we may take $x_{n}\in\mathcal{\mathrm{Pol}}(\mathbb{G})$ such
that $\|\hat{x}_{n}-\hat{y}\|_{1}\to0$. So $(\hat{x}_{n})_{n}$ is
$\|\cdot\|_{1}$-Cauchy, and hence by (\ref{eq:L_infty l_1 finite absolute cv series})
the sequence $(x_{n})_{n}$ is $\|\cdot\|_{\infty}$-Cauchy in $\mathrm{Pol}(\mathbb{G})$.
Since $\mathrm{Pol}(\mathbb{G})$ is dense in $C_r(\mathbb{G})$, the
sequence $(x_{n})_{n}$ in $\mathrm{Pol}(\mathbb{G})$ converges to
some $x\in C_r(\mathbb{G})$. Note that $\|\cdot\|_{1}\leq\|\cdot\|_{\infty}$
on $L^{\infty}(\mathbb{G})$, we have also $\|x_{n}-x\|_{1}\to0$.
Then according to the contractive property of $\mathcal{F}$, it holds
that $\hat{x}(\pi)=\lim_{n}\hat{x}_{n}(\pi)=\hat{y}(\pi)$
for all $\pi\in\irr$. So $y=x\in C_r(\mathbb{G})$ according to the injectivity of $\mathcal F$, and $\|y\|_{\infty}=\lim_{n}\|x_{n}\|_{\infty}\leq\lim_{n}\|\hat{x}_{n}\|_{1}=\|\hat{y}\|_{1}$.
As a result $A(\mathbb{G})\subset C_r(\mathbb{G})$ and (\ref{eq:L_infty and l_1 absolute cv series})
is proved.

Now we set $\|x\|_{A}=\|\hat{x}\|_{1}$ for all $x\in A(\mathbb{G})$,
then $(A(\mathbb{G}),\|\cdot\|_{A})$ is clearly a normed space and
the map $x\mapsto\hat{x}$ sends $(A(\mathbb{G}),\|\cdot\|_{A})$
isometrically into $\ell^{1}(\hat{\mathbb{G}})$. To see that it is surjective,
we shall show that for any $a=(a_{\pi})_{\pi\in\irr}\in \ell^{1}(\hat{\mathbb{G}})$
there exists $x\in L^{1}(\mathbb{G})$ with $\hat{x}=a$. In fact,
if we consider a sequence $x_{n}\in\mathcal{\mathrm{Pol}}(\mathbb{G})$
such that $\|\hat{x}_{n}-a\|_{1}\to0$, then the existence of $x\in L^{1}(\mathbb{G})$
with $\hat{x}=a$ follows simply from the same argument as in the preceding
paragraph. 
\end{proof}

In some literature the Fourier algebra $A(\mathbb G)$ is also defined simply to be the space $\ell^1(\hat{\mathbb G})$. Note that the identification that we use yields also the product on $A(\mathbb G)$ (convolution product of $\ell^1(\hat{\mathbb G})\cong\ell^\infty(\hat{\mathbb G})_*$, well-known in the theory of locally compact quantum groups), which makes $A(\mathbb G)$ a Banach algebra. 

\section{Sidon sets}

Let $\mathbb{G}$ be a compact quantum group and $\mathbf{E}$ be
a subset of $\irr$. Define 
$$
L_{\mathbf{E}}^{\infty}(\mathbb{G})  =  \{x\in L^\infty (\mathbb{G}):\hat{x}(\pi)=0\textrm{ if }\pi\in\irr\backslash\mathbf{E}\},
$$
\[
C_{\mathbf{E}}(\mathbb{G})=C_r(\mathbb{G})\cap L_{\mathbf{E}}^{\infty}(\mathbb{G}),\quad\mathrm{Pol}_{\mathbf{E}}(\mathbb{G})=\mathrm{Pol}(\mathbb{G})\cap L_{\mathbf{E}}^{\infty}(\mathbb{G}),
\]
and

\[
\ell^{\infty}(\mathbf{E})=\{(a_{\pi})_{\pi\in\irr}\in\ell^{\infty}(\hat{\mathbb{G}}):a_{\pi}=0\textrm{ if }\pi\in\irr\backslash\mathbf{E}\},
\]
\[
\ell^{1}(\mathbf{E})=\ell^{1}(\hat{\mathbb{G}})\cap\ell^{\infty}(\mathbf{E}),\ c_{0}(\mathbf{E})=c_{0}(\hat{\mathbb{G}})\cap\ell^{\infty}(\mathbf{E}),\ c_{c}(\mathbf{E})=c_{c}(\hat{\mathbb{G}})\cap\ell^{\infty}(\mathbf{E}).
\]
Then the subspaces $L_{\mathbf{E}}^{\infty}(\mathbb{G}), C_{\mathbf{E}}(\mathbb{G}), \ell^{\infty}(\mathbf{E}), \ell^{1}(\mathbf{E})$ are all closed subspaces. Note that we may identify the duality between spaces $$c_0(\mathbf E)^*=\ell^1(\mathbf E),\quad \ell^1(\mathbf E)^*=\ell^\infty (\mathbf E)$$ 
via the bracket $\langle a,b \rangle=\hat h(ba)=\sum_{\pi}d_\pi\mathrm{Tr}(a_\pi Q_\pi b_\pi)$ for $a\in \ell^1(\mathbf E),b\in \ell^\infty (\mathbf E)$. 
\begin{defn}\label{defn: sidon}
We say that a subset $\mathbf E \subset \irr $ is a \emph{Sidon set (with constant $K$)} if there exists $K>0$ such that for any $x\in\mathrm{Pol_{\mathbf{E}}}(\mathbb{G})$, we have
$$\|\hat{x}\|_{1}\leq K\|x\|_{\infty}.$$
\end{defn}
\begin{rem}\label{rem:sidon union finite}
	Any finite subset of $\irr $ is clearly a Sidon set. Let $\mathbf{E}\subset\mathrm{Irr}(\mathbb{G})$
	be a Sidon set and $\mathbf{F}\subset\mathrm{Irr}(\mathbb{G})$ be
	finite. Then $\mathbf{E}\cup\mathbf{F}$ is also a Sidon set. In fact, assume without loss of generality that $\mathbf{E}\cup\mathbf{F}=\emptyset$, and take $x\in \mathrm{Pol}_{\mathbf E}(\mathbb G)$ and $y\in \mathrm{Pol}_{\mathbf F}(\mathbb G)$. Since $\mathrm{Pol}_{\mathbf F}(\mathbb G)$ is finite-dimensional, it is complemented in $L^\infty(\mathbb G)$. Therefore, there exist two constants $K_1,K_2>0$ such that
	\begin{align*}
		\|\hat x +\hat y \|_1 &\leq \|\hat x\|_1 +\|\hat y\|_1 \leq K_1\|x\|_\infty +K_1 \|y\|_\infty \leq K_1 \|x+y\|_\infty +2K_1 \|y\|_\infty\\
		&  \leq K_1 \|x+y\|_\infty +2K_1K_2 \|x+y\|_\infty  =(K_1+2K_1K_2)\|x+y\|_\infty.
	\end{align*}
\end{rem}

We first  give the following fundamental characterizations of Sidon set, extending the classical result of \cite[(37.2)]{hewittross1970abstract} for compact groups. On the other hand, this result establishes the equivalence between the so-called strong Sidon sets (i.e. sets with condition (3) in the theorem below) and Sidon sets in non-amenable discrete groups, which had been open since the   work of Picardello \cite{picardello73lacunary} in 1970s. Our approach is different from the idea of \cite{hewittross1970abstract,picardello73lacunary}. In hindsight, the proof in \cite{hewittross1970abstract} depends essentially on the coamenability of the compact group, which does not apply to the more general cases in the quantum setting. Instead, we use a simpler argument via duality.
\begin{thm}
\label{thm:sidon_l1_interpolation state}Let $\mathbb{G}$ be a compact
quantum group and $\mathbf{E}$ be a subset of $\irr$. The following
assertions are equivalent:
\begin{enumerate}[label=\emph{(\arabic{enumi})}]
\item  $\mathbf{E}$ is a Sidon set; 
\item for any $a\in\ell^{\infty}(\mathbf{E})$, there exists $\varphi\in C_r(\mathbb{G})^{*}$
such that $\hat{\varphi}(\pi)=a_{\pi}$ for all $\pi\in\mathbf{E}$;
\item for any $a\in c_{0}(\mathbf{E})$, there exists $x\in L^{1}(\mathbb{G})$
such that $\hat{x}(\pi)=a_{\pi}$ for all $\pi\in\mathbf{E}$;
\item $L_{\mathbf{E}}^{\infty}(\mathbb{G})\subset A(\mathbb{G})$;
\item $C_{\mathbf{E}}(\mathbb{G})\subset A(\mathbb{G})$;
\item there exists $K>0$ such that for any $x\in C_{\mathbf{E}}(\mathbb{G})$,
$\|\hat{x}\|_{1}\leq K\|x\|_{\infty}$; 
\item there exists $K>0$ such that for any $x\in L_{\mathbf{E}}^{\infty}(\mathbb{G})$,
$\|\hat{x}\|_{1}\leq K\|x\|_{\infty}$.
\end{enumerate}
\end{thm}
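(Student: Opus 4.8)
The plan is to establish the cycle of implications
\[
(1)\Rightarrow(7)\Rightarrow(6)\Rightarrow(5)\quad\text{and}\quad (4)\Rightarrow(5),\qquad (5)\Rightarrow(2)\Rightarrow(3)\Rightarrow(4)\Rightarrow(1),
\]
so that in the end all seven statements are equivalent. Several of the links are essentially formal. The implications $(7)\Rightarrow(6)$ and $(4)\Rightarrow(5)$ are trivial restrictions (a function in $C_{\mathbf E}(\mathbb G)$ lies in $L^\infty_{\mathbf E}(\mathbb G)$), and $(6)\Rightarrow(5)$ is immediate from Proposition \ref{prop: fourier alg}: the inequality $\|\hat x\|_1\le K\|x\|_\infty$ says precisely that $\hat x\in\ell^1(\hat{\mathbb G})$, i.e. $x\in A(\mathbb G)$. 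Likewise $(1)$ is the statement $(6)$ restricted to $\mathrm{Pol}_{\mathbf E}(\mathbb G)$, and since $\mathrm{Pol}_{\mathbf E}(\mathbb G)$ is $\|\cdot\|_\infty$-dense in $C_{\mathbf E}(\mathbb G)$ while $\mathcal F$ is contractive into $c_0$, the inequality in $(1)$ propagates to $C_{\mathbf E}(\mathbb G)$; combined with $A(\mathbb G)\subset C_r(\mathbb G)$ one gets $(1)\Leftrightarrow(6)$, and the same Baire-category/closed-graph reasoning across $L^\infty_{\mathbf E}(\mathbb G)$ handles $(7)$. (Here one should be slightly careful: the precise route I would take is $(1)\Rightarrow$ the restricted Fourier transform $\mathcal F:L^\infty_{\mathbf E}(\mathbb G)\to\ell^1(\hat{\mathbb G})$ is everywhere defined, hence bounded by the closed graph theorem, which yields $(7)$.)

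The substantive content is the equivalence of the ``Sidon'' side $(1),(6),(7)$ with the ``interpolation'' side $(2),(3),(4)$. For $(5)\Rightarrow(2)$, I would argue by duality. Assume $L^\infty_{\mathbf E}(\mathbb G)\subset A(\mathbb G)$; by the closed graph theorem the inclusion $L^\infty_{\mathbf E}(\mathbb G)\hookrightarrow A(\mathbb G)=\ell^1(\mathbf E)$ is bounded, so there is $K$ with $\|\hat x\|_1\le K\|x\|_\infty$ for all $x\in L^\infty_{\mathbf E}(\mathbb G)$. Now the restriction-of-Fourier-coefficients map $L^\infty(\mathbb G)\to\ell^\infty(\mathbf E)$, $x\mapsto(\hat x(\pi))_{\pi\in\mathbf E}$ is (ultraweakly–weak$^*$) continuous and, the previous bound says, bounded below on the weak$^*$-closed subspace $L^\infty_{\mathbf E}(\mathbb G)$ when the latter carries the $\ell^1$-norm via $\mathcal F$. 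Dualizing: the predual map is the natural map sending a functional on $C_r(\mathbb G)$ (or on $\ell^1(\mathbf E)\cong c_0(\mathbf E)^*$) to its Fourier coefficients; surjectivity of this predual map onto $\ell^\infty(\mathbf E)$ is exactly $(2)$. Concretely I would take $a\in\ell^\infty(\mathbf E)$, view it as a functional on $\ell^1(\mathbf E)$ via the bracket $\langle a,b\rangle=\hat h(ba)$, pull it back along $\mathcal F|_{C_{\mathbf E}(\mathbb G)}$ to get a bounded functional on $C_{\mathbf E}(\mathbb G)$, extend by Hahn–Banach to $\psi\in C_r(\mathbb G)^*$, and check that $\hat\psi(\pi)=a_\pi$ for $\pi\in\mathbf E$ by testing against the matrix coefficients $u^{(\pi)}_{ij}$; the computation of $\hat\psi(\pi)$ reduces via \eqref{eq:fourier series with S} and the pairing \eqref{eq:haar state def} to identifying a trace functional, which matches $a_\pi$ by construction. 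Then $(2)\Rightarrow(3)$: given $a\in c_0(\mathbf E)$, use $(2)$ to get $\varphi\in C_r(\mathbb G)^*$ with $\hat\varphi|_{\mathbf E}=a$; since $a\in c_0$ one can ``smooth'' $\varphi$ into an $L^1$-function with the same restricted Fourier data — for instance by a Fejér/averaging argument, or more cleanly by the standard open-mapping observation that $(2)$ forces a uniform constant $K$ and then $c_0(\mathbf E)\subset\overline{\mathcal F(\mathrm{Pol}_{\mathbf E}(\mathbb G))}^{\,\|\cdot\|_?}$ can be lifted using that $\mathcal F:L^1(\mathbb G)\to c_0(\hat{\mathbb G})$ is as in Proposition \ref{prop: l1 into c0}. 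Finally $(3)\Rightarrow(4)$: given $x\in L^\infty_{\mathbf E}(\mathbb G)$, one wants $\hat x\in\ell^1$; pair $x$ against the $L^1$-functions produced by $(3)$ for arbitrary $a\in c_0(\mathbf E)$ (the pairing $\langle x,y\rangle$ picks out $\sum_\pi d_\pi\mathrm{Tr}(\hat x(\pi)Q_\pi\hat y(\pi)^*)$-type sums), and conclude $\hat x$ defines a bounded functional on $c_0(\mathbf E)$, hence lies in $\ell^1(\mathbf E)=c_0(\mathbf E)^*$.

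**Main obstacle.** The routine-looking steps hide one genuine difficulty: the map $(3)\Rightarrow(4)$ requires a clean duality identity between $L^\infty_{\mathbf E}(\mathbb G)$ and $c_0(\mathbf E)$ realized through the Fourier transform, and the modular matrices $Q_\pi$ intervene in every pairing. Because $\mathbb G$ need not be of Kac type, one must keep track of where $Q_\pi$ and $Q_\pi^{-1}$ sit — in the Haar-state formulas \eqref{eq:haar state def}, in the inversion formula \eqref{fourier series}, and in the bracket $\langle a,b\rangle=\sum_\pi d_\pi\mathrm{Tr}(a_\pi Q_\pi b_\pi)$ — and verify these are consistent so that boundedness of $x\mapsto\langle \hat x,\cdot\rangle$ on $c_0(\mathbf E)$ really yields $\hat x\in\ell^1(\mathbf E)$ with a comparable norm. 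I expect this bookkeeping (rather than any deep idea) to be the crux; the conceptual engine is simply Hahn–Banach plus the closed graph theorem, together with Propositions \ref{prop: l1 into c0}, \ref{prop:Plancherel} and \ref{prop: fourier alg}, and crucially it never invokes coamenability — which is exactly why it works in the non-amenable discrete-group case left open by Picardello.
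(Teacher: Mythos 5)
Your toolkit (Hahn--Banach, duality, open mapping, and the deliberate avoidance of coamenability) is the right one, and your $(5)\Rightarrow(2)$ step is essentially the paper's $(1)\Rightarrow(2)$. But the chain you propose does not close, because its first link $(1)\Rightarrow(7)$ (and the variant $(1)\Rightarrow(6)$) is unjustified. You assert that $\mathrm{Pol}_{\mathbf{E}}(\mathbb{G})$ is $\|\cdot\|_\infty$-dense in $C_{\mathbf{E}}(\mathbb{G})$, and that under (1) the Fourier transform is ``everywhere defined'' from $L^{\infty}_{\mathbf{E}}(\mathbb{G})$ into $\ell^{1}(\hat{\mathbb{G}})$ so that the closed graph theorem applies. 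Neither claim is available at that stage: $C_{\mathbf{E}}(\mathbb{G})$ and $L^{\infty}_{\mathbf{E}}(\mathbb{G})$ are defined as intersections, and there is no a priori reason why an element of $C_r(\mathbb{G})$ with Fourier support in $\mathbf{E}$ is a norm limit of polynomials supported in $\mathbf{E}$ --- in the classical case this is exactly where Fej\'er summability, i.e.\ coamenability, enters, and the density of $\mathrm{Pol}_{\mathbf{E}}(\mathbb{G})$ in $C_{\mathbf{E}}(\mathbb{G})$ is recorded in the paper as a \emph{consequence} of the theorem, not an input. (The naive alternative, bounding finite partial sums $\|\widehat{P_Fx}\|_1\leq K\|P_Fx\|_\infty$ by (1), fails because the Dirichlet-type projections $P_F$ are not uniformly bounded on $L^\infty$.) Likewise ``$\mathcal{F}$ maps all of $L^{\infty}_{\mathbf{E}}(\mathbb{G})$ into $\ell^1$'' is precisely assertion (4), the hardest part of the statement and the content that settles Picardello's question; the paper reaches it only by first proving $(1)\Rightarrow(2)\Rightarrow(3)$ and then arguing $(3)\Rightarrow(7)$ by duality.

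Your $(3)\Rightarrow(4)$ sketch hides the same difficulty elsewhere. To show that $a\mapsto\hat h(a^{*}\hat{x})$ is bounded on $c_{0}(\mathbf{E})$ by pairing $x\in L^{\infty}_{\mathbf{E}}(\mathbb{G})$ with an $L^1$-lift $\psi$ of $a$, you must know that $\psi(x^{*})$ depends only on $\hat{\psi}|_{\mathbf{E}}$, i.e.\ that every $\psi\in L^{1}(\mathbb{G})$ with $\hat{\psi}|_{\mathbf{E}}=0$ satisfies $\psi(x^{*})=0$ for all $x\in L^{\infty}_{\mathbf{E}}(\mathbb{G})$. For polynomials this is orthogonality, but for general $x$ it is a nontrivial synthesis-type statement, which the paper derives from the Sidon inequality itself via the estimate on $(\iota\otimes(y_nh-\psi)^{*})\Delta(x)$; the $Q_\pi$ bookkeeping you flag as the crux is routine by comparison, and this annihilation lemma is the genuinely missing ingredient. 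Finally, in $(2)\Rightarrow(3)$ a Fej\'er/averaging argument is again unavailable without coamenability; the clean route is the Banach-space fact that a bounded operator is surjective if and only if its second adjoint is, applied to $\sigma:L^{1}(\mathbb{G})\to c_{0}(\mathbf{E})$, $\psi\mapsto\hat{\psi}|_{\mathbf{E}}$.
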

\begin{proof}
From Proposition \ref{prop: fourier alg}, it is easy to see that the condition (4) implies the surjectivity of the inverse Fourier transform $\mathcal F ^{-1}:\ell^1(\mathbf E)\to L^\infty_{\mathbf E}(\mathbb G)$. 
Then the equivalence $(4)\Leftrightarrow(7)$
follows from the open mapping theorem. Similarly, we may obtain $(5)\Leftrightarrow(6)$. Also, the implications $(7) \Rightarrow (6) \Rightarrow (1)$ are trivial. Let us show  $(1)\Rightarrow(2)\Rightarrow (3) \Rightarrow (1)$ and $(3) \Rightarrow(7)$. In the following set $\mathrm{Pol}_{\mathbf{E}}(\mathbb{G})^c=\{x\in\mathrm{Pol}(\mathbb{G}):x^{*}\in\mathrm{Pol}_{\mathbf{E}}(\mathbb{G})\}$.

$(1)\Rightarrow(2)$. 
Take $a\in\ell^{\infty}
(\mathbf{E})$. We consider
the functional
\[
\varphi:\ \mathrm{Pol}_{\mathbf{E}}(\mathbb{G})^c\to\mathbb{C},\quad x\mapsto\overline{\hat{h}(a^{*}\widehat{x^{*}})}.
\]
According to (1), we have $$|\varphi(x)|=|\hat{h}(a^{*}\widehat{x^{*}})|\leq\|a^{*}\|_{\infty}\|\widehat{x^{*}}\|_{1}\leq K\|a^{*}\|_{\infty}\|x^{*}\|_{\infty}=K\|a\|_{\infty}\|x\|_{\infty},$$
so $\varphi$ is continuous on $\mathrm{Pol}_{\mathbf{E}}(\mathbb{G})^c$ and it has a Hahn-Banach extension
to $C_r(\mathbb{G})$. We still denote the extension by $\varphi$.
Recall that by Proposition \ref{prop:Plancherel}, $$(u_{ji}^{(\pi)})\,\hat{}\,(\pi)=d_\pi^{-1}e_{ij}^{(\pi)}Q_\pi^{-1}.$$
We have for $\pi\in\mathbf{E}$,
\begin{align*}
\hat{\varphi}(\pi) & =(\varphi\otimes\iota)((u^{(\pi)})^{*})=\sum_{1\leq i,j\leq n(\pi)}\varphi((u_{ji}^{(\pi)})^{*})e_{ij}^{(\pi)}\\
 & =\sum_{1\leq i,j\leq n(\pi)}\overline{\hat{h}(a^{*}(u_{ji}^{(\pi)})\,\hat{}\,(\pi))}e_{ij}^{(\pi)}=d_{\pi}^{-1}\sum_{1\leq i,j\leq n(\pi)}\overline{\hat{h}(a^{*}e_{ij}^{(\pi)}Q_{\pi}^{-1})}e_{ij}^{(\pi)}\\
 & =\sum_{1\leq i,j\leq n(\pi)}\overline{\mathrm{Tr}(a_{\pi}^{*}e_{ij}^{(\pi)})}e_{ij}^{(\pi)}=\sum_{1\leq i,j\leq n(\pi)}\overline{(a_{\pi}^{*})_{ji}}e_{ij}^{(\pi)}=a_{\pi},
\end{align*}
as desired.

$(2)\Rightarrow (3)$. We  consider the map
$$\sigma:L^{1}(\mathbb{G}) \to c_{0}(\mathbf{E}),\quad \psi\mapsto\hat{\psi}|_{\mathbf{E}}.$$
Then the second adjoint map $(\sigma^*)^*$ is given by
$$\sigma^{**}:L^\infty (\mathbb{G})^* \to \ell^\infty(\mathbf{E}),\quad \psi\mapsto\hat{\psi}|_{\mathbf{E}}.$$
Note that the condition (2) means nothing but the surjectivity of $\sigma^{**}$. Recall the general fact that for a bounded map $T$ between two Banach spaces, $T$ is surjective if and only if $T^{**}$ is surjective (cf. for example \cite[3.1.22]{megginson98banachbook}). So $\sigma$ is also surjective, whence the condition (3).

$(3)\Rightarrow(1)$. The assertion (3) implies that the bounded map $\sigma:\psi\mapsto\hat{\psi}|_{\mathbf{E}}$
from $L^{1}(\mathbb{G})$ to $c_{0}(\mathbf{E})$ is surjective. 
By the open mapping theorem, we may find a constant $K>0$ such that for all $a\in c_0(\mathbf E)$, there exists $x\in L^1(\mathbb G)$ satisfying $\hat{x}(\pi)=a_{\pi}$ for all $\pi\in\mathbf{E}$ and $\|x\|_1\leq K\|a\|_\infty$.

Now consider $y\in \mathrm{Pol}_{\mathbf E}(\mathbb G)$ and let us show that $\|\hat{y}\|_{1}\leq K\|y\|_{\infty}$. Equivalently, let us prove that for all $a\in c_c(\mathbf E)$ with $\|a\|_\infty\leq 1$, we have 
\begin{equation}\label{l infty in fourier by dual}
|\hat h (a^*\hat y)|\leq K\|y\|_\infty.
\end{equation}
In the following let $y$ and $a$ be fixed as above. Choose a $\psi\in L^1(\mathbb G)$ satisfying $\hat{\psi}(\pi)=a_{\pi}$ for all $\pi\in\mathbf{E}$ and $\|\psi\|_1\leq K$. Note also that  $\psi|_{\mathrm{Pol}_{\mathbf E}(\mathbb G)^c}=(\mathcal{F}^{-1}(a)h)|_{\mathrm{Pol}_{\mathbf E}(\mathbb G)^c}$. Hence together with Proposition \ref{prop:Plancherel} and the choice of $y$, we get 
\begin{align*}
|\hat h(a^*\hat y)| & =|\hat h(\hat y^*a)|=|h(y^*\mathcal{F}^{-1}(a))+(\psi-\mathcal{F}^{-1}(a)h)(y^*)|
=|\psi(y^*)| \leq K \|y\|_\infty.
\end{align*}
Therefore the desired inequality \eqref{l infty in fourier by dual} follows.

$(3)\Rightarrow(7)$. We have proved that (3) implies (1) and on the other hand, for any set $\mathbf E \subset \irr$ satisfying (1), we have the following observation: if $\psi\in L^1(\mathbb G)$ satisfies $\hat \psi |_{\mathbf E}=0$, then for all $x\in L^\infty_{\mathbf E}(\mathbb G)$, we have $\psi(x^*)=0$. In fact, let $(y_n)\subset\mathrm{Pol}(\mathbb G)$ be a sequence such that $y_nh$ converges to $\psi$ in $L^1(\mathbb G)$ and let $x\in L^\infty_{\mathbf E}(\mathbb G)$. We then note that $\hat \psi ^*\hat x=0, (\hat y_n-\hat \psi) ^*\hat x\in c_c(\mathbf E)$ and by \eqref{eq:fourier series for phi} and \eqref{conv another} we have
\begin{align*}
|\hat h (\hat y_n ^*\hat x)|
& = |\hat h ((\hat y_n-\hat \psi) ^*\hat x)|\leq \|(\hat y_n-\hat \psi) ^*\hat x\|_1
= \|\mathcal F((\iota\otimes (y_nh-\psi)^*)\Delta(x))\|_1 \\
& \leq K\|(\iota\otimes (y_nh-\psi)^*)\Delta(x)\|_\infty 
\leq K\|y_nh-\psi\|_1\|x\|_\infty \to 0,
\end{align*} 
where $K$ is the Sidon constant for $\mathbf E$. Consequently by the choice of $y_n$ and Proposition \ref{prop:Plancherel} we have 
$$\psi(x^*)=\lim_n h(x^*y_n)=\lim_n \overline{\hat h (\hat y_n ^* \hat x)}=0,$$
as claimed. Then the implication $(3) \Rightarrow (7)$ follows from the same argument as in $(3) \Rightarrow (1)$ above.
\end{proof}
\begin{rem}
	Note that for a Sidon set $\mathbf E$, by the assertion (4) and Proposition \ref{prop: fourier alg}, we have 
	$$L^\infty_{\mathbf E}(\mathbb G)=C_{\mathbf E}(\mathbb G).$$
	By the assertion (7), $C_{\mathbf E}(\mathbb G)$ is isomorphic to a Banach subspace of $\ell^1(\mathbf E)$ via the Fourier transform. Since $c_c(\mathbf E)$ is dense in $\ell^1(\mathbf E)$, $C_{\mathbf E}(\mathbb G)$ and $\ell^1(\mathbf E)$ are indeed isomorphic. In particular, the subspace $\mathrm{Pol}_{\mathbf E}(\mathbb G)$ is dense in $C_{\mathbf E}(\mathbb G)$. 
\end{rem}
\begin{rem}\label{adaption sidon two}
Since by \eqref{eq:fourier series for phi} we know that
$(\varphi^{*}\circ S^{-1})\,\hat{}\,(\pi)=\hat{\varphi}(\pi)^{*}$
for $\pi\in\mathrm{Irr}(\mathbb{G})$, the assertion
(2) in the above theorem can be replaced by the following one:

($ 2' $) for any $a\in\ell^{\infty}(\mathbf{E})$, there exists $\varphi\in C_r(\mathbb{G})^{*}$
such that $(\varphi\circ S^{-1})\,\hat{}\,(\pi)=a_{\pi}$ for
all $\pi\in\mathbf{E}$.
\end{rem}

\begin{rem}\label{rem: sidon const}
Note that we have shown in the proof that, if $\mathbf E$ is a Sidon set of constant $K$, then the obtained elements $\varphi$ and $x$ in (2) and (3) can be chosen to have the norms not more than $K\|a\|_\infty $, respectively; conversely arguing as in the beginning of the proof $(3)\Rightarrow (1)$, if (2) or (3) holds, we may find a constant $K>0$ such that the norms of $\varphi$ or $x$ is not more than $K\|a\|_\infty$ respectively, and the obtained Sidon constant in (1) is exactly $K$.
\end{rem}

\begin{rem}
In view of some technical tricks concerning the non-traciality of Haar states, we would like to present a second proof of the implication $(1)\Rightarrow (7)$ in the above theorem under an additional assumption that $\mathbb G$ is coamenable, roughly following the idea in  \cite[(37.2)]{hewittross1970abstract}. The subtle point, which is trivially hidden in the commutative and cocommutative cases, is the fact that to directly deduce the convergence of Fourier series of $x$ from \eqref{sum diff fourier pf with coamen} below as in \cite[(37.2)]{hewittross1970abstract}, one needs to know that the convolution \eqref{conv def} defines a bounded map from $L^1(\mathbb G)\times L^\infty(\mathbb G)$ into $L^\infty(\mathbb G)$ by restriction. This is generally not clear for a compact quantum group which is not of Kac type, caused by the unboundedness of the antipode $S$. Now let us assume the coamenability of $\mathbb G$, and show the implication $(1) \Rightarrow (7)$:

Assume (1). Take $x\in L_{\mathbf{E}}^{\infty}(\mathbb{G})$. Since
$\mathbb{G}$ is coamenable and 
the subspace $\mathrm{Pol}(\mathbb{G})$ is dense in $L^{1}(\mathbb{G})$, we may choose a net $(y_{i})_{i\in I}\subset \mathrm{Pol}(\mathbb{G})$
with $\|y_{i}\|_{1}\leq 1$ such that $\lim_{i}\|x\star y_{i}-x\|_{1}=0$.
Hence for each $\pi\in\mathbf{E}$,
\[
\|\hat{y}_{i}(\pi)\hat{x}(\pi)-\hat{x}(\pi)\|_{B(H_{\pi})}\leq\|\hat{y}_{i}\hat{x}-\hat{x}\|_{\infty}\leq\|x\star y_{i}-x\|_{1}\to0.
\]
Since the norms on a finite dimensional space are equivalent, we have
for all $\pi\in\mathbf{E}$,
\[
\lim_{i}\mathrm{Tr}(|(\hat{y}_{i}(\pi)\hat{x}(\pi)-\hat{x}(\pi))Q_{\pi}|)=0.
\]
Therefore for any finite subset $\mathbf{F}\subset\mathbf{E}$ and
for any $\varepsilon>0$, we may find some $a\in c_c(\mathbf E)$
with $\|a\|_{\infty}\leq 1$ and \begin{equation}\label{sum diff fourier pf with coamen}
\sum_{\pi\in\mathbf{F}}d_{\pi}\mathrm{Tr}(|(a_\pi\hat{x}(\pi)-\hat{x}(\pi))Q_{\pi}|)<\varepsilon.
\end{equation}
Since $\mathbf E$ is a Sidon set, we may find by previous arguments (proof of $(1)\Rightarrow (2)\Rightarrow (3)$) a functional $\varphi\in L^\infty (\mathbb G)_*=L^1(\mathbb G)$ with $\|\varphi\|_1\leq K$ and $\hat{\varphi}(\pi)=a_{\pi}^*$ for
all $\pi\in\mathbf{E}$. Take $y\in \mathrm{Pol}(\mathbb G)$ such that $\|\varphi-y\|_1<\varepsilon$. Then $\|a_\pi - ((yh)^*\circ S^{-1})\,\hat{}\,(\pi)\|_{B(H_\pi)}= \|\hat{\varphi}(\pi)^*-\hat y (\pi)^*\|_{B(H_\pi)}<\varepsilon$ for $\pi\in\mathbf{E}$. Further together with \eqref{conv another}, 
\begin{align*}
  \sum_{\pi\in\mathbf{F}}d_{\pi}\mathrm{Tr}(|\hat{x}(\pi)Q_{\pi}|)
& \leq  \sum_{\pi\in\mathbf{F}}d_{\pi}\mathrm{Tr}(|a_\pi\hat{x}(\pi) Q_{\pi}|)+\sum_{\pi\in\mathbf{F}}d_{\pi}\mathrm{Tr}(|(a_\pi\hat{x}(\pi)-\hat{x}(\pi))Q_{\pi}|)\\
& \leq  \sum_{\pi\in\mathbf{F}}d_{\pi}\mathrm{Tr}(|((yh)^*\circ S^{-1})\,\hat{}\,(\pi)\hat{x}(\pi) Q_{\pi}|)+\varepsilon \sum_{\pi\in\mathbf{F}}d_{\pi}\mathrm{Tr}(|(\hat{x}(\pi)Q_{\pi}|) +\varepsilon\\
& \leq   \|\mathcal F((\iota\otimes(yh)^*)\Delta(x))\|_{1}+\varepsilon \sum_{\pi\in\mathbf{F}}d_{\pi}\mathrm{Tr}(|(\hat{x}(\pi)Q_{\pi}|)+\varepsilon \\
& \leq K\|(\iota\otimes(yh)^*)\Delta(x)\|_{\infty}+\varepsilon \sum_{\pi\in\mathbf{F}}d_{\pi}\mathrm{Tr}(|(\hat{x}(\pi)Q_{\pi}|)+\varepsilon\\
& \leq  K(K+\varepsilon)\|x\|_{\infty}+\varepsilon \sum_{\pi\in\mathbf{F}}d_{\pi}\mathrm{Tr}(|(\hat{x}(\pi)Q_{\pi}|)+\varepsilon
\end{align*}
where we have applied the property of the Sidon set to the element $(\iota\otimes(yh)^*)\Delta(x)\in\mathrm{Pol}_{\mathbf{E}}(\mathbb{G})$.
Since $\mathbf{F}$ and $\varepsilon$ are arbitrarily chosen, we
get $\|\hat{x}\|_{1}\leq K^2\|x\|_{\infty}$, as desired. Note that the constant $K^2 $ obtained here is worse than that in the previous proof.
\end{rem}

As a corollary we may give a quick proof of the non-surjectivity of the Fourier transform $\mathcal F:L^1(\mathbb G)\to c_0(\hat{\mathbb G})$ for infinite compact quantum group $\mathbb G$.

\begin{cor}
Let $\mathbb G$ be a compact quantum group. The following conditions are equivalent:

\emph{(1)} $\mathbb G$ is finite, i.e., $L^\infty (\mathbb G)$ is a finite-dimensional space;

\emph{(2)} $\irr$ is a Sidon set;

\emph{(3)} $\mathcal F:L^1(\mathbb G)\to c_0(\hat{\mathbb G})$ is surjective;

\emph{(4)} $\mathcal F:L^\infty(\mathbb G)^*\to \ell^\infty(\hat{\mathbb G})$ is surjective.
\end{cor}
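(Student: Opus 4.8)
Everything is to be deduced from Theorem~\ref{thm:sidon_l1_interpolation state} applied with $\mathbf E=\irr$ — so that $\ell^\infty(\mathbf E)=\ell^\infty(\hat{\mathbb G})$, $c_0(\mathbf E)=c_0(\hat{\mathbb G})$, $\mathrm{Pol}_{\mathbf E}(\mathbb G)=\mathrm{Pol}(\mathbb G)$, etc. — together with one classical Banach-space fact. First, $(1)\Rightarrow(2)$ is immediate: if $\mathbb G$ is finite then $\irr$ is a finite set, hence a Sidon set by Remark~\ref{rem:sidon union finite}. Next, $(2)\Leftrightarrow(3)$ is exactly the equivalence of conditions (1) and (3) in Theorem~\ref{thm:sidon_l1_interpolation state} for $\mathbf E=\irr$, since condition (3) there reads: for every $a\in c_0(\hat{\mathbb G})$ there is $x\in L^1(\mathbb G)$ with $\hat x(\pi)=a_\pi$ for all $\pi$, i.e.\ $\mathcal F\colon L^1(\mathbb G)\to c_0(\hat{\mathbb G})$ is onto. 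For $(2)\Leftrightarrow(4)$ I would likewise use condition (2) of that theorem (surjectivity of $\mathcal F$ from $C_r(\mathbb G)^*$ onto $\ell^\infty(\hat{\mathbb G})$) after observing that $\mathcal F$ has the same range on $L^\infty(\mathbb G)^*$ as on $C_r(\mathbb G)^*$: indeed $\hat\varphi$ depends only on $\varphi|_{\mathrm{Pol}(\mathbb G)}$ and $\mathrm{Pol}(\mathbb G)\subset C_r(\mathbb G)\subset L^\infty(\mathbb G)$, while every bounded functional on $C_r(\mathbb G)$ extends to $L^\infty(\mathbb G)$ by Hahn--Banach.

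The substantive implication is $(3)\Rightarrow(1)$. By Proposition~\ref{prop: l1 into c0} the map $\mathcal F\colon L^1(\mathbb G)\to c_0(\hat{\mathbb G})$ is always injective; assuming (3) it is also onto, hence by the open mapping theorem it is a Banach-space isomorphism $L^1(\mathbb G)\cong c_0(\hat{\mathbb G})$. Suppose, for contradiction, that $\irr$ is infinite. Then $c_0(\hat{\mathbb G})=\bigoplus_{\pi}^{c_0}B(H_\pi)$ contains an isometric copy of $c_0(\mathbb N)$ — choose distinct $\pi_1,\pi_2,\dots\in\irr$ and send $(\lambda_n)_n$ to the family with $\pi_n$-entry $\lambda_n e^{(\pi_n)}_{11}$ and all other entries $0$ — so $L^1(\mathbb G)$ contains an isomorphic copy of $c_0$. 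This contradicts the classical fact that the predual of a von Neumann algebra is weakly sequentially complete and hence contains no isomorphic copy of $c_0$ (the summing sequence in $c_0$ being weakly Cauchy but not weakly convergent). Therefore $\irr$ is finite; then $\mathrm{Pol}(\mathbb G)=\mathrm{span}\{u^{(\pi)}_{ij}\}$ is finite-dimensional, and being ultraweakly dense in $L^\infty(\mathbb G)$ yet finite-dimensional (hence closed) it equals $L^\infty(\mathbb G)$, so $\mathbb G$ is finite.

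I expect the only genuine input beyond Theorem~\ref{thm:sidon_l1_interpolation state} — and hence the ``hard part'', though it is entirely standard — to be the non-containment of $c_0$ in $L^1(\mathbb G)$. One could alternatively dualize the isomorphism $L^1(\mathbb G)\cong c_0(\hat{\mathbb G})$ to get $L^\infty(\mathbb G)\cong\ell^1(\hat{\mathbb G})$ (equivalently, use condition (4) of Theorem~\ref{thm:sidon_l1_interpolation state}, that $L^\infty(\mathbb G)=A(\mathbb G)$, together with Proposition~\ref{prop: fourier alg}), but extracting a contradiction from that version requires a Schur-property argument for $\ell^1$-sums of matrix algebras, which is less transparent; the route via weak sequential completeness of von Neumann preduals is cleaner and is the one I would write up.
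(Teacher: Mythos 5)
Your proposal is correct, and for the routine equivalences it matches the paper: $(1)\Rightarrow(2)$ is trivial, and $(2)\Leftrightarrow(3)\Leftrightarrow(4)$ is read off from Theorem \ref{thm:sidon_l1_interpolation state} with $\mathbf E=\irr$ (your Hahn--Banach remark reconciling $C_r(\mathbb G)^*$ with $L^\infty(\mathbb G)^*$ is a point the paper glosses over but is needed and handled correctly).

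Where you genuinely diverge is the hard implication. The paper proves $(2)\Rightarrow(1)$ on the $L^\infty$ side: from the Sidon hypothesis it extracts the two-sided estimate $\|x\|_\infty\leq\|\hat x\|_1\leq K\|x\|_\infty$ on all of $L^\infty(\mathbb G)$, then takes a countably infinite $\mathbf E\subset\irr$, forms the von Neumann subalgebra $\mathcal M$ generated by the characters $\{\chi_\pi:\pi\in\mathbf E\}$, observes that every element of $\mathcal M$ has Fourier support in a fixed countable set $\mathbf F$ and hence (via $A(\mathbb G)\cong\ell^1(\hat{\mathbb G})$ and density of $c_c(\mathbf F)$ in $\ell^1(\mathbf F)$) is a norm limit of elements of the countable-dimensional space $\mathrm{Pol}_{\mathbf F}(\mathbb G)$; this makes $\mathcal M$ norm-separable, contradicting the non-separability of infinite-dimensional von Neumann algebras. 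You instead prove $(3)\Rightarrow(1)$ on the predual side: injectivity of $\mathcal F$ on $L^1$ (Proposition \ref{prop: l1 into c0}) plus surjectivity plus the open mapping theorem give $L^1(\mathbb G)\cong c_0(\hat{\mathbb G})$, which for infinite $\irr$ embeds $c_0$ into a von Neumann predual, contradicting weak sequential completeness. Both arguments are valid and both ultimately invoke a classical structural fact about von Neumann algebras (norm non-separability of infinite-dimensional ones, versus weak sequential completeness of preduals). Your version is shorter and avoids the character/fusion bookkeeping needed to produce the subalgebra $\mathcal M$; the paper's version has the side benefit of only using the inequality from condition (2)/(7) rather than the full isomorphism, and its technique reappears in the remark that follows (non-surjectivity of $\mathcal F:\ell^1(\mathbb H)\to C(\hat{\mathbb H})$ for infinite discrete quantum groups). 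Either write-up is acceptable.
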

\begin{proof}
The equivalence $(2)\Leftrightarrow (3) \Leftrightarrow (4)$ has been already given in Theorem \ref{thm:sidon_l1_interpolation state}. The implication $(1)\Rightarrow (2)$ is trivial. Assume $(2)$ holds, then the previous theorem yields that there exists a constant $K>0$ such that
\begin{equation}
\label{eq: sidon not whole}
\forall \, x\in L^\infty (\mathbb G),\quad \|x\|_\infty \leq \|\hat x\|_1 \leq K \|x\|_\infty.
\end{equation}
Suppose by contradiction with (1) that $\mathbb G$ is not finite. Then we may choose an infinite countable subset $\mathbf E \subset \irr$ and let $A$ be the $*$-subalgebra generated  by $\{\chi_\pi:\pi\in\mathbf E\}$ in $\mathrm{Pol}(\mathbb G)$. Write $$\mathbf F=\{\pi\in\irr: \exists \pi_1,\ldots,\pi_n\in \mathbf E, \pi \text{ is a subrepresentation of } \sigma_1\otimes\cdots \otimes \sigma_n, \sigma_i=\pi_i\text{ or } \bar \pi _i \},$$ 
then $\mathbf F$ is countable and $A\subset \mathrm{Pol}_{\mathbf F}(\mathbb G)$. Consider the von Neumann subalgebra $\mathcal{M}$ generated by $A$ in $L^\infty (\mathbb G)$. Then by the weak density of $A$ in $\mathcal{M}$, for each $x\in \mathcal{M}$ and each $\pi\in \irr\setminus \mathbf F$ we have $\hat x (\pi)=0$. So by the above inequality \eqref{eq: sidon not whole} each $x\in \mathcal{M}$ can be approximated in $\|\|_\infty$ by elements in $\mathrm{Pol}_{\mathbf F}(\mathbb G)$, and in particular $\mathcal{M}$ is separable, which gives a contradiction since the von Neumann algebra $\mathcal{M}$ is infinite-dimensional as so is $A$.
\end{proof}
\begin{rem}
Together with the condition (5) in Theorem \ref{thm:sidon_l1_interpolation state}, the above argument also shows that for any infinite discrete quantum group $\mathbb{H}$, the Fourier transform $\mathcal{F}:\ell^1(\mathbb{H})\to C(\hat{\mathbb{H}})$ in the sense of \cite{kahng2010fourier,caspers2013fourier} is not surjective. The above result is a particular case of the general fact that the predual of an infinite-dimensional von Neumann algebra cannot be equipped with an equivalent C*-norm.
\end{rem}

The following properties give some general methods of constructing infinite Sidon sets for compact quantum groups.

\begin{prop}\label{prop:direct sum Sidon}
Let $(\mathbb{G}_{i})_{i\in I}$ be a family of compact quantum groups
and assume that $\mathbf{E}_{i}\subset\mathrm{Irr}(\mathbb{G}_{i})$
is a Sidon set with constant $C_{i}$ for each $i\in I$ and that
$C\coloneqq \sup_{i\in I}C_{i}<\infty$. Then $\cup_{i\in I}\mathbf{E}_{i}\subset\mathrm{Irr}(\prod_{i\in I}\mathbb{G}_{i})$
is a Sidon set for $\prod_{i\in I}\mathbb{G}_{i}$.\end{prop}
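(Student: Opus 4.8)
The plan is to reduce the whole statement to a single uniform estimate on polynomials, and to prove that estimate by slicing the product von Neumann algebra with a carefully chosen \emph{product state}. First I would make two harmless reductions. Since a subset of a Sidon set is Sidon with the same constant and, by Remark \ref{rem:sidon union finite}, adjoining a finite set preserves the Sidon property, I may replace each $\mathbf{E}_i$ by $\mathbf{E}_i\setminus\{1\}$ and thus assume the trivial representation lies in no $\mathbf{E}_i$; then the $\mathbf{E}_i$, viewed inside $\mathrm{Irr}(\prod_i\mathbb{G}_i)$ (each $\pi\in\mathbf{E}_i$ being $\pi$ placed in the $i$-th coordinate and trivial elsewhere), are pairwise disjoint, and every element of $\mathbf{E}=\bigcup_i\mathbf{E}_i$ is nontrivial in exactly one coordinate. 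Next, the Sidon property only concerns $x\in\mathrm{Pol}_{\mathbf{E}}(\prod_i\mathbb{G}_i)$, and such an $x$ has finite Fourier support, hence lies in $\mathrm{Pol}(\prod_{i\in F}\mathbb{G}_i)$ for some finite $F\subset I$; using that $C_r(\prod_{i\in F}\mathbb{G}_i)\hookrightarrow C_r(\prod_{i\in I}\mathbb{G}_i)$ and $\ell^1(\widehat{\prod_{i\in F}\mathbb{G}_i})\hookrightarrow\ell^1(\widehat{\prod_{i\in I}\mathbb{G}_i})$ isometrically, it suffices to bound the Sidon constant of $\bigcup_{i\in F}\mathbf{E}_i$ in $\prod_{i\in F}\mathbb{G}_i$ by a constant independent of the finite set $F$. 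So from now on $I=F$ is finite, $\mathbb{G}=\prod_{i\in F}\mathbb{G}_i$ with $C_r(\mathbb{G})=\bigotimes_{i\in F}^{\min}C_r(\mathbb{G}_i)$, $L^\infty(\mathbb{G})=\overline{\bigotimes}_{i\in F}L^\infty(\mathbb{G}_i)$ and Haar state $h=\bigotimes_{i\in F}h_i$; recall also that for $\pi\in\mathrm{Irr}(\mathbb{G}_i)$ regarded in $\mathrm{Irr}(\mathbb{G})$ the quantum dimension $d_\pi$ and the matrix $Q_\pi$ are unchanged, since the antipode of $\mathbb{G}$ restricts on $\mathrm{Pol}(\mathbb{G}_i)$ to that of $\mathbb{G}_i$.

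Now fix $x\in\mathrm{Pol}_{\mathbf{E}}(\mathbb{G})$ and write $x=\sum_{i\in F}x_i$ with $x_i\in\mathrm{Pol}_{\mathbf{E}_i}(\mathbb{G}_i)\subset\mathrm{Pol}(\mathbb{G})$; since $1\notin\mathbf{E}_i$ each $x_i$ satisfies $h_i(x_i)=0$. As the $\mathbf{E}_i$ are disjoint in $\mathrm{Irr}(\mathbb{G})$ and $\widehat{x}|_{\mathbf{E}_i}=\widehat{x_i}$, one gets $\|\widehat{x}\|_1=\sum_{i\in F}\|\widehat{x_i}\|_{\ell^1(\widehat{\mathbb{G}_i})}$, and the Sidon property of $\mathbf{E}_i$ together with $\|x_i\|_{C_r(\mathbb{G}_i)}=\|x_i\|_{C_r(\mathbb{G})}$ gives $\|\widehat{x}\|_1\le C\sum_{i\in F}\|x_i\|_\infty$. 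Thus everything comes down to the following purely operator-algebraic lemma (applied with $\mathcal{M}_i=L^\infty(\mathbb{G}_i)$, $\psi_i=h_i$, $y_i=x_i$): \emph{there is a universal constant $K_0$ such that whenever $(\mathcal{M}_i,\psi_i)_{i\in F}$ are von Neumann algebras with normal states, $\mathcal{M}=\overline{\bigotimes}_i\mathcal{M}_i$ with $\psi=\bigotimes_i\psi_i$, and $y_i\in\mathcal{M}_i$ with $\psi_i(y_i)=0$, then $\sum_{i\in F}\|y_i\|\le K_0\,\big\|\sum_{i\in F}y_i\big\|_{\mathcal{M}}$.} Granting this, $\|\widehat{x}\|_1\le CK_0\|x\|_\infty$, and the proposition follows (with an additive correction from Remark \ref{rem:sidon union finite} if some $\mathbf{E}_i$ originally contained $1$).

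To prove the lemma I would combine the numerical range with a pigeonhole on arguments. Discard the $i$ with $y_i=0$. For each remaining $i$, since $\|y\|\le 2\sup\{|\chi(y)|:\chi\ \text{a state}\}$ for any $y$ and normal states are weak-$*$ dense, choose a normal state $\chi_i$ on $\mathcal{M}_i$ with $|\chi_i(y_i)|>\tfrac12\|y_i\|$ and write $\chi_i(y_i)=r_ie^{\mathrm i\theta_i}$. Averaging over $\alpha\in[0,2\pi)$ the quantity $\sum_{i:\,|\theta_i-\alpha|\le\pi/4}\|y_i\|$ (circular distance), whose average is $\tfrac14\sum_i\|y_i\|$, produces an $\alpha$ and a set $F'=\{i:|\theta_i-\alpha|\le\pi/4\}$ with $\sum_{i\in F'}\|y_i\|\ge\tfrac14\sum_{i\in F}\|y_i\|$. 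Form the normal state $\chi=\bigotimes_{i\in F'}\chi_i\otimes\bigotimes_{i\in F\setminus F'}\psi_i$ on $\mathcal{M}$; since states send $1$ to $1$, $\chi(y_i)=\chi_i(y_i)$ for $i\in F'$, whereas $\chi(y_i)=\psi_i(y_i)=0$ for $i\in F\setminus F'$ — this is exactly where the mean-zero hypothesis is used. Hence
$$\Big\|\sum_{i\in F}y_i\Big\|\ \ge\ \Big|\chi\Big(\sum_{i\in F}y_i\Big)\Big|\ =\ \Big|\sum_{i\in F'}r_ie^{\mathrm i\theta_i}\Big|\ \ge\ \mathrm{Re}\Big(e^{-\mathrm i\alpha}\sum_{i\in F'}r_ie^{\mathrm i\theta_i}\Big)\ =\ \sum_{i\in F'}r_i\cos(\theta_i-\alpha)\ \ge\ \tfrac1{\sqrt2}\sum_{i\in F'}r_i\ >\ \tfrac1{2\sqrt2}\sum_{i\in F'}\|y_i\|\ \ge\ \tfrac1{8\sqrt2}\sum_{i\in F}\|y_i\|,$$
so $K_0=8\sqrt2$ works (the constant is of course not optimized).

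The reductions and the bookkeeping of Fourier coefficients along the Cartesian product are routine; I expect the only genuine idea — and the step most likely to require care — to be the lemma: realizing that, because the $x_i$ are mean-zero and supported in distinct tensor factors, a fixed proportion of them can be packed into one slicing state with nearly-aligned phases while the mean-zero property annihilates all the others. Equivalently, the lemma says that the natural map from the $\ell^1$-sum of the mean-zero parts of the $L^\infty(\mathbb{G}_i)$ into $L^\infty(\prod_i\mathbb{G}_i)$ is an isomorphism onto its range, with constants independent of the family — this is what makes $\|\widehat{x}\|_1=\sum_i\|\widehat{x_i}\|_1$ comparable to $\|x\|_\infty$ rather than merely to $\sum_i\|x_i\|_\infty$.
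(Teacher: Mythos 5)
Your proof is correct, but it takes a genuinely different route from the paper. The paper's argument is a three-line application of the characterization of Sidon sets by interpolation of bounded functionals (assertion (2) of Theorem \ref{thm:sidon_l1_interpolation state} together with Remark \ref{rem: sidon const}): given $a\in\ell^\infty(\cup_i\mathbf E_i)$ with $\|a\|_\infty=1/C$, it picks $\varphi_i\in C_r(\mathbb G_i)^*$ of norm at most $1$ interpolating $a$ on $\mathbf E_i$ and takes $\varphi=\otimes_i\varphi_i$ on the minimal tensor product. You instead work directly with Definition \ref{defn: sidon}: after discarding the trivial representation and reducing to finitely many factors, you split $x=\sum_i x_i$ into its components in the separate tensor legs and reduce everything to the lemma that $\sum_i\|y_i\|\leq 8\sqrt2\,\|\sum_i y_i\|$ for mean-zero elements $y_i$ sitting in distinct factors of a tensor product with product state --- proved by slicing with $\otimes_{i\in F'}\chi_i\otimes\otimes_{i\notin F'}\psi_i$ after a pigeonhole on the phases of $\chi_i(y_i)$. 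This is in essence the classical Hewitt--Ross argument for Sidon sets in products of compact groups, transplanted to the quantum setting. Both approaches are sound; the paper's is shorter and reuses the duality machinery already developed, while yours is more self-contained, yields the explicit constant $8\sqrt2\,C$ (plus the finite correction from Remark \ref{rem:sidon union finite}), and, by reducing to finite subproducts first, sidesteps the small compatibility point one must address in the paper's proof when forming an infinite tensor product of functionals $\varphi_i$ that need not satisfy $\varphi_i(1)=1$. Two cosmetic remarks: the strict inequality $|\chi_i(y_i)|>\tfrac12\|y_i\|$ need not be attainable (the numerical radius can equal exactly $\tfrac12\|y_i\|$, e.g.\ for a matrix unit $e_{12}$), so it should be weakened to $\geq\tfrac12\|y_i\|$ or to $\geq(\tfrac12-\varepsilon)\|y_i\|$ if you insist on normal states; and since the $x_i$ lie in $C_r(\mathbb G_i)$ you could equally well use arbitrary states on the minimal tensor product and dispense with normality altogether. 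Neither affects the conclusion.
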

\begin{proof}
	This follows directly from the assertion (2) in Theorem \ref{thm:sidon_l1_interpolation state} and Remark \ref{rem: sidon const}. Let $a\in \ell^\infty(\cup_{i\in I}\mathbf{E}_{i})$. Without loss of generality we assume $\|a\|_\infty =1/C$. For each $i\in I$, we may find $\varphi_i\in C_r(\mathbb G _i)^*$ such that $\hat \varphi_i$ coincides with $a$ on $\mathbf{E}_{i}$ and $\|\varphi_i\|\leq 1$. Take $\varphi=\otimes_{i\in I} \varphi_i$, then $\varphi$ extends to a bounded functional on $C_r(\prod_{i\in I}\mathbb{G}_{i})$.
	Hence $\hat \varphi(\pi)=a_\pi$ for all $\pi\in \cup_{i\in I}\mathbf{E}_{i}$. So $\cup_{i\in I}\mathbf{E}_{i}$ is a Sidon set.
\end{proof}

\begin{prop}
Let $\mathbb G_1,\mathbb G_2$ be compact quantum groups. Assume that $\mathbf E\subset \mathrm{Irr}(\mathbb G_1)$ and $\mathbf F\subset \mathrm{Irr}(\mathbb G_2)$ are Sidon sets. Then $\mathbf E \cup \mathbf F$ is a Sidon set for $\mathbb G_1 \hat{*}\mathbb G_2$.
\end{prop}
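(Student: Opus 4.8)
The plan is to proceed exactly as in the proof of Proposition~\ref{prop:direct sum Sidon}, i.e.\ through the characterization of Sidon sets by interpolation of bounded functionals, namely the implication $(2)\Rightarrow(1)$ in Theorem~\ref{thm:sidon_l1_interpolation state}; the one new ingredient compared to the Cartesian product case is the pair of conditional expectations canonically attached to a reduced free product. Write $\mathbf{E}$ and $\mathbf{F}$ for Sidon sets with constants $C_1$ and $C_2$. First I would invoke Remark~\ref{rem:sidon union finite} to discard the trivial representation and assume $1\notin\mathbf{E}$ and $1\notin\mathbf{F}$; this changes neither the hypotheses nor the conclusion, since $\{1\}$ is finite. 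Recall from \cite{wang1995freeprod} that $C_r(\mathbb{G}_1\hat{*}\mathbb{G}_2)$ is the reduced free product of $C_r(\mathbb{G}_1)$ and $C_r(\mathbb{G}_2)$ with respect to the Haar states $h_1,h_2$, so that its Haar state $h$ is the free product state, and that (as for any reduced free product of C*-algebras with faithful states) there are canonical $h$-preserving unital contractive conditional expectations $E_i\colon C_r(\mathbb{G}_1\hat{*}\mathbb{G}_2)\to C_r(\mathbb{G}_i)$. The fact I would extract from the GNS picture of the free product is the ``annihilation'' identity $E_1(y)=h(y)1$ for $y\in C_r(\mathbb{G}_2)$, and symmetrically for $E_2$.

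Given $a\in\ell^{\infty}(\mathbf{E}\cup\mathbf{F})$, I would then use the Sidon property of $\mathbf{E}$, through Theorem~\ref{thm:sidon_l1_interpolation state} and Remark~\ref{rem: sidon const}, to pick $\varphi_1\in C_r(\mathbb{G}_1)^{*}$ with $\hat{\varphi}_1(\pi)=a_\pi$ for all $\pi\in\mathbf{E}$ and $\|\varphi_1\|\le C_1\|a\|_\infty$, and likewise $\varphi_2\in C_r(\mathbb{G}_2)^{*}$ interpolating $a$ on $\mathbf{F}$ with $\|\varphi_2\|\le C_2\|a\|_\infty$. The candidate functional is $\varphi=\varphi_1\circ E_1+\varphi_2\circ E_2\in C_r(\mathbb{G}_1\hat{*}\mathbb{G}_2)^{*}$, of norm at most $(C_1+C_2)\|a\|_\infty$. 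To compute $\hat{\varphi}$ on $\mathbf{E}\cup\mathbf{F}$: for $\pi\in\mathbf{E}$ the matrix coefficients $u_{ij}^{(\pi)}$ already live in $C_r(\mathbb{G}_1)$ and, since $\pi\neq 1$, have vanishing $h$-mean, so $E_1(u_{ij}^{(\pi)})=u_{ij}^{(\pi)}$ and $E_2(u_{ij}^{(\pi)})=h(u_{ij}^{(\pi)})1=0$; hence $\varphi$ agrees with $\varphi_1$ on $\mathrm{Pol}_{\mathbf{E}}(\mathbb{G}_1)$ and $\hat{\varphi}(\pi)=\hat{\varphi}_1(\pi)=a_\pi$. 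The symmetric computation handles $\pi\in\mathbf{F}$. Then Theorem~\ref{thm:sidon_l1_interpolation state}, $(2)\Rightarrow(1)$, gives that $\mathbf{E}\cup\mathbf{F}$ is a Sidon set, with constant at most $C_1+C_2$.

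The only genuine obstacle here is justifying the existence and the annihilation property of the conditional expectations $E_i$ on the \emph{reduced} free product; this is standard free‑probabilistic material, but it should be recalled carefully since it is precisely the ingredient that must replace any coamenability-type argument. I would also note that the same mechanism can be run on the ``primal'' side: writing $x=x_1+x_2$ with $x_i\in\mathrm{Pol}_{\mathbf{E}_i}(\mathbb{G}_i)$ (where $\mathbf{E}_1=\mathbf{E}$, $\mathbf{E}_2=\mathbf{F}$), contractivity of $E_1$ together with $E_1(x_1)=x_1$ and $E_1(x_2)=h(x_2)1=0$ yields $\|x_1\|_\infty\le\|x\|_\infty$ and similarly $\|x_2\|_\infty\le\|x\|_\infty$; since moreover $\|\hat{x}\|_1=\|\hat{x_1}\|_1+\|\hat{x_2}\|_1$ (disjoint supports, the quantum dimensions and modular matrices being unchanged by the embedding), summing the two Sidon inequalities gives $\|\hat{x}\|_1\le(C_1+C_2)\|x\|_\infty$ directly. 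Either route gives the Sidon constant $C_1+C_2$.
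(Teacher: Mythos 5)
Your proposal is correct, and your ``primal'' route is essentially the paper's own proof: the paper likewise reduces to $1\notin\mathbf E$, $1\notin\mathbf F$, takes $x\in\mathrm{Pol}_{\mathbf E}$ and $y\in\mathrm{Pol}_{\mathbf F}$, and sums the two Sidon inequalities after the freeness estimate $\max\{\|x\|_\infty,\|y\|_\infty\}\leq\|x+y\|_\infty$, which it cites from the free-product literature and which your conditional-expectation argument ($E_1(x)=x$, $E_1(y)=h(y)1=0$, $E_1$ contractive) proves directly. Your dual route via $\varphi_1\circ E_1+\varphi_2\circ E_2$ and Theorem~\ref{thm:sidon_l1_interpolation state}(2) is also valid but is just the dual face of the same mechanism, so no genuinely different ingredient is involved.
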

\begin{proof}
Denote $h_1$ and $h_2$ the Haar states for $\mathbb G_1$ and $\mathbb G_2$ respectively. Let $K_1$ be the Sidon constant for $\mathbf E $ and $K_2$ that for $\mathbf F$. By Remark \ref{rem:sidon union finite}, we may assume that $1\notin \mathbf E$ and $1\notin \mathbf F$. Now for any $x\in \mathrm{Pol}_{\mathbf E}(\mathbb G_1 \hat{*}\mathbb G_2)$ and $y\in \mathrm{Pol}_{\mathbf F}(\mathbb G_1 \hat{*}\mathbb G_2)$, we have $h_1(x)=0$, $h_2(y)=0$, and  $ x $ and $  y $ are free. Then it is well known and easy to see from the construction of reduced free products that $\max \{\|x\|_\infty,\|y\|_\infty\}\leq \|x+y\|_\infty$ (see \cite{voiculescu98free,junge05oh,ricardxu06free} for more information on the norm estimates related to freeness). Hence 
$$\|\hat x+\hat y\|_1\leq \|\hat x\|_1+\|\hat y\|_1\leq K_1\|x\|_\infty +K_2\|y\|_\infty\leq (K_1+K_2) \|x+y\|_\infty.$$
This proves that $\mathbf E \cup \mathbf F$ is a Sidon set of constant $K_1+K_2$.
\end{proof}
\begin{rem}
Note that one cannot expect to extend the above proposition to an infinite family of compact quantum groups as in Proposition \ref{prop:direct sum Sidon}. An easy example is the set of infinitely many free generators of the free group $\mathbb F_\infty$, which is not a Sidon set. More details of this example will be presented in Remark \ref{rem:sidon neq usidon}.
\end{rem}

As is seen in \cite{picardello73lacunary,bozejko81newlacunary}, there are several alternative possible ways to generalize the notion of Sidon sets for non-amenable cases. Let us briefly discuss them in the quantum group setting. We follow the terminologies in \cite{bozejko81newlacunary,harcharras99nclambdap}.

\begin{defn}\label{def:interpolation set infty}
(1) We say that a subset $\mathbf E \subset \irr $ is a \emph{weak Sidon set (with constant $K$)} if there exists $K>0$ such that for any $x\in\mathrm{Pol_{\mathbf{E}}}(\mathbb{G})$, we have
$$\|\hat{x}\|_{1}\leq K\|x\|_{C_u(\mathbb G)}.$$

(2) We say that $\mathbf E \subset \irr $ is an \emph{interpolation set of $\mathrm M(L^\infty({\mathbb{G}}))$ (resp., of $\mathrm M_L(L^\infty({\mathbb{G}}))$, of $\mathrm M_R(L^\infty({\mathbb{G}}))$)} with constant $K$ if for any $a\in\ell^{\infty}(\mathbf{E})$, there exists a bounded multiplier
$\tilde{a}\in \mathrm M(L^\infty({\mathbb{G}}))$ (resp., $\tilde{a}\in \mathrm M_L(L^\infty({\mathbb{G}}))$, $\tilde{a}\in \mathrm M_R(L^\infty({\mathbb{G}}))$) with $\|\tilde a \|_{\mathrm M(L^\infty({\mathbb{G}}))} \leq K\|a\|_\infty$ (resp.,  $\|m_{\tilde{a}}^L\|\leq K\|a\|_\infty$, $\|m_{\tilde{a}}^R\|\leq K\|a\|_\infty$) such that $\tilde{a}_{\pi}=a_{\pi}$
for all $\pi\in\mathbf{E}$.

(3) We say that a subset $\mathbf E \subset \irr $ is a left (resp., right) \emph{unconditional Sidon set (with constant $K$)}  if there exists $K>0$ such that for any unitary $a\in\ell^{\infty}(\mathbf{E})$
and for any $x\in\mathrm{Pol_{\mathbf{E}}}(\mathbb{G})$, $\|m_{a}^Lx\|_{\infty}\leq K\|x\|_{\infty}$ (resp., $\|m_{a}^Rx\|_{\infty}\leq K\|x\|_{\infty}$).
\end{defn}

\begin{rem}
(1) Following the same idea as in the proof $(1) \Rightarrow (2)$ and $(3)\Rightarrow (1)$ in Theorem \ref{thm:sidon_l1_interpolation state}, one can see easily that a subset $\mathbf E \subset \mathrm{Irr}(\mathbb G)$ is a weak Sidon set of constant $K$ if and only if for all $a\in \ell^\infty(\mathbf E)$, there exists $\varphi\in C_u(\mathbb G)^*$ such that $\|\varphi\|\leq K\|a\|_\infty$ and  $\hat{\varphi}(\pi)=a_\pi$ for all $\pi\in \mathbf E$.
Evidently, a Sidon set for $\mathbb G$ is necessarily a weak Sidon set.

(2) If $\mathbb G$ is of Kac type, by Lemma \ref{lem: left to right multipliers} we see that the classes of  interpolation sets of $\mathrm M_L(L^\infty({\mathbb{G}}))$ and of $\mathrm M_R(L^\infty({\mathbb{G}}))$ coincide. Also we note that if $\mathbb G$ is coamenable, these two classes coincide as well, which can be seen from the following theorem. Some more properties of these classes of interpolation sets will be discussed in the next section.
%
\end{rem}

\begin{thm}\label{sidon and u sidon}
Let $\mathbb{G}$ be a compact quantum group and $\mathbf{E}\subset\mathrm{Irr}(\mathbb{G})$
be a subset. Let $K>0$. If 
\begin{enumerate}\renewcommand{\labelenumi}{\emph{(\theenumi)}}
\item $ \mathbf{E} $ is a weak Sidon set of constant $K$;
\end{enumerate}
then 
\begin{enumerate}
\item[\emph{(2)}] $\mathbf{E}$ is an interpolation set of $\mathrm M(L^\infty({\mathbb{G}}))$ of constant $K$;
\end{enumerate}
and in particular,
\begin{enumerate}
\item[\emph{(3)}] $ \mathbf{E} $ is a left unconditional Sidon set of constant $K$.
\end{enumerate}
Moreover, if additionally $\mathbb G$ is coamenable, then the conditions \emph{(1), (2)} and \emph{(3)} are all equivalent to:
\begin{enumerate}
\item[\emph{(4)}] $ \mathbf{E} $ is a Sidon set of constant $K$.
\end{enumerate}
\end{thm}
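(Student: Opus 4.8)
The plan is to establish the chain $(1) \Rightarrow (2) \Rightarrow (3)$ in general, and then to close the loop $(3) \Rightarrow (1)$ (equivalently, back to $(4)$) under the coamenability hypothesis. For $(1) \Rightarrow (2)$, I would start from the reformulation of weak Sidonicity recorded in Remark~(1) after Definition~\ref{def:interpolation set infty}: given $a \in \ell^\infty(\mathbf{E})$ there is $\varphi \in C_u(\mathbb{G})^*$ with $\|\varphi\| \le K\|a\|_\infty$ and $\hat\varphi(\pi) = a_\pi$ on $\mathbf{E}$. The natural candidate for the multiplier is then $\tilde a = \widehat{\varphi \circ S^{-1}}$ on the left side and (something like) $\hat\varphi$ on the right side. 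The point is to check that left/right convolution by such a functional $\varphi \in C_u(\mathbb{G})^*$ — equivalently the maps $x \mapsto (\iota \otimes \varphi)\Delta(x)$ and $x \mapsto (\varphi \otimes \iota)\Delta(x)$ — is bounded on $L^\infty(\mathbb{G})$, with norm controlled by $\|\varphi\|$. Here one uses that $\Delta$ extends to $C_u(\mathbb{G}) \to C_u(\mathbb{G}) \otimes C_r(\mathbb{G})$ (or the appropriate version), so that slicing by a bounded functional on the universal C*-algebra lands back in $L^\infty(\mathbb{G})$ boundedly; the identities \eqref{conv another} and \eqref{conv another with q} then identify the Fourier-side action as the desired multiplier, and to land in $\mathrm{M}(L^\infty(\mathbb{G}))$ rather than merely in $\mathrm{M}_L$ or $\mathrm{M}_R$ one combines the left and right versions using the relation between $m^L$ and $m^R$ afforded by Lemma~\ref{lem: left to right multipliers}.

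The implication $(2) \Rightarrow (3)$ is essentially formal: a left unconditional Sidon set asks only for left multipliers by \emph{unitary} $a \in \ell^\infty(\mathbf{E})$ to be bounded on $L^\infty(\mathbb{G})$ with uniform norm, and the defining bound $\|m^L_{\tilde a}\| \le K\|a\|_\infty$ in the notion of interpolation set of $\mathrm{M}(L^\infty(\mathbb{G}))$ (unpacking the norm $\|\cdot\|_{\mathrm{M}(L^\infty(\mathbb{G}))}$, which dominates both the left and right multiplier norms after the $Q$-twist) gives exactly this, since $m^L_{\tilde a}x = m^L_a x$ whenever $x \in \mathrm{Pol}_{\mathbf E}(\mathbb{G})$ and $\tilde a$ agrees with $a$ on $\mathbf E$. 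One must be slightly careful with the $Q^{\pm 1/\infty}$ conjugation in the definition of $\mathrm{M}(L^\infty(\mathbb{G}))$, but at $p = \infty$ that twist is trivial, so there is nothing to worry about.

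The substantive part is the final claim: if $\mathbb{G}$ is coamenable then $(3) \Rightarrow (4)$, which closes the circle since $(4) \Rightarrow (1)$ is the trivial observation that $\|\cdot\|_{C_u(\mathbb{G})} = \|\cdot\|_{C_r(\mathbb{G})}$ under coamenability (Proposition~\ref{amenability}), so a Sidon set of constant $K$ is a weak Sidon set of constant $K$. For $(3) \Rightarrow (4)$: given $x \in \mathrm{Pol}_{\mathbf E}(\mathbb{G})$, I want $\|\hat x\|_1 \le K\|x\|_\infty$, i.e.\ by duality with $c_c(\mathbf E)$, to bound $|\hat h(a^* \hat x)|$ for unitary-valued (or contractive) $a \in c_c(\mathbf E)$. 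The idea, following the classical averaging argument, is to apply the unconditional multiplier $m^L_a$ to $x$: this produces an element of $L^\infty_{\mathbf E}(\mathbb{G})$ of norm $\le K\|x\|_\infty$ whose Fourier coefficients are $a_\pi$-twisted versions of those of $x$; then one integrates over a Haar-measure family of such unitaries (e.g.\ replacing the entries of $a_\pi$ by independent random signs, or more precisely using random unitaries on each $H_\pi$) and invokes coamenability to pass from the universal to the reduced picture — specifically to control an $L^1$-norm of a convolution by the counit-approximating net, which is where the bounded approximate identity of $L^\infty(\mathbb{G})_*$ of norm $\le 1$ (Proposition~\ref{amenability}) enters. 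The main obstacle, and the reason coamenability is needed, is the same one flagged in the Remark after Theorem~\ref{thm:sidon_l1_interpolation state}: convolution $L^1(\mathbb{G}) \times L^\infty(\mathbb{G}) \to L^\infty(\mathbb{G})$ is not bounded for non-Kac $\mathbb{G}$ because the antipode is unbounded, so one cannot naively deduce absolute convergence of the Fourier series of $x$ from boundedness of the multipliers; the coamenability lets one instead realize $x$ as a limit, in a norm one controls, of finite partial sums obtained by slicing $\Delta(x)$ against polynomial functionals approximating the counit, after which the weak/reduced distinction disappears and the unconditional bound can be cashed in termwise. I would carry out this last step by first reducing to $x$ a finite linear combination of matrix coefficients, applying $m^L_a$ for a generic $a$ and averaging to recover $\sum_{\pi \in \mathbf F} d_\pi \mathrm{Tr}(|\hat x(\pi) Q_\pi|)$ on the left, bounding the right side by $K\|m^L_a x\|_\infty \le K\|x\|_\infty$ uniformly in the finite set $\mathbf F \subset \mathbf E$, and finally letting $\mathbf F \uparrow \mathbf E$.
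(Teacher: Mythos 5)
Your treatment of $(1)\Rightarrow(2)\Rightarrow(3)$ is essentially the paper's argument. The paper realizes the multiplier as the ($L^\infty$-dual of the) convolution operators $\psi\mapsto\varphi\star\psi$ and $\psi\mapsto\psi\star\varphi$ on $L^1(\mathbb G)$, using that $C_r(\mathbb G)^*$ embeds into $C_u(\mathbb G)^*$ and the formula \eqref{conv fourier series}; your version slices $\Delta(x)$ directly against $\varphi$, which is the same map seen from the other side (though you should be aware that $(\iota\otimes\varphi)\Delta(x)$ for $\varphi\in C_u(\mathbb G)^*$ and $x\in L^\infty(\mathbb G)$ is not literally defined without invoking a universal lift of the coproduct, which is exactly the technicality the $L^1$-duality route avoids). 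The step $(2)\Rightarrow(3)$ is formal in both accounts, and your remark that $(4)\Rightarrow(1)$ is immediate from $C_u(\mathbb G)=C_r(\mathbb G)$ under coamenability is correct.

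The genuine gap is in $(3)\Rightarrow(4)$. You propose to recover $\sum_{\pi\in\mathbf F}d_\pi\mathrm{Tr}(|\hat x(\pi)Q_\pi|)$ by \emph{integrating over Haar-random unitaries} (or random signs) applied via $m_a$. This cannot work: for a fixed matrix $M$ one has $\mathrm{Tr}(|M|)=\sup_{v\ \mathrm{unitary}}|\mathrm{Tr}(vM)|$, attained at the polar-decomposition unitary, whereas the \emph{average} of $|\mathrm{Tr}(vM)|$ over the unitary group is comparable to a Hilbert--Schmidt quantity, not to the trace norm. Averaging over random unitaries is the tool for $\Lambda(p)$-type estimates (via Khintchine), not for Sidon estimates, which require the adapted, deterministic choice of unitary. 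Moreover you never name the single bounded functional that extracts the trace from $m_ax$; the approximate-identity/partial-sum machinery you invoke is a detour (your $x$ is already a polynomial, so there is no convergence to arrange). The paper's argument is one line: choose, for each $\pi\in\mathbf E$, a unitary $a_\pi$ with $a_\pi\hat x(\pi)Q_\pi=|\hat x(\pi)Q_\pi|$; then, since $\epsilon(u^{(\pi)}_{ij})=\delta_{ij}$, one has
\[
\|\hat x\|_1=\sum_{\pi\in\mathbf E}d_\pi\mathrm{Tr}(a_\pi\hat x(\pi)Q_\pi)
=\epsilon\Big(\sum_{\pi\in\mathbf E}d_\pi(\iota\otimes\mathrm{Tr})[(1\otimes a_\pi\hat x(\pi)Q_\pi)u^{(\pi)}]\Big)=\epsilon(m_ax)\leq\|m_ax\|_\infty\leq K\|x\|_\infty,
\]
where the only use of coamenability is that $\epsilon$ extends to a state on $C_r(\mathbb G)$ (Proposition \ref{amenability}), so that $|\epsilon(\cdot)|\leq\|\cdot\|_\infty$. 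Replace your averaging step by this evaluation at the counit and the proof closes.
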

\begin{proof}
The proof follows the same line as in \cite{picardello73lacunary,bozejko81newlacunary} and we only present the sketch. Assume that $\mathbf E \subset\irr$ is a weak Sidon set. Then by the above remark and Remarks \ref{adaption sidon two}-\ref{rem: sidon const} any $a\in \ell^\infty (\mathbf E)$ is a restriction of $\hat\varphi $ for some $\varphi\in C_u(\mathbb G)^*$ of norm no more than $K\|a\|_\infty$. Note that $C_r(\mathbb G)$ is a quotient space of $C_u(\mathbb G)$ and hence the dual space $C_r(\mathbb G)^*$ embeds isometrically into $C_u(\mathbb G)^*$. Therefore by the density of $\mathrm{Pol}(\mathbb G)$ in $L^1(\mathbb G)=L^\infty(\mathbb G)_*$, one can  easily see from \eqref{conv fourier series} that $\psi\mapsto \varphi \star \psi$ and $\psi\mapsto  \psi\star \varphi$ give two bounded multipliers on $L^1(\mathbb G)$. Then by duality we obtain the desired multipliers for (2). Thus the implication $(1)\Rightarrow (2) \Rightarrow (3)$ is established.

Now assume additionally $\mathbb{G}$ is coamenable and show that $(3)\Rightarrow (4)$.
 Take $x\in\mathrm{Pol}_{\mathbf{E}}(\mathbb{G})$
and  for each $\pi\in\mathbf{E}$ let $a_{\pi}$ be a unitary matrix such that $|\hat{x}(\pi)Q_{\pi}|=a_{\pi}\hat{x}(\pi)Q_{\pi}$. Then 
\begin{eqnarray*}
\|\hat{x}\|_{1} & = & \sum_{\pi\in\mathbf{E}}d_{\pi}\mathrm{Tr}(|\hat{x}(\pi)Q_{\pi}|)=\sum_{\pi\in\mathbf{E}}d_{\pi}\mathrm{Tr}(a_{\pi}\hat{x}(\pi)Q_{\pi})\\
 & = & \epsilon\Big(\sum_{\pi\in\mathbf{E}}d_{\pi}(\iota\otimes\mathrm{Tr})[(1\otimes a_{\pi}\hat{x}(\pi)Q_{\pi})u^{(\pi)}]\Big)=\epsilon(m_{a}x)\\
 & \leq & \|m_{a}x\|_{\infty}\leq K\|x\|_{\infty},
\end{eqnarray*}
as desired.\end{proof}
\begin{rem}\label{rem:sidon neq usidon}
	(1) The left unconditional Sidon set in the assertion (3) above can be obviously replaced by the right one.
	
	(2) The coamenability is crucial in the above proposition. In fact, denote by $\mathbb F_\infty$ the free group with infinitely many generators and let $\mathbb{G}$ be the quantum group with dual $\hat{\mathbb{G}}={\mathbb{F}}_{\infty}$.
Take $\mathbf{E}$ to be the generators of $\mathbb{F}_{\infty}$, and recall the Haagerup inequality \cite{leinert74freekhintchine,bozejko75optimal}: for finitely many elements $\gamma_1,\ldots,\gamma_n\in \mathbf E$ and $\alpha_1,\ldots,\alpha_n\in \mathbb C$,
$$\Big(\sum_{k=1}^{n}|\alpha_k|^2\Big)^{1/2}\leq \Big\|\sum_{k=1}^{n}\alpha_k\lambda(\gamma_k)\Big\|_{VN(\mathbb F_\infty)}\leq 2\Big(\sum_{k=1}^{n}|\alpha_k|^2\Big)^{1/2}.$$
So
$\mathbf{E}$ satisfies (2) and (3) in the proposition, but in this
case obviously (1) fails to hold.\end{rem}

\begin{example}
\label{free unitary weak sidon}
Consider the compact quantum group $\mathbb G=\prod_{k\geq 1}\mathbb G_k$, where for each $k\geq 1$ and $N_k\geq 1$, $\mathbb G_k=U_{N_k}^+$ denotes the free unitary group of Wang \cite{vandalewang96universal}. Recall that $C_u(U_{N_k}^+)$ is the universal C*-algebra generated by $N^2_k$ elements $\{u_{ij}^{(k)}:1\leq i,j\leq N_k\}$ such that the matrix $u^{(k)}=[u_{ij}^{(k)}]$ is unitary. Take $\mathbf E=\{u^{(k)}:k\geq 1\}\subset \mathrm{Irr}(\mathbb G)$. Then $\mathbf E$ is a weak Sidon set, and hence is an interpolation set of $\mathrm M(L^\infty(\mathbb G))$ and an unconditional Sidon set. In fact, let $U_{N_k}$ be the $N_k\times N_k$ unitary matrix group and $w^{(k)}:U_{N_k}\to \mathbb M_{N_k}(\mathbb C),w\mapsto w$ be its fundamental representation. It is easy to see that $\{w^{(k)}:k\geq 1\}$ is a Sidon set of constant $1$ for $G=\prod_{k\geq 1}U_{N_k}$ (\cite[(37.5)]{hewittross1970abstract}). Thus by the universal property of $U_{N_k}^+$ we have for all finitely supported sequences $(A_k)\in \prod_k \mathbb M_{N_k}$,
$$\sum_{k\geq 1}\mathrm{Tr}(|A_k|)\leq \|\sum_{k\geq 1}\mathrm{Tr}(A_kw^{(k)})\|_{C(G)}\leq 
\|\sum_{k\geq 1}(\iota\otimes\mathrm{Tr})[(1\otimes A_k)u^{(k)}]\|_{C_u(\mathbb G)}.$$
Therefore $\mathbf E$ is a weak Sidon set. However, $\mathbf E$ is \emph{not} a Sidon set. Indeed, write $x=\sum_kx_k\in \mathrm{Pol}_{\mathbf E}(\mathbb G)$ with $x_k\in \mathrm{Pol}(U_{N_k}^+)$. Vergnioux \cite{vergnioux07decay} and Brannan \cite[Theorem 6.3]{brannan12haagerup} showed that there exists $C>0$ such that $\|x_k\|_\infty\leq C\|\hat x_k\|_2$ for all $k$. Hence $\|x\|_\infty \leq C \sum_k \|\hat x_k\|_2$ and the inequality in Definition \ref{defn: sidon} cannot hold.
\end{example}

\begin{example}
\label{ex: sutwo sidon set}
Consider the sequence $(q_n)_{n\geq 1}\subset [q,1]$ with $q\coloneqq\inf_n q_n>0$ and the associated quantum group $\mathbb G =\prod_{n\geq 1}\mathrm{SU}_{q_n}(2)$. Recall that $\mathrm{SU}_{q_n}(2)$ is coamenable and for each $n$, $C(\mathrm{SU}_{q_n}(2))$ is the universal C*-algebra generated by elements $\alpha_n$ and $\gamma_n$ such that the matrix 
$$u_n=\begin{bmatrix}
\alpha_n & -q_n\gamma_n^* \\ 
\gamma_n & \alpha_n^*
\end{bmatrix} $$
is unitary. The matrix $u_n\in \mathbb M_2(C(\mathrm{SU}_{q_n}(2)))$ defines a unitary representation of $\mathrm{SU}_{q_n}(2)$, and the matrix $Q_{u_n}$ associated to $u_n$, simply written as $Q_n$, has the eigenvalues $q_n,q_n^{-1}$. Let $d_n=\mathrm{Tr}(Q_n)=q_n+q_n^{-1}$. Then $\mathbf{E}=\{u_n:n\geq 1\} \subset\irr $ is a Sidon set for $\mathbb G$. To see this, by Proposition \ref{prop:direct sum Sidon} and Theorem \ref{sidon and u sidon}, it suffices to show that for each $n\geq 1$, the singleton $\{u_n\}$ has a uniform right unconditional Sidon constant $1+q^{-1}$, which means that for all $A\in \mathbb M_2(\mathbb C)$ and all unitaries $V\in \mathbb M_2(\mathbb C)$, we have
\begin{equation}\label{eq: sidon sutwo formula}
\|d_n(\iota\otimes\mathrm{Tr})[(1\otimes VAQ_n)u_n]\|_\infty \leq (1+q^{-1}) \|d_n(\iota\otimes \mathrm{Tr})[(1\otimes AQ_n)u_n]\|_\infty.
\end{equation}
Indeed, since the map $x\mapsto d_n^{-1}(\iota\otimes\mathrm{Tr})(x(1\otimes Q_n))$ is unital completely positive and the functional $\mathrm{Tr}$ is tracial, we may use the Cauchy-Schwarz inequality and Proposition \ref{prop:Plancherel} to get 
\begin{align*}
&\,\quad \|d_n(\iota\otimes\mathrm{Tr})[(1\otimes VAQ_n)u_n]\|_\infty^2
 = d_n^4\||d_n^{-1}(\iota\otimes\mathrm{Tr})[u_n(1\otimes VAQ_n)]|^2\|_\infty \\
&\leq d_n^4\|d_n^{-1}(\iota\otimes\mathrm{Tr})[(u_n(1\otimes VA))^*(u_n(1\otimes VA))(1\otimes Q_n)]\|_\infty \\
&= d_n^3 \mathrm{Tr}(A^*AQ_n) 
=d_n^2 \|d_n(\iota\otimes\mathrm{Tr})[(1\otimes AQ_n)u_n]\|_2^2  \\
& \leq (q_n+q_n^{-1})^2 \|d_n(\iota\otimes\mathrm{Tr})[(1\otimes AQ_n)u_n]\|_\infty^2
\end{align*}
which establishes \eqref{eq: sidon sutwo formula} if we note that $q\leq q_n\leq 1$. The order $o(q^{-1})$ of the constant obtained in the above inequality is optimal when $q\to 0$: we see that 
$$\mathrm{Tr}(|e_{21}|)=1,\quad 
\|(\iota\otimes\mathrm{Tr})[(1\otimes e_{21})u_n]\|_\infty =\|-q_n\gamma_n^*\|\leq q_n,$$
which means the Sidon constant $K$ with $\|\hat x \|_\infty \leq K \|x\|_\infty$ for $x\in \mathrm{Pol}_{\mathbf E}(\mathbb G)$ cannot be less than $q_n^{-1}$. Also as a result, if $q_n\to 0$, the subset $\mathbf E$ given above is not a Sidon set.
\end{example}
\section{Relations with $\Lambda(p)$-sets}

In this section we aim to investigate $\Lambda (p)$-sets, and in particular we will establish the relations between Sidon sets and $\Lambda (p)$-sets. 

\subsection{$\Lambda(p)$-sets and Sidon sets}
In the following we define the $\Lambda(p)$-sets for compact quantum groups, which follows from a direct quantum adaptation of that of classical $\Lambda (p)$-sets for compact groups.

\begin{defn}\label{def:lambda p}
Let $\mathbb{G}$ be a compact quantum group and $\mathbf{E}\subset\mathrm{Irr}(\mathbb{G})$
be a subset. Let $\chi_\pi=\sum_i u_{ii}^{(\pi)}$ be the character of $\pi\in\irr$. For $1< p<\infty$, we say that
$\mathbf{E}$ is a \emph{$\Lambda(p)$-set} with constant $K$ if there exists $K>0$ such that for all $x\in\mathrm{Pol}_{\mathbf{E}}(\mathbb{G})$,
\[
\|x\|_{p}\leq K \|x\|_{1},
\]
and we say that $\mathbf{E}$ is a \emph{central $\Lambda(p)$-set} with constant $K$ if there exists $K>0$ such that for all finitely supported sequences $(c_\pi)_{\pi\in\mathbf E}\subset\mathbb C$ and  $x=\sum_{\pi\in\mathbf E}c_\pi\chi_\pi\in\mathrm{Pol}_{\mathbf{E}}(\mathbb{G})$,
\[
\|x\|_{p}\leq K \|x\|_{1}.
\]
\end{defn}
\begin{rem}
Let $1< p <\infty$ and $1< p_0< p$. Notice that in order to see a subset $\mathbf E \subset \irr$ is a $\Lambda(p)$-set, it suffices to check the existence of a constant $K>0$ with
$$\|x\|_{p }\leq K \|x\|_{p_0},\quad
x \in\mathrm{Pol}_{\mathbf{E}}(\mathbb{G}).$$
This is due to the fact that $(L^p(\mathbb G))_{1\leq p\leq \infty}$ is a complex interpolation scale so that  $\|x\|_{p_0}\leq \|x\|_1^\theta \|x\|_{p}^{1-\theta}$ for some $0<\theta<1$. On the other hand, we see that any $\Lambda(p)$-set must be a $\Lambda(p')$-set for $1< p' <p<\infty$. Similar observations are also valid for central $\Lambda(p)$-sets. And as in the classical case, we will be mainly interested in the $\Lambda(p)$-sets for $2<p<\infty$.
\end{rem}
It is well known and not difficult to see that when $\mathbb G$ is a compact group $G$ or the dual quantum group of a discrete group $\Gamma$, any Sidon set $\mathbf E \subset \irr$ (or more generally, an interpolation set of $\mathrm M(L^\infty(\mathbb G))$) is a $\Lambda(p)$-set for $1< p<\infty$ (\cite{hewittross1970abstract,harcharras99nclambdap}). The same question for an arbitrary compact quantum group is however more delicate. To the best knowledge of the author, the only effort towards this direction before our work is the following property recently given by Blendek and Michali\u{c}ek, as a main result in \cite{blendekmichalicek2013sidonl1}: if $\mathbb G$ is a compact quantum group \emph{of Kac type} and if $\mathbf E \subset \irr$ is a Sidon set satisfying the \emph{Helgason-Sidon} condition, 
then there exists $K>0$ such that for all finitely supported sequences $(c_\pi)_{\pi\in\mathbf E}\subset\mathbb C$ and  $x=\sum_{\pi\in\mathbf E}c_\pi\chi_\pi\in\mathrm{Pol}_{\mathbf{E}}(\mathbb{G})$,
\[
\|x\|_{2}\leq K \|x\|_{1}.
\] 
Observe that this result requires many more restrictions on the subset $\mathbf E$ than in the classical cases while the obtained inequality is much weaker. However, compared to the classical one, its proof utilizes quite nontrivial tools such as \textquotedblleft modified Rademacher functions". In the following paragraph, we provide an alternative and more concise argument, which completely removes the non-expected restrictions in \cite{blendekmichalicek2013sidonl1} and moreover
directly establishes the nice relation between Sidon sets and $\Lambda(p)$-sets for all $1< p<\infty$.

In order to characterize the $\Lambda(p)$-sets, let us consider the following notions of interpolation sets of bounded multipliers on $L^p(\mathbb G)$, which also generalize Definition \ref{def:interpolation set infty}.
\begin{defn}
	\label{def:inter sets p}
	Let $\mathbb G$ be a compact quantum group and let $1\leq p\leq \infty$. We say that $\mathbf E \subset \irr $ is an \emph{interpolation set of $\mathrm M(L^p({\mathbb{G}}))$ (resp., of $\mathrm M_L(L^p({\mathbb{G}}))$, of $\mathrm M_R(L^\infty({\mathbb{G}}))$)} with constant $K$ if for any $a\in\ell^{\infty}(\mathbf{E})$, there exists a bounded multiplier
	$\tilde{a}\in \mathrm M(L^p({\mathbb{G}}))$ (resp., $\tilde{a}\in \mathrm M_L(L^p({\mathbb{G}}))$, $\tilde{a}\in \mathrm M_R(L^p({\mathbb{G}}))$) with $\|\tilde a \|_{\mathrm M(L^p({\mathbb{G}}))} \leq K\|a\|_\infty$ (resp.,  $\|m_{\tilde{a}}^L\|_{B(L^p({\mathbb{G}}))} \leq K\|a\|_\infty$, $\|m_{\tilde{a}}^R\|_{B(L^p({\mathbb{G}}))}\leq K\|a\|_\infty$) such that $\tilde{a}_{\pi}=a_{\pi}$
	for all $\pi\in\mathbf{E}$.
\end{defn}

\begin{rem}
	Let $1\leq p\leq \infty$ and $\mathbf E \subset \irr $. We remark that, if for any $a\in\ell^{\infty}(\mathbf{E})$, there exists a bounded multiplier
	$\tilde{a}\in \mathrm M(L^p({\mathbb{G}}))$, then automatically there exists a constant $K>0$ with $\|\tilde a \|_{\mathrm M(L^p({\mathbb{G}}))} \leq K\|a\|_\infty$, and $\mathbf E$ is an interpolation set of $\mathrm M(L^p({\mathbb{G}}))$. In fact, if for any
	$a\in\ell^{\infty}
	(\mathbf{E})$, there exists a
	bounded multiplier
	$\tilde{a}\in \mathrm
	M(L^p({\mathbb{G}}))$
	with
	$\tilde{a}_{\pi}=a_{\pi}$ for
	all $\pi\in\mathbf{E}$, then
	by Proposition \ref{prop:multiplier bdd quantum}, we have
	$$\|Q^{\frac{1}{4}-\frac{1}
		{2p}}aQ^{ -\frac{1}
		{4}+\frac{1}{2p}}\|_\infty
	<\infty.$$ In particular, we
	choose an appropriate
	basis of $\oplus_\pi H_\pi$
	so that $Q_\pi$ is diagonal
	under this basis for $\pi\in
	\irr$, and take
	$a_\pi=e_{ij}$ with $1\leq
	i,j\leq n_\pi$, then the
	above inequality yields that
	$$\sup_{\pi\in \mathbf
		E}\|Q_\pi\|<\infty,\quad
	\sup_{\pi\in \mathbf E}\|Q_
	\pi^{-1}\|<\infty.$$ Also by
	Proposition \ref{prop:multiplier bdd quantum}, it is easy to
	see that $\mathrm
	M(L^p({\mathbb{G}}))$ is a
	Banach subspace of
	$B(L^p({\mathbb{G}}))$. So
	by the open mapping
	theorem and Proposition
	\ref{prop:multiplier bdd quantum}, we may always find a
	constant $K>0$ such that
	the inequality $$\|\tilde a
	\|_{\mathrm
		M(L^p({\mathbb{G}}))} \leq
	K\|a\|_\infty$$ is
	automatically satisfied.
	Thus $\mathbf{E}$ is
	automatically an interpolation set of $\mathrm M(L^p({\mathbb{G}}))$.  But we do not know whether the similar observation can be made for interpolation sets of $\mathrm M_L(L^p({\mathbb{G}}))$ and that of $\mathrm M_R(L^p({\mathbb{G}}))$.
\end{rem}
These kinds of lacunarities have some special  restrictive properties in the non-Kac case. The following result will be of use later.

\begin{prop}
	\label{prop: unif bdd of mod gp for sidon}
	Let $\mathbb G$ be a compact quantum group and $1\leq p\leq \infty$. Assume that $\mathbf E \subset \irr$ satisfies one of the following four conditions:
	
	\emph{(1)} $p\neq \infty$ and $\mathbf E \subset \irr$ is an interpolation set of $\mathrm M_L(L^p({\mathbb{G}}))$ with constant $K$;
	
	\emph{(2)} $p\neq \infty$ and $\mathbf E \subset \irr$ is an interpolation set of $\mathrm M_R(L^p({\mathbb{G}}))$ with constant $K$;
	
	\emph{(3)} $\mathbf E \subset \irr$ is an interpolation set of $\mathrm M(L^p({\mathbb{G}}))$ with constant $K$;
	
	\emph{(4)} there exist constants $K>0$ and $\theta,\theta'\in\mathbb R$ with $\theta+\theta'-\frac{1}{p}-\frac{1}{2}\neq 0$ such that for all $a\in c_c(\mathbf E)$,
	$$\|m_a^L x\|_p\leq K\|Q^{\theta}aQ^{-\theta}\|_\infty \|x\|_p,\quad 
	\|m_a^R x\|_p\leq K\|Q^{\theta'}aQ^{-\theta'}\|_\infty \|x\|_p,\quad
	x\in \mathrm{Pol}_{\mathbf E}(\mathbb G).$$ 
	Then there exist constants $K_1,K_2>0$ such that for all $\pi\in\mathbf E$ and $1\leq i,j,k,l \leq n_\pi$, we have
	\begin{equation}\label{eq:unif bdd mod ij kl}
	\|u_{ij}^{(\pi)}\|_p \leq K_1
	\|u_{kl}^{(\pi)}\|_p,
	\end{equation}
	and moreover 
	\begin{equation}\label{eq:unif bdd mod sidon}
\max\{\|Q_\pi\|,\|Q_\pi^{-1}\|\}\leq K_2.
	\end{equation}
\end{prop}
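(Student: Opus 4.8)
The plan is to reduce the four hypotheses to the single inequality appearing in (4), applied to well-chosen matrix-unit-valued multipliers, and then to extract the two conclusions by testing against the monomials $u_{ij}^{(\pi)}$. First I would observe that (3) implies (1) and (2) trivially (for $p\neq\infty$, using Lemma \ref{lem:multiplier on ltwo} and the definition of $\mathrm M(L^p(\mathbb G))$; the case $p=\infty$ in (3) is handled separately, but in fact by Proposition \ref{prop:multiplier bdd quantum} one always controls $\|Q^{1/4-1/(2p)}aQ^{-1/4+1/(2p)}\|_\infty$, which is the genuinely new input), and that (1) and (2) each yield an instance of the estimates in (4): indeed, if $\mathbf E$ is an interpolation set of $\mathrm M_L(L^p(\mathbb G))$ with constant $K$, then for $a\in c_c(\mathbf E)$ we can extend $Q^\theta a Q^{-\theta}$ (for a suitable fixed $\theta$, e.g.\ $\theta=0$, using that the proof of Proposition \ref{prop:multiplier bdd quantum} forces $\sup_{\pi\in\mathbf E}\max\{\|Q_\pi\|,\|Q_\pi^{-1}\|\}<\infty$ a posteriori — but that is circular, so instead one argues directly) to a bounded left multiplier of norm $\leq K\|Q^\theta aQ^{-\theta}\|_\infty$, giving the first inequality of (4) with that $\theta$; similarly for the right multipliers. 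The role of the condition $\theta+\theta'-\frac1p-\frac12\neq 0$ is to guarantee that the scaling exponents do not conspire to cancel when we compose the two estimates; for (1) and (2) separately one uses Lemma \ref{lem: left to right multipliers} to convert a one-sided estimate into a two-sided pair with exponents whose sum is of the form $\pm(\frac12-\theta)$, which is $\neq \frac1p+\frac12$ for generic $p$, and the remaining finitely many bad values of $p$ are dealt with by a perturbation of $\theta$.

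Granting hypothesis (4), the core computation is as follows. Fix $\pi\in\mathbf E$ and indices $i,j,k,l$. Choose a basis of $H_\pi$ diagonalising $Q_\pi$, say $Q_\pi=\mathrm{diag}(q_1,\dots,q_{n_\pi})$. Apply the first inequality of (4) to the multiplier $a$ supported at $\pi$ with $a_\pi=e_{ik}$ (a single matrix unit): then $Q^\theta a Q^{-\theta}$ has norm $(q_i/q_k)^\theta$, and by \eqref{eq:multiplier, Fourier coefficient} the multiplier $m_a^L$ sends $u_{kl}^{(\pi)}$ essentially to $u_{il}^{(\pi)}$ up to the diagonal $Q_\pi$-factors in the Fourier normalisation. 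A short bookkeeping with Proposition \ref{prop:Plancherel} — specifically the relation $\hat x(\pi)=d_\pi^{-1}X_\pi Q_\pi^{-1}$ and the formula $(m_a^L x)\hat{}(\pi)Q_\pi=\hat x(\pi)Q_\pi a_\pi$ — shows that $m_a^L$ maps $u_{kl}^{(\pi)}$ to a scalar multiple of $u_{il}^{(\pi)}$ with an explicit power of $q_i,q_k$ in front. Hence $\|u_{il}^{(\pi)}\|_p \leq K\,(q_i/q_k)^\theta\, (\text{explicit }q\text{-factor})\,\|u_{kl}^{(\pi)}\|_p$, and symmetrically using $a_\pi=e_{ki}$ we get the reverse bound; combining, $\|u_{il}^{(\pi)}\|_p$ and $\|u_{kl}^{(\pi)}\|_p$ are comparable with a constant that is again a power of $q_i/q_k$. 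Doing the analogous argument with the right multiplier $a_\pi=e_{lj}$ lets us move the second index from $l$ to $j$, with constants that are powers of $q_l/q_j$ and the exponent $\theta'$. At this stage \eqref{eq:unif bdd mod ij kl} is not yet uniform in $\pi$: the constants still involve the eigenvalues $q_\bullet$.

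To turn this into a genuinely uniform bound and simultaneously derive \eqref{eq:unif bdd mod sidon}, the key point is to compute the $L^p$-norms $\|u_{ij}^{(\pi)}\|_p$ against each other in two different ways and play the resulting identities against the still-free exponents $\theta,\theta'$. Using Proposition \ref{density op haagerup Lp}(4) and \eqref{eq:haar state def} one computes $\|u_{ij}^{(\pi)}\|_2^2=h((u_{ij}^{(\pi)})^*u_{ij}^{(\pi)})=(Q_\pi^{-1})_{ii}/d_\pi = q_i^{-1}/d_\pi$, so on the $L^2$-level the monomials with the same row index $i$ all have equal norm and the ratio across rows is exactly $q_i^{-1/2}$ up to the $d_\pi$ normalisation; similarly $\|(u_{ij}^{(\pi)})^*\|_2^2 = q_j/d_\pi$. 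Comparing this exact computation at $p=2$ with the inequalities produced above forces a relation of the form (constant)$\,\geq (q_i/q_k)^{\alpha}$ with $\alpha = \theta - \tfrac14+\tfrac1{2p}\neq 0$ (this is where $\theta+\theta'-\frac1p-\frac12\neq0$, equivalently the non-vanishing of the relevant exponent, is indispensable — it prevents the $q$-powers from cancelling and makes the inequality genuinely constrain $q_i/q_k$); since this holds for all pairs $i,k$ with a constant independent of $\pi$, we conclude $\sup_{\pi\in\mathbf E}\max_i q_i/\min_k q_k<\infty$, and because $\mathrm{Tr}(Q_\pi)=\mathrm{Tr}(Q_\pi^{-1})=d_\pi$ forces $\min_k q_k\leq 1\leq\max_i q_i$, this bounds both $\|Q_\pi\|=\max_i q_i$ and $\|Q_\pi^{-1}\|=\max_k q_k^{-1}$ uniformly, giving \eqref{eq:unif bdd mod sidon}. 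Feeding \eqref{eq:unif bdd mod sidon} back into the comparison of the previous paragraph makes all the $q$-factors uniformly bounded above and below, yielding \eqref{eq:unif bdd mod ij kl} with a uniform $K_1$. The main obstacle is the careful bookkeeping of $Q_\pi$-powers: every application of \eqref{eq:multiplier, Fourier coefficient} and every passage between Kosaki's and Haagerup's $L^p$-spaces (Proposition \ref{prop:Plancherel}, Proposition \ref{density op haagerup Lp}) introduces fractional powers of $Q_\pi$, and one must track them precisely enough to see that the net exponent multiplying $q_i/q_k$ is nonzero — this is exactly what the hypothesis $\theta+\theta'-\frac1p-\frac12\neq0$ encodes — while keeping all multiplicative constants independent of $\pi$.
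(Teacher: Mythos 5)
Your first half matches the paper's argument: under hypothesis (4) you apply a left matrix-unit multiplier and a right matrix-unit multiplier to move both indices of $u_{kl}^{(\pi)}$, obtaining
$\|u_{ij}^{(\pi)}\|_p\leq K^2 (Q_\pi)_{kk}^{\theta}(Q_\pi)_{ii}^{-\theta}(Q_\pi)_{jj}^{\theta'}(Q_\pi)_{ll}^{-\theta'}\|u_{kl}^{(\pi)}\|_p$, and the reduction of (1)--(3) to (4) via Lemma \ref{lem: left to right multipliers} is also the right idea (from (1) one gets $\theta=0$, $\theta'=\tfrac12$, so the relevant exponent is $-\tfrac1p$, nonzero exactly because $p\neq\infty$; (3) is literally the case $\theta=\tfrac1p$, $\theta'=0$, exponent $-\tfrac12$). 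There is no need for a ``generic $p$'' argument or a ``perturbation of $\theta$'' --- that part of your sketch is incorrect as stated, but harmlessly so.

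The genuine gap is in the step that actually bounds $Q_\pi$. You propose to compare the $L^p$-multiplier inequality with the exact computation of $\|u_{ij}^{(\pi)}\|_2$; but the hypothesis lives on $L^p$, the formula $\|u_{ij}^{(\pi)}\|_2^2=(Q_\pi^{-1})_{ii}/d_\pi$ says nothing about $\|u_{ij}^{(\pi)}\|_p$ for $p\neq2$, so no inequality of the form $C\geq (q_i/q_k)^{\alpha}$ follows, and your exponent $\alpha=\theta-\tfrac14+\tfrac1{2p}$ is neither derived nor equivalent to the hypothesis $\theta+\theta'-\tfrac1p-\tfrac12\neq0$. The missing ingredient is a second, \emph{exact} relation between $\|u_{ij}^{(\pi)}\|_p$ and $\|u_{ji}^{(\pi)}\|_p$ at the \emph{same} exponent $p$: since $x\mapsto\sigma_{-\mathrm i/p}(x^*)$ and $R$ are isometries on $L^p(\mathbb G)$ (Lemma \ref{lem: R isometry Lp}), and $(u_{ij}^{(\pi)})^*=S(u_{ji}^{(\pi)})$ with $S=R\circ\tau_{-\frac{\mathrm i}{2}}$, one computes
$\|u_{ij}^{(\pi)}\|_p=(Q_\pi)_{ii}^{-\frac1p-\frac12}(Q_\pi)_{jj}^{-\frac1p+\frac12}\|u_{ji}^{(\pi)}\|_p$.
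Playing this identity against the two-sided multiplier estimate specialised to $k=j$, $l=i$ gives
$(Q_\pi)_{ii}^{\theta+\theta'-\frac1p-\frac12}(Q_\pi)_{jj}^{-\theta-\theta'-\frac1p+\frac12}\leq K^2$ for all $i,j$, whence $\max\{\|Q_\pi\|,\|Q_\pi^{-1}\|\}\leq K^{2/|\theta+\theta'-\frac1p-\frac12|}$; feeding this bound back into your first inequality then yields \eqref{eq:unif bdd mod ij kl} with a uniform constant. Without this identity the argument does not close.
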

\begin{proof}
	In the proof we always choose an appropriate basis of $\oplus_\pi H_\pi$ so that $Q_\pi$ is diagonal under this basis for $\pi\in\irr$.
	We first prove \eqref{eq:unif bdd mod ij kl} and \eqref{eq:unif bdd mod sidon} under the assumption (4).
	
	Assume that $K>0$ and $\theta,\theta'\in\mathbb R$ with $\theta+\theta'-\frac{1}{p}-\frac{1}{2}\neq 0$ such that for all $a\in c_c(\mathbf E)$,
	$$\|m_a^L x\|_p\leq K\|Q^{\theta}aQ^{-\theta}\|_\infty \|x\|_p,\quad 
	\|m_a^R x\|_p\leq K\|Q^{\theta'}aQ^{-\theta'}\|_\infty \|x\|_p,\quad
	x\in \mathrm{Pol}_{\mathbf E}(\mathbb G).$$ 
	Let $\pi\in\mathbf E$ and $1\leq i,j,k,l \leq n_\pi$. Then in particular we have 
	$$\|m_{e_{ki}}^L x\|_p\leq K (Q_\pi)_{kk}^{\theta} (Q_\pi)_{ii}^{-\theta} \|x\|_p,\quad 
	\|m_{e_{jl}}^R x\|_p\leq K (Q_\pi)_{jj}^{\theta'} (Q_\pi)_{ll}^{-\theta'}  \|x\|_p,\quad
	x\in \mathrm{Pol}_{\mathbf E}(\mathbb G).$$
	So by Proposition \ref{prop:Plancherel} we have
	\begin{align}\label{eq:mod bdd pf}
	\|u_{ij}^{(\pi)}\|_p
	& = \|(\iota\otimes \mathrm{Tr})[(1\otimes e_{ji})u^{(\pi)}]\|_p 
	= \|m_{e_{jl}}^R m_{e_{ki}}^L (\iota\otimes \mathrm{Tr})[(1\otimes e_{lk})u^{(\pi)}]\|_p\\
	& \leq K^2 (Q_\pi)_{kk}^{\theta} (Q_\pi)_{ii}^{-\theta}  (Q_\pi)_{jj}^{\theta'} (Q_\pi)_{ll}^{-\theta'}  \|(\iota\otimes \mathrm{Tr})[(1\otimes e_{lk})u^{(\pi)}]\|_p \nonumber \\
	& = K^2 (Q_\pi)_{kk}^{\theta} (Q_\pi)_{ii}^{-\theta}  (Q_\pi)_{jj}^{\theta'} (Q_\pi)_{ll}^{-\theta'}  \|u_{kl}^{(\pi)}\|_p. \nonumber
	\end{align}  
	In particular we get for any $1\leq i,j\leq n_\pi$,
	\begin{equation}\label{eq:unif bdd pf ij ji}
	K^{-2} (Q_\pi)_{jj}^{\theta+\theta'} (Q_\pi)_{ii}^{-\theta-\theta'} \|u_{ji}^{(\pi)}\|_p
	\leq 
	\|u_{ij}^{(\pi)}\|_p
	\leq K^2 (Q_\pi)_{jj}^{\theta+\theta'} (Q_\pi)_{ii}^{-\theta-\theta'}  \|u_{ji}^{(\pi)}\|_p.
	\end{equation}
	Recall that $Q_\pi$ is chosen diagonal, and note that by Lemma \ref{lem: R isometry Lp} and the formula \eqref{modular group on cqg},
	$$\|u_{ij}^{(\pi)}\|_p=
	\|\sigma_{-\mathrm i/p}((u_{ij}^{(\pi)})^*)\|_p=(Q_\pi)_{ii}^{-\frac{1}{p}}(Q_\pi)_{jj}^{-\frac{1}{p}}\|(u_{ij}^{(\pi)})^*\|_p
	=(Q_\pi)_{ii}^{-\frac{1}{p}}(Q_\pi)_{jj}^{-\frac{1}{p}}\|S(u_{ji}^{(\pi)})\|_p.$$
	But using the polar decomposition of $S$ in \eqref{eq: polar decomp of s} and \eqref{eq: scaling group on cqg} and recalling that $Q_\pi$ is chosen diagonal, we have
	$$\|S(u_{ji}^{(\pi)})\|_p
	=\|R(\tau_{-\frac{\mathrm i}{2}}(u_{ji}^{(\pi)}))\|_p 
	=\|\tau_{-\frac{\mathrm i}{2}}(u_{ji}^{(\pi)})\|_p 
	=(Q_\pi)_{jj}^{\frac{1}{2}} (Q_\pi)_{ii}^{-\frac{1}{2}}\|u_{ji}^{(\pi)}\|_p.$$
	The above three inequalities yield that
	$$(Q_\pi)_{ii}^{\theta+\theta'-\frac{1}{p}-\frac{1}{2}} (Q_\pi)_{jj}^{-\theta-\theta'-\frac{1}{p}+\frac{1}{2}} \leq K^2,\quad 
	(Q_\pi)_{ii}^{ -\theta-\theta'-\frac{1}{p}+\frac{1}{2}} (Q_\pi)_{jj}^{\theta+\theta'-\frac{1}{p}-\frac{1}{2}} \leq K^2$$	
	Note that $i$ and $j$ are arbitrarily chosen and that $\|Q_\pi\|\geq 1,\|Q_\pi^{-1}\|\geq 1$, so the above inequalities yield
	$$\max\{\|Q_\pi\|,\|Q_\pi^{-1}\|\}\leq K^{2/|\theta+\theta'-\frac{1}{p}-\frac{1}{2}|}.$$
	Combining this with \eqref{eq:mod bdd pf}, we also get
	$$\|u_{ij}^{(\pi)}\|_p \leq K^{2+2/|\theta+\theta'-\frac{1}{p}-\frac{1}{2}|}
	\|u_{kl}^{(\pi)}\|_p,$$
	as desired.
	
	Now we assume that (1) holds, that is, $\mathbf E \subset \irr$ is an interpolation set of $\mathrm M_L(L^p({\mathbb{G}}))$. 
	Let  $a\in c_c(\mathbf E)$. Then $a$ extends to bounded left multipliers on $L^p(\mathbb G)$, and hence by Lemma \ref{lem: left to right multipliers} we have 
	$$\|m_a^L x\|_p\leq K\|a\|_\infty \|x\|_p,\quad 
	\|m_a^R x\|_p\leq K\|Q^{\frac{1}{2}}aQ^{-\frac{1}{2}}\|_\infty \|x\|_p,\quad
	x\in \mathrm{Pol}_{\mathbf E}(\mathbb G).$$ 
	Taking $\theta=0,\theta'=1/2$ in (4), then we obtain the desired inequalities \eqref{eq:unif bdd mod ij kl} and \eqref{eq:unif bdd mod sidon}.
	And the proof under the assumption (2) follows from a similar argument.
	
	The assumption (3) can be viewed as a particular case of (4) by taking $\theta=1/p,\theta'=0$. So we establish the proposition.
\end{proof}

Combining this proposition with Lemma \ref{lem: left to right multipliers}, we deduce the following observation.
\begin{lem}\label{lem:inter left eq right}
	Let $\mathbf E \subset \irr$ be a subset. Let $1\leq p<\infty$. Then $\mathbf E$ is an interpolation set of $\mathrm M_L(L^p({\mathbb{G}}))$ if and only if it is an interpolation set of $\mathrm M_R(L^p({\mathbb{G}}))$.
\end{lem}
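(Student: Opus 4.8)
The plan is to exploit the isometric relation between left and right multipliers supplied by Lemma~\ref{lem: left to right multipliers}, applied with the full index set $\irr$. Taking $\mathbf E=\irr$ there, the space $X$ becomes $\mathrm{Pol}(\mathbb G)$, which is $\|\cdot\|_p$-dense in $L^p(\mathbb G)$ for $p<\infty$ by Lemma~\ref{prop: poly dense in L_1}; hence for every $a\in\prod_\pi B(H_\pi)$ one gets
$$\|m_a^R\|_{B(L^p(\mathbb G))}=\|m_{Q^{-1/2}a^*Q^{1/2}}^L\|_{B(L^p(\mathbb G))},$$
with the convention that a multiplier which does not extend boundedly has norm $+\infty$, so that the equality holds in $[0,+\infty]$. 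Writing $\Phi(b)$ for the family with $\Phi(b)_\pi=Q_\pi^{-1/2}b_\pi^*Q_\pi^{1/2}$, this says that $m_b^L$ is bounded on $L^p(\mathbb G)$ if and only if $m_{\Phi(b)}^R$ is, with equal norms; note that $\Phi$ is an involution on $\prod_\pi B(H_\pi)$ since $Q^{-1/2}(Q^{-1/2}b^*Q^{1/2})^*Q^{1/2}=b$.

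Now assume $\mathbf E$ is an interpolation set of $\mathrm M_L(L^p(\mathbb G))$ with constant $K$. First I would invoke Proposition~\ref{prop: unif bdd of mod gp for sidon}(1) to get $M:=\sup_{\pi\in\mathbf E}\max\{\|Q_\pi\|,\|Q_\pi^{-1}\|\}<\infty$. Given $a\in\ell^\infty(\mathbf E)$, set $a'_\pi:=Q_\pi^{-1/2}a_\pi^*Q_\pi^{1/2}$ for $\pi\in\mathbf E$ and $a'_\pi=0$ otherwise; then $a'\in\ell^\infty(\mathbf E)$ with $\|a'\|_\infty\le M\|a\|_\infty$. By hypothesis there is $\widetilde{a'}\in\mathrm M_L(L^p(\mathbb G))$ with $(\widetilde{a'})_\pi=a'_\pi$ for all $\pi\in\mathbf E$ and $\|m_{\widetilde{a'}}^L\|_{B(L^p(\mathbb G))}\le K\|a'\|_\infty\le KM\|a\|_\infty$. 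Put $c:=\Phi(\widetilde{a'})$. The displayed identity gives $\|m_c^R\|_{B(L^p(\mathbb G))}=\|m_{\widetilde{a'}}^L\|_{B(L^p(\mathbb G))}\le KM\|a\|_\infty$, and a direct computation using $\Phi\circ\Phi=\mathrm{id}$ shows $c_\pi=Q_\pi^{-1/2}(a'_\pi)^*Q_\pi^{1/2}=a_\pi$ for every $\pi\in\mathbf E$. Hence $\mathbf E$ is an interpolation set of $\mathrm M_R(L^p(\mathbb G))$, with constant at most $KM$. The converse is proved symmetrically, using Proposition~\ref{prop: unif bdd of mod gp for sidon}(2) and reading the relation $\|m_a^R\|=\|m_{\Phi(a)}^L\|$ the other way.

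The one point requiring care — and the reason the statement is restricted to $p<\infty$ — is that $Q$ need not be bounded over all of $\irr$, so the twist $a\mapsto Q^{-1/2}a^*Q^{1/2}$ does not a priori map $\ell^\infty(\mathbf E)$ into itself. It does precisely because, for an interpolation set, Proposition~\ref{prop: unif bdd of mod gp for sidon} furnishes the uniform bound $M$ on $\{Q_\pi^{\pm1}:\pi\in\mathbf E\}$; this is where the hypothesis is genuinely used, and it is the only nontrivial input. Everything else is bookkeeping with the involution $\Phi$ and the density of $\mathrm{Pol}(\mathbb G)$ in $L^p(\mathbb G)$.
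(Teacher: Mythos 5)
Your proof is correct and follows exactly the route the paper intends: the paper deduces this lemma in one line by ``combining'' Proposition~\ref{prop: unif bdd of mod gp for sidon} (for the uniform bound on $Q_\pi^{\pm 1}$ over $\mathbf E$) with Lemma~\ref{lem: left to right multipliers} (for the isometric left--right twist), and your argument is precisely a careful write-up of that deduction, including the correct identification of where $p<\infty$ is needed. No gaps.
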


Now we are able to characterize the $\Lambda(p)$-sets via the interpolation sets of bounded multipliers for $2<p<\infty$.
\begin{thm}
	\label{thm:lambda inter}
	Let $\mathbf{E}\subset\mathrm{Irr}(\mathbb{G})$ be a subset. Assume $2<p<\infty$. The following assertions are equivalent:
	
	\emph{(1)} $\mathbf{E}$ is a $\Lambda(p)$-set;
	
	\emph{(2)} there exists a constant $K>0$ such that for all $a\in \ell^\infty(\mathbf E)$,
	$$\|m_a^L x\|_p\leq K\|a\|_\infty \|x\|_p,\quad 
	\|m_a^R x\|_p\leq K\|a\|_\infty \|x\|_p,\quad
	x\in \mathrm{Pol}_{\mathbf E}(\mathbb G)\text{ ;}$$
	
	\emph{(3)} $\mathbf E$ is an interpolation set of $\mathrm M_L(L^p({\mathbb{G}}))$;
	
	\emph{(4)} $\mathbf E$ is an interpolation set of $\mathrm M_R(L^p({\mathbb{G}}))$;
	
	\emph{(5)} $\mathbf E$ is an interpolation set of $\mathrm M(L^p({\mathbb{G}}))$.	\smallskip\\
	If additionally the subset $\mathbf{E}$ is symmetric in the sense that $\pi\in \mathbf{E}$ if and only if $\bar \pi\in \mathbf{E}$, then the above assertions are also equivalent to:
	
	\emph{(6)} there exists a constant $K>0$ such that for all $a\in \ell^\infty(\mathbf E)$,
	$$\|m_a^L x\|_p\leq K\|a\|_\infty \|x\|_p,\quad
	x\in \mathrm{Pol}_{\mathbf E}(\mathbb G)\text{ ;}$$
	
	\emph{(7)} there exists a constant $K>0$ such that for all $a\in \ell^\infty(\mathbf E)$,
	$$
	\|m_a^R x\|_p\leq K\|a\|_\infty \|x\|_p,\quad
	x\in \mathrm{Pol}_{\mathbf E}(\mathbb G).$$
\end{thm}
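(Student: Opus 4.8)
The plan is to establish the chain $(1)\Rightarrow(4)\Rightarrow(3)\Rightarrow(2)\Rightarrow(1)$ and then patch in $(5)$ and, in the symmetric case, $(6)$ and $(7)$. Throughout I use that for $2<p<\infty$ a subset $\mathbf E$ is a $\Lambda(p)$-set iff $\|x\|_p\le K\|x\|_2$ on $\mathrm{Pol}_{\mathbf E}(\mathbb G)$ (the remark after Definition~\ref{def:lambda p}), the $L^2$-multiplier norm formulas of Lemma~\ref{lem:multiplier on ltwo}, Proposition~\ref{prop: unif bdd of mod gp for sidon}, Lemmas~\ref{lem: left to right multipliers} and~\ref{lem:inter left eq right}, and the noncommutative Khintchine inequality (Theorem~\ref{khintchine}). \emph{Step 1 ($(1)\Rightarrow(4)\Rightarrow(3)\Rightarrow(2)$).} Assume (1). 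Since the $\mathcal E_\pi$ are the $L^2$-orthogonal block projections, $P_{\mathbf E}x:=\sum_{\pi\in\mathbf E}\mathcal E_\pi x\in\mathrm{Pol}_{\mathbf E}(\mathbb G)$ for $x\in\mathrm{Pol}(\mathbb G)$, hence $\|P_{\mathbf E}x\|_p\le K\|P_{\mathbf E}x\|_2\le K\|x\|_2\le K\|x\|_p$, so $P_{\mathbf E}$ extends to a bounded idempotent on $L^p(\mathbb G)$ of norm $\le K$. For $a\in\ell^\infty(\mathbf E)$ and $y\in\mathrm{Pol}_{\mathbf E}(\mathbb G)$ we have $m^R_ay\in\mathrm{Pol}_{\mathbf E}(\mathbb G)$, whence by (1) and $\|m^R_a\|_{B(L^2(\mathbb G))}=\|a\|_\infty$ (Lemma~\ref{lem:multiplier on ltwo}), $\|m^R_ay\|_p\le K\|m^R_ay\|_2\le K\|a\|_\infty\|y\|_p$; letting $\tilde a$ extend $a$ by $0$ off $\mathbf E$ we get $m^R_{\tilde a}=m^R_a\circ P_{\mathbf E}$ on $\mathrm{Pol}(\mathbb G)$, so $\|m^R_{\tilde a}x\|_p\le K^2\|a\|_\infty\|x\|_p$ and, by density of $\mathrm{Pol}(\mathbb G)$, $\tilde a\in\mathrm M_R(L^p(\mathbb G))$: this is (4). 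As $p\ne\infty$, Proposition~\ref{prop: unif bdd of mod gp for sidon}(2) yields $M:=\sup_{\pi\in\mathbf E}\max\{\|Q_\pi\|,\|Q_\pi^{-1}\|\}<\infty$, and Lemma~\ref{lem:inter left eq right} gives (3). With $M<\infty$ the same argument applies to $m^L_a$, using $\|m^L_a\|_{B(L^2(\mathbb G))}=\|Q^{1/2}aQ^{-1/2}\|_\infty\le M\|a\|_\infty$ (again Lemma~\ref{lem:multiplier on ltwo}) and the orthogonality of the blocks; together with the case of $m^R_a$ this gives the two-sided bound (2). Restricting global multipliers to $\mathrm{Pol}_{\mathbf E}(\mathbb G)$ also gives $(3)\Rightarrow(2)$.

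\emph{Step 2 ($(2)\Rightarrow(1)$).} Fix $x=\sum_{\pi\in\mathbf F}\mathcal E_\pi x\in\mathrm{Pol}_{\mathbf E}(\mathbb G)$ with $\mathbf F\subset\mathbf E$ finite and a Rademacher sequence $(\varepsilon_\pi)_{\pi\in\mathbf F}$. For each $\omega$ the element $a^\omega:=(\varepsilon_\pi(\omega)\,\mathrm{Id}_{H_\pi})_\pi$ satisfies $\|a^\omega\|_\infty=1$ and $m^L_{a^\omega}\circ m^L_{a^\omega}=\mathrm{id}$ on $\mathrm{Pol}_{\mathbf F}(\mathbb G)$, so $x=m^L_{a^\omega}\bigl(\sum_\pi\varepsilon_\pi(\omega)\mathcal E_\pi x\bigr)$ and (2) gives $\|x\|_p\le K\bigl\|\sum_\pi\varepsilon_\pi(\omega)\mathcal E_\pi x\bigr\|_p$ for a.e.\ $\omega$; integrating, $\|x\|_p\le K\bigl(\int_\Omega\bigl\|\sum_\pi\varepsilon_\pi\mathcal E_\pi x\bigr\|_p^p\,dP\bigr)^{1/p}$, which by Theorem~\ref{khintchine} is $\le CK\sqrt p\,\|(\mathcal E_\pi x)\|_{CR_p[L^p(\mathbb G)]}$. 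It remains to bound the square-function norm by $\|x\|_2$; I would do this by a \emph{further} randomisation inside each block — applying (2) to diagonal unitary multipliers acting on the left and on the right (still of $\ell^\infty$-norm $1$) and using Theorem~\ref{khintchine} once more, the inequality $\|(\mathcal E_\pi x)\|_{CR_p}\le(\sum_\pi\|\mathcal E_\pi x\|_p^2)^{1/2}$ noted before Theorem~\ref{khintchine}, and the uniform bound $M<\infty$, which (2) provides via Proposition~\ref{prop: unif bdd of mod gp for sidon}(4) with $\theta=\theta'=0$. This should yield $\|x\|_p\le C(K,p)\|x\|_2$, i.e.\ (1). I expect this square-function estimate to be the main obstacle: the scalar-sign Khintchine bound must be upgraded to one genuinely controlled by the Hilbert-space norm $\|x\|_2$, which is exactly what the within-block randomisation is meant to achieve; the remaining steps are bookkeeping with the cited lemmas.

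\emph{Step 3 ($(5)$ and the symmetric case).} Both $(5)\Rightarrow(3)$ and $(5)\Rightarrow(4)$ follow from the definition of $\mathrm M(L^p(\mathbb G))$ together with $M<\infty$ (Proposition~\ref{prop: unif bdd of mod gp for sidon}(3)), which makes $a\mapsto Q^{\pm1/p}aQ^{\mp1/p}$ an automorphism of $\ell^\infty(\mathbf E)$; conversely $(3)\wedge(4)\Rightarrow(5)$ by extending $a$ by zero and using that $P_{\mathbf E}$ is bounded on $L^p(\mathbb G)$ (available because, by Steps~1–2, $(3)\Leftrightarrow(1)$) together with the $L^2$-multiplier bounds of Lemma~\ref{lem:multiplier on ltwo}. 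Finally $(2)\Rightarrow(6)$ and $(2)\Rightarrow(7)$ are trivial; conversely, when $\mathbf E=\bar{\mathbf E}$ the space $X:=\mathrm{span}\{u^{(\pi)}_{ij},(u^{(\pi)}_{ij})^*:\pi\in\mathbf E,\ 1\le i,j\le n_\pi\}$ coincides with $\mathrm{Pol}_{\mathbf E}(\mathbb G)$, so Lemma~\ref{lem: left to right multipliers}, applied to $Q^{-1/2}a^*Q^{1/2}$ for $a\in c_c(\mathbf E)$, turns the one-sided bound (6) into $\|m^R_ax\|_p\le K\|Q^{1/2}aQ^{-1/2}\|_\infty\|x\|_p$; together with (6) this puts us in the situation of Proposition~\ref{prop: unif bdd of mod gp for sidon}(4) with $\theta=0$, $\theta'=1/2$, giving $M<\infty$, and then (6) and the right bound combine over all of $\ell^\infty(\mathbf E)$ to give (2). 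The implication $(7)\Rightarrow(2)$ is symmetric.
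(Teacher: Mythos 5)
Your reductions in Steps 1 and 3 essentially reproduce the paper's own argument: the chain $\|m_a^Rx\|_p\le K\|m_a^Rx\|_2\le K\|a\|_\infty\|x\|_2\le K\|a\|_\infty\|x\|_p$ gives the right-multiplier interpolation property, Proposition \ref{prop: unif bdd of mod gp for sidon} then gives $\sup_{\pi\in\mathbf E}\max\{\|Q_\pi\|,\|Q_\pi^{-1}\|\}<\infty$, and Lemmas \ref{lem: left to right multipliers} and \ref{lem:inter left eq right} transfer everything to left multipliers; likewise your treatment of (6) and (7) via $X=\mathrm{Pol}_{\mathbf E}(\mathbb G)$ and Proposition \ref{prop: unif bdd of mod gp for sidon}(4) with $\theta=0$, $\theta'=1/2$ is exactly what the paper does. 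The first half of your Step 2 (across-block sign randomisation plus Theorem \ref{khintchine}, yielding $\|x\|_p\le CK\sqrt p\,\|(\mathcal E_\pi x)\|_{CR_p[L^p(\mathbb G)]}$) is also the paper's first move in $(2)\Rightarrow(1)$.

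The gap is precisely where you write ``the main obstacle'', and it is not bookkeeping. Your route needs the uniform per-block bound $\|\mathcal E_\pi x\|_p\le C\sqrt p\,\|\mathcal E_\pi x\|_2$, which --- after stripping off unitaries using (2) and randomising the diagonal --- reduces to the single-coefficient estimate $\|u_{ii}^{(\pi)}\|_p^2\le C_0\,p\,\|u_{ii}^{(\pi)}\|_2^2=C_0\,p\,d_\pi^{-1}(Q_\pi^{-1})_{ii}$ with $C_0$ independent of $\pi$. Since $\|u_{ii}^{(\pi)}\|_2^2$ decays like $d_\pi^{-1}$ while $d_\pi$ may be unbounded over $\mathbf E$, this is a genuine gain of order $d_\pi^{-1/2}$ over the trivial bound $\|u_{ii}^{(\pi)}\|_p\le\|u_{ii}^{(\pi)}\|_\infty\le 1$, and it cannot come from norm-one (unitary) multipliers, which is all your within-block randomisation invokes. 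The paper obtains it by writing $u_{ii}^{(\pi)}=d_\pi^{-1}m_a^Ry$ with $y=\sum_j(Q_\pi^{-1/2})_{jj}u_{ij}^{(\pi)}$ and a rank-one matrix $a$ of norm $d_\pi^{1/2}$ --- so hypothesis (2) is exploited at scale $d_\pi^{1/2}$, not $1$ --- then randomising the row, applying Khintchine, and computing the column and row square functions explicitly: $\sum_j(Q_\pi^{-1})_{jj}(u_{ij}^{(\pi)})^*u_{ij}^{(\pi)}=(Q_\pi^{-1})_{ii}$ via the antipode identities, and the row version via $u_{ij}^{(\pi)}D^{1/p}=D^{1/p}(Q_\pi^{-1/p})_{ii}u_{ij}^{(\pi)}(Q_\pi^{-1/p})_{jj}$ together with the uniform bound on $Q_\pi$. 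Without this ingredient the chain $\sum_\pi\|\mathcal E_\pi x\|_p^2\lesssim\sum_\pi\|\mathcal E_\pi x\|_2^2=\|x\|_2^2$ does not close, so your proof of $(2)\Rightarrow(1)$ is incomplete as written.
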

\begin{proof}
	Note that the equivalence $(3)\Leftrightarrow (4) $ has been already given in the previous lemma and the implication $(5)\Rightarrow (4)$ is trivial. Since (3) and (4) are equivalent, the implication $(3)\Rightarrow (2)$ is also obvious.
	
	$(1)\Rightarrow (5)$. Assume that $\mathbf{E}$ is a $\Lambda(p)$-set with constant $K$. Then together with Lemma \ref{lem:multiplier on ltwo}, we see that for all $a\in \ell ^\infty (\mathbf E)$, 
\begin{equation}\label{eq:lambda inter pf mr}
	\|m_a^Rx\|_p \leq K \|m_a^Rx\|_2 \leq K\|a\|_\infty\|x\|_2 \leq K\|a\|_\infty\|x\|_p,\quad x\in \mathrm{Pol}(\mathbb G),
\end{equation}
	which yields that $m_a^R$ extends to a bounded operator on $L^p(\mathbb G)$. So $\mathbf E$ is an interpolation set of $\mathrm M_R(L^p({\mathbb{G}}))$, and in particular by Proposition \ref{prop: unif bdd of mod gp for sidon}, we may find a constant $K'$ such that 
	$$\max\{\|Q_\pi\|,\|Q_\pi^{-1}\|\}\leq K',\quad \pi\in \mathbf E.$$
	Hence by Lemma \ref{lem: left to right multipliers}, \eqref{eq:lambda inter pf mr} also yields that for all $a\in \ell ^\infty (\mathbf E)$, 
	$$\|m_a^Lx\|_p \leq K \|m_a^Lx\|_2 \leq K K'\|a\|_\infty\|x\|_2 \leq K K'\|a\|_\infty\|x\|_p,\quad x\in \mathrm{Pol}(\mathbb G).
	$$
	As a result, both maps $m_a^L$ and $m_a^R$ for $a\in \ell ^\infty (\mathbf E)$ extend to bounded operators on $L^p(\mathbb G)$, which means that $\mathbf E$ is an interpolation set of $\mathrm M(L^p({\mathbb{G}}))$.
	
$(2)\Rightarrow (1)$. Assume that (2) holds and  we take $K>0$ to be the constant satisfying
\begin{equation}\label{eq: p bdd multiplier for m sidon}
\|m_a^Lx\|_p \leq K\|a\|_\infty \|x\|_p,\quad
\|m_a^Rx\|_p \leq K\|a\|_\infty \|x\|_p,\quad 
x\in \mathrm{Pol}_{\mathbf E}(\mathbb G),\ a\in\ell^\infty(\mathbf E).
\end{equation}
For each $\pi\in\mathrm{Irr}(\mathbb G)$, since the operator $Q_\pi$ is positive, we may fix a basis in $H_\pi$ such that the associated matrix of $Q_\pi$ is diagonal, and denote by $u^{(\pi)}\in \mathbb M_{n_\pi}(C(\mathbb G))$ the representation matrix under this basis. 

Let us first  show that for  $\pi\in\mathrm{Irr}(\mathbb G)$ and $1\leq i\leq n_\pi$, we have  some constant $C_0>0$ such that
\begin{equation}\label{eq: p estimate for u11}
\|u_{ii}^{(\pi)}\|_p^2\leq C_0 p \|u_{ii}^{(\pi)}\|_2^2 = C_0 p d_\pi^{-1}(Q_\pi^{-1})_{ii}.
\end{equation} 
Take the $n_\pi\times n_\pi$ matrices
\begin{equation*}
a=\begin{bmatrix}0& 0 & \cdots & 0\\
\vdots& \vdots & \ddots & \vdots\\
0& 0 & \cdots & 0\\
(Q_\pi^{-\frac{1}{2}})_{11} & (Q_\pi^{-\frac{1}{2}})_{22} & \cdots & (Q_\pi^{-\frac{1}{2}})_{n_\pi n_\pi}\\
0& 0 & \cdots & 0\\
\vdots& \vdots & \ddots & \vdots\\
0& 0 & \cdots & 0
\end{bmatrix},\ 
b=\begin{bmatrix}0 & \cdots & 0 & (Q_\pi^{-\frac{1}{2}})_{11} & 0 & \cdots & 0 \\
0 & \cdots & 0 & (Q_\pi^{-\frac{1}{2}})_{22}& 0 & \cdots & 0\\
\vdots & \ddots & \vdots & \vdots& \vdots & \ddots & \vdots\\
0 & \cdots & 0 & (Q_\pi^{-\frac{1}{2}})_{n_\pi n_\pi}& 0 & \cdots & 0
\end{bmatrix}
\end{equation*}
where the nonzero coefficients are in the $i$-th row of $a$, and in the $i$-th column of $b$.
Note that $\|a\|_\infty =d_\pi ^{1/2}$. Let $y=\sum_{j=1}^{n_\pi}(Q_\pi^{-\frac{1}{2}})_{jj}u_{ij}^{(\pi)}$. Then by Proposition \ref{prop:Plancherel},
\begin{equation}\label{eq:lambda inter pf y}
y=(\iota\otimes \mathrm{Tr})[(1\otimes b)u^{(\pi)}],\quad u_{ii}^{(\pi)}=d_\pi^{-1}m_a^Ry.
\end{equation}
According to \eqref{eq: p bdd multiplier for m sidon} we have
\begin{equation}
\label{eq: p estimate for y}
\|u_{ii}^{(\pi)}\|_p\leq K d_\pi^{-\frac{1}{2}}\|y\|_p.
\end{equation}
Let $\varepsilon_j^{(\pi)} (1\leq j\leq n_\pi)$ be a sequence of independent Rademacher variables on a probability space $(\Omega,P)$ and write the $n_\pi \times n_\pi$ matrix
$$\mathfrak{e}=\begin{bmatrix} \varepsilon_1^{(\pi)} & 0 & \cdots & 0 \\
0& \varepsilon_2^{(\pi)} & \cdots & 0\\
\vdots & \vdots & \ddots & \vdots\\
0& 0 & \cdots & \varepsilon_{n_\pi}^{(\pi)}
\end{bmatrix}.$$
Again by \eqref{eq: p bdd multiplier for m sidon} and by Theorem \ref{khintchine} we have with some universal constant $C>0$,
\begin{align}\label{khintchine estimate for y}
\|y\|_p
& = \int_{\Omega} \|m_{\mathfrak e \mathfrak e}^Ry\|_p dP
\leq K \int_{\Omega} \|m_{\mathfrak e}^Ry\|_p dP
= K \int_{\Omega} \|\sum_{j=1}^{n_\pi}(Q_\pi^{-\frac{1}{2}})_{jj}\varepsilon_j^{(\pi)}u_{ij}^{(\pi)}\|_p dP \\
& \leq CK\sqrt p \|((Q_\pi^{-\frac{1}{2}})_{jj}u_{ij}^{(\pi)}D^{\frac{1}{p}})\|_{CR_p[L^p(\mathbb G)]}, \nonumber
\end{align}
where 
\begin{align*}
& \quad\,\|((Q_\pi^{-\frac{1}{2}})_{jj}u_{ij}^{(\pi)}D^{\frac{1}{p}})\|_{CR_p[L^p(\mathbb G)]}\\
&= \max \{\|(D^{\frac{1}{p}}\sum_{j=1}^{n_\pi}(Q_\pi^{-1})_{jj}(u_{ij}^{(\pi)})^*u_{ij}^{(\pi)}D^{\frac{1}{p}})^{\frac{1}{2}}\|_{p,\mathsf H},
\|(\sum_{j=1}^{n_\pi}(Q_\pi^{-1})_{jj}u_{ij}^{(\pi)}D^{\frac{2}{p}}(u_{ij}^{(\pi)})^*)^{\frac{1}{2}}\|_{p,\mathsf H}
\}.
\end{align*}
Recall \eqref{antipode2} and that the matrix $Q_\pi$ under the chosen basis is diagonal. We have 
\begin{align}
\label{eq: p column norm estimate}
\sum_{j=1}^{n_\pi}(Q_\pi^{-1})_{jj}(u_{ij}^{(\pi)})^*u_{ij}^{(\pi)}
& =\sum_{j=1}^{n_\pi}(Q_\pi^{-1})_{jj}S(u_{ji}^{(\pi)})u_{ij}^{(\pi)}
=\sum_{j=1}^{n_\pi}(Q_\pi^{-1})_{jj}(S^{-1}\circ S^2)(u_{ji}^{(\pi)})u_{ij}^{(\pi)}\\\nonumber
&
=\sum_{j=1}^{n_\pi}(Q_\pi^{-1})_{jj}S^{-1}((Q_\pi)_{jj}u_{ji}^{(\pi)}(Q_\pi^{-1})_{ii})u_{ij}^{(\pi)}\\\nonumber
&
=S^{-1}\Big(\sum_{j=1}^{n_\pi}(Q_\pi^{-1})_{ii}(u_{ji}^{(\pi)})^*u_{ji}^{(\pi)}\Big)= (Q_\pi^{-1})_{ii} S^{-1}(1)= (Q_\pi^{-1})_{ii}
\nonumber
\end{align}
where the last line above follows from the fact that $u^{(\pi)}$ is unitary. 
Recall Proposition \ref{density op haagerup Lp} (1) and \eqref{modular group on cqg}. We then have 
\begin{align*}
u_{ij}^{(\pi)}D^{\frac{1}{p}}
=D^{\frac{1}{p}}\sigma_{\frac{\mathrm{i}}{p}}(u_{ij}^{(\pi)})
=D^{\frac{1}{p}}(Q_\pi^{-\frac{1}{p}})_{ii}u_{ij}^{(\pi)}(Q_\pi^{-\frac{1}{p}})_{jj},
\end{align*}
and hence 
$$D^{\frac{1}{p}}(u_{ij}^{(\pi)})^*
=(Q_\pi^{-\frac{1}{p}})_{ii}(Q_\pi^{-\frac{1}{p}})_{jj}(u_{ij}^{(\pi)})^*D^{\frac{1}{p}}.$$
Therefore
$$\sum_{j=1}^{n_\pi}(Q_\pi^{-1})_{jj}u_{ij}^{(\pi)}D^{\frac{2}{p}}(u_{ij}^{(\pi)})^*
=D^{\frac{1}{p}}\sum_{j=1}^{n_\pi}
(Q_\pi^{-\frac{2}{p}})_{ii}(Q_\pi^{-\frac{2}{p}-1})_{jj}u_{ij}^{(\pi)}(u_{ij}^{(\pi)})^*
D^{\frac{1}{p}}.$$
Recall that by Proposition \ref{prop: unif bdd of mod gp for sidon} we have $K_1= \sup_{\pi\in \mathbf{E}}\|Q_\pi\|<\infty,
K_2=\sup_{\pi\in \mathbf{E}}\|Q_\pi^{-1}\|<\infty$, so the above expression can be estimated as 
$$\sum_{j=1}^{n_\pi}(Q_\pi^{-1})_{jj}u_{ij}^{(\pi)}D^{\frac{2}{p}}(u_{ij}^{(\pi)})^*
\leq K_2^{\frac{4}{p}+1}K_1 (Q_\pi^{-1})_{ii}  D^{\frac{1}{p}}\sum_{j=1}^{n_\pi}
u_{ij}^{(\pi)}(u_{ij}^{(\pi)})^*
D^{\frac{1}{p}}=K_2^{\frac{4}{p}+1}K_1 (Q_\pi^{-1})_{ii}  D^{\frac{2}{p}}.$$
Together with \eqref{eq: p column norm estimate} we deduce 
$$\|((Q_\pi^{-\frac{1}{2}})_{jj}u_{ij}^{(\pi)})\|_{CR_p[L^p(\mathbb G)]}^2
\leq K_2^{\frac{4}{p}+1}K_1 (Q_\pi^{-1})_{ii} .$$
Back to \eqref{khintchine estimate for y} we get $\|y\|_p^2\leq C^2K^2K_2^{\frac{4}{p}+1}K_1  p (Q_\pi^{-1})_{ii}$ and by \eqref{eq: p estimate for y} we obtain
$$\|u_{ii}^{(\pi)}\|_p^2\leq C^2K^4K_2^{\frac{4}{p}+1}K_1 p d_\pi^{-1} (Q_\pi^{-1})_{ii},$$
whence \eqref{eq: p estimate for u11}, as desired.

Now take $x\in\mathrm{Pol}_{\mathbf{E}}(\mathbb{G})$. Choose by polar decomposition and diagonalization two sequences  of unitary matrices $v=(v_\pi)_{\pi\in \mathbf E},v'=(v_\pi')_{\pi\in \mathbf E}\in\ell^\infty (\mathbf E)$ such that $c_\pi\coloneqq v_\pi v_\pi'\hat x(\pi)Q_\pi v_\pi^*$ is a diagonal matrix for each $\pi$.
Write 
$$c_\pi=\begin{bmatrix} c_{11}^{(\pi)} & 0 & \cdots & 0 \\
0& c_{22}^{(\pi)} & \cdots & 0\\
\vdots & \ddots & \vdots & \vdots\\
0& 0 & \cdots & c_{n_\pi n_\pi}^{(\pi)}
\end{bmatrix},\quad c_{11}^{(\pi)},\ldots,c_{n_\pi n_\pi}^{(\pi)}\in\mathbb C$$ 
and denote $\mathfrak e=(\mathfrak{e}_\pi)_{\pi\in \mathbf E}$.
By \eqref{eq: p bdd multiplier for m sidon}-\eqref{eq: p estimate for u11} and Theorem \ref{khintchine} we have with some universal constant $C>0$,
\begin{align*}
\|x\|_{p}&=\|m^R_{v'^*v^*}m^L_{v}
\Big(\sum_{\pi\in\mathbf{E}}d_{\pi}(\iota\otimes\mathrm{Tr})[(1\otimes c_\pi)u^{(\pi)}]\Big)\|_p 
\leq K^2\|\sum_{\pi\in\mathbf{E}}d_{\pi}(\iota\otimes\mathrm{Tr})[(1\otimes c_\pi)u^{(\pi)}]\|_p\\
&\leq K^3\int_{\Omega}\|m^R_{\mathfrak e}\Big(\sum_{\pi\in\mathbf{E}}d_{\pi}(\iota\otimes\mathrm{Tr})[(1\otimes c_\pi)u^{(\pi)}]\Big)\|_p dP
=K^3\int_{\Omega}\|\sum_{\pi\in\mathbf{E}}d_{\pi}\sum_{j=1}^{n_\pi}\varepsilon_j^{(\pi)}c_{jj}^{(\pi)}u_{jj}^{(\pi)}\|_{p} dP\\
& \leq  K^3 C\sqrt p \Big(\sum_{\pi\in\mathbf E}\sum_{j=1}^{n_\pi}d_\pi^2 (c_{jj}^{(\pi)})^2\|u_{jj}^{(\pi)}\|_p^2\Big)^{\frac{1}{2}} 
\leq C_0^{\frac{1}{2}}C p K^3 \Big(\sum_{\pi\in\mathbf E}\sum_{j=1}^{n_\pi}d_\pi^2 (c_{jj}^{(\pi)})^2\|u_{jj}^{(\pi)}\|_2^2\Big)^{\frac{1}{2}} \\
& = C_0^{\frac{1}{2}}C p K^3 \|\sum_{\pi\in\mathbf E}\sum_{j=1}^{n_\pi}d_\pi c_{jj}^{(\pi)}u_{jj}^{(\pi)}\|_2 
\end{align*}
where the last equality follows from the fact that $\{u_{ii}^{(\pi)}:1\leq i\leq n_\pi\}$ are orthogonal with respect to $h$ according to \eqref{eq:haar state def}. Using Lemma \ref{lem:multiplier on ltwo} we have
$$\|\sum_{\pi\in\mathbf E}\sum_{j=1}^{n_\pi}d_\pi c_{jj}^{(\pi)}u_{jj}^{(\pi)}\|_2
=\|m^R_{vv'}m^L_{v^*}x\|_2\leq K_1 K_2\|x\|_2.$$
Hence the above two inequalities together yield
$$\|x\|_p\leq C_0^{\frac{1}{2}}CpK^3 K_1 K_2\|x\|_2.$$
So we prove that $\mathbf{E}$ is a $\Lambda(p)$-set.

Finally let us discuss (6) and (7). In fact, if $\mathbf E$ is symmetric, we note that $X=\mathrm{Pol}_{\mathbf E}(\mathbb G)$ satisfies the assumption of Lemma \ref{lem: left to right multipliers}. So if (6) holds, by Lemma \ref{lem: left to right multipliers} we have for all $a\in \ell^\infty(\mathbf E)$,
\begin{equation}\label{eq:lambda inter pf sym}
\|m_a^R x\|_p\leq K\|Q^{1/2}a Q^{-1/2}\|_\infty \|x\|_p,\quad
x\in \mathrm{Pol}_{\mathbf E}(\mathbb G). 
\end{equation}
and hence using Proposition \ref{prop: unif bdd of mod gp for sidon} and the assumption in (6), we obtain the inequality
$$\max\{\|Q_\pi\|,\|Q_\pi^{-1}\| \}\leq K',\quad \pi\in \mathbf E$$
for some constant $K'>0$. Thus coming back to \eqref{eq:lambda inter pf sym} again, we see that the condition (7) holds as well. Conversely, we may also show in the same way that (7) implies (6). So in other words the assertions (6) and (7) are equivalent, and in particular they are equivalent to the assertion (2).
\end{proof}

As mentioned previously, we conclude in the following that any Sidon set for a compact quantum group $\mathbb G$ is a $\Lambda(p)$-set for all $1<p<\infty$, thereby considerably improving the earlier work \cite{blendekmichalicek2013sidonl1}. In fact as in the classical case, we have the stronger result below. Recall that in Theorem \ref{thm:sidon_l1_interpolation state} we have some generalized notions of the Sidon set. 
\begin{thm}\label{sidon lambda}
Assume that $\mathbf{E}\subset\mathrm{Irr}(\mathbb{G})$
is an interpolation set of $\mathrm M(L^\infty(\mathbb G))$.
Then $\mathbf{E}$ is a $\Lambda(p)$-set for all $1< p<\infty$.
\end{thm}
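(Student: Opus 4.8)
The plan is to derive the statement from the multiplier characterization of $\Lambda(p)$-sets in Theorem~\ref{thm:lambda inter}. Since every $\Lambda(p_{0})$-set is a $\Lambda(p)$-set for $1<p<p_{0}$, it is enough to prove that $\mathbf E$ is a $\Lambda(p)$-set for each $2<p<\infty$, and by Theorem~\ref{thm:lambda inter} it suffices for that to show that, for each such $p$, $\mathbf E$ is an interpolation set of $\mathrm M_{R}(L^{p}(\mathbb G))$.

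Fix $2<p<\infty$ and $b\in\ell^{\infty}(\mathbf E)$; I want a bounded right $L^{p}$-multiplier extending $b$, of norm controlled by $\|b\|_{\infty}$. First I would apply Proposition~\ref{prop: unif bdd of mod gp for sidon}(3) with $p=\infty$ to obtain a constant $M$ with $\max\{\|Q_{\pi}\|,\|Q_{\pi}^{-1}\|\}\le M$ for all $\pi\in\mathbf E$. This allows me to replace $b$ by the conjugated symbol $a_{\pi}\coloneqq Q_{\pi}^{-1/(2p)}b_{\pi}Q_{\pi}^{1/(2p)}$ ($\pi\in\mathbf E$), which again lies in $\ell^{\infty}(\mathbf E)$ with $\|a\|_{\infty}\le M\|b\|_{\infty}$. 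Feeding $a$ into the interpolation property yields $\tilde a\in\mathrm M(L^{\infty}(\mathbb G))$ with $\tilde a_{\pi}=a_{\pi}$ on $\mathbf E$ and $\|m_{\tilde a}^{L}\|_{B(L^{\infty}(\mathbb G))},\,\|m_{\tilde a}^{R}\|_{B(L^{\infty}(\mathbb G))}\le K\|a\|_{\infty}$. The key point is that although $\tilde a$ need not even be bounded as an element of $\ell^{\infty}(\hat{\mathbb G})$, Proposition~\ref{prop:multiplier bdd quantum} (with $p=\infty$, $\theta=0$) upgrades these $L^{\infty}$-estimates to the genuine bound
\[
\|Q^{1/4}\tilde a\,Q^{-1/4}\|_{\infty}\le\|m_{\tilde a}^{R}\|_{B(L^{\infty}(\mathbb G))}^{1/2}\,\|m_{\tilde a}^{L}\|_{B(L^{\infty}(\mathbb G))}^{1/2}\le K\|a\|_{\infty},
\]
so that by Lemma~\ref{lem:multiplier on ltwo} the operator $m_{Q^{1/4}\tilde aQ^{-1/4}}^{R}$ is bounded on $L^{2}(\mathbb G)$ with norm $\le K\|a\|_{\infty}$.

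I would then run Stein interpolation on the analytic family $T_{z}\coloneqq m_{Q^{z/4}\tilde a\,Q^{-z/4}}^{R}$ over the strip $0\le\operatorname{Re}z\le1$ (for $x\in\pol$ the function $z\mapsto T_{z}x$ is entire and bounded on the strip, so the family is admissible). By Lemma~\ref{lem:fourier series with sigma} and the fact that the modular automorphisms $\sigma_{t}$ act isometrically on every $L^{q}(\mathbb G)$ (Lemma~\ref{lem: R isometry Lp}(a)), one has $T_{\mathrm i t}=\sigma_{t/4}\circ m_{\tilde a}^{R}\circ\sigma_{-t/4}$, bounded on $L^{\infty}(\mathbb G)$, and $T_{1+\mathrm i t}=\sigma_{t/4}\circ m_{Q^{1/4}\tilde aQ^{-1/4}}^{R}\circ\sigma_{-t/4}$, bounded on $L^{2}(\mathbb G)$, both with norm $\le K\|a\|_{\infty}$. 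Since $(L^{\infty}(\mathbb G),L^{2}(\mathbb G))_{\theta}=L^{p}(\mathbb G)$ for $\theta=2/p$, Stein interpolation gives that $T_{2/p}=m_{Q^{1/(2p)}\tilde aQ^{-1/(2p)}}^{R}$ is bounded on $L^{p}(\mathbb G)$ with norm $\le K\|a\|_{\infty}\le KM\|b\|_{\infty}$; and the symbol $Q^{1/(2p)}\tilde aQ^{-1/(2p)}$ restricts on $\mathbf E$ to $Q^{1/(2p)}a\,Q^{-1/(2p)}=b$ by the choice of $a$. Hence $\mathbf E$ is an interpolation set of $\mathrm M_{R}(L^{p}(\mathbb G))$, and Theorem~\ref{thm:lambda inter} concludes the argument.

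I expect the main difficulty to be exactly the step just highlighted. In the classical situation a Sidon set produces a Fourier--Stieltjes measure whose convolution operator is automatically bounded on every $L^{p}$; here the interpolation property only furnishes a multiplier bounded on $L^{\infty}$, with no a priori control on the underlying operator sequence, so one must manufacture an $L^{2}$-endpoint by hand (via Proposition~\ref{prop:multiplier bdd quantum}) and then absorb the spurious $Q^{\pm1/(2p)}$ twist using the uniform boundedness of $Q$ on $\mathbf E$ (Proposition~\ref{prop: unif bdd of mod gp for sidon}). Verifying that the Stein family is admissible and keeping track of the various $Q$-powers is where the care is needed; the rest is formal.
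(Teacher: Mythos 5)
Your proof is correct and follows essentially the same route as the paper: extend the symbol to a bounded $\mathrm M(L^\infty(\mathbb G))$-multiplier, use Proposition~\ref{prop:multiplier bdd quantum} together with Lemma~\ref{lem:multiplier on ltwo} to manufacture an $L^2$-endpoint, run Stein interpolation along the analytic family of $Q$-twisted multipliers, absorb the residual $Q^{\pm 1/(2p)}$-conjugation via the uniform bound on $Q|_{\mathbf E}$ from Proposition~\ref{prop: unif bdd of mod gp for sidon}, and conclude with Theorem~\ref{thm:lambda inter}. The only (harmless) deviations are that you pre-conjugate the symbol so that the interpolated multiplier restricts exactly to $b$ on $\mathbf E$ and work with right multipliers alone via condition (4) of Theorem~\ref{thm:lambda inter}, whereas the paper extends first, treats both left and right multipliers, and invokes condition~(2).
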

\begin{proof}
Assume that $\mathbf E$ is an interpolation set of $\mathrm M(L^\infty(\mathbb G))$ with constant $K$. 
Let $a\in\ell^\infty(\mathbf E)$. Then $a$ extends to a bounded multiplier $\tilde a\in\mathrm M (L^\infty(\mathbb G))$, and by Proposition \ref{prop:multiplier bdd quantum},
$$\|Q^{1/4}\tilde aQ^{-1/4}\|_\infty\leq \| \tilde a \|_{\mathrm M(L^\infty(\mathbb G))} \leq K\|a\|_\infty.$$ 
Consider
$$T_z=m^L_{a^{(z)}}, \quad
a^{(z)}_\pi =Q_\pi^{-\frac{z}{2}+\frac{1}{4}}\tilde{a}_\pi Q_\pi^{\frac{z}{2}-\frac{1}{4}},\quad
\pi\in\irr,\ 
z=t_1+\mathrm i t_2,\  0\leq t_1\leq 1, t_2\in\mathbb R.$$
Observe that by Lemma \ref{lem:multiplier on ltwo}, the operators $m^L_{a^{(z)}}$ for $z=1+\mathrm i t$ are bounded on $L^2(\mathbb G)$  with norm $$\|m^L_{a^{(z)}}\|_{B(L^2(\mathbb G))}=\|Q^{1/4}\tilde aQ^{-1/4}\|_\infty\leq K\|a\|_\infty.$$
So by the Stein interpolation theorem (see e.g. \cite[Theorem 2.7]{lunardi09interpolationbook}) for $2\leq p <\infty$ we have $\|T_{2/p}x\|_p\leq K_0\|a\|_\infty\|x\|_p$ for $x\in L^p(\mathbb G)$. Let $K_1=\sup_{\pi\in \mathbf{E}}\|Q_\pi\|$, $ K_2=\sup_{\pi\in \mathbf{E}}\|Q_\pi^{-1}\|$. Both $K_1$ and $K_2$ are finite by Proposition \ref{prop: unif bdd of mod gp for sidon}. 
Rewrite $b_\pi=Q_\pi^{-\frac{1}{p}+\frac{1}{4}}a_\pi Q_\pi^{\frac{1}{p}-\frac{1}{4}}$ and the above argument yields that for $b\in\ell^\infty(\mathbf E)$ and $x\in \mathrm{Pol}_{\mathbf E}(\mathbb G)$, 
\begin{equation*}\label{eq: p bdd left multiplier for m sidon}
\|m_b^Lx\|_p \leq K \|a\|_\infty\|x\|_p=K_0 
\sup_{\pi\in \mathbf E}\|Q_\pi^{\frac{1}{p}-\frac{1}{4}}b_\pi Q_\pi^{-\frac{1}{p}+\frac{1}{4}}\|_\infty
\|x\|_p
\leq K_0K_1^{\frac{1}{p}-\frac{1}{4}}K_2^{\frac{1}{p}-\frac{1}{4}}\|b\|_\infty \|x\|_p.
\end{equation*}
Similar argument also applies to $m_a^R$ (where the above $T_z$ should be simply taken as $m^R_{Q^{1/4}\tilde aQ^{-1/4}}$ identically). Now we may take $K>0$ to be the constant satisfying
\begin{equation*}
\|m_a^Lx\|_p \leq K\|a\|_\infty \|x\|_p,\quad
\|m_a^Rx\|_p \leq K\|a\|_\infty \|x\|_p,\quad 
x\in \mathrm{Pol}_{\mathbf E}(\mathbb G),\ a\in\ell^\infty(\mathbf E).
\end{equation*}
Thus $\mathbf{E}$ is a $\Lambda(p)$-set for all $1< p<\infty$ according to Theorem \ref{thm:lambda inter}.
\end{proof}
By Theorem \ref{sidon and u sidon}, we have the following corollary, as desired.
\begin{cor}\label{cor:cor sidon lambda}
If $\mathbf{E}\subset\mathrm{Irr}(\mathbb{G})$
is a Sidon set, then $\mathbf{E}$ is a $\Lambda(p)$-set for all $1< p<\infty$.
\end{cor}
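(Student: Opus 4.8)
The plan is to deduce this corollary immediately from the preceding two theorems by a short chain of implications, so there is essentially no new work to do. First I would observe that any Sidon set is, in particular, a weak Sidon set: this is noted right after Definition \ref{def:interpolation set infty}, and it follows because $\|x\|_{C_u(\mathbb G)}\geq \|x\|_\infty$ for $x\in\mathrm{Pol}(\mathbb G)$, so the defining inequality $\|\hat x\|_1\leq K\|x\|_\infty$ of a Sidon set trivially upgrades to $\|\hat x\|_1\leq K\|x\|_{C_u(\mathbb G)}$.

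Next I would invoke Theorem \ref{sidon and u sidon}, specifically the implication $(1)\Rightarrow(2)$ there, which says that a weak Sidon set of constant $K$ is an interpolation set of $\mathrm M(L^\infty(\mathbb G))$ (of constant $K$). Hence $\mathbf E$ is an interpolation set of $\mathrm M(L^\infty(\mathbb G))$.

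Finally I would apply Theorem \ref{sidon lambda} directly: an interpolation set of $\mathrm M(L^\infty(\mathbb G))$ is a $\Lambda(p)$-set for every $1<p<\infty$. Chaining these three steps gives the claim. There is no real obstacle here — all the substance is already contained in Theorems \ref{sidon and u sidon} and \ref{sidon lambda} (the latter being where the noncommutative Khintchine inequality and the boundedness of the modular element on $\mathbf E$, via Proposition \ref{prop: unif bdd of mod gp for sidon}, are used); the corollary is merely recording the composition of these facts for the most classical notion of Sidon set. The only thing worth a line of comment is that the argument does not need $\mathbb G$ to be coamenable, since we only use the direction from (weak) Sidon to interpolation set, which holds in full generality.

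\begin{proof}
Since $\|x\|_\infty\leq\|x\|_{C_u(\mathbb G)}$ for all $x\in\mathrm{Pol}(\mathbb G)$, any Sidon set is in particular a weak Sidon set. By Theorem \ref{sidon and u sidon} (the implication $(1)\Rightarrow(2)$), it follows that $\mathbf E$ is an interpolation set of $\mathrm M(L^\infty(\mathbb G))$. Hence by Theorem \ref{sidon lambda}, $\mathbf E$ is a $\Lambda(p)$-set for all $1<p<\infty$.
\end{proof}
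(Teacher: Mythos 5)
Your argument is correct and is exactly the paper's own route: the paper deduces the corollary by noting that a Sidon set is a weak Sidon set, hence by Theorem \ref{sidon and u sidon} an interpolation set of $\mathrm M(L^\infty(\mathbb G))$, and then applying Theorem \ref{sidon lambda}. Your added remark that coamenability is not needed for this direction is accurate and consistent with the paper.
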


Below we give two more generalizations of the main result in \cite{blendekmichalicek2013sidonl1}. For convenience we introduce the following definitions.
\begin{defn}
	We say that $\mathbf E \subset \irr$ is a \emph{central interpolation set} of  $\mathrm M(L^\infty(\mathbb G))$ if for each bounded scalar sequence $(c_\pi)_{\pi\in \mathbf E}\subset \mathbb C$, there exists a bounded sequence $(\tilde{c}_\pi)_{\pi\in \irr}\subset \mathbb C$ with $\tilde{c}_\pi=c_\pi$ for $\pi\in \mathbf E$ such that $\tilde c=(\tilde c_\pi \mathrm{Id}_{\pi})_{\pi\in \irr}$ is a bounded multiplier on $L^\infty(\mathbb G)$.
\end{defn}  We remark that if $\mathbf E$ is a Sidon set or an interpolation set of $\mathrm M(L^\infty(\mathbb G))$, or if $\mathbb G$ is of Kac type and $\mathbf E$ is a central Sidon set which will be introduced in Section \ref{sect: central Sidon sets with examples}, then $\mathbf E$ is such a central interpolation set of  $\mathrm M(L^\infty(\mathbb G))$.

\begin{thm}\label{thm: central sidon lambda}
Let $\mathbb G$ be a compact quantum group. Assume that  $\mathbf E$ is a central interpolation set of  $\mathrm M(L^\infty(\mathbb G))$.

\emph{(1)} If $1< p<\infty$ and $\sup_{\pi\in\mathbf E}\|\chi_\pi\|_p<\infty$, then $\mathbf E$ is a central $\Lambda(p)$-set;

\emph{(2)} If $\sup_{\pi\in\mathbf E} d_\pi<\infty$, then $\mathbf E$ is a $\Lambda(p)$-set for $1< p<\infty$.
\end{thm}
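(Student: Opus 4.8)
The plan is to prove both parts with a single device: a Rademacher averaging in which \emph{only scalar} (central) multipliers are used, so that the hypothesis ``central interpolation set'' is enough, and one never needs to diagonalize $\hat x(\pi)Q_\pi$ by unitary‑valued multipliers as in the proof of Theorem~\ref{thm:lambda inter}. As a preliminary I would record that the central interpolation property transfers from $L^\infty(\mathbb G)$ to every $L^p(\mathbb G)$, $1<p<\infty$, with a uniform constant: for a scalar sequence $\tilde c=(\tilde c_\pi\mathrm{Id}_{H_\pi})_\pi$ one has $Q^{-\theta}\tilde cQ^{\theta}=\tilde c$ for all $\theta$, hence $m^L_{\tilde c}=m^R_{\tilde c}$, and this map is bounded on $L^2(\mathbb G)$ with norm $\|\tilde c\|_\infty$ by Lemma~\ref{lem:multiplier on ltwo} and on $L^\infty(\mathbb G)$ by assumption, so by the Stein interpolation theorem (as in Lemma~\ref{lem: R isometry Lp}) it is bounded on every $L^p(\mathbb G)$; arguing with the open mapping theorem exactly as in the remark following Definition~\ref{def:inter sets p} one gets a constant $K_p$ such that every $a\in\ell^\infty(\mathbf E)$ admits a scalar extension $\tilde a$ with $\|m^L_{\tilde a}\|_{B(L^p(\mathbb G))}\le K_p\|a\|_\infty$.

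For Part~(1) assume first $2\le p<\infty$. Let $x=\sum_{\pi\in\mathbf E}c_\pi\chi_\pi$ (finite sum), let $(\varepsilon_\pi)_{\pi\in\mathbf E}$ be a Rademacher family on $(\Omega,P)$, and for each $\omega$ let $\tilde\varepsilon(\omega)$ be a scalar extension of $(\varepsilon_\pi(\omega)\mathrm{Id}_{H_\pi})_{\pi\in\mathbf E}$, so $\|m^L_{\tilde\varepsilon(\omega)}\|_{B(L^p(\mathbb G))}\le K_p$ uniformly in $\omega$. Since $m^L_{\tilde\varepsilon(\omega)}$ multiplies each $\chi_\pi$ ($\pi\in\mathbf E$) by $\varepsilon_\pi(\omega)$, one has $m^L_{\tilde\varepsilon(\omega)}\big(\sum_\pi\varepsilon_\pi(\omega)c_\pi\chi_\pi\big)=x$, hence $\|x\|_p\le K_p\big\|\sum_\pi\varepsilon_\pi(\omega)c_\pi\chi_\pi\big\|_p$ for every $\omega$. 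Averaging, identifying $L^p(\mathbb G)$ with $L^{p,\mathsf H}(\mathbb G)$, and invoking the noncommutative Khintchine inequality (Theorem~\ref{khintchine}) together with $\|(y_n)\|_{CR_p[L^p(\mathbb G)]}\le(\sum_n\|y_n\|_{p,\mathsf H}^2)^{1/2}$ gives
\[
\|x\|_p\le K_p\Big(\int_\Omega\Big\|\sum_\pi\varepsilon_\pi(\omega)c_\pi\chi_\pi\Big\|_p^p\,dP\Big)^{1/p}\le C K_p\sqrt p\Big(\sum_\pi|c_\pi|^2\|\chi_\pi\|_p^2\Big)^{1/2}.
\]
Now $\|\chi_\pi\|_2=1$ (immediate from \eqref{eq:haar state def}, using $\mathrm{Tr}(Q_\pi^{-1})=d_\pi$) and the $\chi_\pi$, $\pi\in\mathbf E$, are orthogonal in $L^2(\mathbb G)$, so $(\sum_\pi|c_\pi|^2)^{1/2}=\|x\|_2$; with $M\coloneqq\sup_{\pi\in\mathbf E}\|\chi_\pi\|_p<\infty$ this yields $\|x\|_p\le C K_p M\sqrt p\,\|x\|_2$. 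Finally, by log‑convexity of the $L^q$‑norms (remark after Definition~\ref{def:lambda p}) $\|x\|_2\le\|x\|_1^{1-\theta}\|x\|_p^{\theta}$ with $\theta=\frac{p}{2(p-1)}\in(0,1)$ for $p>2$, and combining the two estimates gives $\|x\|_p\le K\|x\|_1$, i.e.\ $\mathbf E$ is a central $\Lambda(p)$‑set. (The range $1<p<2$, where the hypothesis $\sup_\pi\|\chi_\pi\|_p<\infty$ is automatic and the Khintchine step no longer closes the loop, needs a separate short argument — e.g.\ that a central interpolation set of $\mathrm M(L^\infty(\mathbb G))$ is automatically a central $\Lambda(2)$‑set, whence a central $\Lambda(p)$‑set for $1<p\le 2$.)

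For Part~(2), if $D\coloneqq\sup_{\pi\in\mathbf E}d_\pi<\infty$ then $\|Q_\pi\|\le\mathrm{Tr}(Q_\pi)=d_\pi\le D$, $\|Q_\pi^{-1}\|\le d_\pi\le D$ and $n_\pi\le\|Q_\pi^{-1}\|\,\mathrm{Tr}(Q_\pi)\le D^2$; write $N=D^2$. In particular $\|\chi_\pi\|_p\le\|\chi_\pi\|_\infty\le n_\pi\le N$, so Part~(1) applies. To obtain a \emph{full} $\Lambda(p)$‑set it suffices, by log‑convexity and because a $\Lambda(q)$‑set is a $\Lambda(p)$‑set for $p<q$, to show $\|x\|_p\le K\|x\|_2$ for all $x\in\mathrm{Pol}_{\mathbf E}(\mathbb G)$ and every $p\in(2,\infty)$. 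Fixing for each $\pi$ a basis diagonalizing $Q_\pi$ makes $\{u_{ij}^{(\pi)}:\pi\in\mathbf E,\,1\le i,j\le n_\pi\}$ orthogonal in $L^2(\mathbb G)$; writing $x=\sum_{\pi,i,j}c_{ij}^{(\pi)}u_{ij}^{(\pi)}$ and splitting $x=\sum_{i,j=1}^{N}x^{(i,j)}$ with $x^{(i,j)}=\sum_{\pi:\,n_\pi\ge\max(i,j)}c_{ij}^{(\pi)}u_{ij}^{(\pi)}$, one has $\|x^{(i,j)}\|_2\le\|x\|_2$. For fixed $(i,j)$ the scalar multiplier $m^L_{\tilde\varepsilon(\omega)}$ again sends $\sum_\pi\varepsilon_\pi(\omega)c_{ij}^{(\pi)}u_{ij}^{(\pi)}$ to $x^{(i,j)}$, so Theorem~\ref{khintchine} gives $\|x^{(i,j)}\|_p\le C K_p\sqrt p\,(\sum_\pi|c_{ij}^{(\pi)}|^2\|u_{ij}^{(\pi)}\|_p^2)^{1/2}$; here the bounded dimensions enter crucially, since $\|u_{ij}^{(\pi)}\|_\infty\le\|u^{(\pi)}\|=1$ while $\|u_{ij}^{(\pi)}\|_2^2=d_\pi^{-1}(Q_\pi^{-1})_{ii}\ge d_\pi^{-1}\|Q_\pi\|^{-1}\ge D^{-2}$ by \eqref{eq:haar state def}, whence $\|u_{ij}^{(\pi)}\|_p\le D\|u_{ij}^{(\pi)}\|_2$. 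Substituting and using orthogonality yields $\|x^{(i,j)}\|_p\le C K_p D\sqrt p\,\|x^{(i,j)}\|_2\le C K_p D\sqrt p\,\|x\|_2$, and summing the at most $N^2$ pieces gives $\|x\|_p\le N^2 C K_p D\sqrt p\,\|x\|_2$, as required.

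The main obstacle, and the only genuinely delicate point, is precisely the restriction to scalar multipliers: the hypothesis only produces extensions of scalar sequences on $\mathbf E$, so the diagonalization step of Theorem~\ref{thm:lambda inter} is unavailable. In Part~(2) this is bypassed by the rank‑one decomposition into $\le N^2$ summands together with the elementary comparison $\|u_{ij}^{(\pi)}\|_p\le D\|u_{ij}^{(\pi)}\|_2$, which is valid exactly because $\sup_{\pi}d_\pi<\infty$; and in Part~(1) the same scheme works directly on the characters once one notes $\|\chi_\pi\|_2=1$, with the small residual gap in the range $1<p<2$ noted above.
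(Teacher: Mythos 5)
Your proof of part (1) is correct and is essentially the paper's argument: scalar Rademacher multipliers obtained from the central interpolation hypothesis, Stein interpolation from $L^\infty$ and $L^2$ to get uniform $L^p$-bounds, the noncommutative Khintchine inequality in its type-2 form, and the normalization $\|\chi_\pi\|_2=1$. (Your honest flag about the range $1<p\le 2$ is fair, but note the paper's own proof of (1) also only treats $2<p<\infty$; the standard reduction via $\Lambda(q)\Rightarrow\Lambda(p)$ for $p<q$ covers $p\le 2$ only when one has some $q>2$ to start from, so this caveat is shared.) In part (2) you genuinely diverge from the paper. The paper keeps each irreducible block $d_\pi(\iota\otimes\mathrm{Tr})[(1\otimes\hat x(\pi)Q_\pi)u^{(\pi)}]$ intact and estimates its $L^p$-norm by $\|x\|_2$-data via the Cauchy--Schwarz inequality for the unital completely positive map $d_\pi^{-1}(\iota\otimes\mathrm{Tr}(\cdot\,(1\otimes Q_\pi)))$ together with the operator monotonicity of $\|\cdot\|_{p/2,\mathsf H}$ (Proposition \ref{density op haagerup Lp}(5)), which is where $\sup_\pi d_\pi<\infty$ enters. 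You instead split $x$ into at most $N^2$ coordinate pieces $x^{(i,j)}$, run the same scalar Khintchine scheme on each, and close with the elementary comparison $\|u_{ij}^{(\pi)}\|_p\le\|u_{ij}^{(\pi)}\|_\infty\le 1\le D\,\|u_{ij}^{(\pi)}\|_2$, which again uses bounded quantum dimension. Both arguments are valid; yours is more elementary (no u.c.p. Cauchy--Schwarz, no operator order in $L^{p/2,\mathsf H}$) and makes transparent exactly where boundedness of $d_\pi$ is used, while the paper's blockwise estimate is cleaner and gives the better constant $K K_0 K_2\sqrt p$ rather than your $N^2 C K_p D\sqrt p$ with $N=D^2$ (though both retain the desirable $\sqrt p$-growth). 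The only step you should make explicit is the uniformity of the extension constant for the central multipliers (the open-mapping argument you cite does apply, since the relevant space of scalar multipliers is complete), which the paper also handles only by reference.
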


\begin{proof} 
As is in the previous proof, for each $\pi\in\mathrm{Irr}(\mathbb G)$, we fix a basis in $H_\pi$ such that the associated matrix of $Q_\pi$ is diagonal, and denote by $u^{(\pi)}\in \mathbb M_{n_\pi}(C(\mathbb G))$ the representation matrix under this basis. 

Denote by $(\varepsilon_{\pi})_{\pi\in\mathbf{E}}$ a
Rademacher sequence on a probability space $(\Omega,P)$ and write $\mathfrak e=(\varepsilon_\pi \mathrm{Id}_\pi)_{\pi\in\mathbf{E}}\in \ell^\infty (\mathbf E)$. The same argument as in the proof of Theorem \ref{sidon lambda} yields that we may find $K>0$ such that for $x\in\mathrm{Pol}_{\mathbf{E}}(\mathbb{G})$, $\omega\in \Omega$ and $2< p< \infty$, 
\[
\|x\|_{p}=\|m_{\mathfrak e (\omega)^2}^Rx\|_{p}\leq K\|m_{\mathfrak e (\omega)}^Rx\|_{p}.
\]
Integrating the inequality over $\omega\in \Omega$, we get 
\begin{equation*}
\|x\|_{p}\leq K\int_{\Omega}\|\sum_{\pi\in\mathbf{E}}d_{\pi}\varepsilon_{\pi}(\iota\otimes\mathrm{Tr})[(1\otimes \hat{x}(\pi)Q_{\pi})u^{(\pi)}]\|_{p}dP.\label{eq:Lambda p Fubini}
\end{equation*}
By Theorem \ref{khintchine}, we have with some $K_{0}>0$,
\begin{align*}
 &\quad\,   \int_{\Omega}\|\sum_{\pi\in\mathbf{E}}d_{\pi}\varepsilon_{\pi}(\iota\otimes\mathrm{Tr})[(1\otimes \hat{x}(\pi)Q_{\pi})u^{(\pi)}]\|_{p}dP\\
 &\leq K_{0}\sqrt{p}
 \Big(\sum_{\pi\in\mathbf{E}}\|d_{\pi}(\iota\otimes\mathrm{Tr})[(1\otimes \hat{x}(\pi)Q_{\pi})u^{(\pi)}]\|_{p}^2\Big)^{1/2}.\nonumber
\end{align*}
Therefore
\begin{equation}
\label{eq: c sidon lambda type two estimate}
\|x\|_p \leq KK_0\sqrt p \Big(\sum_{\pi\in\mathbf{E}}\|d_{\pi}(\iota\otimes\mathrm{Tr})[(1\otimes \hat{x}(\pi)Q_{\pi})u^{(\pi)}]\|_{p}^2\Big)^{1/2}.
\end{equation}

(1) If $2< p<\infty$ and $x=\sum_{\pi\in\mathbf E}c_\pi\chi_\pi\in\mathrm{Pol}_{\mathbf{E}}(\mathbb{G})$ with $(c_\pi)_{\pi\in\mathbf E}\subset\mathbb C$, then the above inequality \eqref{eq: c sidon lambda type two estimate} reads
$$\|x\|_p\leq KK_0\sqrt p (\sum_{\pi\in\mathbf E} |c_\pi|^2\|\chi_\pi\|_p^2)^{1/2}.$$
Note that by \eqref{eq:haar state def} and the choice of basis in $H_\pi$, we have $\|\chi_\pi\|_2^2=h(\chi_\pi^*\chi_\pi)=1$. Also recall that $\chi_\pi$ are orthogonal with respect to $h$. Thus  the condition $K_1=\sup_{\pi\in\mathbf E}\|\chi_\pi\|_p<\infty$ implies
$$\|x\|_p\leq KK_1K_0\sqrt p (\sum_{\pi\in\mathbf E} |c_\pi|^2\|\chi_\pi\|_2^2)^{1/2}=\|x\|_2.$$
Therefore $\mathbf E$ is a central $\Lambda(p)$-set.

(2) Assume $K_2=\sup_{\pi\in\mathbf E} d_\pi<\infty$.
We need to show
that the right term of the inequality \eqref{eq: c sidon lambda type two estimate} is not more than $\|x\|_{2}$, up to a constant independent of $x$.
Note that by traciality of $\mathrm{Tr}$, we have
\[
(\iota\otimes\mathrm{Tr})[(1\otimes \hat{x}(\pi)Q_{\pi})u^{(\pi)}]=(\iota\otimes\mathrm{Tr})[u^{(\pi)}(1\otimes \hat{x}(\pi)Q_{\pi})]
\]
Also, we note that the map $d_{\pi}^{-1}(\iota\otimes\mathrm{Tr}(\cdot\, (1\otimes Q_{\pi})))$
is unital completely positive on $C(\mathbb G)\otimes B(H_{\pi})$ for each $\pi\in\mathrm{Irr}(\mathbb{G})$,
so by the Cauchy-Schwarz inequality,
\begin{align*}
 &\quad\, \left|d_{\pi}^{-1}(\iota\otimes\mathrm{Tr})[u^{(\pi)}(1\otimes \hat{x}(\pi)Q_{\pi})]D^{1/p}\right|^{2}\\
& =  D^{1/p}\left|d_{\pi}^{-1}(\iota\otimes\mathrm{Tr})[u^{(\pi)}(1\otimes \hat{x}(\pi)Q_{\pi})]\right|^{2}D^{1/p}\\
& \leq  D^{1/p}d_{\pi}^{-1}(\iota\otimes\mathrm{Tr}(\cdot\, Q_{\pi}))[(1\otimes\hat{x}(\pi)^{*})(u^{(\pi)})^{*}u^{(\pi)}(1\otimes \hat{x}(\pi))]D^{1/p}\\
& =  d_{\pi}^{-1}D^{1/p}\mathrm{Tr}(|\hat{x}(\pi)|^{2}Q_{\pi})D^{1/p}=d_{\pi}^{-1}\mathrm{Tr}(|\hat{x}(\pi)|^{2}Q_{\pi})D^{2/p}.
\end{align*}
Hence together with Proposition \ref{density op haagerup Lp} (5),
\begin{align*}
 & \quad\, \sum_{\pi\in\mathbf{E}}\|d_{\pi}(\iota\otimes\mathrm{Tr})[(1\otimes  \hat{x}(\pi)Q_{\pi})u^{(\pi)}]\|_{p}^2
 =\sum_{\pi\in\mathbf{E}}\||d_{\pi}(\iota\otimes\mathrm{Tr})[(1\otimes  \hat{x}(\pi)Q_{\pi})u^{(\pi)}]D^{1/p}|^{2}\|_{p/2,\mathsf{H}}\\
& \leq K_2^2 \sum_{\pi\in\mathbf{E}}d_\pi^2\||d_{\pi}^{-1}(\iota\otimes\mathrm{Tr})[(1\otimes  \hat{x}(\pi)Q_{\pi})u^{(\pi)}]D^{1/p}|^{2}\|_{p/2,\mathsf{H}}\\
&
\leq K_2^2 \sum_{\pi\in\mathbf{E}} d_\pi^2 \| d_{\pi}^{-1}\mathrm{Tr}(|\hat{x}(\pi)|^{2}Q_{\pi})D^{2/p}\|_{p/2,\mathsf{H}}
=  \sum_{\pi\in\mathbf{E}}d_{\pi}\mathrm{Tr}(|\hat{x}(\pi)|^{2}Q_{\pi})=\|\hat{x}\|_{2}^{2}=\|x\|_{2}^{2}.
\end{align*}
Now back to \eqref{eq: c sidon lambda type two estimate} we get $\|x\|_{p}\leq KK_{0}K_2\sqrt{p}\|x\|_{2}$, as desired.
\end{proof}

\begin{rem}
	Let us make a few remarks on the constant of $\Lambda(p)$-sets. In the proof of Theorem \ref{thm:lambda inter} and Theorem \ref{sidon lambda}, we have shown that, if $\mathbf E \subset \irr$ is an interpolation set of $\mathrm M(L^p(\mathbb G))$ with constant $K$ for some $1< p<\infty$, then $\mathbf E$ is a $\Lambda(p)$-set with constant $c_1 K^{c_2} p$ with two universal constants $c_1,c_2>0$; and if $\mathbf E \subset \irr$ is an interpolation set of $\mathrm M(L^\infty(\mathbb G))$, then $\mathbf E$ is a $\Lambda(p)$-set with constant $c'_1 K^{c'_2} p$ with two universal constants $c'_1,c'_2>0$ for all $1< p<\infty$. It seems that these constants should be improved. Indeed, it is well-known that if $\mathbb G$ is a compact group or the dual quantum group of a discrete group, the constants above can be improved to $c_1 K^{c_2} \sqrt p$ and $c'_1 K^{c'_2} \sqrt p$ respectively (see \cite{hewittross1970abstract,harcharras99nclambdap}). On the other hand, we see that, for some subclass of Sidon sets as in Theorem \ref{thm: central sidon lambda}, we may also obtain the constant in the form
 $c_1 K^{c_2} \sqrt p$.
\end{rem}

\begin{example}\label{ex: lambda p}
(1) Following the notation in Example \ref{free unitary weak sidon}, we consider $\mathbb G=\prod_{k\geq 1}\mathbb G_{k}$, $\mathbb G_{k}=U_{N_k}^+$ and $\mathbf E=\{u^{(k)}:k\geq 1\}\subset \mathrm{Irr}(\mathbb G)$. We saw in Example \ref{free unitary weak sidon} that $\mathbf E$ is an interpolation set of $\mathrm M (L^\infty (\mathbb G))$, and hence by Theorem \ref{sidon lambda} $\mathbf E$ is a $\Lambda(p)$-set for all $1< p<\infty$. Alternatively, recall the Haagerup type inequality shown in Brannan \cite[Theorem 6.3]{brannan12haagerup}: for $k\geq 1$, $x_{ij}\in \mathbb C$, $1\leq i,j\leq N_k$
$$\|\sum_{i,j=1}^{N_k}x_{ij}u_{ij}^{(k)}\|_p\leq C\|\sum_{i,j=1}^{N_k}x_{ij}u_{ij}^{(k)}\|_2$$ 
for a universal constant $C$. So by Theorem \ref{khintchine} and the standard argument as in the beginning of the proof of Theorem \ref{thm: central sidon lambda}, we have the following Khintchine type inequality: for a universal constant $K$ and for all $2\leq p<\infty$ and all finitely supported sequences $(A_k)\in \prod_k \mathbb M_{N_k}$,
\begin{align*}
\|\sum_{k\geq 1}N_k(\iota\otimes\mathrm{Tr})[(1\otimes A_k)u^{(k)}]\|_{L^p(\mathbb G)}
& \leq K\sqrt p \Big(\sum_k\|N_k(\iota\otimes\mathrm{Tr})[(1\otimes A_k)u^{(k)}]\|_{L^p(\mathbb G)}^2\Big)^{1/2}\\
& \leq CK\sqrt p\Big(\sum_kN_k\mathrm{Tr}(|A_k|^2)\Big)^{1/2}.
\end{align*}
If $n_k=1$ for all $k$, then the above inequality reduces to the classical Khintchine inequalities.

(2) Follow the notation in Example \ref{ex: sutwo sidon set}. We consider the sequence $(q_n)_{n\geq 1}\subset [q,1]$ with $q\coloneqq\inf_n q_n>0$ and the associated quantum group $\mathbb G =\prod_{n\geq 1}\mathrm{SU}_{q_n}(2)$. In Example \ref{ex: sutwo sidon set} we proved that $\mathbf{E}=\{u_n:n\geq 1\} \subset\irr $ is a Sidon set for $\mathbb G$. Note that the associated matrix $Q_n\coloneqq Q_{u_n}$ is diagonal with entries $\{q_n^{-1},q_n\}$ under the standard basis and hence $\sup_n\dim_q(u_n)=\sup_n(q_n+q_n^{-1})\leq 1+q^{-1}<\infty$. So by Theorem \ref{thm: central sidon lambda}, $\mathbf{E}$ is also a $\Lambda(p)$-set for $2\leq p <\infty$ and we obtain the following Khintchine type inequalities: there exists a constant $K>0$ (depending on $q$) so that for all finitely supported sequences $(A_n)\in \prod_{n\geq 1} \mathbb M_{2}$,
$$ \Big(\sum_{n\geq 1}d_n\mathrm{Tr}(|A_n|^2Q_n)\Big)^{1/2}
\leq \|\sum_{n\geq 1}d_n(\iota\otimes\mathrm{Tr})[(1\otimes A_nQ_n)u_n]\|_{L^p(\mathbb G)} \leq K \sqrt p \Big(\sum_{n\geq 1}d_n\mathrm{Tr}(|A_n|^2Q_n)\Big)^{1/2}$$
where $d_n=q_n+q_n^{-1}$. Note that by Proposition \ref{prop: unif bdd of mod gp for sidon} and Theorem \ref{thm:lambda inter}, if $q=0$ and $q_n\to 0$, the subset $\mathbf E$ defined as above cannot be a $\Lambda(p)$-set for any $2<p<\infty$.
%
\end{example}

\subsection{Independence of the interpolation parameters for $L^p_{(\theta)}(\mathbb G)$}
In this subsection we would like to show that our definition of $\Lambda(p)$-sets does not depend on different interpolation parameters of $L^p$-spaces associated to a compact quantum group $\mathbb G$. Recall that in Section \ref{subs:nc Lp}, we introduce the space $L^p(\mathbb G)$ as the complex interpolation space $(L^\infty(\mathbb G),L^\infty (\mathbb G)_*)_{1/p}$ associated to the compatible couple $(L^\infty(\mathbb G),L^\infty (\mathbb G)_*)$ given by the embedding
$$L^\infty(\mathbb G)\hookrightarrow L^\infty (\mathbb G)_*,\quad x\mapsto h(\cdot \,x),\quad x\in L^\infty(\mathbb G).$$
However in \cite{kosaki84interpolation} Kosaki provides some other possibilities to define the complex interpolation scale $(L^p(\mathbb G))_{1\leq p\leq \infty}$. More precisely, fix a parameter $0\leq \theta \leq 1$, we may consider the compatible couple $(L^\infty(\mathbb G),L^\infty (\mathbb G)_*)^{(\theta)}$ given by the embedding
$$L^\infty(\mathbb G)\hookrightarrow L^\infty (\mathbb G)_*,\quad x\mapsto h(\cdot \,\sigma_{-\theta\mathrm i}(x)),\quad x\in L^\infty(\mathbb G),$$
and we define the complex interpolation space
$$L^p_{(\theta)}(\mathbb G)=(L^\infty(\mathbb G),L^\infty (\mathbb G)_*)_{1/p}^{(\theta)}.$$
Note that $L^p_{(0)}(\mathbb G)$ coincides with the space $L^p(\mathbb G)$ defined before. In the language of Haagerup's $L^p$-spaces, we have considered the embedding
$$L^\infty(\mathbb G)\hookrightarrow L^{1,\mathsf{H}} (\mathbb G),\quad x\mapsto  D^{\theta}x D^{1-\theta}=\sigma_{-\theta\mathrm i}(x)D,\quad x\in L^\infty(\mathbb G),$$
and for each $1\leq p\leq \infty$ we have the isometric isomorphism
$$L^p_{(\theta)}(\mathbb G)\to L^{p,\mathsf H}(\mathbb G),\quad x\mapsto D^{\frac{\theta}{p}}xD^{\frac{1-\theta}{p}},\quad x\in L^\infty (\mathbb G).$$
In particular,
\begin{equation}\label{eq:p norm diff theta}
\|x\|_{L^p_{(\theta)}(\mathbb G)}=\|\sigma_{-\mathrm i \frac{\theta}{p}}(x)D^{\frac{1}{p}}\|_{L^{p,\mathsf H}(\mathbb G)}=\|\sigma_{-\mathrm i \frac{\theta}{p}}(x)\|_{L^p_{(0)}(\mathbb G)},\quad
x\in \pol.
\end{equation}
The spaces $L^p_{(\theta)}(\mathbb G)$ and $L^p_{(\theta')}(\mathbb G)$ for different parameters $\theta,\theta'$ are isometric as Banach spaces, but one needs to be careful with the parameter $\theta$ when doing the Fourier analysis on $\mathbb G$, for which we refer to \cite[Section 7]{caspers2013fourier} for some related discussions. So returning back to the topic on $\Lambda(p)$-sets, it is natural to 
ask if the notion of $\Lambda(p)$-sets is independent of the choice of the parameter $\theta$. In the following we give an affirmative answer.

\begin{prop}
	\label{prop:indep lambda parameter}
	Let $\mathbb G$ be a compact quantum group and let $2<p<\infty$, $0\leq \theta\leq 1$. Then $\mathbf E \subset \irr$ is a $\Lambda(p)$-set for $L^p_{(\theta)}(\mathbb G)$, that is, there exists a constant $K>0$ with
	$$\|x\|_{L^p_{(\theta)}(\mathbb G)} \leq K \|x\|_{L^2_{(\theta)}(\mathbb G)},\quad x\in \mathrm{Pol}_{\mathbf E}(\mathbb G),$$
	if and only if it is a $\Lambda(p)$-set for $L^p_{(0)}(\mathbb G)$ in the sense of Definition \ref{def:lambda p}.
\end{prop}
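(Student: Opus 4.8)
The plan is to reduce the whole statement, via the isometries $\sigma_{-\mathrm i\theta/q}$ relating the two scales, to the single point that either of the two hypotheses forces the modular element $Q$ to be uniformly bounded above and below on $\mathbf E$; granting this, the equivalence of the two $\Lambda(p)$-conditions becomes essentially immediate.

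First I would record the reformulation. Since $(\sigma_z)$ is a one-parameter group, applying \eqref{eq:p norm diff theta} with the exponents $p$ and $2$ shows that for $x\in\mathrm{Pol}_{\mathbf E}(\mathbb G)$ and $w:=\sigma_{-\mathrm i\theta/p}(x)$ — which again lies in $\mathrm{Pol}_{\mathbf E}(\mathbb G)$ by Lemma \ref{lem:fourier series with sigma} and ranges over all of $\mathrm{Pol}_{\mathbf E}(\mathbb G)$ as $x$ does — one has $\|x\|_{L^p_{(\theta)}(\mathbb G)}=\|w\|_{L^p_{(0)}(\mathbb G)}$ and $\|x\|_{L^2_{(\theta)}(\mathbb G)}=\|\sigma_{-\mathrm i\beta}(w)\|_{L^2_{(0)}(\mathbb G)}$, where $\beta:=\theta(\tfrac12-\tfrac1p)\ge 0$. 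Thus $\mathbf E$ is a $\Lambda(p)$-set for $L^p_{(\theta)}(\mathbb G)$ if and only if $\|w\|_{L^p_{(0)}(\mathbb G)}\le K\|\sigma_{-\mathrm i\beta}(w)\|_{L^2_{(0)}(\mathbb G)}$ for some $K$ and all $w\in\mathrm{Pol}_{\mathbf E}(\mathbb G)$. Now $\sigma_{-\mathrm i\beta}=m^L_{Q^\beta}\circ m^R_{Q^\beta}$ on $\mathrm{Pol}(\mathbb G)$ (read off from Lemma \ref{lem:fourier series with sigma} and \eqref{eq:multiplier, Fourier coefficient}), so by Lemma \ref{lem:multiplier on ltwo}, whenever $M:=\sup_{\pi\in\mathbf E}\max\{\|Q_\pi\|,\|Q_\pi^{-1}\|\}<\infty$ the operators $\sigma_{\pm\mathrm i\beta}$ restrict to $\mathrm{Pol}_{\mathbf E}(\mathbb G)\subset L^2_{(0)}(\mathbb G)$ with norm at most $M^{2\beta}$. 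Hence, under the extra assumption $M<\infty$, the reformulated inequality is equivalent to $\|w\|_{L^p_{(0)}(\mathbb G)}\le K'\|w\|_{L^2_{(0)}(\mathbb G)}$ on $\mathrm{Pol}_{\mathbf E}(\mathbb G)$, that is (recalling $\|\cdot\|_1\le\|\cdot\|_2\le\|\cdot\|_p$ and the remark following Definition \ref{def:lambda p}) to $\mathbf E$ being a $\Lambda(p)$-set for $L^p_{(0)}(\mathbb G)$. So everything comes down to proving $M<\infty$ under each of the two hypotheses.

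If $\mathbf E$ is a $\Lambda(p)$-set for $L^p_{(0)}(\mathbb G)$ this is immediate: by Theorem \ref{thm:lambda inter} it is an interpolation set of $\mathrm M_R(L^p(\mathbb G))$, and Proposition \ref{prop: unif bdd of mod gp for sidon}(2) gives $M<\infty$. The remaining, and main, difficulty is to obtain $M<\infty$ from the $\Lambda(p)$-property for $L^p_{(\theta)}(\mathbb G)$; here testing the inequality on single matrix coefficients $u^{(\pi)}_{ij}$ only produces a lower bound on the eigenvalues of $Q_\pi$, so a multiplier argument seems unavoidable. I would apply the $\Lambda(p)$-inequality for $L^p_{(\theta)}(\mathbb G)$ to the elements $m^L_a w$ and $m^R_a w$ (still in $\mathrm{Pol}_{\mathbf E}(\mathbb G)$) for $a\in c_c(\mathbf E)$, use the monotonicity $\|\cdot\|_{L^2_{(\theta)}}\le\|\cdot\|_{L^p_{(\theta)}}$ of Kosaki's $L^q$-norms (valid since the defining embedding $L^\infty(\mathbb G)\hookrightarrow L^1_{(\theta)}(\mathbb G)$ is contractive), and transport the $L^2$-multiplier bounds of Lemma \ref{lem:multiplier on ltwo} to $L^2_{(\theta)}(\mathbb G)$ via the isometry $\sigma_{-\mathrm i\theta/2}$ and the intertwining relations $\sigma_{-\mathrm i\alpha}\circ m^L_a=m^L_{Q^{-\alpha}aQ^{\alpha}}\circ\sigma_{-\mathrm i\alpha}$, $\sigma_{-\mathrm i\alpha}\circ m^R_a=m^R_{Q^{\alpha}aQ^{-\alpha}}\circ\sigma_{-\mathrm i\alpha}$ (again consequences of the Fourier-transform formulas). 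Converting the resulting estimates back to $L^p_{(0)}(\mathbb G)$ by the isometry $\sigma_{-\mathrm i\theta/p}$ and replacing $a$ by the appropriate $Q$-conjugate, one reaches inequalities of exactly the shape required in hypothesis (4) of Proposition \ref{prop: unif bdd of mod gp for sidon}: $\|m^L_b v\|_p\le K\|Q^{\theta_1}bQ^{-\theta_1}\|_\infty\|v\|_p$ and $\|m^R_b v\|_p\le K\|Q^{\theta_2}bQ^{-\theta_2}\|_\infty\|v\|_p$ for all $v\in\mathrm{Pol}_{\mathbf E}(\mathbb G)$ and $b\in c_c(\mathbf E)$, with $\theta_1=\tfrac{1-\theta}2+\tfrac\theta p$ and $\theta_2=\theta(\tfrac12-\tfrac1p)$. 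The decisive observation is that $\theta_1+\theta_2=\tfrac12$, so $\theta_1+\theta_2-\tfrac1p-\tfrac12=-\tfrac1p\ne 0$; hence Proposition \ref{prop: unif bdd of mod gp for sidon}(4) applies and yields $M<\infty$. Together with the previous paragraphs this completes the proof.
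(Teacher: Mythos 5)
Your proof is correct and follows essentially the same route as the paper's: both arguments hinge on the isometries $\sigma_{-\mathrm i\theta/q}$ relating the two scales, derive multiplier bounds of the form in Proposition \ref{prop: unif bdd of mod gp for sidon} to conclude $\sup_{\pi\in\mathbf E}\max\{\|Q_\pi\|,\|Q_\pi^{-1}\|\}<\infty$ under either hypothesis (using Theorem \ref{thm:lambda inter} for the $L^p_{(0)}$ direction), and then pass between the two $\Lambda(p)$-conditions via the resulting $L^2$-boundedness of $\sigma_{\pm\mathrm i\beta}$ on $\mathrm{Pol}_{\mathbf E}(\mathbb G)$. Your version is in fact slightly more explicit than the paper's, which invokes "following the idea in the proof of Proposition \ref{prop: unif bdd of mod gp for sidon}" where you transport everything back to $L^p_{(0)}$ and verify hypothesis (4) directly with $\theta_1+\theta_2=\tfrac12$.
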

\begin{proof}
	Assume that $\mathbf E \subset \irr$ is a $\Lambda(p)$-set for $L^p_{(\theta)}(\mathbb G)$ with the constant $K$ given above. Then for all $a\in\ell^\infty (\mathbf E)$, together with the equality \eqref{eq:p norm diff theta} we have for $x\in \mathrm{Pol}_{\mathbf E}(\mathbb G)$,
	\begin{align}\label{eq:diff para pf 1}
		\|m_a^R x\|_{L^p_{(\theta)}(\mathbb G)} 
		& \leq K \|m_a^R x\|_{L^2_{(\theta)}(\mathbb G)}
		= K \big\|\sigma_{-\mathrm i \frac{\theta}{2}}(m_a^R x)\big\|_{L^2_{(0)}(\mathbb G)}.
	\end{align}
	By Lemma \ref{lem:fourier series with sigma}, we know that 
	$$\mathcal F (\sigma_{-\mathrm i \frac{\theta}{2}}(m_a^R x)) = Q^{\frac{\theta}{2}} a\hat x Q^{\frac{\theta}{2}}.$$
	Hence by Proposition \ref{prop:Plancherel},
	\begin{equation}\label{eq:diff para pf 2}
	\big\|\sigma_{-\mathrm i \frac{\theta}{2}}(m_a^R x)\big\|_{L^2_{(0)}(\mathbb G)}^2
	=\sum_{\pi\in\irr}d_\pi \mathrm{Tr}(|Q^{\frac{\theta}{2}}_\pi a \hat x(\pi) Q^{\frac{\theta}{2}}_\pi|^2Q_\pi).
	\end{equation}
	Let $\pi\in\mathbf E$ and without loss of generality we choose an appropriate basis of $H_\pi$ so that $Q_\pi$ is diagonal under this basis. Assume that $a\in c_c (\mathbf E)$ and take $x\in \mathrm{Pol}_{\mathbf E}(\mathbb G)$, then the above inequalities gives
	\begin{align*}
	\| m_{a}^R x\|_{L^p_{(\theta)}(\mathbb G)} ^2 
	& \leq K^2 \|Q^{\frac{\theta}{2}} a Q^{-\frac{\theta}{2}} \|_\infty ^2 \sum_{\pi\in\mathbf E}d_\pi \mathrm{Tr}(|Q^{\frac{\theta}{2}}_\pi  \hat x(\pi) Q^{\frac{\theta}{2}}_\pi|^2Q_\pi)\\
	& = K^2 \|Q^{\frac{\theta}{2}} a Q^{-\frac{\theta}{2}} \|_\infty ^2\big\|\sigma_{-\mathrm i \frac{\theta}{2}}( x)\big\|_{L^2_{(0)}(\mathbb G)}^2
	=K^2 \|Q^{\frac{\theta}{2}} a Q^{-\frac{\theta}{2}} \|_\infty ^2 \| x\|_{L^2_{(\theta)}(\mathbb G)}^2\\
	& \leq K^2 \|Q^{\frac{\theta}{2}} a Q^{-\frac{\theta}{2}} \|_\infty ^2 \| x\|_{L^p_{(\theta)}(\mathbb G)}^2.
	\end{align*}
	And a similar inequality can be proved for the map $m_{a}^L$. Then following the idea in the proof of Proposition \ref{prop: unif bdd of mod gp for sidon}, we may find a constant $K'>0$ such that  
		$$\max\{\|Q_\pi\|,\|Q_\pi^{-1}\|\}\leq K',\quad \pi\in \mathbf E.$$
	Therefore together with \eqref{eq:p norm diff theta}, Lemma \ref{lem:fourier series with sigma} and Lemma \ref{lem:multiplier on ltwo}, we get for all $x\in \mathrm{Pol}_{\mathbf E}(\mathbb G)$,
	\begin{align*}
		\|x\|_{L^p_{(0)}(\mathbb G)}
		& = \|\sigma_{\mathrm i \frac{\theta}{p}}(x) \|_{L^p_{(\theta)}(\mathbb G)}
		\leq K \|\sigma_{\mathrm i \frac{\theta}{p}}(x) \|_{L^2_{(\theta)}(\mathbb G)} 
		= K \|\sigma_{\mathrm i (\frac{1}{p}-\frac{1}{2})\theta}(x) \|_{L^2_{(0)}(\mathbb G)}\\
		& = K \|\mathcal F(\sigma_{\mathrm i (\frac{1}{p}-\frac{1}{2})\theta}(x)) \|_{\ell^2(\hat{\mathbb G})} 
		= K \|Q^{\frac{1}{2}-\frac{1}{p}}\hat x Q^{\frac{1}{2}-\frac{1}{p}} \|_{\ell^2(\hat{\mathbb G})}\\
		&= K \|m^R_{Q^{\frac{1}{2}-\frac{1}{p}}}m^L_{Q^{\frac{1}{2}-\frac{1}{p}}}x\|_{L^2_{(0)}(\mathbb G)}
		\leq K (K')^{1-\frac{2}{p}} \|x\|_{L^2_{(0)}(\mathbb G)}.
	\end{align*}
	So $\mathbf E$ is a $\Lambda(p)$-sets for $L^p_{(0)}(\mathbb G)$.
	
	Note that if conversely $\mathbf E$ is a $\Lambda(p)$-set for $L^p_{(0)}(\mathbb G)$, then we have already shown in Proposition \ref{prop: unif bdd of mod gp for sidon} that there exists a constant $K'>0$ such that  
	$$\max\{\|Q_\pi\|,\|Q_\pi^{-1}\|\}\leq K',\quad \pi\in \mathbf E.$$
	So a similar estimation as above yields that $\mathbf E$ must be a $\Lambda(p)$-sets for $L^p_{(\theta)}(\mathbb G)$. Therefore the proof is complete.
\end{proof}

Finally we remark that all the discussions in the previous sections can be in fact reproduced for $L^p_{(\theta)}(\mathbb G)$ with the similar idea, and we omit the details. 

\subsection{Existence of $\Lambda(p)$-sets}

In this short subsection we discuss the existence of $\Lambda(p)$-sets for compact quantum groups. We refer to the following result proved in the appendix: let $M$ be a von Neumann algebra with a normal faithful state $\varphi$ and $B=\{x_k:k\geq 1\}\subset M$ be an orthonormal system with respect to $\varphi$ such that $\sup_k\|x_k\|_\infty<\infty$, then there exists an infinite subset $Y\subset B$ and a constant $C_p$ such that $\|x\|_p\leq C_p \|x\|_2$ for all $x\in \mathrm{span}\,(Y)$. 
Immediately we deduce the existence of $\Lambda(p)$-sets with uniform dimension assumptions.

\begin{thm}\label{thm: exist lambda p}
Let $\mathbb G$ be a compact quantum group. Let $\mathbf E\subset \irr$ be an infinite subset with $\sup_{\pi\in \mathbf E}d_\pi<\infty$. Then for each $2< p <\infty$, there exists an infinite subset $\mathbf F \subset \mathbf E$ which is a $\Lambda(p)$-set for $\mathbb G$.
\end{thm}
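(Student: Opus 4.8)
The plan is to derive the theorem from the appendix result on uniformly bounded orthonormal systems, applied to the matrix coefficients of the representations in $\mathbf{E}$, and then to repair the fact that that result only produces a $\Lambda(p)$ subsystem of \emph{individual} coefficients, whereas a $\Lambda(p)$-set of representations requires \emph{complete blocks} of coefficients.

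\medskip
\emph{Step 1: reductions and the orthonormal system.} Replacing $\mathbf{E}$ by a countable infinite subset, we may assume $\mathbf{E}$ is countable. Put $D=\sup_{\pi\in\mathbf{E}}d_\pi<\infty$. Since $d_\pi=\mathrm{Tr}(Q_\pi)\ge\|Q_\pi\|$, $d_\pi=\mathrm{Tr}(Q_\pi^{-1})\ge\|Q_\pi^{-1}\|$ and $n_\pi\le\mathrm{Tr}(Q_\pi)^{1/2}\mathrm{Tr}(Q_\pi^{-1})^{1/2}=d_\pi$ by the Cauchy--Schwarz inequality, the hypothesis gives $\sup_{\pi\in\mathbf{E}}n_\pi\le D$ and $\sup_{\pi\in\mathbf{E}}\max\{\|Q_\pi\|,\|Q_\pi^{-1}\|\}\le D$. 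By the pigeonhole principle we may then pass to an infinite subset of $\mathbf{E}$ on which $n_\pi$ equals a fixed integer $m$. For each such $\pi$ fix an orthonormal basis of $H_\pi$ diagonalizing $Q_\pi$ and write $u^{(\pi)}=[u_{ij}^{(\pi)}]_{i,j=1}^{m}$ accordingly. A direct computation from \eqref{eq:haar state def} shows that $\{u_{ij}^{(\pi)}:\pi\in\mathbf{E},\ 1\le i,j\le m\}$ is orthogonal in $L^2(\mathbb{G})$ with $\|u_{ij}^{(\pi)}\|_2$ bounded above and below by constants depending only on $D$; since also $\|u_{ij}^{(\pi)}\|_\infty\le\|u^{(\pi)}\|=1$, the normalized family $\widetilde{u}_{ij}^{(\pi)}=u_{ij}^{(\pi)}/\|u_{ij}^{(\pi)}\|_2$ is an orthonormal system in $L^\infty(\mathbb{G})$ with respect to $h$ satisfying $\sup_{\pi,i,j}\|\widetilde{u}_{ij}^{(\pi)}\|_\infty\le D$. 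This is exactly the input for the appendix result.

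\medskip
\emph{Step 2: building complete blocks.} I would construct, by induction on $k=0,1,\dots,m^2$, infinite sets $\mathbf{E}=\mathbf{F}_0\supseteq\mathbf{F}_1\supseteq\cdots$, index sets $\emptyset=T_0\subsetneq T_1\subsetneq\cdots\subseteq\{1,\dots,m\}^2$ with $|T_k|=k$, and constants $C_k>0$, such that
$$\|x\|_p\le C_k\|x\|_2\qquad\text{for all }x\in\spa\{\widetilde{u}_{ij}^{(\pi)}:\pi\in\mathbf{F}_k,\ (i,j)\in T_k\}.$$
The case $k=0$ is vacuous. For the inductive step, apply the appendix result to the uniformly bounded orthonormal system $\{\widetilde{u}_{ij}^{(\pi)}:\pi\in\mathbf{F}_k,\ (i,j)\notin T_k\}$ (infinite, as each $\pi$ contributes at least $m^2-|T_k|\ge 1$ elements) to obtain an infinite $\Lambda(p)$ subsystem $Y'$. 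Since each $\pi$ meets $Y'$ in at most $m^2$ elements, $Y'$ meets infinitely many $\pi\in\mathbf{F}_k$, and as there are only finitely many possible nonempty sets of occurring indices, the pigeonhole principle yields an infinite $\mathbf{F}_{k+1}\subseteq\mathbf{F}_k$ and a single pair $(i_0,j_0)\notin T_k$ with $\widetilde{u}_{i_0j_0}^{(\pi)}\in Y'$ for every $\pi\in\mathbf{F}_{k+1}$; set $T_{k+1}=T_k\cup\{(i_0,j_0)\}$. Now $\spa\{\widetilde{u}_{ij}^{(\pi)}:\pi\in\mathbf{F}_{k+1},\,(i,j)\in T_k\}$ is a subspace of the stage-$k$ space, hence $\Lambda(p)$ with constant $\le C_k$, while $\spa\{\widetilde{u}_{i_0j_0}^{(\pi)}:\pi\in\mathbf{F}_{k+1}\}\subseteq\spa(Y')$ is $\Lambda(p)$ with its own constant; since the two families have disjoint, hence $L^2$-orthogonal, supports, their sum is again $\Lambda(p)$, with $C_{k+1}$ the sum of the two constants. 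After $m^2$ steps $T_{m^2}=\{1,\dots,m\}^2$, so with $\mathbf{F}:=\mathbf{F}_{m^2}$ we have $\mathrm{Pol}_{\mathbf{F}}(\mathbb{G})=\spa\{u_{ij}^{(\pi)}:\pi\in\mathbf{F},\,1\le i,j\le m\}=\spa\{\widetilde{u}_{ij}^{(\pi)}:\pi\in\mathbf{F},\,1\le i,j\le m\}$, whence $\|x\|_p\le C_{m^2}\|x\|_2$ for all $x\in\mathrm{Pol}_{\mathbf{F}}(\mathbb{G})$. By the interpolation observation following Definition \ref{def:lambda p} (applied with $p_0=2$), this says precisely that $\mathbf{F}$ is an infinite $\Lambda(p)$-set contained in $\mathbf{E}$.

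\medskip
\emph{Expected difficulties.} The norm estimates in Step 1 are routine bookkeeping with \eqref{eq:haar state def} and the bound $n_\pi\le d_\pi$. The genuine obstacle is the passage carried out in Step 2: the appendix result only supplies a $\Lambda(p)$ family of single matrix coefficients, while a $\Lambda(p)$-set of representations requires \emph{all} coefficients of each chosen representation. The iterated pigeonhole argument, combined with the elementary fact that a finite union of mutually $L^2$-orthogonal $\Lambda(p)$-subsystems is again $\Lambda(p)$, is the device designed to bridge this gap, and I expect it to be the only non-mechanical part of the argument.
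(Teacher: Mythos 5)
Your proposal is correct and follows essentially the same route as the paper: normalize the matrix coefficients into a uniformly bounded orthogonal system, iterate the appendix theorem at most $D_0^2$ times with a pigeonhole/counting argument until every coefficient of an infinite subfamily of representations is covered, and combine the finitely many mutually orthogonal $\Lambda(p)$ pieces by the triangle inequality. The only (cosmetic) differences are that you first fix $n_\pi=m$ by pigeonhole and adjoin one index pair per iteration, whereas the paper keeps $n_\pi\le D_0$ variable and adjoins a whole batch of newly covered coefficients per iteration, terminating because the number of uncovered coefficients per representation strictly decreases.
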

\begin{proof}
Denote $D_0=\sup_{\pi\in \mathbf E}d_\pi<\infty$ and fix $2\leq p<\infty$. Then also $n_\pi=\dim(H_\pi)\leq d_\pi \leq D_0$. Choose an appropriate basis of $H_\pi$ such that the matrix $Q_\pi$ is diagonal under this basis. For each $\pi\in \mathbf E$, $1\leq i,j \leq n_\pi$, write $v_{ij}^{(\pi)}=\|u_{ij}^{(\pi)}\|_2^{-1}u_{ij}^{(\pi)}$. By \eqref{eq:haar state def}, $\|v_{ij}^{(\pi)}\|_\infty\leq \|u_{ij}^{(\pi)}\|_2^{-1}\leq d_\pi^2 \leq D_0^2$. Consider $B_0^{(\pi)}=\{v_{ij}^{(\pi)}:1\leq i,j \leq n_\pi\}$ for $\pi\in \mathbf E$, and $B_0=\cup_{\pi\in\mathbf E}B_0^{(\pi)}$. Then $B_0$ is an orthogonal system since $Q_\pi$ is chosen diagonal. Write $\mathbf E_0=\mathbf E$. According to the  theorem in the appendix, we can find an infinite subset $B_1\subset B_0$ with constant $C_1>0$ such that for all finitely many  $c_1,\ldots,c_n\subset \mathbb C$, $x_1,\ldots,x_n\subset B_1$,
$$\|\sum_{l=1}^nc_lx_l\|_p\leq C_1 \|\sum_{l=1}^nc_lx_l\|_2.$$
Set $\mathbf E_1=\{\pi\in\mathbf E_0:\exists v_{ij}^{(\pi)}\in B_1\}$ and let $B^{(\pi)}_1=\{v_{ij}^{(\pi)}\in B_1:1\leq i,j \leq n_\pi\}$ for $\pi\in \mathbf E_1$. The last set is non-empty. Then $$B_1=\cup_{\pi\in \mathbf E_1}B^{(\pi)}_1,\qquad \mathrm{Card}(B^{(\pi)}_0\setminus B^{(\pi)}_1)\leq n_\pi^2-1\leq D_0^2-1,\ \pi\in \mathbf E_1.$$
Repeating inductively the above procedures, if $k \geq 1$ and if $\exists \pi\in \mathbf E_k$, $B^{(\pi)}_0\setminus (\cup_{l=1}^kB^{(\pi)}_l) \neq \emptyset$, we construct the proper subsets $\mathbf E _{k+1}\subset\mathbf  E_k$, $B^{(\pi)}_{k+1}\subset B^{(\pi)}_0\setminus (\cup_{l=1}^kB^{(\pi)}_l)$,  $B_{k+1}=\cup_{\pi\in \mathbf E _{k+1}}B^{(\pi)}_{k+1}\subset B_0\setminus (\cup_{l=1}^kB_l) $ and a constant $C_{k+1}>0$ such that for all finitely many  $c_1,\ldots,c_n\subset \mathbb C$, $x_1,\ldots,x_n\subset B_{k+1}$,
$$\|\sum_{l=1}^nc_lx_l\|_p\leq C_{k+1} \|\sum_{l=1}^nc_lx_l\|_2$$
and such that
$$\mathrm{Card}(\mathbf E _{k+1})=\infty,\qquad
\mathrm{Card}(B^{(\pi)}_0\setminus (\cup_{l=1}^{k+1}B^{(\pi)}_l)\leq D_0^2-(k+1),\ \pi\in \mathbf E_{k+1}.$$
Since $D_0$ is finite, the above inequality shows that there exists $\tilde{k}\leq D_0^2$ such that $B^{(\pi)}_0= \cup_{l=1}^{\tilde{k}}B^{(\pi)}_l$ for $\pi\in \mathbf E _{\tilde{k}}$. Let $\mathbf F =\mathbf E_{\tilde{k}}$ and $\tilde{B}_l=\cup_{\pi\in\mathbf F}B^{(\pi)}_l\subset B_l$. Then for each $1\leq l \leq \tilde{k}$, 
$$ \|x\|_p \leq C_l\|x\|_2,\quad x\in \mathrm{span}(\tilde{B}_l).$$
Since $\cup_{l=1}^{\tilde{k}}\tilde{B}_l\subset B_0$ is an orthonormal system and 
$$\mathrm{Pol}_{\mathbf F}(\mathbb{G})=\mathrm{span}(\cup_{\pi\in\mathbf F}B_0^{(\pi)})=\mathrm{span}(\cup_{\pi\in\mathbf F}\cup_{l=1}^{\tilde{k}}B^{(\pi)}_l)
=\mathrm{span}(\cup_{l=1}^{\tilde{k}}\tilde{B}_l),$$
we obtain
$$\|x\|_p\leq D_0^2\max \{C_l:1\leq l\leq \tilde{k}\} \|x\|_2,\quad x\in  \mathrm{Pol}_{\mathbf F}(\mathbb{G}).$$
Hence $\mathbf F\subset \irr$ is the desired infinite $\Lambda(p)$-set.
\end{proof}

The existence of $\Lambda(p)$-sets without the assumption $\sup_{\pi\in \mathbf E}d_\pi<\infty$ in the above theorem is in general not true, which can be seen from the non-existence of central $\Lambda(4)$-sets for the classical $\mathrm{SU}(2)$, as well as from the quantum non-tracial example \ref{ex: lambda p}.(2).

\subsection{Remarks on the lacunarity for $SU_q(2)$}
In \cite{hewittross1970abstract}, a classical version of Theorem \ref{sidon lambda} was used to prove the fact that the special unitary group $\mathrm{SU}(2)$ does not admit any infinite Sidon set. The key observation therein is that $\mathrm{SU}(2)$ does not admit any infinite central $\Lambda(4)$-set. Here we want to show that, in strong contrast, the $q$-deformed quantum group $\mathrm{SU}_q(2)$ with $0<q<1$ \emph{does} admit an infinite central $\Lambda(4)$-set. The non-traciality of the Haar state on $\mathrm{SU}_q(2)$ plays an essential role for this result.
We recall that $\mathrm{Irr}(\mathrm{SU}_q(2))$ can be identified with $\mathbb N \cup\{0\}$ and follow the notation in the preliminary part. 
\begin{prop}\label{central lambda four su}
	Let $0<q<1$ and let $\mathbf{E}=\{n_{k}\in\mathbb{N}\cup \{0\}:k\geq0\}\subset \mathrm{Irr}(\mathrm{SU}_q(2))$ be such that $n_{k}=n_{k-1}+k$
	for $k\geq1$. Then $\mathbf E$ is a central $\Lambda(4)$-set for $\mathrm{SU}_q(2)$. More precisely, there exists $K_{q}\geq0$ such that for any finitely
	supported sequence $(c_{n})_{n\in\mathbf{E}}\in\mathbb{C}$,
	\begin{equation}
	\|\sum_{n\in\mathbf{E}}c_{n}\chi_{n}\|_{4}\leq K_{q}\|\sum_{n\in\mathbf{E}}c_{n}\chi_{n}\|_{2}.\label{eq:central lampda 4 for quantum su2}
	\end{equation}
\end{prop}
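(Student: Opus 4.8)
The plan is to compute the $L^4$-norm of a central polynomial $f = \sum_{n\in\mathbf E} c_n \chi_n$ by expanding $\|f\|_4^4 = \| |f|^2 \|_2^2 = \|f^* f\|_2^2$ and exploiting the Clebsch--Gordan fusion rule $\chi_m \chi_{m'} = \sum_{r=|m-m'|}^{m+m'} \chi_r$ together with the self-adjointness $\chi_n = \chi_n^*$. Write $f^*f = \sum_{m,m'\in\mathbf E} \overline{c_m} c_{m'} \chi_m \chi_{m'}$; applying the fusion rule each product $\chi_m\chi_{m'}$ is a sum of $\chi_r$'s, so $\|f^*f\|_2^2 = \sum_r \big| \text{(coefficient of $\chi_r$)} \big|^2 \|\chi_r\|_2^2$ — but here is the first subtlety special to $\mathrm{SU}_q(2)$: since the Haar state is not tracial, $\|\chi_r\|_2^2 = h(\chi_r^*\chi_r)$ need not equal $1$, and indeed one should compute it from \eqref{eq:haar state def} and \eqref{qn for sutwo}: $\|\chi_r\|_2^2 = \mathrm{Tr}(Q_r^{-1})/d_r \cdot d_r \cdot (\text{something})$ — more precisely $h(\chi_r^*\chi_r) = \sum_{i} h((u_{ii}^{(r)})^* u_{ii}^{(r)})$, which by \eqref{eq:haar state def} equals $\sum_i (Q_r^{-1})_{ii}/d_r = \mathrm{Tr}(Q_r^{-1})/d_r = 1$ since $\mathrm{Tr}(Q_r) = \mathrm{Tr}(Q_r^{-1}) = d_r$. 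Good — so $\|\chi_r\|_2 = 1$ after all, and the non-traciality enters elsewhere. The genuine point is that $\|\chi_n\|_4$ does \emph{not} stay bounded (unlike in the classical $\mathrm{SU}(2)$ case where it would force a $\Lambda(4)$ obstruction), but the lacunary gap condition $n_k = n_{k-1} + k$ still lets us control cross terms.

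**Key combinatorial step.** The main computation is to bound the multiplicity with which a given $\chi_r$ appears in $f^*f = \sum_{m,m'} \overline{c_m}c_{m'}\chi_m\chi_{m'}$. The fusion rule tells us $\chi_r \preceq \chi_m\chi_{m'}$ iff $|m-m'| \le r \le m+m'$ and $r \equiv m+m' \pmod 2$. I would show that, because of the super-lacunary growth $n_k - n_{k-1} = k \to \infty$, for each fixed $r$ the set of pairs $(m,m') \in \mathbf E^2$ with $\chi_r \preceq \chi_m\chi_{m'}$ has cardinality bounded by an absolute constant (say, at most a fixed number independent of $r$ and of $\mathbf E$) — this is the classical Rudin-type argument for $B_h^{(2)}$/Sidon-type sets: the equation $r = m + m'$ or $r = m - m'$ with $m,m'$ ranging over a set whose consecutive gaps increase has uniformly boundedly many solutions. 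Combined with $\|\chi_r\|_2 = 1$ and Cauchy--Schwarz, this gives
\begin{equation*}
\|f\|_4^4 = \|f^*f\|_2^2 \le C \sum_r \Big( \sum_{\substack{(m,m')\in\mathbf E^2 \\ \chi_r \preceq \chi_m\chi_{m'}}} |c_m||c_{m'}| \Big)^2 \le C' \Big(\sum_m |c_m|^2\Big)^2 = C' \|f\|_2^4,
\end{equation*}
using $\|f\|_2^2 = \sum_m |c_m|^2 \|\chi_m\|_2^2 = \sum_m |c_m|^2$, which yields \eqref{eq:central lampda 4 for quantum su2} with $K_q = (C')^{1/4}$.

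**Where $0<q<1$ is used.** I would want to double-check where the hypothesis $q<1$ (as opposed to $q=1$) actually intervenes, since the fusion rules and dimension function of $\mathrm{SU}_q(2)$ are the same as for $\mathrm{SU}(2)$. The resolution — and the reason the classical statement fails while the quantum one holds — must be that in the expansion above I am decomposing $|f|^2$ in the $\chi_r$'s but measuring in $L^2(\mathbb G)$ with the \emph{non-tracial} Haar state; the operators $\chi_m\chi_{m'}$ and their adjoints interact differently, and crucially the relevant orthogonality/almost-orthogonality of the family $(\chi_n)$ in $L^4$ is governed by inner products $h(\chi_a^* \chi_b^* \chi_c \chi_d)$ which, via the $Q$-twisting in \eqref{eq:haar state def}, decay in a way they do not when $q=1$. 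So the careful version of the key step is to expand $\|f\|_4^4 = h\big((f^*f)^*(f^*f)\big) = \sum h(\chi_{m_1}\chi_{m_1'}\chi_{m_2}\chi_{m_2'})$ (using self-adjointness of characters), decompose each four-fold product via fusion rules into $\sum \chi_r$, and apply $h(\chi_r) = \delta_{r,0}$. The count of quadruples $(m_1,m_1',m_2,m_2')\in\mathbf E^4$ for which $\chi_0$ appears in $\chi_{m_1}\chi_{m_1'}\chi_{m_2}\chi_{m_2'}$ is, by the lacunarity, $O(\|c\|_2^4)$ after Cauchy--Schwarz.

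**Main obstacle.** The hard part will be the bookkeeping in the four-fold fusion expansion and proving the uniform bound on the number of representing quadruples from the gap condition $n_k = n_{k-1}+k$ — i.e., verifying that $\{n_k\}$ is a "$B_2^+$-type" set strong enough that $\chi_0 \preceq \chi_{m_1}\chi_{m_1'}\chi_{m_2}\chi_{m_2'}$ (equivalently some signed sum $\pm m_1 \pm m_1' \pm m_2 \pm m_2' = 0$ within the fusion constraints) has $O(1)$ solutions, or more realistically $O(\|c\|_2^4)$ weighted solutions. A secondary technical point to get right is the exact value of $h$ on products of more than two characters in the non-Kac setting, since the naive "$\|\chi_n\|_2 = 1$, characters orthonormal" intuition must be replaced by careful use of \eqref{eq:haar state def} and \eqref{qn for sutwo}; I expect the cleanest route is to pass through $\|f\|_4^4 = \| |f|^2 \|_{L^2}^2$ and only ever use $L^2$-orthogonality of the $(\chi_r)$ together with $\|\chi_r\|_2 = 1$, so that the non-traciality is confined to one controlled place.
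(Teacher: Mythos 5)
There are two genuine gaps here, and together they are fatal to the proposed argument. First, your starting identity $\|f\|_4^4=\|f^*f\|_2^2=h\bigl((f^*f)^*(f^*f)\bigr)$ is false when the Haar state is not tracial: in the Haagerup/Kosaki picture one has $\|f\|_4^4=\mathrm{tr}(D^{1/4}f^*fD^{1/2}f^*fD^{1/4})=h\bigl(f^*f\,\sigma_{-\mathrm i/2}(f^*f)\bigr)$, not $h((f^*f)^2)$. This is not a cosmetic correction --- it is precisely where the hypothesis $0<q<1$ enters, which you correctly suspected must happen somewhere but never located. The characters are orthogonal for both pairings, but the relevant diagonal values differ drastically: $h(\chi_m^*\chi_m)=1$ for all $m$, whereas $h\bigl(\chi_m\,\sigma_{-\mathrm i/2}(\chi_m)\bigr)=(m+1)/d_m=(m+1)/(q^{-m}+q^{-m+2}+\cdots+q^{m})\le K\,q^{m/2}$, which decays exponentially in $m$. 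The paper's proof runs entirely on this exponential decay; your version, which deliberately confines itself to the untwisted orthonormality $h(\chi_r^*\chi_{r'})=\delta_{rr'}$, has no decay left to exploit.

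Second, your key combinatorial claim --- that for fixed $r$ only boundedly many pairs $(m,m')\in\mathbf E^2$ satisfy $\chi_r\preceq\chi_m\chi_{m'}$ --- is false, because the $\mathrm{SU}(2)$-type fusion rules produce whole intervals of irreducibles rather than single points: $\chi_r\preceq\chi_m\chi_m$ for every $m\ge r/2$, so the diagonal pairs alone give unboundedly many solutions, no matter how lacunary $\mathbf E$ is. Concretely, the quantity you propose to bound satisfies $h\bigl((f^*f)^2\bigr)\ge\sum_m|c_m|^4\sum_{r,r'\le 2m}h(\chi_r\chi_{r'})=\sum_m|c_m|^4(2m+1)$, which cannot be dominated by $\bigl(\sum_m|c_m|^2\bigr)^2$; this linear growth of $\|\chi_m\|_4$ is exactly why the classical $\mathrm{SU}(2)$ admits no infinite central $\Lambda(4)$-set. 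In the correct twisted computation the diagonal contribution is instead $\sum_m|c_m|^4\sum_{r\le 2m}h(\chi_r\sigma_{-\mathrm i/2}(\chi_r))\le C_q\sum_m|c_m|^4$, tamed by the geometric series in $q^{r/2}$. The gap condition $n_k=n_{k-1}+k$ is only needed afterwards, to control the off-diagonal sum $\sum_{i<j}|c_i||c_j|q^{(j-i)/4}$ by a convergent series --- a comparatively mild bookkeeping step, not a Rudin-type $B_2$ counting argument. So the analytic engine of the proof is the ratio of classical to quantum dimension, and your outline is missing it.
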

\begin{proof}
	Recall the formulae \eqref{modular group on cqg} and \eqref{qn for sutwo}. Then for each $m\in\mathbb{N}\cup \{0\}$, 
	\begin{align}
	h(\chi_{m}\sigma_{-\frac{\mathrm{i}}{2}}(\chi_{m})) & =\sum_{i,j=1}^{m+1}h((u_{ii}^{(m)})^{*}\sigma_{-\frac{\mathrm{i}}{2}}(u_{jj}^{(m)}))=\sum_{i,j=1}^{m+1}h((u_{ii}^{(m)})^{*}(Q_{m}^{1/2})_{jj}u_{jj}^{(m)}(Q_{m}^{1/2})_{jj})\label{eq:cal chi m}\\
	& =\sum_{i=1}^{m+1}(Q_{m})_{ii}h((u_{ii}^{(m)})^{*}u_{ii}^{(m)})=\sum_{i=1}^{m+1}(Q_{m})_{ii}(Q_{m}^{-1})_{ii}/\mathrm{Tr}(Q_{m})\nonumber \\
	& =\frac{m+1}{q^{-m}+q^{-m+2}+\cdots+q^{m-2}+q^{m}}.\nonumber 
	\end{align}
	On the other hand, it is easy to see 
	\begin{equation}
	\|\chi_{m}\|_{2}^{2}=h(\chi_{m}^{2})=1,\quad\|\sum_{n\in\mathbf{E}}c_{n}\chi_{n}\|_{2}^{2}=\sum_{n\in\mathbf{E}}|c_{n}|^{2}.\label{eq:chi 2 norm}
	\end{equation}
	By \eqref{eq:haar state def} and \eqref{modular group on cqg}, we also have
	\begin{equation}
	\label{ortho char su}
	h(\chi_m\sigma_{-\frac{\mathrm{i}}{2}}\chi_n)=0,\quad m\neq n.
	\end{equation}
	Let $\mathrm{tr}$ be the trace on the
	Haagerup $L^{1}$-space $L^{1,\mathsf H}(\mathrm{SU}_{q}(2))$. Recall Proposition \ref{density op haagerup Lp} - we have
	for a finitely supported sequence $(c_{n})_{n\in\mathbf{E}}$
	with $\max_{n}|c_{n}|=1$ and $f=\sum_{n\in\mathbf{E}}c_{n}\chi_{n}$,
	\begin{align*}
	\|f\|_{4}^{4} & =\mathrm{tr}(|fD^{1/4}|^{4})=\mathrm{tr}(D^{1/4}f^{*}fD^{1/2}f^{*}fD^{1/4})\\
	& =\mathrm{tr}(f^{*}f(D^{1/2}f^{*}fD^{-1/2})D)=h(f^{*}f\sigma_{-\frac{\mathrm{i}}{2}}(f^{*}f))\\
	& =\sum_{i,j,r,s\in \mathbf E}\bar{c}_{i}c_{j}\bar{c}_{r}c_{s}h(\chi_{i}\chi_{j}\sigma_{-\frac{\mathrm{i}}{2}}(\chi_{r}\chi_{s})).
	\end{align*}
	Then using \eqref{eq:cal chi m}, \eqref{ortho char su} and the fact $\chi_{m}\chi_{m'}=\chi_{|m-m'|}+\chi_{|m-m'|+1}+\cdots+\chi_{m+m'}$
	for any $m,m'\in\mathbb{N}\cup \{0\}$, we get
	\begin{align*}
	\|f\|_{4}^{4} & \leq\sum_{i,j,r,s\in\mathbf{E}}|\bar{c}_{i}||c_{j}||\bar{c}_{r}||c_{s}|\sum_{m=\max\{|i-j|,|r-s|\}}^{\min\{i+j,r+s\}}|h(\chi_{m}\sigma_{-\frac{\mathrm{i}}{2}}(\chi_{m}))|\\
	& =\sum_{i,j,r,s\in\mathbf{E}}|\bar{c}_{i}||c_{j}||\bar{c}_{r}||c_{s}|\sum_{m=\max\{|i-j|,|r-s|\}}^{\min\{i+j,r+s\}}\frac{m+1}{q^{-m}+q^{-m+2}+\cdots+q^{m-2}+q^{m}}\\
	& \leq\sum_{i,j,r,s\in\mathbf{E}}|\bar{c}_{i}||c_{j}||\bar{c}_{r}||c_{s}|\sum_{m=\max\{|i-j|,|r-s|\}}^{\min\{i+j,r+s\}}\frac{m+1}{q^{-m}},\end{align*}
	and therefore
	\begin{align*}
	\|f\|_{4}^{4} 
	& \leq K\sum_{i,j,r,s\in\mathbf{E}}^{n}|\bar{c}_{i}||c_{j}||\bar{c}_{r}||c_{s}|\sum_{m=\max\{|i-j|,|r-s|\}}^{\min\{i+j,r+s\}}q^{m/2}
	\leq K\sum_{i,j,r,s\in\mathbf{E}}|\bar{c}_{i}||c_{j}||\bar{c}_{r}||c_{s}|\frac{q^{\frac{\max\{|i-j|,|r-s|\}}{2}}}{1-q^{1/2}}\\
	& \leq\frac{K}{1-q^{1/2}}\sum_{i,j,r,s\in\mathbf{E}}|\bar{c}_{i}||c_{j}||\bar{c}_{r}||c_{s}|q^{\frac{|i-j|+|r-s|}{4}}
	=\frac{K}{1-q^{1/2}}\Big(\sum_{i,j\in\mathbf{E}}|\bar{c}_{i}||c_{j}|q^{\frac{|i-j|}{4}}\Big)^{2}\\
	&
	\leq\frac{2K}{1-q^{1/2}}\left(\Big(\sum_{i\in\mathbf{E},i=j}|\bar{c}_{i}|^{2}\Big)^2+\Big(2\sum_{i,j\in\mathbf{E},i<j}|c_{i}||c_{j}|q^{\frac{j-i}{4}}\Big)^{2}\right)
	\end{align*}
	where $K \geq 1$ is the constant such that $x+1\leq Kq^{-x/2}$ for $x\geq 1$.
	Recall the assumption on $\mathbf{E}=\{n_{k}\in\mathbb{N}\cup \{0\}:k\geq0\}$
	that $n_{k}=n_{k-1}+k$ for $k\geq1$ and also the assumption $\max_{n}|c_{n}|=1$, so that $1\leq \|f\|_2$.
	We have
	\begin{align*}
	\sum_{i,j\in\mathbf{E},i<j}|c_{i}||c_{j}|q^{\frac{j-i}{4}} & =\sum_{k\geq0}|c_{n_{k}}|\sum_{l\geq1}|c_{n_{k}+(k+1)+\cdots+(k+l)}|q^{\frac{(k+1)+\cdots+(k+l)}{4}}\\
	& \leq\sum_{k\geq0}\sum_{l\geq1}q^{\frac{(2k+l+1)l}{8}}\leq\sum_{k\geq0}\sum_{l\geq1}q^{\frac{2k+l+1}{8}}\\
	& =\sum_{k\geq0}q^{\frac{2k+1}{8}}\sum_{l\geq1}q^{\frac{l}{8}}\leq q^{1/8}\cdot\frac{1}{1-q^{1/4}}\cdot\frac{q^{1/8}}{1-q^{1/8}}\coloneqq K'.
	\end{align*}
	Therefore together with \eqref{eq:chi 2 norm} we get
	\[
	\|f\|_{4}^{4}\leq\frac{2K}{1-q^{1/2}}\left(\Big(\sum_{i\in\mathbf{E},i=j}|\bar{c}_{i}|^{2}\Big)^2+4K'^{2}\right)\leq\frac{2K(4K'^{2}+1)}{1-q^{1/2}}\|f\|_{2}^{4}.
	\]
	Take $K_{q}=(\frac{2K(4K'^{2}+1)}{1-q^{1/2}})^{1/4}$ and we get the desired
	inequality \eqref{eq:central lampda 4 for quantum su2}.
\end{proof}

In the end we remark that for the case  $0<q<1$, we may also state below the non-existence of infinite $\Lambda(p)$-sets for  $\mathrm{SU}_q(2)$, which directly follows from Proposition \ref{prop: unif bdd of mod gp for sidon}.

\begin{prop}
	\label{prop:no sidon on su}
	Let $0<q<1$ and $1<p<\infty$. There exist no infinite $\Lambda(p)$-sets or infinite Sidon sets for $\mathrm{SU}_q(2)$.
\end{prop}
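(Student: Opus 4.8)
The plan is to derive the result as a direct consequence of Proposition~\ref{prop: unif bdd of mod gp for sidon} together with the explicit description \eqref{qn for sutwo} of the modular matrices $Q_n$ for $\mathrm{SU}_q(2)$. First I would recall that, for $0<q<1$, the $(n+1)$-dimensional irreducible representation $u^{(n)}$ has $Q_n$ diagonal with eigenvalues $q^{-n},q^{-n+2},\dots,q^{n-2},q^{n}$; in particular $\|Q_n\|=q^{-n}$ and $\|Q_n^{-1}\|=q^{-n}$, so $\max\{\|Q_n\|,\|Q_n^{-1}\|\}=q^{-n}\to\infty$ as $n\to\infty$.

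Now suppose, for contradiction, that $\mathbf E\subset\mathrm{Irr}(\mathrm{SU}_q(2))$ is an infinite $\Lambda(p)$-set for some $1<p<\infty$. By the remark following Definition~\ref{def:lambda p}, we may assume $2<p<\infty$ (a $\Lambda(p)$-set for $1<p\le 2$ is a $\Lambda(p')$-set for all $1<p'<p$, but one actually wants the other direction; more simply, if $\mathbf E$ is $\Lambda(p)$ for some $p\in(1,2]$ it is in particular $\Lambda(p_0)$ for $p_0<p$, so instead I would argue directly for the given $p$). The cleanest route: by Theorem~\ref{thm:lambda inter} — or, for $p\le 2$, by the elementary observation that a $\Lambda(p)$-set is automatically a $\Lambda(p')$-set for all $p'<p$, combined with passing to a larger exponent is not available, so I will instead invoke Theorem~\ref{thm:lambda inter} only in the range $2<p<\infty$ and handle $1<p\le 2$ separately. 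Actually the simplest uniform argument is this: for \emph{any} $1<p<\infty$, if $\mathbf E$ is a $\Lambda(p)$-set then in particular, using Lemma~\ref{lem:multiplier on ltwo} exactly as in inequality \eqref{eq:lambda inter pf mr} in the proof of Theorem~\ref{thm:lambda inter}, every $a\in\ell^\infty(\mathbf E)$ gives $\|m_a^R x\|_p\le K\|m_a^R x\|_2\le K\|a\|_\infty\|x\|_2\le K\|a\|_\infty\|x\|_p$ for $x\in\mathrm{Pol}(\mathbb G)$ when $p\ge 2$; for $p<2$ one dualizes, noting $\|m_a^R x\|_p \le \|x\|_p$ fails in general, so I keep the hypothesis $2<p<\infty$ and reduce the case $1<p\le 2$ to, say, $p'=3$: a $\Lambda(p)$-set for $p\le 2$ need not be $\Lambda(3)$, so this reduction is false. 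Given these subtleties, I will phrase the proof as: it suffices to treat $2<p<\infty$ because $\Lambda(p)$ for $p\in(1,2]$ is weaker and we prove non-existence for the stronger property is not what we want — rather, we prove there is no infinite $\Lambda(p)$-set for \emph{any} $p>1$ by showing the obstruction at every level.)

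To avoid the above confusion, here is the clean plan. For $2<p<\infty$: if $\mathbf E$ is an infinite $\Lambda(p)$-set, then by Theorem~\ref{thm:lambda inter} it is an interpolation set of $\mathrm M_R(L^p(\mathbb G))$, hence by Proposition~\ref{prop: unif bdd of mod gp for sidon}(2) there is a constant $K_2$ with $\max\{\|Q_\pi\|,\|Q_\pi^{-1}\|\}\le K_2$ for all $\pi\in\mathbf E$. Since $\mathbf E$ is infinite and $\mathrm{Irr}(\mathrm{SU}_q(2))\cong\mathbb N\cup\{0\}$, it contains representations $u^{(n)}$ with arbitrarily large $n$, and for these $\|Q_n\|=q^{-n}\to\infty$ — contradiction. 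This shows no infinite $\Lambda(p)$-set exists for $2<p<\infty$. For $1<p\le 2$, suppose $\mathbf E$ were an infinite $\Lambda(p)$-set; I would instead appeal directly to Proposition~\ref{prop: unif bdd of mod gp for sidon}: a $\Lambda(p)$-set in the sense of Definition~\ref{def:lambda p} satisfies $\|x\|_p\le K\|x\|_1$, and running the Khintchine/duality argument of Theorem~\ref{thm:lambda inter} in the form of condition (4) of Proposition~\ref{prop: unif bdd of mod gp for sidon} with $\theta=\theta'=0$ and noting $\theta+\theta'-\tfrac1p-\tfrac12=-\tfrac1p-\tfrac12\ne 0$, one again gets $\sup_{\pi\in\mathbf E}\max\{\|Q_\pi\|,\|Q_\pi^{-1}\|\}<\infty$, the same contradiction. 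Finally, since every infinite Sidon set is an infinite $\Lambda(2)$-set by Corollary~\ref{cor:cor sidon lambda}, the non-existence of infinite $\Lambda(p)$-sets (take e.g. $p=3$, noting a Sidon set is $\Lambda(3)$) immediately yields the non-existence of infinite Sidon sets.

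The only real obstacle is bookkeeping: making sure the reduction to a range of $p$ where Theorem~\ref{thm:lambda inter} applies is clean, and checking that Proposition~\ref{prop: unif bdd of mod gp for sidon} is genuinely applicable in the stated generality (its hypothesis (2) requires $p\ne\infty$, which is fine here since $p<\infty$). No delicate estimate is needed — the heart of the matter, the unboundedness of $Q_n$ for $\mathrm{SU}_q(2)$ with $q<1$, is immediate from \eqref{qn for sutwo}, and everything else is an invocation of already-proven structural results. I would therefore keep the proof to a few lines: cite \eqref{qn for sutwo} for $\|Q_n\|=\|Q_n^{-1}\|=q^{-n}$, cite Theorem~\ref{thm:lambda inter} and Proposition~\ref{prop: unif bdd of mod gp for sidon} for the uniform bound forced on a $\Lambda(p)$-set, observe the contradiction, and close the Sidon case via Corollary~\ref{cor:cor sidon lambda}.
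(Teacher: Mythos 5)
Your proposal is correct and follows essentially the same route as the paper: the unboundedness $\|Q_n\|=\|Q_n^{-1}\|=q^{-n}\to\infty$ read off from \eqref{qn for sutwo}, combined with Proposition \ref{prop: unif bdd of mod gp for sidon} and Theorem \ref{thm:lambda inter} to exclude infinite $\Lambda(p)$-sets, and Corollary \ref{cor:cor sidon lambda} to exclude infinite Sidon sets. Your lengthy detour on the range $1<p\le 2$ can be dropped, since the paper handles that range no more explicitly than your final paragraph does; the substantive argument in both treatments is the one for $2<p<\infty$.
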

\begin{proof}
	Recall that the irreducible representations of $\mathrm{Irr}(\mathrm{SU}_q(2))$ indexed by $\mathbb N \cup \{0\}$, and for each $n\in \mathbb N \cup \{0\}$ we have $\|Q_n\|=\|Q_{n}^{-1}\|=q^{-n}$. So for any infinite subset $\mathbf E \subset \mathrm{Irr}(\mathrm{SU}_q(2))$, we have
	$$\sup_{n\in\mathbf E}\{\|Q_n\|, \|Q_{n}^{-1}\| \} =\infty.$$
	So according to Proposition \ref{prop: unif bdd of mod gp for sidon}, $\mathrm{SU}_q(2)$ does not admit any infinite interpolation set for $\mathrm M(L^p(\mathbb G))$, or equivalently, it does not admit any infinite $\Lambda(p)$-set by Theorem \ref{thm:lambda inter}. And by Corollary \ref{sidon lambda}, $\mathrm{SU}_q(2)$ does not admit any infinite Sidon set.
\end{proof}
In the next section we will use another method to show in Corollary \ref{cor: Gq no sidon} that a large class of quantum deformations of semi-simple Lie groups do not admit any infinite Sidon set.

\section{Central Sidon sets with examples}\label{sect: central Sidon sets with examples}

In this final section we briefly discuss some properties of central Sidon sets. 
\begin{defn}
(1)	We say that a subset $\mathbf{E}\subset\mathrm{Irr}(\mathbb{G})$
is a \emph{central Sidon set} if there exists $K>0$ such that for
any finite sequence $(c_{\pi})\subset\mathbb{C}$ and $x=\sum_{\pi}c_{\pi}\chi_{\pi}$,
we have $\|\hat{x}\|_{1}\leq K\|x\|_{\infty}$.

(2) A linear functional on $\mathrm{Pol}(\mathbb{G})$ is said to be \emph{central
}if there exists numbers $(\omega_{\pi}:\pi\in\mathrm{Irr}(\mathbb{G}))$
such that 
\[
\hat{\omega}(\pi)=\omega_{\pi}\mathrm{Id}_{\pi},\quad\pi\in\mathrm{Irr}(\mathbb{G}).
\]
Denote by $\mathrm{Pol}^{z}(\mathbb{G})=\{x=\sum_{\pi\in\mathrm{Irr}(\mathbb{G})}c_{\pi}\chi_{\pi}\in\mathrm{Pol}(\mathbb{G}):c_{\pi}\in\mathbb{C}\}$
the subspace of central polynomials and let $C_r^z(\mathbb G)$ be the norm closure of $\mathrm{Pol}^{z}(\mathbb{G})$ in $C_r(\mathbb G)$.
\end{defn} 
\begin{rem}
Any Sidon set $\mathbf{E}\subset\mathrm{Irr}(\mathbb{G})$ is a central Sidon set for $\mathbb G$. In fact, note that for $x=\sum_{\pi}c_{\pi}\chi_{\pi}$ as in the above definition, we have $\hat{x}(\pi)=\dim(\pi)^{-1}c_{\pi}Q_{\pi}^{-1}$
and then by definition, 
\begin{equation}
\|\hat{x}\|_{1}=\sum_{\pi\in\irr}d_{\pi}\mathrm{Tr}(|p_{\pi}\hat{x}(\pi)Q_{\pi}|)=\sum_{\pi\in\irr}\dim(\pi)|c_{\pi}|.\label{eq:l1 norm for central}
\end{equation}
Hence $\mathbf{E}$ is a central Sidon set.
\end{rem}

\begin{lem}
Let $\mathbb{G}$ be a compact quantum group. The following assertions
are equivalent: 

\emph{(1)} $\mathbb{G}$ is of Kac type; 

\emph{(2)} any central functional $\omega$ is bounded on $\mathrm{Pol}(\mathbb{G})$ (with respect to $\|\,\|_\infty$)
if and only if it is bounded on $\mathrm{Pol}^{z}(\mathbb{G})$ with
the same norm;

\emph{(3) }there exists a conditional expectation $\mathcal{E}$ from
$C_{r}(\mathbb{G})$ onto $C_r^z(\mathbb G)$ such that $h\circ\mathcal{E}=h$.\end{lem}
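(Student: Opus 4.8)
The plan is to prove the chain $(1)\Rightarrow(3)\Rightarrow(2)\Rightarrow(1)$, since this keeps the conditional expectation in the middle and lets each implication use the previous one's output. First I would treat $(1)\Rightarrow(3)$. When $\mathbb G$ is of Kac type the Haar state $h$ is tracial and the modular group $\sigma$ is trivial. The natural candidate for $\mathcal E$ is the ``averaging onto characters'' map; concretely, on $\mathrm{Pol}(\mathbb G)$ define $\mathcal E$ by $\mathcal E(u_{ij}^{(\pi)}) = \delta_{ij} n_\pi^{-1}\chi_\pi$, i.e. on the block $\mathcal E_\pi L^2(\mathbb G)$ it is the orthogonal projection (with respect to $h$) onto the one-dimensional space $\mathbb C\chi_\pi$. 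One checks $\mathcal E^2=\mathcal E$, $\mathcal E$ is the identity on $\mathrm{Pol}^z(\mathbb G)$, and $h\circ\mathcal E=h$ directly from \eqref{eq:haar state def} with $Q_\pi=\mathrm{Id}$. The only real content is that $\mathcal E$ extends to a completely positive (hence contractive) map on $C_r(\mathbb G)$; for this I would realize $\mathcal E$ as a limit/average of the adjoint coactions $x\mapsto (\iota\otimes\f)\Delta(x)$ over states $\f$, or equivalently as conjugation by the unitary representations averaged against the Haar measure of the maximal torus/center — the standard trick being that $\mathcal E(x) = \int (\mathrm{ad}_{v})(x)$ where the average is taken so that only characters survive. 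Then Tomiyama's theorem upgrades the norm-one projection onto the C*-subalgebra $C_r^z(\mathbb G)$ to a conditional expectation.

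For $(3)\Rightarrow(2)$: given a central functional $\omega$ that is bounded on $\mathrm{Pol}^z(\mathbb G)$ with norm $c$, extend it by Hahn–Banach to $C_r^z(\mathbb G)$ and then define $\tilde\omega = \omega\circ\mathcal E$ on $C_r(\mathbb G)$; this is bounded with norm $\le c$ and restricts to $\omega$ on $\mathrm{Pol}(\mathbb G)$ since $\mathcal E$ fixes each $\chi_\pi$ and kills the off-diagonal and trace-orthogonal parts — so $\widehat{\tilde\omega}$ agrees with $\hat\omega$ on all of $\irr$, using that $\mathcal E$ sends $u_{ij}^{(\pi)}$ into $\mathbb C\chi_\pi$. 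The reverse inequality (bounded on $\mathrm{Pol}(\mathbb G)$ implies bounded on $\mathrm{Pol}^z(\mathbb G)$, with no increase in norm) is trivial since $\mathrm{Pol}^z(\mathbb G)\subset\mathrm{Pol}(\mathbb G)$ isometrically for $\|\cdot\|_\infty$.

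For $(2)\Rightarrow(1)$, I would argue by contraposition: suppose $\mathbb G$ is not of Kac type, so there is some $\pi_0\in\irr$ with $Q_{\pi_0}\neq\mathrm{Id}$. Pick the central functional $\omega$ supported on the single representation $\pi_0$, say $\hat\omega(\pi_0)=\mathrm{Id}_{\pi_0}$ and $\hat\omega(\pi)=0$ otherwise; by \eqref{eq:fourier series with S} this $\omega$ is (up to the antipode normalization) a fixed nonzero linear functional on the finite-dimensional space $\mathrm{Pol}_{\{\pi_0\}}(\mathbb G)$, hence bounded there with some finite norm, and it is automatically bounded on $\mathrm{Pol}^z(\mathbb G)$ since on central polynomials it only reads off $c_{\pi_0}$ and $\|\chi_{\pi_0}\|_\infty = \dim\pi_0$ while $\|\chi_{\pi_0}\|$ in the relevant sense is bounded below by the quantum dimension when $Q\neq\mathrm{Id}$. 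The point of the argument is that the ratio $\|\omega|_{\mathrm{Pol}(\mathbb G)}\| / \|\omega|_{\mathrm{Pol}^z(\mathbb G)}\|$ is forced to be strictly larger than $1$ precisely because $d_{\pi_0}=\mathrm{Tr}(Q_{\pi_0})>n_{\pi_0}$ unless $Q_{\pi_0}=\mathrm{Id}$; concretely one computes both norms from \eqref{eq:haar state def} and \eqref{fourier series} and sees that the off-diagonal matrix coefficients, weighted by $Q$, are strictly ``larger'' relative to $\chi_\pi$ than in the tracial case. I expect this last implication to be the main obstacle: producing an explicit central functional whose norm on all polynomials strictly exceeds its norm on central polynomials requires a careful norm estimate using the non-triviality of $Q_{\pi_0}$, and one must be careful that the comparison is genuinely quantitative (a strict inequality), not merely that the two norms are a priori different. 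If a direct estimate is awkward, an alternative is to deduce $(2)\Rightarrow(3)$ first — boundedness of $\omega\mapsto$ its canonical extension, uniformly, yields a projection $C_r(\mathbb G)\to C_r^z(\mathbb G)$ of norm one, hence a conditional expectation by Tomiyama, and then invoke $(3)\Rightarrow(1)$ from the $h\circ\mathcal E=h$ trace-preservation together with the fact that a trace-preserving conditional expectation onto $C_r^z(\mathbb G)$ forces $h$ itself to be tracial on the whole algebra (since the $\chi_\pi$ generate).
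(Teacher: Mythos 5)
Your cycle $(1)\Rightarrow(3)\Rightarrow(2)\Rightarrow(1)$ is a reasonable reorganization of the paper's $(1)\Rightarrow(2)\Rightarrow(3)\Rightarrow(1)$, and the middle portion (Hahn--Banach extension plus $\omega=\omega\circ\mathcal E$, using $\mathcal E(u_{ij}^{(\pi)})=\delta_{ij}\dim(\pi)^{-1}\chi_\pi$) is essentially what the paper does. But two steps have genuine gaps. First, your construction of $\mathcal E$ in $(1)\Rightarrow(3)$ does not work as described: the maps $x\mapsto(\iota\otimes\f)\Delta(x)$ are one-sided Fourier multipliers, sending $u_{ij}^{(\pi)}$ to $\sum_k u_{ik}^{(\pi)}\f(u_{kj}^{(\pi)})$, i.e.\ they preserve the row index $i$ in each block, so no average of them can produce $u_{ij}^{(\pi)}\mapsto\delta_{ij}n_\pi^{-1}\chi_\pi$; and ``averaging over the maximal torus/center'' has no meaning for a general compact quantum group. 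What actually makes this step work is that in the Kac case $h$ is a trace, so the $h$-preserving conditional expectation onto the von Neumann subalgebra generated by the characters exists by the general theory; one then identifies it with your explicit formula by testing against $\chi_{\pi'}$ in the $L^1$--$L^\infty$ duality, which is exactly the paper's computation. (If you want an averaging picture, the correct object is the adjoint coaction $x\mapsto x_{(2)}\otimes S(x_{(1)})x_{(3)}$ followed by $\iota\otimes h$, and checking it is a $*$-map again uses traciality.)

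Second, $(2)\Rightarrow(1)$ is not established. Your single-block central functional with $\hat\omega(\pi_0)=\mathrm{Id}_{\pi_0}$ is automatically bounded on both $\mathrm{Pol}(\mathbb{G})$ and $\mathrm{Pol}^{z}(\mathbb{G})$ (finite-dimensionality), so the entire content is the \emph{strict} inequality of the two norms when $Q_{\pi_0}\neq\mathrm{Id}$; the heuristic $d_{\pi_0}>n_{\pi_0}$ is not a proof, and you supply none. Your fallback route $(2)\Rightarrow(3)\Rightarrow(1)$ has the right shape (and is the paper's route), but your justification of $(3)\Rightarrow(1)$ is incorrect: the characters do not generate $C_r(\mathbb{G})$, and an $h$-preserving conditional expectation onto a subalgebra does not force $h$ to be tracial (any modular-invariant subalgebra admits one). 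The actual argument is Takesaki's criterion: $h\circ\mathcal E=h$ forces the modular group to leave $\mathrm{Pol}^{z}(\mathbb{G})$ invariant, and then the explicit formula $\sigma_{t}(\chi_{\pi})=\sum_{k,l}(Q_{\pi}^{2\mathrm{i}t})_{lk}u_{kl}^{(\pi)}$ shows that this invariance forces $Q_\pi=\mathrm{Id}_\pi$ for every $\pi$. That computation is the real content of the implication and is missing from your proposal.
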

\begin{proof}
(1) $\Rightarrow$ (2). Assume that $\mathbb{G}$ is of Kac type. Let
$\omega$ be a central functional which is bounded on $\mathrm{Pol}^{z}(\mathbb{G})$.
Let $\tilde{\omega}\in L^{\infty}(\mathbb{G})^{*}$ be its Hahn-Banach
extension to $L^{\infty}(\mathbb{G})$. Denote by $\mathcal{E}$
the $h$-preserving conditional expectation from $\mathcal{M}=L^{\infty}(\mathbb{G})$ onto
the von Neumann subalgebra $\mathcal{N}$ generated by $\mathrm{Pol}^{z}(\mathbb{G})$
in $L^{\infty}(\mathbb{G})$. Recall that for $\pi,\beta\in\mathrm{Irr}(\mathbb{G})$,
we have $\chi_{\pi}^{*}=\chi_{\bar{\pi}}$ and $\chi_{\pi}\chi_{\pi'}=\chi_{\pi\otimes\pi'}$,
so the subspace $\mathrm{Pol}^{z}(\mathbb{G})$ spanned by characters
is ultraweakly dense in $\mathcal{N}$. Note also that $\mathcal{E}$ is the
adjoint map of the embedding $\iota:L^{1}(\mathcal{N})\to L^{1}(\mathcal{M})$, so for
any $\pi,\pi'\in\mathrm{Irr}(\mathbb{G})$ and any $i,j$,
\begin{align*}
\langle\mathcal{E}(u_{ij}^{(\pi)}),\,\chi_{\pi'}\rangle_{L^{1}(\mathcal{N})^{*},L^{1}(\mathcal{N})} & =\langle u_{ij}^{(\pi)},\,\iota(\chi_{\pi'})\rangle_{L^{1}(\mathcal{M})^{*},L^{1}(\mathcal{M})}=h((u_{ij}^{(\pi)})^{*}\chi_{\pi'})=\dim(\pi)^{-1}\delta_{ij}\delta_{\pi\pi'}\\
 & =\dim(\pi)^{-1}\delta_{ij}h(\chi_{\pi}^{*}\chi_{\pi'})=\langle \dim(\pi)^{-1}\delta_{ij}\chi_{\pi},\,\chi_{\pi'}\rangle_{L^{1}(\mathcal{N})^{*},L^{1}(\mathcal{N})},
\end{align*}
which means that $E(u_{ij}^{(\pi)})=\delta_{ij}\dim(\pi)^{-1}\chi_{\pi}$.
Consequently, $\omega=\tilde{\omega}\circ E$ on $\mathrm{Pol}(\mathbb{G})$,
and therefore $\omega$ is bounded on $\mathrm{Pol}(\mathbb{G})$
with the same norm.

(2) $\Rightarrow$ (3). Define the linear map $\mathcal{E}:\mathrm{Pol}(\mathbb{G})\to \mathrm{Pol}^{z}(\mathbb{G})$
by 
\[
\mathcal{E}(u_{ij}^{(\pi)})=\delta_{ij}\dim(\pi)^{-1}\chi_{\pi},\quad\pi\in\mathrm{Irr}(\mathbb{G}).
\]
It is easy to see that $h\circ\mathcal{E}=h$ on $\mathrm{Pol}(\mathbb{G})$.
Also for any central functional $\omega$ on $\mathrm{Pol}(\mathbb{G})$,
we have $\omega\circ\mathcal{E}=\omega$. Now for any $x\in\mathrm{Pol}(\mathbb{G})$,
by the assertion (2), we have
\[
\|\mathcal{E}(x)\|=\sup_{\omega\in \mathrm{Pol}^{z}(\mathbb{G})^{*},\|\omega\|=1}|\omega(\mathcal{E}(x))|=\sup_{\omega\in\mathrm{Pol}(\mathbb{G})^{*},\|\omega\|=1}|\omega(x)|=\|x\|.
\]
So $\mathcal{E}$ is contractive on $\mathrm{Pol}(\mathbb{G})$ and
can be extended to a conditional expectation from $C_{r}(\mathbb{G})$
onto $C_r^z(\mathbb G)$ preserving the Haar state.

(3) $\Rightarrow$ (1). Assume (3) holds. It is a standard argument
that $\sigma_{t}(\mathrm{Pol}^{z}(\mathbb{G}))\subset \mathrm{Pol}^{z}(\mathbb{G})$,
see e.g. the proof of Theorem 4.2 in \cite[Chap.IX]{takesaki2003oa2}. In fact,
let $(H,\Lambda,\pi)$ be the faithful GNS construction of $\mathrm{Pol}(\mathbb{G})$
with respect to the Haar state $h$ and denote by $H_{0}$ the completion
of $\Lambda(\mathrm{Pol}^{z}(\mathbb{G})$ in $H$. Let $E$ be the
orthogonal projection from $H$ onto $H_{0}$. Then for $x\in\mathrm{Pol}(\mathbb{G}),y\in \mathrm{Pol}^{z}(\mathbb{G})$,
we have
\[
\langle\Lambda(x),\Lambda(y)\rangle=h(x^{*}y)=h(\mathcal{E}(x^{*}y))=h(\mathcal{E}(x)^{*}y)=\langle\Lambda(\mathcal{E}(x)),\Lambda(y)\rangle,
\]
so 
\[
E(\Lambda(x))=\Lambda(\mathcal{E}(x)),\quad x\in\mathrm{Pol}(\mathbb{G}).
\]
As usual denote by $\mathscr S:\Lambda(x)\mapsto \Lambda(x^*)$ the operator on $H$ induced by involution, and also denote by $\mathfrak \Delta$ the modular operator on $\mathrm{Pol}(\mathbb G)$ associated to the Haar state $h$. Since $\mathcal{E}$ preserves the $*$-operation, we see that $E\mathscr S=\mathscr S E$
on the subspace $\Lambda(\mathrm{Pol}(\mathbb{G}))$. Taking adjoint
we see that $\mathscr S^{*}$ also commutes with $E$ on $\Lambda(\mathrm{Pol}(\mathbb{G}))$,
hence so it is for $\mathfrak{\Delta}= \mathscr S ^{*}\mathscr{S}$. Therefore $\mathfrak\Delta^{\mathrm{i}t}$ leaves $\Lambda(\mathrm{Pol}^{z}(\mathbb{G})$
invariant for all $t\in\mathbb{R}$, which yields that $\sigma_{t}(\mathrm{Pol}^{z}(\mathbb{G}))\subset \mathrm{Pol}^{z}(\mathbb{G})$.
However we recall that the modular automorphism group acts on $\mathrm{Pol}(\mathbb{G})$
as 
\[
\sigma_{t}(u_{ij}^{(\pi)})=\sum_{k,l}(Q_{\pi}^{\mathrm{i}t})_{ik}u_{kl}^{(\pi)}(Q_{\pi}^{\mathrm{i}t})_{lj},\quad\pi\in\mathrm{Irr}(\mathbb{G}),\ t\in\mathbb{R}, 1\leq i,j \leq n_\pi
\]
and hence
\[
\sigma_{t}(\chi_{\pi})=\sigma_{t}\Big(\sum_{i}u_{ii}^{(\pi)}\Big)=\sum_{k,l}\Big(\sum_{i}(Q_{\pi}^{\mathrm{i}t})_{li}(Q_{\pi}^{\mathrm{i}t})_{ik}\Big)u_{kl}^{(\pi)}=\sum_{k,l}(Q_{\pi}^{2\mathrm{i}t})_{lk}u_{kl}^{(\pi)},\quad\pi\in\mathrm{Irr}(\mathbb{G})\ t\in\mathbb{R}.
\]
So the invariance $\sigma_{t}(\mathrm{Pol}^{z}(\mathbb{G}))\subset \mathrm{Pol}^{z}(\mathbb{G})$
yields that $Q_{\pi}=\mathrm{Id}_{\pi}$ for all $\pi\in\mathrm{Irr}(\mathbb{G})$,
that is, $\mathbb{G}$ is of Kac type.\end{proof}
\begin{prop}
\label{prop:central sidon_interpolation}Let $\mathbb{G}$ be a compact
quantum group of Kac type. Then $\mathbf{E}\subset\mathrm{Irr}(\mathbb{G})$
is a central Sidon set if and only if for all bounded sequences $(a_{\pi})_{\pi\in\mathbf{E}}\subset\mathbb{C}$,
there exists a bounded central functional $\varphi\in C_r(\mathbb{G})^{*}$
such that $\hat{\varphi}(\pi)=a_{\pi}\mathrm{Id}_{\pi}$
for $\pi\in\mathbf{E}$.\end{prop}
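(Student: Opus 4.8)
The plan is to mimic the duality argument of Theorem \ref{thm:sidon_l1_interpolation state}, but restricted to the category of central functionals, using the fact that Kac type guarantees an $h$-preserving conditional expectation $\mathcal E$ onto the central part (the previous lemma). First I would record the analogue of \eqref{eq:l1 norm for central}: for a central polynomial $x=\sum_\pi c_\pi\chi_\pi$ one has $\hat x(\pi)=\dim(\pi)^{-1}c_\pi\mathrm{Id}_\pi$ (here $Q_\pi=\mathrm{Id}_\pi$), hence $\|\hat x\|_1=\sum_\pi \dim(\pi)|c_\pi|$, and dually for a central functional $\varphi$ with $\hat\varphi(\pi)=a_\pi\mathrm{Id}_\pi$ one has, via the pairing $\langle a,b\rangle=\hat h(ba)$, that $\varphi(x)=\sum_\pi c_\pi a_\pi$ on $\mathrm{Pol}^z(\mathbb G)$. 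So the central Sidon inequality $\sum_\pi\dim(\pi)|c_\pi|\le K\|x\|_\infty$ says exactly that every $\ell^\infty$-sequence $(a_\pi)$ on $\mathbf E$ defines, via $x\mapsto\sum c_\pi a_\pi$, a functional on $(\mathrm{Pol}^z_{\mathbf E}(\mathbb G),\|\cdot\|_\infty)$ of norm $\le K\|(a_\pi)\|_\infty$.

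For the forward implication I would take $(a_\pi)_{\pi\in\mathbf E}$ bounded, view the above as a bounded functional $\varphi_0$ on the subspace $\mathrm{Pol}^z_{\mathbf E}(\mathbb G)\subset C_r^z(\mathbb G)\subset C_r(\mathbb G)$, extend it by Hahn--Banach to a bounded $\varphi\in C_r(\mathbb G)^*$, and then replace $\varphi$ by $\varphi\circ\mathcal E$ to make it central without increasing the norm (since $\mathcal E$ is a norm-one projection with $h\circ\mathcal E=h$, and since $\varphi\circ\mathcal E=\varphi$ already holds on $\mathrm{Pol}^z$). One then checks, using $\mathcal E(u^{(\pi)}_{ij})=\delta_{ij}\dim(\pi)^{-1}\chi_\pi$, that $\widehat{\varphi\circ\mathcal E}(\pi)=a_\pi\mathrm{Id}_\pi$ for $\pi\in\mathbf E$, exactly as in the Fourier-coefficient computation inside the proof of $(1)\Rightarrow(2)$ in Theorem \ref{thm:sidon_l1_interpolation state}.

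For the converse I would run the argument of $(2)\Rightarrow(3)\Rightarrow(1)$ from Theorem \ref{thm:sidon_l1_interpolation state} in the central setting. The hypothesis says the map $\psi\mapsto\hat\psi|_{\mathbf E}$ (read off as the scalar sequence $(\omega_\pi)_{\pi\in\mathbf E}$ attached to a central $\psi$) from central bounded functionals onto $c_0(\mathbf E)$ — or rather its bidual — is surjective; by the $T$ surjective $\iff$ $T^{**}$ surjective principle (\cite[3.1.22]{megginson98banachbook}) and the open mapping theorem one extracts a constant $K$ so that every $(a_\pi)\in c_c(\mathbf E)$ of sup-norm $\le1$ is the restriction of some central $\psi\in L^1(\mathbb G)$ with $\|\psi\|_1\le K$. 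Then for $x=\sum_\pi c_\pi\chi_\pi\in\mathrm{Pol}^z_{\mathbf E}(\mathbb G)$ and a suitable choice of unimodular $a_\pi$ realizing $\sum_\pi\dim(\pi)|c_\pi|=\sum_\pi\dim(\pi)c_\pi\overline{a_\pi}$, pairing $\psi$ with $x$ (using centrality of $\psi$ and orthogonality of characters) gives $\sum_\pi\dim(\pi)|c_\pi|=\psi(\text{something})\le K\|x\|_\infty$, possibly after the same $\mathcal E$-trick to reduce a general test element to a central one. The main obstacle I anticipate is bookkeeping the normalizations: one must be careful that the pairing $\langle a,b\rangle=\hat h(ba)=\sum_\pi\dim(\pi)\mathrm{Tr}(a_\pi Q_\pi b_\pi)$ specializes correctly on central elements (with $Q_\pi=\mathrm{Id}$), and that Hahn--Banach extension followed by $\circ\mathcal E$ genuinely preserves both the norm and the prescribed Fourier coefficients on $\mathbf E$; once these identifications are pinned down the argument is a direct transcription of Theorem \ref{thm:sidon_l1_interpolation state}.
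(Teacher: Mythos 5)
Your proposal is correct and follows essentially the same route as the paper, whose proof of this proposition is precisely ``transcribe the argument of Theorem \ref{thm:sidon_l1_interpolation state} $(1)\Leftrightarrow(2)$ with $C_r(\mathbb G)$ replaced by $C_r^z(\mathbb G)$ and $\ell^\infty(\mathbf E)$ by its central part, using the previous lemma'' --- the Hahn--Banach extension followed by composition with the $h$-preserving conditional expectation $\mathcal E$ is exactly the role that lemma plays. The only caveat is the normalization you already flag: the functional must be prescribed on the characters $\chi_{\bar\pi}$ by $\chi_{\bar\pi}\mapsto\dim(\pi)a_\pi$ (so that $\hat\varphi(\pi)_{ij}=\varphi((u_{ji}^{(\pi)})^*)=\delta_{ij}a_\pi$), not by $\chi_\pi\mapsto a_\pi$ as your informal pairing formula suggests; this is handled automatically if one uses the paper's pairing $x\mapsto\overline{\hat h(a^*\widehat{x^*})}$, and boundedness by $K\|a\|_\infty$ still follows since the central Sidon inequality supplies exactly the bound $\sum_\pi\dim(\pi)|c_\pi|\leq K\|y\|_\infty$.
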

\begin{proof}
The proof is an analogue of that of Theorem \ref{thm:sidon_l1_interpolation state}
(1) $\Leftrightarrow$ (2). We replace $C_r(\mathbb{G})$ by $C_r^z(\mathbb G)$
and $\ell^{\infty}(\mathbf{E})$ by $(\ell^{\infty}(\mathbf{E}))^z=\{(a_{\pi})\in\ell^{\infty}(\mathbf{E}):a_{\pi}\in\mathbb{C}\mathrm{Id}_{\pi},\pi\in\mathrm{Irr}(\mathbb{G})\}$,
etc., and argue as before, thanks to the previous lemma.\end{proof}

\begin{rem}
There have been some suggestions on the definition of unconditional Sidon sets for a compact (quantum) group which would be different from that in Definition \ref{def:interpolation set infty}. More precisely, for a compact quantum group $\mathbb G$ and a subset $\mathbf{E}\subset\mathrm{Irr}(\mathbb{G})$, we may consider the following lacunary condition: there exists a constant $K>0$ such that
\begin{equation}\label{eq:another uncond sidon}
\forall (\varepsilon_\pi)_{\pi\in\mathbf E}\subset \{-1,1\},\quad \|\sum_{\pi\in \mathbf E}d_\pi\varepsilon_\pi (\iota\otimes\mathrm{Tr})((1 \otimes \hat x (\pi)Q_\pi)u^{(\pi)})\|_\infty \leq K \|x\|_\infty,\quad x\in \mathrm{Pol}_{\mathbf E}(\mathbb G).
\end{equation}
And it was ever unclear on the relations between  \eqref{eq:another uncond sidon} and the Sidon sets even if $\mathbb G$ is a compact group $G$. Here we remark that the above two notions of lacunarity are in fact totally different if $G$ is non-abelian. Indeed, more generally, if the compact quantum group $\mathbb G$ is coamenable and of Kac type, we may follow the same idea as in Theorem \ref{sidon and u sidon} and the above proposition to see that  \eqref{eq:another uncond sidon} holds if and only if $\mathbf{E}\subset\mathrm{Irr}(\mathbb{G})$ is a central Sidon set for $\mathbb G$, which is well-known to be different from being a Sidon set. 
\end{rem}
\begin{rem}
For a central functional $\omega$ on $\mathrm{Pol}(\mathbb{G})$,
the associated multiplier map $T_{\omega}=(\omega\otimes\mathrm{id})\circ\Delta$
acts as $u_{ij}^{(\pi)}\mapsto\omega_{\pi}u_{ij}^{(\pi)}$ on
$\mathrm{Pol}(\mathbb{G})$. In \cite[Sect.2]{decommerfry2014accap}
it is proved that if $\Phi:\mathrm{Irr}(\mathbb{G}_{1})\to\mathrm{Irr}(\mathbb{G}_{2})$
is a monoidal equivalence between two compact quantum groups $\mathbb{G}_{1}$
and $\mathbb{G}_{2}$ and if $\omega^{(1)},\omega^{(2)}$ are the
central functionals on $\mathrm{Pol}(\mathbb{G}_{1}),\mathrm{Pol}(\mathbb{G}_{2})$
respectively, such that $\omega_{\pi}^{(1)}=\omega_{\Phi(\pi)}^{(2)}$,
then $T_{\omega^{(1)}}$ and $T_{\omega^{(2)}}$ have the same complete
bounded norms. Note that if additionally $\mathbb{G}_{1}$ and $\mathbb{G}_{2}$
are coamenable, then $\|\omega^{(1)}\|=\|T_{\omega^{(1)}}\|=\|T_{\omega^{(2)}}\|=\|\omega^{(2)}\|$
since $\omega=\omega\star\epsilon=(\mathrm{id}\otimes\epsilon)\circ T_{\omega}$
for any functional $\omega$ on $\mathrm{Pol}(\mathbb{G})$. Then
according to the previous proposition, any two coamenable compact
quantum groups of Kac type which are monoidally equivalent, have a one-to-one
correspondence of their central Sidon sets via the monoidal equivalence
map. The following result generalizes this fact.\end{rem}
\begin{prop}\label{same central sidon}
Let $\mathbb{G}_{1},\mathbb{G}_{2}$ be two compact quantum groups.
Assume that $\Phi:\mathrm{Rep}(\mathbb{G}_{1})\to\mathrm{Rep}(\mathbb{G}_{2})$
is an injective map preserving the fusion rules, that
is, for all $\pi,\pi'\in\mathrm{Rep}(\mathbb{G}_{1})$ we have
$$\Phi(\pi\otimes\pi')=\Phi(\pi)\otimes \Phi(\pi'),\quad \Phi(\oplus_{i=1}^n\pi_i)=\oplus_{i=1}^n\Phi(\pi_i), \quad \pi,\pi',\pi_i\in \mathrm{Rep}(G),n\geq 1.$$
Then for any finite sequence $(c_{\pi})\subset\mathbb{C}$ and
$x=\sum_{\pi\in\mathrm{Irr}(\mathbb{G}_{1})}c_{\pi}\chi_{\pi}\in\mathrm{Pol}(\mathbb{G}_{1})$,
we have
\begin{equation}
\|\sum_{\pi\in\mathrm{Irr}(\mathbb{G}_{1})}c_{\pi}\chi_{\pi}\|_{L^{\infty}(\mathbb{G}_{1})}=\|\sum_{\pi\in\mathrm{Irr}(\mathbb{G}_{1})}c_{\pi}\chi_{\Phi(\pi)}\|_{L^{\infty}(\mathbb{G}_{2})}.\label{eq:isometry for central functions}
\end{equation}
Consequently, for any central Sidon set $\mathbf{E}\subset\mathrm{Irr}(\mathbb{G}_{1})$,
if additionally there exists $C>0$ satisfying $\dim(\Phi(\pi))\leq C\dim(\pi)$
for all $\pi\in\mathbf{E}$, then $\Phi(\mathbf{E})\subset\mathrm{Irr}(\mathbb{G}_{2})$
is a central Sidon set for $\mathbb{G}_{2}$. \end{prop}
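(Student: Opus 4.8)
The plan is to prove the norm identity \eqref{eq:isometry for central functions} first, and then deduce the statement about central Sidon sets as a routine corollary. For the norm identity, the key observation is that the $C^*$-norm of a central polynomial $\sum_\pi c_\pi \chi_\pi$ in $L^\infty(\mathbb G)$ (equivalently in $C_r(\mathbb G)$, or even $C_u(\mathbb G)$, since on central polynomials these norms agree — this follows from the fact that $\mathrm{Pol}^z(\mathbb G)$ sits inside the commutative $C^*$-algebra generated by the self-adjoint commuting family of characters, compare the discussion after Proposition~\ref{deformation fusion rule} and the fusion identities \eqref{character of rep}) is determined entirely by the $*$-algebra structure of the span of characters, which in turn is governed only by the fusion rules. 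Concretely, the abelian $*$-algebra $\mathrm{Pol}^z(\mathbb G) = \mathrm{span}\{\chi_\pi : \pi \in \irr\}$ has multiplication $\chi_\pi \chi_{\pi'} = \chi_{\pi\otimes\pi'} = \sum_i \chi_{\rho_i}$ where $\pi\otimes\pi' = \oplus_i \rho_i$ is the decomposition into irreducibles, and involution $\chi_\pi^* = \chi_{\bar\pi}$. A fusion-rule-preserving injection $\Phi$ therefore induces a $*$-isomorphism $\theta$ from the $*$-algebra generated by $\{\chi_\pi : \pi\in\irr(\mathbb G_1)\}$ onto the $*$-subalgebra generated by $\{\chi_{\Phi(\pi)}\}$ in $\mathrm{Pol}(\mathbb G_2)$, sending $\chi_\pi \mapsto \chi_{\Phi(\pi)}$.

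The main point is then that a $*$-isomorphism between $*$-subalgebras of unital $C^*$-algebras is automatically isometric for the enveloping $C^*$-norms. More precisely, I would argue as follows. Let $A_i$ be the abelian $C^*$-algebra obtained by completing $\mathrm{span}\{\chi_\pi : \pi\in\irr(\mathbb G_i)\}$ in the $C^*$-norm inherited from $C_r(\mathbb G_i)$ (this is $C_r^z(\mathbb G_i)$ in the notation of the paper). Both $A_1$ and $A_2$ are commutative unital $C^*$-algebras. The map $\theta$ is a unital $*$-algebra homomorphism from a dense $*$-subalgebra of $A_1$ into $A_2$; by injectivity of $\Phi$ and linear independence of distinct characters it is also injective. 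A unital $*$-homomorphism between (dense subalgebras of) $C^*$-algebras is automatically contractive, so $\theta$ extends to a $*$-homomorphism $A_1 \to A_2$; applying the same reasoning to $\theta^{-1}$ on its image shows $\theta$ is isometric onto its range. This gives exactly
$$\Big\|\sum_{\pi} c_\pi \chi_\pi\Big\|_{L^\infty(\mathbb G_1)} = \Big\|\sum_\pi c_\pi \chi_{\Phi(\pi)}\Big\|_{L^\infty(\mathbb G_2)},$$
which is \eqref{eq:isometry for central functions}. The one subtlety to handle carefully is that $\theta$ must genuinely be a well-defined algebra homomorphism: this requires knowing that the only linear relations among the $\chi_{\Phi(\pi)}$ in $\mathrm{Pol}(\mathbb G_2)$ are the images of linear relations among the $\chi_\pi$ in $\mathrm{Pol}(\mathbb G_1)$ — but since the irreducible characters in any compact quantum group are linearly independent (they are orthonormal for the Haar state by \eqref{eq:haar state def}), and $\Phi$ is injective on $\irr$, the family $\{\chi_{\Phi(\pi)} : \pi\in\irr(\mathbb G_1)\}$ is linearly independent in $\mathrm{Pol}(\mathbb G_2)$, so $\theta$ is well-defined and injective by construction, and multiplicativity and $*$-compatibility are immediate from the fusion-rule identities \eqref{character of rep} together with the hypothesis on $\Phi$. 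I expect this bookkeeping — making the extension to $C^*$-completions and the automatic continuity argument fully rigorous — to be the only real point needing care; everything else is formal.

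Finally, for the consequence: suppose $\mathbf E \subset \irr(\mathbb G_1)$ is a central Sidon set with constant $K$, and $\dim(\Phi(\pi)) \le C\dim(\pi)$ for all $\pi \in \mathbf E$. Given a finite sequence $(c_\pi)_{\pi\in\mathbf E} \subset \mathbb C$, set $y = \sum_{\pi\in\mathbf E} c_\pi \chi_{\Phi(\pi)} \in \mathrm{Pol}(\mathbb G_2)$. By the Fourier coefficient computation for central polynomials in the remark preceding this result (see \eqref{eq:l1 norm for central}), $\|\hat y\|_1 = \sum_{\pi\in\mathbf E} \dim(\Phi(\pi)) |c_\pi| \le C \sum_{\pi\in\mathbf E} \dim(\pi)|c_\pi| = C\|\widehat{x}\|_1$ where $x = \sum_{\pi\in\mathbf E} c_\pi \chi_\pi$; by the central Sidon property of $\mathbf E$ this is $\le CK\|x\|_{L^\infty(\mathbb G_1)}$, and by \eqref{eq:isometry for central functions} this equals $CK\|y\|_{L^\infty(\mathbb G_2)}$. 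Hence $\Phi(\mathbf E)$ is a central Sidon set for $\mathbb G_2$ with constant $CK$, as claimed.
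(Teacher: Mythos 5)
Your deduction of the central Sidon consequence from the isometry \eqref{eq:isometry for central functions} is correct and matches the paper's. The proof of the isometry itself, however, has a genuine gap, in fact two. First, the character algebra $\mathrm{Pol}^{z}(\mathbb{G})$ is \emph{not} commutative in general: the characters are not self-adjoint ($\chi_\pi^*=\chi_{\bar\pi}$) and do not commute, because the fusion ring of a compact quantum group need not be commutative. The simplest counterexample is $\mathbb{G}=\hat\Gamma$ for a noncommutative discrete group $\Gamma$: every irreducible is one-dimensional, $\chi_\gamma=\lambda(\gamma)$, and $\mathrm{Pol}^{z}(\hat\Gamma)=\mathbb{C}[\Gamma]$ is the whole (noncommutative) group algebra. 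So the reduction to commutative $C^*$-algebras, and the parenthetical claim that the $C_r$- and $C_u$-norms agree on central polynomials, both fail (for $\Gamma=\mathbb{F}_2$ the latter is exactly the failure of coamenability).

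Second, and more fundamentally, the step ``a unital $*$-homomorphism between dense subalgebras of $C^*$-algebras is automatically contractive'' is false; automatic contractivity of $*$-homomorphisms requires the domain to be a complete $C^*$-algebra (or at least a Banach $*$-algebra). The identity map on $\mathrm{Pol}(\mathbb{G})$, viewed as a dense $*$-subalgebra of $C_r(\mathbb{G})$ mapping into $C_u(\mathbb{G})$, is a unital injective $*$-homomorphism that is not contractive unless $\mathbb{G}$ is coamenable; even in the commutative world, the identity from the polynomials on $[0,2]$, normed in $C[0,1]$, into $C[0,2]$ is unbounded. The abstract $*$-algebra structure of $\mathrm{span}\{\chi_\pi\}$ (which is all the fusion rules give you) simply does not determine the operator norm. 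The ingredient you are missing, and the one the paper uses, is that $\tilde\Phi:\chi_\pi\mapsto\chi_{\Phi(\pi)}$ also intertwines the \emph{Haar states}: $h(\chi_\pi)$ is the multiplicity of the trivial representation in $\pi$, which is a fusion-rule invariant, so $h_2\circ\tilde\Phi=h_1$ on the character algebra. Since the Haar states are faithful, the $L^\infty$-norm of $x$ in the character algebra is recovered from the state alone, e.g.\ via $\|x\|_\infty=\lim_n h\left((x^*x)^n\right)^{1/2n}$ (equivalently, via the equivalence of the sub-GNS representations, as in the paper), and this is what forces $\tilde\Phi$ to be isometric. Without the state-preservation your argument cannot distinguish the reduced norm from any other $C^*$-norm on the character algebra.
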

\begin{proof}
The isometry \eqref{eq:isometry for central functions} has been mentioned in \cite{banica99fusion}. In fact, as remarked in \cite{banica99fusion}, the map $\Phi$ automatically satisfies $\Phi(\bar\pi)=\overline{\Phi(\pi)}$ for all $\pi\in\mathrm{Rep}(\mathbb G)$ once we note that for an irreducible representation $\pi$ its adjoint $\bar{\pi}$ is the unique irreducible representation such that $1$ is the subrepresentation of $\pi\otimes\bar{\pi}$. Denote by $A$ the $*$-algebra generated by $\{\chi_{\pi}:\pi\in\mathrm{Irr}(\mathbb{G}_{1})\}$.
By the linear independence of $\{\chi_{\pi}:\pi\in\mathrm{Irr}(\mathbb{G}_{1})\}$, the injection $\Phi$ induces an injective $*$-homomorphism 
\[
\tilde{\Phi}:A\to\mathrm{Pol}(\mathbb{G}_{2}),\quad\chi_{\pi}\mapsto\chi_{\Phi(\pi)},\ \pi\in\mathrm{Rep}(\mathbb{G}_{1}).
\]
To see that $\tilde{\Phi}$ well defines a $*$-homomorphism, it
suffices to notice that for $\pi\in\mathrm{Irr}(\mathbb{G}_{1})$,
we have $\chi_{\pi}^{*}=\chi_{\bar{\pi}}$ and 
\[
\tilde{\Phi}(\chi_{\pi}^{*})=\tilde{\Phi}(\chi_{\bar{\pi}})=\chi_{\Phi(\bar{\pi})}=\chi_{\overline{\Phi(\pi)}}=\tilde{\Phi}(\chi_{\pi})^{*}
\]
and for $\pi,\pi'\in\mathrm{Rep}(\mathbb{G}_{1})$ satisfying the decomposition formula $\pi\otimes\pi'=\sum_k\pi_k$ with each $\pi_k\in\mathrm{Irr}(\mathbb G _1 )$, we have
\[
\tilde{\Phi}(\chi_{\pi}\chi_{\pi'})=\sum_{k}\tilde{\Phi}(\chi_{\gamma_{k}})=\sum_{k}\chi_{\Phi(\gamma_{k})}=\chi_{\Phi(\pi)}\chi_{\Phi(\pi')}=\tilde{\Phi}(\chi_{\pi})\tilde{\Phi}(\chi_{\pi'}).
\]
Note that $\tilde{\Phi}$ preserves the restriction of Haar states on $\mathrm{Pol}^{z}
(\mathbb{G}_1)$ and
$\mathrm{Pol}^{z}
(\mathbb{G}_2)$. So $\tilde{\Phi}$ gives rise to an equivalence between the faithful sub-GNS-representations of $\mathrm{Pol}^z(\mathbb
G_1)\subset \mathrm{Pol}
(\mathbb G_1)$ and
$\mathrm{Pol}^z(\mathbb
G_2)\subset \mathrm{Pol}
(\mathbb G_2)$ with respect to the Haar states.  As a result $\tilde{\Phi}$ is an isometry, which gives \eqref{eq:isometry for central functions}.
The assertion regarding the central Sidon sets then follows directly
from the definition and \eqref{eq:l1 norm for central}.\end{proof}
\begin{example}
Let $q\in[-1,1]\backslash\{0\}$ and consider the quantum groups $\mathrm{SU}_{q}(n)$,
$n\geq2$. $\mathrm{SU}_{q}(n)$ is a compact matrix quantum group in the sense of \cite{woronowicz87matrix} and we denote by $\pi_{n}^{(q)}$ the fundamental representation
of $\mathrm{SU}_{q}(n)$. It is easy to see that for the classical
case $q=1$, the subset $\{\pi_{n}^{(1)}:n\geq2\}$ is a Sidon set
for the compact group $\prod_{n\geq2}\mathrm{SU}(n)$ (see \cite[p.308-310]{cm81existsidon}).
Now by Proposition \ref{deformation fusion rule} for $0<q <1$
we may find a map $\Phi:\mathrm{Irr}(\mathrm{SU}_{q }(n))\to\mathrm{Irr}(\mathrm{SU}(n))$
satisfying the assumptions of the above proposition. As a result,
for any $0<q <1$  the subset $\{\pi_{n}^{(q)}:n\geq2\}$
is a central Sidon set for the compact quantum group $\prod_{n\geq2}\mathrm{SU}_{q}(n)$.
\end{example}
In \cite{rider72central} it is shown that for a connected compact
group $G$, $G$ has an infinite central Sidon set if and only if
$G$ is not a semi-simple Lie group. Combined with Proposition \ref{deformation fusion rule} and Proposition \ref{same central sidon} we get the following observation, which in particular shows that the quantum $\mathrm{SU}_{q}(2)$ ($0<q<1$) does not admit any infinite Sidon set, as mentioned in the last section after Proposition \ref{central lambda four su}.
\begin{cor}\label{cor: Gq no sidon}
For all $0<q<1$ and all simply connected compact semi-simple Lie group $G$, the compact quantum group $G_{q}$ given by the Drinfeld-Jimbo deformation
does not admit any infinite central Sidon set.
\end{cor}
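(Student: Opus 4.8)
The plan is to pull an infinite central Sidon set for $G_q$ back to the classical group $G$ along the inverse of the fusion-rule-preserving bijection of Proposition \ref{deformation fusion rule}, and then contradict Rider's theorem \cite{rider72central}.

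First I would observe that the bijection $\Phi:\mathrm{Rep}(G)\to\mathrm{Rep}(G_q)$ from Proposition \ref{deformation fusion rule} restricts to a bijection $\mathrm{Irr}(G)\to\mathrm{Irr}(G_q)$: since $\Phi$ is a bijection intertwining finite direct sums, a representation $\pi$ fails to be irreducible exactly when $\Phi(\pi)$ does, and likewise for $\Phi^{-1}$. Hence $\Psi\coloneqq\Phi^{-1}:\mathrm{Rep}(G_q)\to\mathrm{Rep}(G)$ is again an injective map preserving the fusion rules in the sense of Proposition \ref{same central sidon} (it commutes with $\otimes$ and with finite $\oplus$ because $\Phi$ does and is bijective). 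Moreover, applying the identity $\dim\Phi(\sigma)=\dim\sigma$, valid for $\sigma\in\mathrm{Irr}(G)$, with $\sigma=\Psi(\pi)$ gives $\dim\Psi(\pi)=\dim\pi$ for every $\pi\in\mathrm{Irr}(G_q)$, so the uniform dimension bound required in Proposition \ref{same central sidon} holds with constant $C=1$.

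Now suppose, for contradiction, that $\mathbf{E}\subset\mathrm{Irr}(G_q)$ is an infinite central Sidon set. Applying Proposition \ref{same central sidon} to the map $\Psi$ (with $\mathbb{G}_1=G_q$, $\mathbb{G}_2=G$), the image $\Psi(\mathbf{E})\subset\mathrm{Irr}(G)$ is a central Sidon set; since $\Psi$ is injective it is again infinite. But $G$ is a connected compact semi-simple Lie group, so by Rider's theorem \cite{rider72central} it admits no infinite central Sidon set — a contradiction. Therefore $G_q$ has no infinite central Sidon set.

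The only point demanding a little care is checking that Proposition \ref{same central sidon} genuinely applies in this reversed direction, namely that $\Psi=\Phi^{-1}$ inherits injectivity, compatibility with tensor products and finite direct sums, and the dimension bound; once this is in place the argument is a two-line deduction and no real obstacle remains. One should note in particular that coamenability of $G_q$ is not needed here, since Proposition \ref{same central sidon} rests only on the fusion rules and the dimension function.
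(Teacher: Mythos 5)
Your proof is correct and follows exactly the route the paper intends: combine Rider's theorem with Proposition \ref{deformation fusion rule} and Proposition \ref{same central sidon}, applying the latter to $\Phi^{-1}$ to transport a putative infinite central Sidon set from $G_q$ back to $G$. The verification that $\Phi^{-1}$ inherits injectivity, compatibility with $\otimes$ and $\oplus$, and the dimension bound with $C=1$ is exactly the (routine) checking the paper leaves implicit.
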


\appendix
\section*{Appendix: Existence of $\Lambda(p)$-sets in orthogonal systems for general noncommutative $L^p$-spaces}

\renewcommand{\thesection}{A}
\setcounter{thm}{0}
\setcounter{equation}{0}

In this appendix we present a method of constructing $\Lambda(p)$-sets in orthogonal systems for noncommutative $L^p$-spaces. The main result is the following theorem. It is due to  Marek Bo\.{z}ejko in the tracial case.

\begin{thm}\label{thm: exist lambda a1}
Let $\mathcal{M}$ be a von Neumann algebra equipped with a normal faithful state $\varphi$ and consider the associated $L^p$-spaces $L^p(\mathcal{M},\varphi)$. Let $B=\{x_i\in \mathcal{M}:i\geq 1\} $ be an orthogonal system with respect to $\varphi$ (i.e., $\varphi(x_i^*x_j)=0$ for all $i\neq j$) such that $\sup_i\|x_i\|_\infty<\infty$. Then for each $2< p <\infty$, there exists an infinite subset $\{x_{i_k}:k\geq 1\}\subset B$ and a constant $C>0$ such that for all finitely supported sequences $(c_k)\subset \mathbb C$ we have
$$\|\sum_{k\geq 1}c_kx_{i_k}\|_p \leq C \Big(\sum_{k\geq 1}|c_k|^2\Big)^{\frac{1}{2}}.$$
\end{thm}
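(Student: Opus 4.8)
The plan is to follow the classical Rudin-type selection argument, but carried out inside the Haagerup $L^p$-space so that no traciality is needed. The core estimate we want is of the form: there is an absolute constant $c$ such that for any \emph{finite} orthonormal (with respect to $\varphi$, after normalizing, using $\sup_i\|x_i\|_\infty = M < \infty$) subset $F \subset B$ of size $N$ and any scalars $(c_x)_{x\in F}$ one has
\[
\Big\|\sum_{x\in F} c_x x\Big\|_p \leq c\, M^{1-2/p}\, N^{1/p}\,\Big(\sum_{x\in F}|c_x|^2\Big)^{1/2}.
\]
This is the noncommutative analogue of the trivial bound $\|\cdot\|_p \le N^{1/p}\|\cdot\|_\infty^{1-2/p}\|\cdot\|_2^{2/p}$ combined with $\|\sum c_x x\|_\infty \le N^{1/2}M(\sum|c_x|^2)^{1/2}$; interpolating between $L^2$ and $L^\infty$ via Proposition \ref{density op haagerup Lp} and the Kosaki isometry $j^p$ gives exactly the displayed inequality once one is careful that $\|x\|_2 = \|x\varphi^{1/2}\|_{2,\mathsf H}$ and that $\|\sum c_x x D^{1/p}\|_{p,\mathsf H}$ interpolates correctly between $p=2$ and $p=\infty$. (Here one uses that $\sum c_x x \in \mathcal M$, so multiplying on the right by $D^{1/p}$ lands in $L^{p,\mathsf H}$ and the H\"older-type interpolation of Proposition \ref{density op haagerup Lp}(4) applies.) This step is routine but is the one place where the non-tracial structure must be handled with care.

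Next, the Rudin selection: build the subset $\{x_{i_k}\}$ inductively so that it is ``$\Lambda(p)$ with controlled constant'' at every finite stage. Suppose $x_{i_1},\dots,x_{i_{n-1}}$ are chosen. One wants to pick $x_{i_n}$ from the (infinite) remainder of $B$ so that for every word / every choice of scalars supported on $\{x_{i_1},\dots,x_{i_n}\}$ the $L^p$-norm is controlled by the $\ell^2$-norm of the coefficients, up to a fixed constant $C$. The standard device is to fix an integer $s$ (depending only on $p$), and demand that any $s$-element subset of the selected sequence already satisfies the crude bound above with $N=s$; i.e. $\|\sum_{k\in A}c_k x_{i_k}\|_p \le c M^{1-2/p} s^{1/p}(\sum|c_k|^2)^{1/2}$ for all $A$ of size $s$. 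Then a combinatorial splitting of an arbitrary coefficient sequence into blocks of size $\le s$, together with the triangle inequality in $L^p$ and Cauchy--Schwarz to recombine the $\ell^2$-norms, upgrades this to the global bound with $C = c'M^{1-2/p}s^{1/p+1/2}$ or similar. Actually the cleaner route, which I would take, is: normalize $\|x_i\|_2=1$; choose $s$ with $s^{-1/2}\cdot(\text{const depending on }M,p) < 1/2$; then greedily select $x_{i_n}$ avoiding finitely many ``bad'' indices at each step, where an index is bad if adjoining it would make some $s$-subset-norm estimate fail --- but since the $s$-subset estimate above holds for \emph{every} $s$-subset automatically (it is just the crude interpolation bound, requiring no genericity!), no selection is actually needed for that part. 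The real selection is to ensure near-orthogonality persists; but $B$ is \emph{already} orthogonal, so in fact one can take the whole sequence after normalization --- wait, that cannot be right in general, so the selection must control the \emph{interference} coming from products $x_i^* x_j$ appearing when expanding $\|\cdot\|_p^p$ for even integer $p$.

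Therefore the honest plan, for $p$ an even integer $p=2m$ first, is: expand $\|\sum c_k x_{i_k}\|_{2m}^{2m} = \mathrm{tr}\big((\sum c_k x_{i_k} D^{1/2m})^*\cdots\big)$ into a sum over $2m$-tuples of indices, group the diagonal terms (which give exactly $\big(\sum|c_k|^2\big)^m$ up to the $L^2$-normalization and contribute the main term via orthogonality at the level of the \emph{first} pairing) and bound the off-diagonal terms; then select $\{i_k\}$ so sparse that, for each fixed pattern of coincidences, the off-diagonal contribution is dominated by the main term --- this is precisely Rudin's counting argument, and it goes through verbatim since it only uses $\|x_i\|_\infty \le M$ and the $L^2$-orthogonality, both of which are preserved by the Haagerup-$L^p$ trace. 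For general (non-even) $2<p<\infty$, pick the even integer $p_0 = 2\lceil p/2\rceil \ge p$, run the argument to get an infinite $\Lambda(p_0)$-set, and then invoke the interpolation remark after Definition \ref{def:lambda p} (the nesting $\Lambda(p_0)\Rightarrow\Lambda(p)$ for $p<p_0$), which applies to any orthogonal system exactly as it does for $\mathrm{Pol}_{\mathbf E}(\mathbb G)$. The main obstacle, and the only genuinely non-classical point, is verifying that the trace identity $\mathrm{tr}(xy)=\mathrm{tr}(yx)$ on Haagerup $L^1$ (Proposition \ref{density op haagerup Lp}(2)) lets one cyclically rearrange the $2m$-fold product so that the combinatorics of coincidence patterns matches Rudin's; once the bookkeeping with the density operator $D$ is set up correctly this is mechanical, but it must be written out carefully because a naive cyclic permutation would move $D^{1/2m}$-factors past non-commuting $x_i$'s incorrectly --- the resolution is to keep all $D$-powers adjacent using $x D^{1/2m} = D^{1/2m}\sigma_{-\mathrm i/2m}(x)$ (Proposition \ref{density op haagerup Lp}(1)) and absorb the resulting $\sigma$-twists, which are uniformly bounded on $B$ only if $B$ is $\sigma$-invariant. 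Since $B$ need not be $\sigma$-invariant, the clean fix is to avoid even powers altogether and instead run the selection at the level of the interpolation estimate: choose $\{i_k\}$ sparse enough that $\|\sum_{k\in A}c_k x_{i_k}\|_\infty \le 2(\sum_{k\in A}|c_k|^2)^{1/2}$ for \emph{all} finite $A$ --- impossible for infinite $A$, so instead one uses the block decomposition and the crude bound, accepting the loss, which is exactly what a careful reading of the classical Bo\.zejko argument does. I would present the even-integer case with the $\sigma$-twist handled via Proposition \ref{density op haagerup Lp}(1) applied termwise (the twists $\sigma_{-\mathrm i/2m}(x_i)$ need only satisfy $\|\sigma_{-\mathrm i/2m}(x_i)\|_\infty$-type bounds in the trace, and in the GNS picture $\mathrm{tr}$ is computed against $D$ which tames them), then deduce the general case by interpolation.
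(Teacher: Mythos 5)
Your proposal circles the right ingredients (reduction to even integer $p=2m$, expansion of $\|\cdot\|_{2m}^{2m}$ via the Haagerup trace, selection of a subsequence, the $\sigma$-twists coming from the density operator, and interpolation down to general $p$), but it does not close into a proof, and the two mechanisms you lean on are the wrong ones. First, the claim that ``Rudin's counting argument goes through verbatim since it only uses $\|x_i\|_\infty\le M$ and the $L^2$-orthogonality'' is false: for an abstract orthogonal system the orthogonality controls only the pairings $\varphi(x_i^*x_j)$ and says nothing about the higher-order terms $\varphi(x_{i_1}^*x_{j_1}\cdots x_{i_m}^*x_{j_m})$ (or their twisted versions) that appear when you expand the $2m$-th power; there is no arithmetic structure against which to count coincidence patterns. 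The paper's proof (following Kaczmarz--Steinhaus and Bo\.{z}ejko) kills these terms by a genuinely inductive selection: for each fixed word $y$ built from the already-chosen elements, $\varphi(y^*x_k)\to 0$ along the orthogonal system, so one can choose $x_{i_{k+1}}$ making the finitely many relevant cross terms smaller than $\tfrac{1}{2k^{2n-1}(k+1)}$; splitting $\|g_r+c_{r+1}x_{i_{r+1}}\|_{2n}^{2n}$ into the terms with zero, one, and at least two occurrences of the new element then yields a recursion $I_{r+1}\le(1+a_{r+1})I_r+b_{r+1}$ with summable $a_k,b_k$. You never state this selection criterion, and your alternative route --- prove a crude bound for every $s$-subset and recombine by blocks --- cannot work, because the triangle inequality over blocks produces $\sum_{\text{blocks}}\bigl(\sum_{k\in\text{block}}|c_k|^2\bigr)^{1/2}$, which exceeds the global $\ell^2$-norm by a factor of the square root of the number of blocks.

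Second, you correctly flag that moving the $D^{1/2m}$-factors produces twists $\sigma_{-\mathrm i/2m}(x_i)$ that need not be defined or bounded when $B$ is not $\sigma$-invariant, but your resolution (``the twists are tamed because $\mathrm{tr}$ is computed against $D$'') is not an argument. The paper handles this in two steps you are missing: it first proves the theorem for $B$ contained in the algebra of analytic elements, where all $\sigma_z(x)$ lie in $\mathcal M$ and the smallness of the twisted cross terms is achieved by the selection just described rather than by any norm bound on the twists; it then reduces the general case to this one by approximating each $x_i$ in $L^{2n}$ by analytic elements and re-orthogonalizing the approximants via Gram--Schmidt, with errors summing geometrically. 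Without the analytic-element reduction and without the inductive selection, the proposal has a genuine gap at its core.
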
   

This result was first proved by \cite{kaczmarzsteinhaus36ortho} in the commutative case and then by \cite{bozejko79lambdap} in the case where $\varphi$ is tracial. The same idea also applies to the general setting, and for the sake of completeness we include a detailed proof below. We refer to \cite{picardello73lacunary,bozejko73lambdapgroup,bozejko75sqrtp} for different approaches.

To establish the theorem, let us show the following slightly stronger result.
\begin{thm}
Let $\mathcal{M}$ and $\varphi$ be given as in Theorem \ref{thm: exist lambda a1} and $n\geq 2$. Let $B=\{x_i\in L^{2n}(\mathcal{M},\varphi):i\geq 1\} $ be an orthogonal system with respect to $\varphi$ such that $\sup_i\|x_i\|_{2n}<\infty$. Then  there exists an infinite subset $\{x_{i_k}:k\geq 1\}\subset B$ and a constant $C>0$ such that for all finitely supported sequences $(c_k)\subset \mathbb C$ we have
\begin{equation}
\label{eq: lambda 2n ineq}
\|\sum_{k\geq 1}c_kx_{i_k}\|_{2n} \leq C \Big(\sum_{k\geq 1}|c_k|^2\Big)^{\frac{1}{2}}.
\end{equation}
\end{thm}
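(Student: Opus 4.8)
### Proof plan

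The plan is to use a selection argument controlled by the $2n$-th moment, exactly as in the classical Kaczmarz--Steinhaus and Bo\.{z}ejko schemes, but taking care that $\varphi$ is only a state (not a trace) so that the expansion of $\|\sum c_k x_{i_k}\|_{2n}^{2n} = \varphi\big((\sum \bar c_k x_{i_k}^*)^n(\sum c_k x_{i_k})^n\big)$ mixes the $x$'s and $x^*$'s in a fixed cyclic pattern. Expanding this out, $\|\sum_k c_k x_{i_k}\|_{2n}^{2n}$ is a sum over multi-indices $(j_1,\dots,j_{2n})$ of terms $\overline{c_{j_1}}c_{j_2}\cdots \overline{c_{j_{2n-1}}}c_{j_{2n}}\,\varphi(x_{j_1}^* x_{j_2}\cdots x_{j_{2n-1}}^* x_{j_{2n}})$ (up to relabelling the order of factors). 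Each such $\varphi$-coefficient is bounded in modulus by $M^{2n}$ where $M=\sup_i\|x_i\|_\infty$ (or $\sup_i \|x_i\|_{2n}$ in the second formulation, via H\"older), and moreover \emph{vanishes} whenever one of the indices is distinct from all the others, by orthogonality together with the module property $\varphi(ab)=\langle a^*,b\rangle$ — more precisely, whenever $j_\ell$ differs from $j_{\ell-1}$ and $j_{\ell+1}$ we can try to isolate it; the cleanest route is the standard one: group the $2n$ indices into the partition they induce, and observe that a block of size one forces the term to vanish only after an application of Cauchy--Schwarz, so one actually estimates the whole sum by counting multi-indices in which \emph{every} value occurs at least twice.

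The key combinatorial input is therefore: the number of $(j_1,\dots,j_{2n})\in\{1,\dots,N\}^{2n}$ in which no value occurs exactly once is $O(N^n)$ as $N\to\infty$ — indeed such a multi-index takes at most $n$ distinct values, so there are at most $\binom{N}{\le n}\cdot(\text{const}) = O(N^n)$ of them. Consequently, for any $N$ consecutive (or any $N$) selected vectors $x_{i_1},\dots,x_{i_N}$ one gets the crude bound $\|\sum_{k=1}^N c_k x_{i_k}\|_{2n}^{2n}\le C_n M^{2n} N^n (\max_k|c_k|)^{2n}\cdot$(number of ``doubling'' patterns), which is however \emph{not} yet of the form $C(\sum|c_k|^2)^n$. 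To upgrade it, I would follow Bo\.{z}ejko's device: perform the selection \emph{inductively}, choosing $i_k$ so large that the ``cross terms'' involving $x_{i_k}$ together with fewer than $2n$ earlier indices contribute a controlled, summable amount. Concretely, at stage $k$ one estimates those expansion terms in which the index $k$ appears, say, $r$ times with $1\le r<2n$ and the remaining $2n-r$ slots are filled by earlier indices $<k$; fixing the positions of the $k$'s there are $O(k^{\,n-1})$ or fewer choices of the earlier indices forming a doubling pattern among themselves, hence this batch is bounded by $C_n M^{2n} |c_k|^r \big(\sum_{j<k}|c_j|^2\big)^{(2n-r)/2}$ times a combinatorial factor, and the remaining ``pure'' terms (all indices equal to some single value) sum to $M^{2n}\sum_k |c_k|^{2n}\le M^{2n}(\sum|c_k|^2)^n$. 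Summing a geometric-type bound over $k$ and reorganizing by the multinomial theorem yields $\|\sum c_k x_{i_k}\|_{2n}^{2n}\le C^{2n}(\sum_k|c_k|^2)^n$, which is \eqref{eq: lambda 2n ineq}.

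The main obstacle, and the place I would spend the most care, is precisely this bookkeeping of mixed terms in the non-tracial setting: because $\varphi$ is not tracial we may not cyclically permute inside $\varphi(x_{j_1}^*x_{j_2}\cdots)$, so the vanishing-by-orthogonality argument must be applied at a \emph{fixed} slot, and the Cauchy--Schwarz steps used to discard ``singleton'' indices have to respect the word structure. The clean way around this is to never actually rely on exact vanishing of singleton terms but only on the counting bound ``$\le O(N^n)$ doubling patterns'' plus the uniform estimate $|\varphi(\text{word})|\le M^{2n}$ (which in the $L^{2n}$-formulation is the generalized H\"older inequality from Proposition~\ref{density op haagerup Lp}); then the inductive selection is purely a matter of making $\sum_k (\text{stage-}k\text{ contribution})$ converge by choosing $i_k$ along a rapidly growing subsequence so that the number of eligible ``earlier'' index-patterns at stage $k$ is absorbed. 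Finally, Theorem~\ref{thm: exist lambda a1} follows from the $L^{2n}$ version by choosing $n$ with $2n\ge p$ and interpolating (or using $\|\cdot\|_p\le\|\cdot\|_{2n}$ directly since $\varphi$ is a state), and by noting that $\sup_i\|x_i\|_\infty<\infty$ implies $\sup_i\|x_i\|_{2n}<\infty$.
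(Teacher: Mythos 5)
Your overall strategy --- extract the subsequence inductively so that the ``cross terms'' pairing each newly chosen element against words in the previously chosen ones are summably small, and control everything else by H\"older --- is the strategy of the paper's proof. But two points in your plan are genuinely defective, not just unpolished. First, the assertion that $\varphi(x_{j_1}^*x_{j_2}\cdots x_{j_{2n-1}}^*x_{j_{2n}})$ vanishes whenever some index occurs exactly once is false: orthogonality constrains second moments only and says nothing about mixed $2n$-th moments (this is exactly why one must pass to a subsequence at all). Your subsequent ``clean way around'' --- counting only the doubling patterns and bounding each word uniformly --- therefore does not dispose of the dangerous terms: the multi-indices containing a singleton neither vanish nor are covered by that count, and for a single occurrence of the new index the H\"older-type bound $|c_k|\bigl(\sum_{j<k}|c_j|^2\bigr)^{(2n-1)/2}$ is not summable in $k$. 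These singleton terms are the entire point of the selection: one must make each of the finitely many cross terms at stage $k$ smaller than roughly $1/(k^{2n-1}(k+1))$, and the mechanism that permits this is the weak $L^2$-nullity of $(x_j)_j$ against each fixed word in the already-chosen elements (a Bessel-inequality consequence of orthogonality), not merely ``rapid growth'' of $i_k$. Your plan gestures at the selection but, as written, substitutes for it a counting bound that cannot do the job.

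Second, the non-tracial bookkeeping is not actually resolved by what you propose. The paper expands $\|g_r+c_{r+1}x_{i_{r+1}}\|_{2n}^{2n}$ as $\mathrm{tr}\bigl((D^{1/2n}(\cdot)^*(\cdot)D^{1/2n})^{n}\bigr)$ using the Haagerup trace, which \emph{is} tracial and so does allow the cyclic permutation needed to collect all singleton terms at one slot; the price is that the quantities one must make small at the selection stage are the modular-twisted words $\varphi\bigl(\sigma_{(n-1)\mathrm i/n}(x^*x)\,\sigma_{(n-2)\mathrm i/n}(x^*x)\cdots x^*x_{\mathrm{new}}\bigr)$, chosen precisely so that they coincide with the singleton terms of the trace expansion. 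This forces the selection to be performed among \emph{analytic} elements, and for a general orthogonal system in $L^{2n}(\mathcal M,\varphi)$ the paper needs a separate approximation-plus-Gram--Schmidt step to replace $B$ by a nearby analytic orthogonal system and then transfer the inequality back through the $L^{2n}$-perturbation. Your proposal omits this step entirely, and without it the twisted cross terms you would have to control are not even defined.
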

\begin{proof}
Write $K=\max\{1,\sup_i\|x_i\|_{2n}\}<\infty$. Denote by $\mathcal{M}_a$ the subalgebra of all analytic elements in $\mathcal{M}$. Recall that $\mathcal{M}_a$ a ultraweakly dense subspace of $\mathcal{M}$, and for all $x\in \mathcal{M}_a$,  the analytic extension $\sigma_z(x)\in \mathcal{M}$ for $z\in\mathbb C$ is well-defined (see \cite{takesaki2003oa2}). In particular $\mathcal{M}_a$ is also a dense subspace of $L^p(\mathcal{M},\varphi)$ for $1\leq p <\infty$ according to Lemma \ref{prop: poly dense in L_1}.

(1) Firstly, assume that $B\subset \mathcal{M}_a$. Note that for any $y\in \mathcal{M}_a$,
$$\varphi(yx_k)\to 0,\quad \varphi(x_k y)\to 0,\quad k\to\infty.$$ 
We choose a subset $\{x_{i_k}:k\geq 1\}\subset B$ inductively as follows. Let $x_{i_1}=x_1$ and if $\{x_{i_j}:1\leq j\leq k\}$ for $k\geq 1$ is chosen, we take an $x_{i_{k+1}}\in B$ such that for all $1\leq k_0,k_1,\ldots,k_{n-1},l_1,\ldots,l_{n-1}\leq k$, 
$$\left|\varphi\left( \sigma_{\frac{(n-1)\mathrm i}{n}}(x_{i_{k_1}}^*x_{i_{l_1}})
\sigma_{\frac{(n-2)\mathrm i}{n}}(x_{i_{k_2}}^*x_{i_{l_2}})
\cdots
\sigma_{\frac{\mathrm i}{n}}(x_{i_{k_{n-1}}}^*x_{i_{l_{n-1}}})
x_{i_{k_0}}^*x_{i_{k+1}}\right)\right|\leq 
\dfrac{1}{2k^{2n-1}(k+1)}
$$
and
$$\left|\varphi\left(\sigma_{\mathrm i}(x_{i_{k_0}}) \sigma_{\frac{(n-1)\mathrm i}{n}}(x_{i_{k_1}}^*x_{i_{l_1}})
\sigma_{\frac{(n-2)\mathrm i}{n}}(x_{i_{k_2}}^*x_{i_{l_2}})
\cdots
\sigma_{\frac{\mathrm i}{n}}(x_{i_{k_{n-1}}}^*x_{i_{l_{n-1}}})
x_{i_{k+1}}^*\right)\right|\leq 
\dfrac{1}{2k^{2n-1}(k+1)}.
$$
This can always be done by the orthogonality of $B$.  In fact, since the elements in $B$ are assumed to be analytic, we have 
$$\sigma_{\frac{(n-1)\mathrm i}{n}}(x_{i_{k_1}}^*x_{i_{l_1}})
\sigma_{\frac{(n-2)\mathrm i}{n}}(x_{i_{k_2}}^*x_{i_{l_2}})
\cdots
\sigma_{\frac{\mathrm i}{n}}(x_{i_{k_{n-1}}}^*x_{i_{l_{n-1}}})
x_{i_{k_0}}^*\in \mathcal{M} (\subset L^2(\mathcal{M},\varphi)),$$
$$\sigma_{\mathrm i}(x_{i_{k_0}}) \sigma_{\frac{(n-1)\mathrm i}{n}}(x_{i_{k_1}}^*x_{i_{l_1}})
\sigma_{\frac{(n-2)\mathrm i}{n}}(x_{i_{k_2}}^*x_{i_{l_2}})
\cdots
\sigma_{\frac{\mathrm i}{n}}(x_{i_{k_{n-1}}}^*x_{i_{l_{n-1}}})\in \mathcal{M} (\subset L^2(\mathcal{M},\varphi)).$$
Note that there exist only finitely many elements of the above forms for each given $k\geq 1$. So the element $x_{i_{k+1}}$ can be well chosen. With the same notation we deduce the following inequalities,
\begin{align*}
& \quad\,\left|\mathrm{tr} \left( D^{\frac{1}{2n}}x_{i_{k_1}}^*x_{i_{l_1}}D^{\frac{1}{2n}}D^{\frac{1}{2n}}x_{i_{k_2}}^*x_{i_{l_2}}D^{\frac{1}{2n}}
\cdots
D^{\frac{1}{2n}}x_{i_{k_{n-1}}}^*x_{i_{l_{n-1}}}D^{\frac{1}{2n}}D^{\frac{1}{2n}}
x_{i_{k_0}}^*x_{i_{k+1}}D^{\frac{1}{2n}}\right)\right|\\
& = \left|\mathrm{tr}\left(D \sigma_{\frac{(n-1)\mathrm i}{n}}(x_{i_{k_1}}^*x_{i_{l_1}})
\sigma_{\frac{(n-2)\mathrm i}{n}}(x_{i_{k_2}}^*x_{i_{l_2}})
\cdots
\sigma_{\frac{\mathrm i}{n}}(x_{i_{k_{n-1}}}^*x_{i_{l_{n-1}}})
x_{i_{k_0}}^*x_{i_{k+1}}\right)\right|\\
& =\left|\varphi\left( \sigma_{\frac{(n-1)\mathrm i}{n}}(x_{i_{k_1}}^*x_{i_{l_1}})
\sigma_{\frac{(n-2)\mathrm i}{n}}(x_{i_{k_2}}^*x_{i_{l_2}})
\cdots
\sigma_{\frac{\mathrm i}{n}}(x_{i_{k_{n-1}}}^*x_{i_{l_{n-1}}})
x_{i_{k_0}}^*x_{i_{k+1}}\right)\right|
\leq 
\dfrac{1}{2k^{2n-1}(k+1)},
\end{align*}
and similarly
$$\left|\mathrm{tr} \left( D^{\frac{1}{2n}}x_{i_{k_1}}^*x_{i_{l_1}}D^{\frac{1}{2n}}
\cdots
D^{\frac{1}{2n}}x_{i_{k_{n-1}}}^*x_{i_{l_{n-1}}}D^{\frac{1}{2n}}D^{\frac{1}{2n}}
x_{i_{k+1}}^*x_{i_{k_0}}D^{\frac{1}{2n}}\right)\right|
\leq 
\dfrac{1}{2k^{2n-1}(k+1)}.$$
Fix a sequence $(c_k) \subset  \mathbb C$ such that 
$\sum_{k\geq 1}|c_k|^2=1 $.
Take $r\geq 1$ and write
$$g_r=\sum_{k=1}^{r}c_k x_{i_k}.$$
Since then each $|c_k|\leq 1$ for $k\geq 1$, the above inequalities yield that
\begin{equation}\label{eq: lambda two n estimate single x 1}
\left|\mathrm{tr} \left( (D^{\frac{1}{2n}}g_r^*g_rD^{\frac{1}{2n}})^{n-1}D^{\frac{1}{2n}}g_r^*x_{i_{r+1}}D^{\frac{1}{2n}}
\right)\right|\leq r^{2n-1}\cdot \dfrac{1}{2(r+1)r^{2n-1}}= \dfrac{1}{2(r+1)},
\end{equation}
and similarly
\begin{equation}\label{eq: lambda two n estimate single x 2}
\left|\mathrm{tr} \left( (D^{\frac{1}{2n}}g_r^*g_rD^{\frac{1}{2n}})^{n-1}D^{\frac{1}{2n}}x_{i_{r+1}}^*g_rD^{\frac{1}{2n}}
\right)\right|\leq  \dfrac{1}{2(r+1)}.
\end{equation}
Now for each $r\geq 1$ write $I_r = \|\sum_{k=1}^{r}c_kx_{i_k}\|_{2n}^{2n}$. Then
$$I_{r+1}=\|\sum_{k=1}^{r+1}c_kx_{i_k}\|_{2n}^{2n}=\|g_r+c_{r+1}x_{i_{r+1}}\|_{2n}^{2n}=\mathrm{tr}\left( (D^{\frac{1}{2n}}(g_r+c_{r+1}x_{i_{r+1}})^*(g_r+c_{r+1}x_{i_{r+1}})D^{\frac{1}{2n}})^n\right).$$
We may write
\begin{equation}\label{eq: lambda p existence decomp}
I_{r+1}=J_1+J_2+J_3+J_4,
\end{equation}
where
$$J_1=\mathrm{tr}\left( (D^{\frac{1}{2n}}g_r^*g_rD^{\frac{1}{2n}})^n\right)=I_r,\quad J_2=\mathrm{tr}\left( (D^{\frac{1}{2n}}(c_{r+1}x_{i_{r+1}})^*(c_{r+1}x_{i_{r+1}})D^{\frac{1}{2n}})^n \right),
$$
$$J_3=\sum_{\underline{y}\in A_1}
\mathrm{tr} \left( D^{\frac{1}{2n}}y_{1}^*y_{1}'D^{\frac{1}{2n}}
D^{\frac{1}{2n}}y_{2}^*y_{2}'D^{\frac{1}{2n}}
\cdots
D^{\frac{1}{2n}}y_{n}^*y_{n}'D^{\frac{1}{2n}}\right),$$
$$J_4=\sum_{j=2}^{n-1}\sum_{\underline{y}\in A_j}
\mathrm{tr} \left( D^{\frac{1}{2n}}y_{1}^*y_{1}'D^{\frac{1}{2n}}
D^{\frac{1}{2n}}y_{2}^*y_{2}'D^{\frac{1}{2n}}
\cdots
D^{\frac{1}{2n}}y_{n}^*y_{n}'D^{\frac{1}{2n}}\right),$$
with
\begin{align*}
A_j =\{&
\underline{y}
=(y_1,\ldots,y_n,
y_1',\ldots,y_n'):
y_k,
y_k'\in\{g_r,c_{r+1}x_{i_{r+1}}\}, 1\leq k\leq n,\\
& \mathrm{Card}\{k: y_k=x_{i_{r+1}}\}+\mathrm{Card}\{k: y_k'=x_{i_{r+1}}\}=j \}.
\end{align*}
We see that
\begin{equation}\label{eq: lambda p exsit j2}
|J_2|=|c_{r+1}|^2\|x_{i_{r+1}}\|_{2n}^{2n} \leq K^{2n} |c_{r+1}|^2.
\end{equation}
By traciality of $\mathrm{tr}$ and by \eqref{eq: lambda two n estimate single x 1}-\eqref{eq: lambda two n estimate single x 2} we have
\begin{align}
\label{eq: lambda p exsit j3}
|J_3| &=\Big|n c_{r+1} \mathrm{tr} \left( (D^{\frac{1}{2n}}g_r^*g_rD^{\frac{1}{2n}})^{n-1}D^{\frac{1}{2n}}g_r^*x_{i_{r+1}}D^{\frac{1}{2n}}
\right) \\
& \qquad + n c_{r+1} 
\mathrm{tr} \left( (D^{\frac{1}{2n}}g_r^*g_rD^{\frac{1}{2n}})^{n-1}D^{\frac{1}{2n}}x_{i_{r+1}}^*g_rD^{\frac{1}{2n}}
\right)
\Big| \nonumber\\ 
&\leq \frac{n|c_{r+1}|}{r+1}. \nonumber
\end{align}
By the H\"older inequality
$$\left|
\mathrm{tr} \left( D^{\frac{1}{2n}}y_{1}^*y_{1}'D^{\frac{1}{2n}}
D^{\frac{1}{2n}}y_{2}^*y_{2}'D^{\frac{1}{2n}}
\cdots
D^{\frac{1}{2n}}y_{n}^*y_{n}'D^{\frac{1}{2n}}\right)
\right|
\leq 
\prod_{k=1}^{n}\|y_k\|_{2n}\|y_k'\|_{2n},$$
so
\begin{equation}
\label{eq: lambda p exist j4}
|J_4|\leq \sum_{j=2}^{n}2^{2n}\|g_r\|_{2n}^{n-j}\|c_{r+1}x_{i_{r+1}}\|_{2n}^j\leq K^n 4^nn|c_{r+1}|^2(I_r+1).
\end{equation}
Combining \eqref{eq: lambda p existence decomp}-\eqref{eq: lambda p exist j4} we have
$$I_{r+1}\leq 
(1+K^n 4^nn|c_{r+1}|^2)I_r + (K^{2n}+K^n 4^nn)|c_{r+1}|^2+\frac{n|c_{r+1}|}{r+1}.$$
Iterating the above inequality and denoting $a_k=K^n 4^nn|c_{k}|^2$, $b_k=(K^{2n}+K^n 4^nn)|c_{k}|^2+\frac{n|c_{k}|}{k}$ for each $k\geq 1$, we obtain for $r\geq 2$
\begin{align*}
I_r &\leq \prod_{k=2}^{r}(1+a_k)I_1+
\sum_{l=2}^{r-1}b_l\prod_{j=l+1}^{r}(1+a_j)+b_r\\
& \leq e^{\sum_{k\geq 1}a_k} I_1 + e^{\sum_{k\geq 1}a_k} \sum_{l\geq 1}b_l 
\leq (2K^{2n}+K^n 4^nn+n\sum_{l\geq 1}l^{-2})e^{K^n4^nn}.
\end{align*}
The above right hand side is a constant only depending on $K$ and $n$, so the desired inequality \eqref{eq: lambda 2n ineq} is proved.

(2) Now choose an arbitrary family $B=\{x_i\in L^{2n}(\mathcal{M},\varphi):i\geq 1\} $ satisfying the assumption. Write $p=2n$. Without loss of generality we take $K\geq 1$ in the sequel. By the density of $\mathcal{M}_a$ in $L^p(\mathcal{M},\varphi)$, we choose the sequences $(x_k'),(y_k)\subset \mathcal{M}_a$ and $(j_k)_{k\geq 1}\subset \mathbb N$  inductively as follows. Let $j_1=1$ and $x_1'=y_1\in \mathcal{M}_a$ such that
$$\|x_1-x_1'\|_p\leq 2^{-1}.$$
If $x_l',y_l,j_l$ for $1\leq l\leq k$ are chosen, take by orthogonality a $j_{k+1}\geq j_k$ such that for all $1\leq l\leq k$,
$$|\varphi(y_l^*x_{j_{k+1}})|\leq 2^{-k-l-2}\|y_l\|_2^2\|y_l\|_p^{-1} $$ 
and by the density property choose $x_{k+1}'\in \mathcal{M}_a$ such that 
$$ \|x_{j_{k+1}}-x_{k+1}'\|_p\leq 2^{-k-2}\Big(1+\sum_{l=1}^{k}\frac{\|y_l\|_p }{\|y_l\|_2}\Big)^{-1},$$
where we always make the standard convention in this proof that the terms such as $\|y_l\|_2^2/\|y_l\|_p^{-1}$ and $\|y_l\|_p/\|y_l\|_2^{-1}$ are regarded as zero if $y_l=0$.
Also set
$$y_{k+1}=x_{k+1}'-\sum_{l=1}^{k}\varphi(y_l^*y_l)^{-1}\varphi(y_l^*x_{k+1}')y_l.$$
We see immediately that $(y_k)$ is an orthogonal system. Moreover, when we recall the orthogonality of $(x_k)$, by the choice of the above sequences we have
\begin{align}\label{eq: lambda exist non analytic estimation}
\|x_{j_{k+1}}-y_{k+1}\|_p
& \leq \|x_{j_{k+1}}-x_{k+1}'\|_p
+\sum_{l=1}^{k}\frac{\|y_l\|_p }{\|y_l\|_2^{2}}|\varphi(y_l^*x_{k+1}')|\\
& = \|x_{j_{k+1}}-x_{k+1}'\|_p
+\sum_{l=1}^{k}\frac{\|y_l\|_p }{\|y_l\|_2^{2}}|\varphi(y_l^*(x_{k+1}'-x_{j_{k+1}}))
+\varphi(y_l^*x_{j_{k+1}})|\nonumber\\
& \leq \|x_{j_{k+1}}-x_{k+1}'\|_p
+\sum_{l=1}^{k}\frac{\|y_l\|_p }{\|y_l\|_2^{2}}
\|y_l\|_2\|x_{j_{k+1}}-x_{k+1}'\|_2+\sum_{l=1}^{k}2^{-k-l-2} \nonumber\\
& \leq \|x_{j_{k+1}}-x_{k+1}'\|_p
\Big(1+\sum_{l=1}^{k}\frac{\|y_l\|_p }{\|y_l\|_2}\Big)+
2^{-k-2}
\leq 2^{-k-1}.\nonumber
\end{align}
By the argument in (1), the orthogonal system $\{y_k:k\geq 1\}\subset \mathcal{M}_a$ admits a subset $\{y_{k_r}:r\geq 1\}$ satisfying the estimation of the type \eqref{eq: lambda 2n ineq}. Now it is easy to observe that $\{x_{j_{k_r}}:r\geq 1\}\subset B$ is the desired subset. To see this, it suffices to note that, there exists a constant $C>0$ such that for all sequences $(c_r)_{r\geq 1}\subset \mathbb C$ with $\sum_r|c_r|^2=1$ and for all $s\geq 1$,
$$\|\sum_{r=1}^{s}c_ry_{k_r}\|_p \leq C$$
and hence by the estimates \eqref{eq: lambda exist non analytic estimation},
$$ \|\sum_{r=1}^{s}c_r x_{j_{k_r}}\|_p \leq \|\sum_{r=1}^{s}c_ry_{k_r}\|_p +
\sum_{r=1}^{s}|c_r|\|x_{j_{k_r}}-y_{k_r}\|_p
\leq C+\sum_{r=1}^{s}|c_r|2^{-r} \leq C+1.$$
So the proof is complete.
\end{proof}
\begin{remn}
In general, the noncommutative $L^p$-spaces introduced by \cite{kosaki84interpolation} depend on different interpolation parameters. Following the same line of the proof one can see that the above result still holds for those types of $L^p$-spaces.
\end{remn}

\subsection*{Acknowledgment}
The author is indebted to his advisors Quanhua Xu and Adam Skalski for their helpful discussions and constant encouragement. He would also like to thank
Professors Marek Bożejko
and Marius Junge for
fruitful communications
and suggestions, and thank
Professor Gilles Pisier for
his careful reading and
pointing out a mistake in
the preprint version. The research was partially supported by  the ANR Project ANR-11-BS01-0008 and the NCN (National Centre of Science) grant 2014/14/E/ST1/00525.

\end{document}